\documentclass[12pt]{amsart}
\usepackage[utf8]{inputenc}
\usepackage{hyperref}
\usepackage[letterpaper,centering,text={6in,9in}]{geometry}
\usepackage[all]{xy}
\usepackage{cp2-conic-line}
\usepackage{graphicx}

\usepackage{setspace}

\usepackage{datetime}
\newdateformat{ymddate}{\THEYEAR--\twodigit{\THEMONTH}--\twodigit{\THEDAY}}
\ymddate

\title[Mirror of the projective plane]{Floer cohomology in the mirror of the projective plane and a binodal cubic curve}
\author{James Pascaleff}
\date{Revised 2013--12--19}
\address{Department of Mathematics, MIT, Cambridge, MA 02139, USA}
\curraddr{Department of Mathematics, The University of Texas at
  Austin, Austin, TX 78712, USA}
\email{jpascaleff@math.utexas.edu}

\begin{document}
\begin{abstract}
  We construct a family of Lagrangian submanifolds in the
  Landau--Ginzburg mirror to the projective plane equipped with a
  binodal cubic curve as anticanonical divisor. These objects
  correspond under mirror symmetry to the powers of the twisting sheaf
  $\cO(1)$, and hence their Floer cohomology groups form an algebra
  isomorphic to the homogeneous coordinate ring. An interesting
  feature is the presence of a singular torus fibration on the mirror,
  of which the Lagrangians are sections. This gives rise to a
  distinguished basis of the Floer cohomology and the homogeneous
  coordinate ring parametrized by fractional integral points in the
  singular affine structure on the base of the torus fibration. The
  algebra structure on the Floer cohomology is computed using the
  symplectic techniques of Lefschetz fibrations and the TQFT counting
  sections of such fibrations. We also show that our results agree
  with the tropical analog proposed by
  Abouzaid--Gross--Siebert. Extensions to a restricted class of
  singular affine manifolds and to mirrors of the complements of
  components of the anticanonical divisor are discussed.
\end{abstract}

\maketitle
\tableofcontents

%\begin{doublespace}
\section{Introduction}
\label{sec:intro}

This article is concerned with a case of mirror symmetry relating the
algebraic geometry of a Fano manifold to the symplectic geometry of a
Landau--Ginzburg model. The Fano manifold is the projective plane
$\CP^2$ with homogeneous coordinates $x, y, z$, and additionally we choose as anticanonical divisor $D = C
\cup L$, the union of a conic $C = \{xz-y^2\}$ and a line $L = \{y=0\}$ (a binodal cubic
curve). According to Auroux \cite{auroux07}, the mirror of this pair
$(\CP^2,D)$ is the Landau--Ginzburg model $(X^\vee,W)$
\begin{equation}
  \begin{split}
    X^\vee = \{(u,v) \in \C^2 \mid uv \neq 1\}, \quad
    W = u + \frac{e^{-\Lambda}v^2}{uv-1}
  \end{split}
\end{equation}
The function $W$ is known as the superpotential. This example is
interesting in the context of the Strominger-Yau-Zaslow picture of
mirror symmetry \cite{syz} in terms of dual torus fibrations, since
the spaces $\CP^2\setminus D$ and $X^\vee$ admit special Lagrangian
torus fibrations with a singularity. The presence of singularities in
the torus fibration is known to make the mirror duality vastly more
complicated, but in this article we develop techniques to deal with it
in the above case (and other cases with similar properties, see \S
\ref{sec:other-manifolds}).

\subsection{Summary of results}
\label{sec:outline}
The piece of symplectic geometry in the Landau--Ginzburg model
$(X^\vee,W)$ we study is the Floer cohomology of Lagrangian
submanifolds, while on the mirror side $\CP^2$ we consider the
cohomology of the coherent sheaves $\cO_{\CP^2}(d)$. We construct a
symplectic manifold $X(B)$, that serves as our symplectic model for
$X^\vee$, and a collection of Lagrangian submanifolds $\{L(d)\}_{d\in
  \Z}$ in $X(B)$ that is mirror to the collection
$\{\cO_{\CP^2}(d)\}_{d\in \Z}$. The constructions of $X(B)$ and $L(d)$
are based on the SYZ picture of mirror symmetry in terms of torus
fibrations and affine manifolds, and also use the theory of Lefschetz
fibrations. The manifold $X(B)$ carries two fibrations: one is a
fibration over a singular affine manifold $B$ whose fibers are Lagrangian
tori, while the other is a Lefschetz fibration whose fibers are
symplectic submanifolds. See section \ref{sec:main-construction} for
the definitions.

To state the main result, let $A_n = H^0(\CP^2,\cO_{\CP^2}(n))$, and
choose a basis $x,y,z$ of $A_1$. Thus $A = \bigoplus_{n=0}^\infty A_n
\cong \C[x,y,z]$ is the homogeneous coordinate ring of $\CP^2$. The Floer cohomology of two Lagrangian submanifolds $L,L'$ is a $\Z$-graded $\C$-vector space denoted $HF^*(L,L')$.
\begin{theorem}
  \label{thm:triangle-products}
  For each $d \in \Z$ and $n \geq 0$, there is an isomorphism
  \begin{equation}
    \psi_{d,n} : HF^0(L(d),L(d+n)) \to A_n
  \end{equation}
  carrying the basis of intersection points $L(d) \cap L(d+n)$ to the
  basis of $A_n$ consisting of the polynomials of the form
  \begin{equation}
    \{x^a(xz-y^2)^iy^{n-a-2i}\} \cup
    \{z^a(xz-y^2)^iy^{n-a-2i}\}
  \end{equation}
  (where we require $a \geq 0, i \geq 0, n-a-2i \geq 0$).
  The system of isomorphisms $\psi_{d,n}$ intertwines the Floer triangle product
  \begin{equation}
    \mu^2: HF^0(L(d+n),L(d+n+m)) \otimes HF^0(L(d),L(d+n)) \to HF^0(L(d),L(d+n+m))
  \end{equation}
  and the product of polynomials $A_m \otimes A_n \to A_{n+m}$.
\end{theorem}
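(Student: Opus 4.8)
The plan is to prove Theorem~\ref{thm:triangle-products} by the degeneration technique outlined in the introduction, reducing the Floer products to counts of pseudoholomorphic sections of a Lefschetz fibration and then identifying those counts with structure constants of the homogeneous coordinate ring. First I would recall from Section~\ref{sec:main-construction} that the symplectic model $X(B)$ carries a Lefschetz fibration over an annulus, that each Lagrangian $L(d)$ fibers over a properly embedded path in that annulus, and that the boundary conditions sit in flat sections. From this it follows that $CF^*(L(d),L(d+n))$ is concentrated in degree zero --- the paths for $L(d)$ and $L(d+n)$ can be taken to intersect only in the fiber direction, and the grading computation made there shows every intersection point has Maslov index $0$ --- so the Floer differential vanishes and $HF^0(L(d),L(d+n)) = CF^0(L(d),L(d+n))$ has a basis indexed by $L(d)\cap L(d+n)$. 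The \emph{definition} of the map $\psi_{d,n}$ is then forced: I would set up the bijection, already visible in the construction, between these intersection points and the stated monomial basis of $A_n$, using that intersection points project to fractional integral points of the singular affine base $B$ and that the two families of monomials $\{x^a(xz-y^2)^i y^{n-a-2i}\}$ and $\{z^a(xz-y^2)^i y^{n-a-2i}\}$ correspond to the two ``sheets'' of fractional integral points created by the focus-focus singularity. Verifying that this really is a basis of $A_n$ --- i.e.\ that these polynomials span and that the evident count matches $\dim A_n = \binom{n+2}{2}$ --- is an elementary but necessary lemma, handled by monomial bookkeeping (the overlap $x^a z^b (xz-y^2)^i$ relations account for exactly the redundancy).

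The substantive content is the compatibility with $\mu^2$. Here I would follow Seidel's framework for the TQFT of pseudoholomorphic sections of a Lefschetz fibration: deform the disk with three boundary marked points (carrying the Lagrangian labels $L(d), L(d+n), L(d+n+m)$) so that it maps to the base annulus in a standard way, and interpret $\mu^2(c_2 \otimes c_1)$ as a weighted count of sections of the pulled-back Lefschetz fibration with the three specified asymptotics. The key structural step, as announced in Section~\ref{sec:degeneration}, is a degeneration/gluing argument: stretch the base so that the relevant count factors as a convolution of elementary pieces --- contributions of the critical point(s) of the Lefschetz fibration (vanishing cycles) and contributions of ``trivial'' parallel-transport strips over the smooth part. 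Each elementary piece I expect to compute essentially by hand: the parallel-transport pieces contribute combinatorially (tracking how fractional integral points add under the affine structure), and the critical-point piece contributes the single nontrivial relation, which is precisely the conic relation $xz = y^2$ in $A$. Assembling these, the structure constant for $\mu^2$ applied to two basis monomials $P \in A_n$, $Q \in A_m$ should come out to the coefficient of the target basis monomials in the product $PQ$, after re-expanding $PQ$ in the chosen basis using $xz - y^2$ --- i.e.\ exactly the coordinate-ring product.

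I would organize the argument so that the degeneration statement (the section count equals the convolution of the pieces) is isolated as the main proposition, with the identification of each piece as a sequence of lemmas, and then Theorem~\ref{thm:triangle-products} follows by bilinearity once the structure constants agree on basis elements. The main obstacle, and the step deserving the most care, is the degeneration/gluing argument itself: one must ensure transversality and compactness for the moduli of sections (no bubbling off of fiber components, no escape into the end of the fibration near the boundary conditions), and one must verify that the gluing is a bijection on the nose --- that every contributing section of the original fibration arises from a unique broken configuration and conversely that every broken configuration glues, with matching signs and area/holonomy weights. Controlling the area weights through the degeneration (so that the $e^{-\Lambda}$ factors land on the conic relation exactly as in $W$) and pinning down the orientation signs are where the bookkeeping is heaviest; everything else is either formal (grading, vanishing of the differential) or elementary combinatorics of integral points on $B$.
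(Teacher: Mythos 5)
Your proposal follows essentially the same route as the paper: reinterpret the triangles as pseudoholomorphic sections of the Lefschetz fibration over a triangle in the base, degenerate the fibration so the critical values bubble off into one-critical-value pieces leaving a trivial fibration over a polygon, compute each piece (with the band-attachment/no-escape and sign issues you flag handled exactly as you anticipate), and match the resulting binomial structure constants with the expansion of $(xz)^k$ via $xz = (xz-y^2) + y^2$ in the monomial basis $\{x^a p^i y^{n-a-2i}\}\cup\{z^a p^i y^{n-a-2i}\}$. The only slight difference of emphasis is that in the paper each critical-value piece contributes exactly one (horizontal) section, and the binomial multiplicities arise from counting homotopy classes of polygons in the fiber over the trivial piece, rather than from the critical-point pieces themselves.
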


Let us remark on the formulation of the theorem. From the construction
of the Lagrangians $L(d)$ it is easy to count the number of
intersection points and show that the map $\psi_{d,n}$ exists as an
isomorphism of vector spaces. Such a map exists for any choice of
basis of $A_n$. The last statement equating Floer triangle products
and the products of polynomials is the nontrivial bit that ties the
choices together and shows that we have chosen the right basis for
$A_n$. 

This basis we obtain is related to the choice of divisor $D = C\cup L$, as it consists of monomials in the defining section $p =xz-y^2$ of $C$, the defining section $y$ of $L$, and the homogeneous variables $x$ or $z$ (but not both $x$ and $z$ in the same monomial). Another point of view has to do with the fact that the ring of functions on $\CP^2\setminus D$ is a cluster algebra \cite{fomin-zelevinsky-I}. There are two triples of homogeneous forms on $\CP^2$, $(x,p,y)$ and $(z,p,y)$, which are related by the so-called exchange relation $xz = y^2 + p$, and our basis consists of sections that are monomials in either triple. 

For the proof of this theorem, the majority of our technical efforts
are aimed at computing the Floer triangle product. This occupies
section \ref{sec:degeneration}. At this point in the argument, the
picture of $X(B)$ as a Lefschetz fibration is the focus. The
holomorphic triangles we need to find can be represented as sections
of this Lefschetz fibration. The problem of counting sections of
Lefschetz fibrations has a TQFT-type structure, developed by
Seidel. This structure, where one breaks a given problem into simpler
ones by degenerating the base of the Lefschetz fibration, provides the
basis of our technique.

There are several variations on Theorem \ref{thm:triangle-products} that
are accessible using the same holomorphic curve analysis. We consider
the complement of (some components of) the anticanonical divisor in
$\CP^2$, and on the Landau--Ginzburg side, we must change the
superpotential $W$ and consider a certain form of wrapped Floer
cohomology. Our techniques allow us to treat three cases:
\begin{enumerate}
\item the mirror of $(\CP^2 \setminus L, C \setminus (C\cap L))$,
\item the mirror of $(\CP^2 \setminus C, L \setminus (L\cap C))$, and 
\item the mirror of $(\CP^2 \setminus (C\cup L), \emptyset)$.
\end{enumerate}
In each case the mirror space is the same manifold $X^\vee$, and the
Lagrangians mirror to line bundles are the same $L(d)$, but in each
case there is a different prescription for wrapping the Lagrangian
submanifolds. The first two cases require ``partially wrapped'' Floer
cohomology, while the third uses the more standard ``fully wrapped''
Floer cohomology. In each case we denote wrapped Floer cohomology by
$HW^*(L_1,L_2)$.  Let $U = \CP^2 \setminus L$, $\CP^2 \setminus C$, or
$\CP^2 \setminus (C\cup L)$, and let $A_n(U) = H^0(U,\cO_U(n))$. The
space $A_n(U)$ consists of rational functions in the variables
$x,y,z$ that are regular on $U$ and have degree $n$. The wrapped version of Theorem
\ref{thm:triangle-products} is as follows (see
\S\ref{sec:complements})
\begin{theorem}
  \label{thm:wrapped-products}
  
  For $d \in \Z$ and $n \geq 0$, there is an isomorphism
  \begin{equation}
    \psi_{d,n} : HW^0(L(d),L(d+n)) \to A_n(U)
  \end{equation}
  carrying a certain distinguished basis of $HW^0(L(d),L(d+n))$ to the
  basis of $A_n(U)$ consisting of rational functions of the form 
  \begin{equation}
    \{x^a(xz-y^2)^iy^{n-a-2i}\} \cup
    \{z^a(xz-y^2)^iy^{n-a-2i}\}
  \end{equation}
  where the exponents $a$ and $i$ are restricted to those values which
  actually give elements of $A_n(U)$. The system of isomorphisms
  $\psi_{d,n}$ intertwines the Floer triangle product on wrapped Floer
  cohomology and the product of rational functions $A_m(U) \otimes A_n(U) \to
  A_{n+m}(U)$.
\end{theorem}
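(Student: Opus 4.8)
The plan is to deduce this from Theorem~\ref{thm:triangle-products} by realizing each wrapped Floer cohomology group as a direct limit of ordinary Floer cohomology groups computed in exactly the Lefschetz-fibration model of Sections~\ref{sec:main-construction}--\ref{sec:degeneration}. The first task is to pin down the wrapping prescription in each case. The superpotential $W = u + e^{-\Lambda}v^2/(uv-1)$ has two terms, one attached to the line $L$ and one to the conic $C$; correspondingly the admissibility hypersurface on which $\partial L(d)$ is required to lie has two components, and in the Lefschetz fibration $X(B) \to \mathcal{A}$ over the annulus these sit over the two ends of $\mathcal{A}$. Deleting a component $D_i$ of $D = C \cup L$ from $\CP^2$ should translate, on the mirror, into forgetting the boundary condition associated to $D_i$: the Lagrangian $L(d)$ is allowed to run off to the corresponding end, which is thereby punctured, and wrapped Floer cohomology is defined by wrapping around that puncture. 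Thus for $\CP^2 \setminus L$ (resp.\ $\CP^2 \setminus C$) one punctures the end of $\mathcal{A}$ associated to $L$ (resp.\ $C$), and for $\CP^2 \setminus (C\cup L)$ both ends, so that $\mathcal{A}$ becomes a cylinder and one uses fully wrapped theory. With the geometry set up this way I would verify the relevant maximum principle --- the fibers, the base, and the boundary paths are arranged so that pseudoholomorphic sections cannot escape to the punctured ends --- giving $HW^0(L(d),L(d+n)) = \varinjlim_w HF^0(L(d), L(d+n)^{(w)})$, where $L(d+n)^{(w)}$ denotes $L(d+n)$ wrapped $w$ times and the structure maps are continuation maps.

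The next point is that each wrapped Lagrangian $L(d+n)^{(w)}$ is still assembled from the same Lagrangian in the fiber by parallel transport, now over a path in the punctured base that winds $w$ times around a puncture. Hence its intersection with $L(d)$ is controlled by the same local combinatorics as in Section~\ref{sec:main-construction}: the intersection points $L(d) \cap L(d+n)^{(w)}$ are parameterized by the $(1/n)$-integral points of a region $B_w$, with $B_0 \subset B_1 \subset \cdots$ exhausting a noncompact region $B_\infty$ --- a half-infinite strip in the cases $\CP^2 \setminus L$, $\CP^2 \setminus C$, and all of $\R^2$ (intersected with the appropriate lattice coset) for $\CP^2 \setminus (C\cup L)$. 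In the colimit one therefore obtains a basis of $HW^0(L(d),L(d+n))$ indexed by all fractional integral points of $B_\infty$, and this is the ``distinguished basis'' of the statement. Matching it with the algebra side is then combinatorial bookkeeping: $A_n(U)$ is the degree-$n$ piece of the localization of $A = \C[x,y,z]$ at the equation(s) of the removed component(s), so it carries the monomial basis $\{x^a(xz-y^2)^iy^{n-a-2i}\} \cup \{z^a(xz-y^2)^iy^{n-a-2i}\}$ with $a, i \geq 0$ but with the exponent $n-a-2i$ (resp.\ $i$, resp.\ both) now allowed to be negative, and these additional monomials are exactly the ones corresponding to the additional fractional integral points produced by wrapping. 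This defines $\psi_{d,n}$ as the colimit of the isomorphisms $\psi_{d,n}^{(w)}$ furnished by Theorem~\ref{thm:triangle-products}.

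For the triangle product, the key observation is that the $L(d+n)^{(w)}$ are still built from flat sections of $X(B) \to \mathcal{A}$, so the Floer products $\mu^2$ on the wrapped complexes are again signed counts of pseudoholomorphic sections of the Lefschetz fibration with prescribed boundary on these sections. Seidel's TQFT for counting such sections, the degeneration argument, and the explicit local contributions computed in Section~\ref{sec:degeneration} then apply essentially verbatim, since wrapping alters only the boundary paths and not the fiberwise geometry where the curve counts take place; one checks in addition that the continuation maps in the colimit are maps of bimodules over these products, so that $\psi_{d,n} = \varinjlim \psi_{d,n}^{(w)}$ intertwines $\mu^2$ with multiplication of rational functions in $A_\bullet(U)$. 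I expect the real difficulty to be foundational rather than computational: setting up (partially) wrapped Floer cohomology rigorously in this Lefschetz-fibration-over-an-annulus model, establishing the $C^0$ estimates that confine holomorphic curves and make the exhaustion $B_w \nearrow B_\infty$ both cofinal and degreewise stable, and verifying on the nose that wrapping around a given puncture corresponds to inverting the equation of the correct component of $D$ --- once that dictionary is secure, everything else follows the template of Theorem~\ref{thm:triangle-products}.
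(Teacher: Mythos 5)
Your overall strategy -- realize $HW^0$ as a direct limit of the ordinary Floer groups of Theorem \ref{thm:triangle-products} and match the limit with a localization of the homogeneous coordinate ring -- is the same as the paper's. But there is a genuine error in the geometric setup of the wrapping. The two admissibility hypersurfaces attached to the two terms of $W$ do \emph{not} sit over the two ends of the base annulus $X(I)$: they are the leaves $\Sigma_0,\Sigma_1$ of the foliations of the two \emph{horizontal} boundary faces of $X(B)$, i.e.\ they lie at the two ends of the cylinder fibers and project onto all of $X(I)$; the ends of the base annulus correspond instead to the corners of $B$ (mirror to the points of $C\cap L$), not to the components of $D$. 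Consequently ``puncture the end of the annulus associated to $L$ (resp.\ $C$) and wrap around it'' is not the mirror of deleting that component. In the paper, deleting a component releases the boundary condition on the corresponding horizontal face: the fibers are completed at the bottom end (case $L$), the top end (case $C$), or both (case $D$), while the base annulus is completed to a cylinder in \emph{all three} cases, and the wrapping Hamiltonian is a sum $H_b+H_f$ of base and fiber terms. Your prescription gives the wrong groups, and your own description shows it: you predict generators indexed by a half-infinite strip for $\CP^2\setminus L$ and $\CP^2\setminus C$, whereas the correct index set is the half-plane $\hat{B}_L(\frac{1}{n}\Z)$ resp.\ $\hat{B}_C(\frac{1}{n}\Z)$ -- e.g.\ $H^0(\CP^2\setminus L,\cO)\cong\C[x/y,z/y]$ already needs a basis with both indices unbounded ($a\in\Z$ arbitrary, $i\ge 0$ arbitrary), which no strip can supply. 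Only your fully wrapped case matches.

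The second missing ingredient is the mechanism identifying the continuation maps, which you defer as a ``foundational check'' but which is the substance of the argument. The paper tunes the slopes of $H_b$ and $H_f$ so that the time-$1$ flow satisfies $\phi_{rH}(L(d))=L(d-r)$, $L(d-2r)$, $L(d-3r)$ in the three cases, so every wrapped complex \emph{is} an already-computed complex $CF^0(L(d_1-r),L(d_2))$ with basis $B(\frac{1}{d_2-d_1+r}\Z)$; no new holomorphic-curve counts are needed (in particular one does not rerun the degeneration/TQFT argument in a wrapped setting). Then, using the minimal-action elements $e_r\in HF^0(L(d_1),L(d_1);rH)$ and compatibility of $\mu^2$ with continuation, the continuation maps are identified with multiplication by $e_r$, which under Theorem \ref{thm:triangle-products} is multiplication by $y^r$, $p^{r/2}$, $(yp)^{r/3}$ respectively; Seidel's algebraic statement that $\lim_{r\to\infty}H^*(X,\cF\otimes\cO_X(Y)^{\otimes r})$, formed with respect to multiplication by the defining section of $Y$, computes $H^*(U,\cF|U)$ then closes the loop with $A_n(U)$. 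Without the correct completion (fiber-direction wrapping plus base completion) this identification cannot even be set up, so the proposal as written does not yield the theorem in the two partially wrapped cases.
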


While our holomorphic triangle counts use the structure of a Lefschetz
fibration on $X(B)$, we can also study the geometry of the $X(B)$ as a
Lagrangian torus fibration over the base affine manifold $B$. One of
the expectations of SYZ philosophy is that much of the geometry of the
space $X(B)$ can be seen tropically, in terms of the geometry of the
affine base $B$.

In fact, this is how we arrive at the construction of the manifold
$X(B)$. In section \ref{sec:fiber-superpotential}, we construct a
tropicalization of the fiber of the superpotential $W^{-1}(c)$ over a
large positive real value, with respect to a torus fibration on the
manifold $X^\vee$ with a single singularity. This gives a tropical
curve in the base of our torus fibration. It bounds a compact region
$B$ in the base, which agrees with the affine base of the torus
fibration on $\CP^2\setminus D$. The purpose of this section is to
motivate the use of the singular affine manifold $B$ as the basis for
our main symplectic constructions in section
\ref{sec:main-construction}.

We also are able to verify a conjectural tropical description of Floer
cohomology in the cases we study. This
description comes from a recent proposal of Abouzaid, Gross and
Siebert for a tropical Fukaya category associated to a singular affine
manifold. The Lagrangian submanifolds are taken to be sections of the
torus fibration, but all we see of them tropically are their
intersection points, which map to fractional integral points of the
affine manifold $B$. The Floer triangle product corresponds to a
``tropical triangle product'' counting certain tropical curves in $B$
joining the fractional integral points. Since we do not say anything
about degenerating holomorphic polygons to tropical ones, we merely
verify the equivalence by matching bases and computing on both
sides. See section \ref{sec:trop-fuk} for the precise definitions of
the terms.
\begin{theorem}
  \label{thm:tropical-trangles}
  There is a bijection between the basis of intersection points
  $L(d)\cap L(d+n)$ for $HF^0(L(d),L(d+n))$ and the set of
  $(\frac{1}{n})$-integral points of the affine manifold $B$. Under
  this bijection, the counts of pseudo-holomorphic triangles
  contributing to the Floer triangle product are equal to counts of
  tropical curves in $B$ joining $(\frac{1}{n})$-integral points.
\end{theorem}

The techniques developed in this article apply to a larger but still
restricted class of $2$--dimensional singular affine manifolds, where
the main restriction is that all singularities have \emph{parallel
  monodromy-invariant directions}. The generalization to these types
of manifolds is discussed in \ref{sec:other-manifolds}.

\subsection{Context and related work}
\label{sec:context}

% Mirror symmetry is the name given to the phenomenon of deep,
% non-trivial, and sometimes even spectacular equivalences between the
% geometries of certain pairs of spaces. Such a pair $(X,X^\vee)$ is
% called a mirror pair, and we say that $X^\vee$ is the mirror to $X$
% and vice--versa. A byword for mirror symmetry is Since this discovery, the study of mirror symmetry has
% expanded in many directions, both in physics and mathematics, allowing
% for generalization of the class of spaces considered, providing new
% algebraic ideas for how the equivalence ought to be conceptualized,
% and giving geometric insight into how a given space determines its
% mirror partner. 

% In this introduction we provide some orientation and
% context that we hope will enable the reader to situate our work within
% this constellation of ideas.

\subsubsection{Manifolds with effective anticanonical divisor and Landau--Ginzburg models}
\label{sec:intro-fano}

The class of spaces originally considered in mirror symmetry were
Calabi--Yau manifolds. Roughly speaking, the mirror to a compact
Calabi--Yau manifold $X$ is another compact Calabi--Yau manifold
$X^\vee$ of the same dimension. For example, it is in this context
that we have the equivalence, discovered by Candelas--de la
Ossa--Green--Parkes \cite{candelas-etal} and proven mathematically by
Givental \cite{givental-equivariant-GW} and Lian--Liu--Yau \cite{lly},
between the Gromov--Witten theory of the quintic threefold $V_5
\subset \PP^4$ and the theory of period integrals on a family of
Calabi--Yau threefolds known as ``mirror quintics.'' However, mirror
symmetry can be considered for other classes of manifolds such as
manifolds of general type ($\Omega^n_X$ ample), for which a proposal
has recently been made by Kapustin--Katzarkov--Orlov--Yotov
\cite{hms-general-type}, and (our present concern) manifolds with an
effective anticanonical divisor. In both of these latter cases the
mirror is not a manifold of the same class.

Let $X$ be an $n$--dimensional K\"{a}hler manifold with an effective
anticanonical divisor $D$. We regard $D$ as part of the data, and
write $(X,D)$ for the pair. We also choose a meromorphic $(n,0)$--form
$\Omega$ with no zeros and polar locus equal to $D$. According to
Hori--Vafa \cite{hori-vafa} and Givental, the mirror to $(X,D)$ is a
\emph{Landau--Ginzburg model} $(X^\vee, W)$, consisting of a
K\"{a}hler manifold $X^\vee$, together with a holomorphic function $W:
X^\vee \to \C$, called the superpotential.

The mirrors of toric Fano manifolds were derived by Hori--Vafa
\cite[\S 5.3]{hori-vafa} based on physical considerations. Let $X$ be
an $n$--dimensional toric Fano manifold, and let $D$ be the complement
of the open torus orbit. The mirror is then $X^\vee = (\C^*)^n$ with a superpotential $W$ given by a sum of monomials corresponding to the one-dimensional cones in the fan for $X$. 
 Choose a polarization $\cO_X(1)$ with corresponding moment
polytope $P$, a lattice polytope in $\R^n$. For each facet $F$ of $P$,
let $\nu(F)$ to be the primitive integer inward-pointing normal
vector, and let $\alpha(F)$ be such that $\langle \nu(F),x\rangle +
\alpha(F) = 0$ is the equation for the hyperplane containing $F$. Then
mirror Landau-Ginzburg model is given by
\begin{equation}
  \label{eq:hori-vafa}
  X^\vee = (\C^*)^n, \quad W = \sum_{F \text{ facet}} e^{-2\pi\alpha(F)}z^{\nu(F)},
\end{equation}
where $z^{\nu(F)}$ is a monomial in multi-index notation.

 In the case
where $X$ is toric but not necessarily Fano, a similar formula for the
mirror superpotential is expected to hold, which differs by the
addition of ``higher order'' terms \cite[Theorems 4.5, 4.6]{fooo-toric-I}.

The Hori--Vafa formula contains the case of the projective plane
$\CP^2$ with the toric boundary as anticanonical divisor. If $x,y,z$
denote homogeneous coordinates, then $D_{\text{toric}}$ can be taken to be
$\{xyz=0\}$, the union of the coordinate lines. We then have
\begin{equation}
  \label{eq:toric-lg-model}
    X^\vee_{\text{toric}} = (\C^*)^2, \quad
    W_{\text{toric}} = z_1+z_2+\frac{e^{-\Lambda}}{z_1z_2}
\end{equation}
where $\Lambda$ is a parameter that measures the cohomology class of
the K\"{a}hler form $\omega$ on $\CP^2$.

The example with which we are primarily concerned in this paper is
also $\CP^2$, but with respect to a different, non-toric boundary
divisor. Consider the meromorphic $(2,0)$--form $\Omega = dx\wedge dz
/ (xz-1)$, whose polar locus is the binodal cubic curve $D = \{xyz-y^3
= 0\}$. Thus $D = L \cup C$ is the union of a conic $C = \{xz-y^2 =
0\}$ and a line $\{y=0\}$. The construction of the mirror to this pair
$(\CP^2,D)$ is due to Auroux \cite{auroux07}, and we have
\begin{equation}
  \label{eq:main-lg-model}
  \begin{split}
    X^\vee = \{(u,v) \in \C^2 \mid uv \neq 1\}, \quad
    W = u + \frac{e^{-\Lambda}v^2}{uv-1}
  \end{split}
\end{equation}

A direct computation shows that both superpotentials
$W_{\text{toric}}$ and $W$ have the same critical values
% \eqref{eq:main-lg-model} has three critical points
% \begin{equation}
%   \label{eq:W-crit-pts}
%   \Crit(W) = \{(v = e^{\Lambda/3}e^{2\pi i (n/3)}, w = 1) \mid n = 0, 1, 2\},
% \end{equation}
% and corresponding critical values
\begin{equation}
  \label{eq:W-crit-vals}
  \{3e^{-\Lambda/3}e^{-2\pi i (n/3)} \mid n = 0, 1, 2\}.
\end{equation}
It is also easy to see that any regular fiber of
$W_{\text{toric}}^{-1}(c) \subset X^\vee_{\text{toric}}$ is a
three-times-punctured elliptic curve, while any regular fiber
$W^{-1}(c) \subset X^\vee$ is a twice-punctured elliptic curve. This
is an example of the general expectation that partially smoothing the
anticanonical divisor corresponds to partially compactifying the total
space of the Landau--Ginzburg model.

\subsubsection{Torus fibrations and affine manifolds}
\label{sec:intro-SYZ}

One justification that
\eqref{eq:toric-lg-model}--\eqref{eq:main-lg-model} are appropriate
mirrors is found in the Strominger--Yau--Zaslow proposal, which
expresses mirror symmetry geometrically in terms of dual torus
fibrations, a relationship also known as T--duality. Ideally, one
would expect that $X \setminus D$ and $X^\vee$ are dual special
Lagrangian torus fibrations over the same base $B$. When this holds
true, the mirror $X^\vee$ can be constructed as the \emph{complexified
  moduli space} of special Lagrangian fibers of $X\setminus D$
\cite{mclean-calibrated}, \cite{hitchin-slag},\cite[\S
2]{auroux07}. The superpotential $W$ can be expressed as a function
on this moduli space counting Maslov index two disks with boundary on
the Lagrangian fibers of $X\setminus D$, weighted by symplectic area
and the holonomy of a local system \cite{auroux07}.

% An important insight into the geometric nature of mirror symmetry is
% the proposal by Strominger--Yau--Zaslow (SYZ) \cite{syz} to view two mirror
% manifolds $X$ and $X^\vee$ as dual special Lagrangian torus fibrations
% over the same base $B$. This relationship is called T--duality.

For our purposes, a Lagrangian submanifold $L$ of a K\"{a}hler manifold
$X$ with meromorphic $(n,0)$--form $\Omega$ is called \emph{special}
of phase $\phi$ if $\im(e^{-i\phi}\Omega)|L = 0$. Obviously this only
makes sense in the complement of the polar locus $D$. The
infinitesimal deformations of a special Lagrangian submanifold are
given by $H^1(L;\R)$, and are unobstructed \cite{mclean-calibrated}. If $L \cong T^n$ is a
torus, $H^1(L;\R)$ is an $n$--dimensional space, and in good cases the
special Lagrangian deformations of $L$ are all disjoint, and form the
fibers of a fibration $\pi: X\setminus D\to B$, where $B$ is the global parameter
space for the deformations of $L$.

Assuming this, define the \emph{complexified moduli space} of
deformations of $L$ to be the space $\cM_L$ consisting of pairs
$(L_b,\cE_b)$, where $L_b = \pi^{-1}(b)$ is a special Lagrangian
deformation of $L$, and $\cE_b$ is a $U(1)$ local system on
$L_b$. There is an obvious projection $\pi^\vee : \cM_L \to B$ given
by forgetting the local system. The fiber $(\pi^\vee)^{-1}(b)$ is the
space of $U(1)$ local systems on the given torus $L_b$, which is
precisely the dual torus $L_b^\vee$. In this sense, the fibrations
$\pi$ and $\pi^\vee$ are dual torus fibrations, and the SYZ proposal
can be taken to mean that the mirror $X^\vee$ is precisely this
complexified moduli space: $X^\vee = \cM_L$. The picture is completed
by showing that $\cM_L$ naturally admits a complex structure $J^\vee$,
a K\"{a}hler form $\omega^\vee$, and a holomorphic $(n,0)$--form
$\Omega^\vee$. One finds that $\Omega^\vee$ is constructed from
$\omega$, while $\omega^\vee$ is constructed from $\Omega$, thus
expressing the interchange of symplectic and complex structures
between the two sides of the mirror pair. For details we refer the
reader to \cite{hitchin-slag},\cite[\S 2]{auroux07}.

However, this picture of mirror symmetry cannot be correct as stated,
as it quickly hits upon a major stumbling block: the presence of
singular fibers in the original fibration $\pi: X\setminus D \to
B$. These singularities make it impossible to obtain the mirror
manifold by a fiberwise dualization, and generate ``quantum
corrections'' that complicate the T-duality prescription. Attempts to
overcome this difficulty led to the work of Kontsevich and Soibelman
\cite{ks-torus-fibrations,ks-nonarchimedean} and Gross and Siebert
\cite{gs-affine-log-mirror,gs-log-degen-I,gs-log-degen-II} that
implements the SYZ program in an algebro-geometric context. In the
case of K3 surfaces, the work of Fukaya--Oh--Ohta--Ono
\cite{fooo-ch10} relates this problem to the wall--crossing of
holomorphic disk moduli spaces. It is also this difficulty which
motivates us to consider the case of $\CP^2$ relative to a binodal
cubic curve, where the simplest type of singularity arises.

In the case of $X$ with effective anticanonical divisor $D$, we can
see these corrections in action if we include the superpotential $W$
into the SYZ picture. As $W$ is to be a function on $X^\vee$, which is
naively $\cM_L$, $W$ assigns a complex number to each pair
$(L_b,\cE_b)$. This number is a count of holomorphic disks with
boundary on $L_b$, of Maslov index $2$, weighted by symplectic area
and the holonomy of $\cE_b$:
\begin{equation}
  \label{eq:disk-superpotential}
  W(L_b,\cE_b) = \sum_{\beta \in \pi_2(X,L_b), \mu(\beta) =
    2}n_\beta(L_b) \exp(-\int_\beta \omega) \hol(\cE_b,\partial \beta)
\end{equation}
where $n_\beta(L_b)$ is the count of holomorphic disks in the class
$\beta$ passing through a general point of $L_b$.

In the toric case, $X\setminus D \cong (\C^*)^n$, and we the special
Lagrangian torus fibration is simply the map $\Log: X\setminus D \to
\R^n$, $\Log(z_1,\dots,z_n) = (\log|z_1|,\dots,\log|z_n|)$. This
fibration has no singularities, and the above prescriptions work as
stated. In the toric Fano case, we recover the Hori--Vafa
superpotential \eqref{eq:hori-vafa}.

In the case of $\CP^2$ with the non-toric divisor $D$, the torus
fibration has one singular fiber, which is a pinched torus (a
focus-focus singularity). The above prescription breaks down: one
finds that the superpotential counting disks is not a continuous
function on the moduli space of special Lagrangians. This leads one to
redefine $X^\vee$ by breaking it into pieces and re-gluing so as to
make $W$ continuous, leading to \eqref{eq:main-lg-model} \cite{auroux07}. We find that $X^\vee$ also admits a
special Lagrangian torus fibration with one singular fiber.

% \subsection{Affine manifolds}
% \label{sec:intro-affine}

In general, the structure of a special Lagrangian torus fibration
$\pi: X\to B$ yields the structure of an \emph{tropical affine
  manifold} on the base $B$. This is a manifold with a distinguished
collection of affine coordinate charts, such that the transition maps
between affine coordinate charts lie in $\Aff_\Z(\R^n) = \GL(n,\Z)
\rtimes \R^n$, the group of affine linear transformations with
integral linear part. When singular fibers are present in the torus
fibration, we simply regard the affine structure as being undefined at
the singular fibers and call the resulting structure on the base a
\emph{singular tropical affine manifold}. In fact, the base $B$
inherits two affine structures, one from the symplectic form $\omega$,
and one from the holomorphic $(n,0)$--form $\Omega$. The former is
called the \emph{symplectic affine structure}, and the latter is
called the \emph{complex affine structure}.

Let us recall briefly how the local affine coordinates are
defined. For the symplectic affine structure, we choose a collection
of loops $\gamma_1,\dots,\gamma_n$ that form a basis of
$H_1(L_b;\Z)$. Let $X \in T_bB$ be a tangent vector to the base, and
take $\tilde{X}$ be any vector field along $L_b$ which lifts it. Then
\begin{equation}
  \alpha_i(X) = \int_0^{2\pi} \omega_{\gamma_i(t)}(\dot{\gamma}_i(t),\tilde{X}(\gamma_i(t)))\,dt
\end{equation}
defines a $1$-form on $B$: since $L_b$ is Lagrangian, the integrand is
independent of the lift $\tilde{X}$, and $\alpha_i$ only depends on
the class of $\gamma_i$ in homology. In fact, the collection
$(\alpha_i)_{i=1}^n$ forms a basis of $T^*_bB$, and there is a
coordinate system $(y_i)_{i=1}^n$ such that $dy_i = \alpha_i$; these
are the affine coordinates. This definition actually shows us that
there is a canonical isomorphism $T^*_bB \cong H_1(L_b;\R)$. This isomorphism induces an
integral structure on $T^*_bB$: $(T^*_bB)_\Z \cong H_1(L_b;\Z)$, which is
preserved by all transition functions between coordinate charts. Thus,
when an affine manifold arises as the base of a torus fibration in
this way, the structural group is reduced to $\Aff_\Z(\R^n) =
\GL(n,\Z) \rtimes \R^n$, the group of affine linear transformations
with integral linear part.

The complex affine structure follows exactly the same pattern, only
that we take $\Gamma_1,\dots,\Gamma_n$ to be $(n-1)$--cycles forming a
basis of $H_{n-1}(L_b;\Z)$, and in place of $\omega$ we use
$\im(e^{-i\phi}\Omega)$. Now we have an isomorphism $T^*_bB \cong
H_{n-1}(L_b;\R)$, or equivalently $T_bB \cong H_1(L_b;\R)$, which
induces the integral structure.

The affine manifolds we consider in this article satisfy a stronger
integrality condition, which requires the translational part of each
transition function to be integral as well. We use the term
\emph{integral affine manifold} to denote an affine manifold whose
structural group has been reduced to $\Aff(\Z^n) = \GL(n,\Z)\rtimes
\Z^n$. Such affine manifolds are ``defined over $\Z$'' and have an
intrinsically defined lattice of integral points.

For an integral affine manifold, it makes sense to speak of tropical
subvarieties. These are certain piecewise linear complexes contained
in $B$, which in some way correspond to holomorphic or Lagrangian
submanifolds of the total space of the torus fibration. Tropical
geometry has played a role in much work on mirror symmetry,
particularly in the program of Gross and Siebert, and closer to this
paper, in Abouzaid's work on mirror symmetry for toric varieties
\cite{abouzaid06,abouzaid09}. See \cite{tropical-AG} for a general
introduction to tropical geometry. Though most of the methods in this
paper are explicitly symplectic, tropical geometry appears in section
\ref{sec:fiber-superpotential}, where we compute the tropicalization
of the fiber of the superpotential, and in section \ref{sec:trop-fuk},
where a class of tropical curves corresponding to holomorphic polygons
is considered.

% \begin{remark} This notion of special Lagrangian is a weakening of the
%   original notion of special Lagrangian submanifold, which is a
%   submanifold calibrated by $\re(e^{-i\phi}\Omega)$, where $\Omega$ is
%   a parallel $(n,0)$--form on a manifold of $\SU(n)$ holonomy.
% \end{remark}

\subsubsection{Homological mirror symmetry}
\label{sec:intro-HMS}

The results on Floer cohomology that we prove fall under the heading
homological mirror symmetry (HMS) \cite{hms}, which holds that mirror
symmetry can be interpreted as an equivalence of categories associated to the complex or algebraic
geometry of $X$, and the symplectic geometry of $X^\vee$, and
vice--versa. The categories which are appropriate depend somewhat on
the situation, so let us focus on the case of the a manifold $X$ with
anticanonical divisor $D$, and its mirror Landau--Ginzburg model
$(X^\vee,W)$. 

Associated to $(X,D)$, we take the derived category of coherent
sheaves $D^b(\Coh X)$, while to $(X^\vee,W)$ we associate a Fukaya-type A$_\infty$-category
$\cF(X^\vee,W)$ whose objects are certain Lagrangian submanifolds of
$X^\vee$, morphism spaces are generated by intersection points, and the
A$_\infty$ product structures are defined by counting pseudo-holomorphic
polygons with boundary on a collection of Lagrangian submanifolds. Our
main reference for Floer cohomology and Fukaya categories is the book
of Seidel \cite{seidel-book}.

The superpotential $W$ enters the definition of $\cF(X^\vee,W)$ by
restricting the class of objects to what are termed \emph{admissible
  Lagrangian submanifolds}. In this paper, it will mean that we allow Lagrangian submanifolds that are not closed but which have boundary on a reducible hypersurface whose components are defined by setting one term of the superpotential equal to a constant. Historically, there have been several attempts to formulate the notion of admissibility. Originally, 
Kontsevich \cite{kontsevich-notes} and Hori--Iqbal--Vafa \cite{hori-iqbal-vafa} considered those Lagrangian
submanifolds $L$, not necessarily compact, which outside of a compact
subset are invariant with respect to the gradient flow of $\re(W)$. An
alternative formulation, due to Abouzaid \cite{abouzaid06,abouzaid09},
trades the non-compact end for a boundary on a fiber $\{W = c\}$ of
$W$, together with the condition that, near the boundary, the $L$ maps
by $W$ to a curve in $\C$. A further reformulation, which is more
directly related to the SYZ picture, replaces the fiber $\{W = c\}$
with the union of hypersurfaces $\bigcup_{\beta}\{z_\beta = c\}$, where $z_\beta$ is
the term in the superpotential \eqref{eq:disk-superpotential}
corresponding to the class $\beta \in \pi_2(X,\pi^{-1}(b))$, and
admissibility means that near $\{z_\beta = c\}$, $L$ maps by $z_\beta$
to a curve in $\C$. The admissibility condition we use in this paper is closest to this last formulation.

With these definitions, homological mirror symmetry amounts to an
equivalence of categories $D^\pi \cF(X^\vee,W) \to D^b(\Coh X)$, where
$D^\pi$ denotes the split-closed derived category of the
A$_\infty$--category. This piece of mirror symmetry has been addressed
many times
\cite{ako08,ako06,seidel01b,abouzaid06,abouzaid09,fltz-T-duality},
including results for the projective plane and its toric mirror.

However, in this article, we emphasize less the equivalences of
categories themselves, and focus more on geometric structures which
arise from a combination of the HMS equivalence with the SYZ picture.
When dual torus fibrations are present on the manifolds in a mirror
pair, one expects the correspondence between coherent sheaves and
Lagrangian submanifolds to be expressible in terms of a Fourier--Mukai
transform with respect to the torus fibration \cite{lyz}. In
particular, Lagrangian submanifolds $L \subset X^\vee$ that are
sections of the torus fibration correspond to line bundles on $X$, and
the Lagrangians $L(d)$ we consider are of this type.
 
In this context, our Theorem \ref{thm:triangle-products} can be
interpreted as yielding an embedding (at the cohomology level) of the
subcategory of $\cF(X(B),W)$ containing the Lagrangians $L(d)$ into
$D^b(\Coh \CP^2)$.

\subsubsection{Distinguished bases}
\label{sec:intro-bases}

The combination of SYZ and HMS also gives rise to the expectation
that, at least in favorable situations, the spaces of sections of
coherent sheaves on $X$ can be equipped with distinguished
bases. Suppose that $F: \Fuk(X^\vee) \to D^b(X)$ is a functor
implementing the HMS equivalence. Let $L_1, L_2 \in \Ob(\Fuk(X^\vee))$
be two objects of the Fukaya category supported by transversely
intersecting Lagrangian submanifolds. Then
\begin{equation}
  HF(L_1,L_2) \cong \RHom(F(L_1),F(L_2)).
\end{equation}
Suppose furthermore that the differential on the Floer cochain complex
$CF(L_1,L_2)$ vanishes, so that $HF(L_1,L_2) \cong CF(L_1,L_2)$. As
$CF(L_1,L_2)$ is defined to have a basis in bijection with the set of
intersection points $L_1\cap L_2$, one obtains a basis of
$\RHom(F(L_1),F(L_2))$ parametrized by the same set via the above
isomorphisms. If $\cF$ is some sheaf of interest, and by convenient
choice of $L_1$ and $L_2$ we can ensure $F(L_1) \cong \cO_X$ and
$F(L_2) \cong \cF$, then we will obtain a basis for $H^i(X,\cF)$.

When $E$ and $E^\vee$ are mirror dual elliptic curves, this phenomenon
is evident in the work of Polishchuk--Zaslow \cite{polishchuk-zaslow}
and especially M.~Gross \cite[Ch.~8]{d-branes-book}. Both $E$ and
$E^\vee$ may be written as special Lagrangian $S^1$-fibrations over
the same base $B \cong S^1$. The base has an integral affine structure
as $\R/\Z$. The Lagrangians $L(d) \subset E^\vee$ are sections of this
torus fibration with slope $d$, and their intersection points project
precisely to the fractional integral points of the base $B$.
\begin{equation}
  \label{eq:int-points}
  L(0) \cap L(d) \leftrightarrow B\left(\frac{1}{d}\Z\right) :=
  \frac{1}{d} \text{-integral points of } B
\end{equation}
% The notation $B((1/d)\Z)$ is in analogy with the functor-of-points
% notation.
Under HMS, we obtain $F(L(0)) = \cO_E$ and $F(L(d)) = \cO_E(d)$. the
basis of intersection points $L(0)\cap L(d)$ corresponds to a basis of
$\Gamma(E,\cO_E(d))$ consisting of theta functions.

Another illustration is the case of toric varieties and their mirror
Landau-Ginzburg models \cite{abouzaid06,abouzaid09}. In this case,
Abouzaid constructs a family of Lagrangian submanifolds $L(d)$ mirror
to the powers of the ample line bundle $\cO_X(d)$. These Lagrangian
submanifolds are topologically discs with boundary on a level set of
the superpotential, $W^{-1}(c)$ for some $c$. For $d>0$, the Floer
complex $CF^*(L(0),L(d))$ is concentrated in degree zero. Hence
\begin{equation}
  CF^0(L(0),L(d)) = HF^0(L(0),L(d)) = H^0(X,\cO_X(d)).
\end{equation}
The basis of intersection points $L(0)\cap L(d)$ corresponds to the
basis of characters of the algebraic torus $T = (\C^*)^n$ which appear
in the $T$-module $H^0(X,\cO_X(d))$. The same formula
\eqref{eq:int-points} is valid in the case of toric varieties, where
the base $B$ is the moment polytope $P$ of the toric variety
$X$. Abouzaid interprets $P$ as a subset of the base of the torus
fibration on $X^\vee = (\C^*)^n$ (the fibration given by the Clifford
tori), which moreover appears as a chamber bounded by a tropical
variety corresponding to a level set $W^{-1}(c)$ of the
superpotential. 

As is explained in section \ref{sec:fiber-superpotential}, these
features are also present in our case. The base $B$ is identified with
the base of the torus fibration on $\CP^2\setminus D$, with its
symplectic affine structure, and also identified as a subset of base
of the torus fibration on $X^\vee$ with its complex affine structure,
bounded by the tropicalization of a level set of the superpotential. We once again find a bijection between the intersection points of our Lagrangians $L(d)$ and the fractional integral points of the affine base.

Ongoing work of Gross--Hacking--Keel \cite{gross-hacking-keel} seeks
to extend these constructions to other manifolds, such as K3 surfaces,
using a purely algebraic and tropical framework. In this paper we are
concerned with extensions to cases that are tractable from the point
of view of symplectic geometry, although the tropical analog
of our results is described in section \ref{sec:trop-fuk}.

\subsection{Acknowledgments}
\label{sec:acknowledgements}
This paper is a version of my doctoral thesis. It represents research done
at MIT under the supervision of Denis Auroux, whose support and
generosity contributed greatly to the completion of this work. I thank
him, Paul Seidel, and Mohammed Abouzaid for suggestions that proved to be invaluable in its development. I thank Tom Mrowka for
his interest in this work and several helpful discussions. I thank the referees who provided comments and helpful suggestions that improved the paper in many ways.

This work was partially supported by NSF grants DMS-0600148 and
DMS-0652630. Part of it was done during visits to the University of
California at Berkeley and the Mathematical Sciences Research
Institute, whom I thank for their hospitality.

\section{The fiber of $W$ and its tropicalization}
\label{sec:fiber-superpotential}

\subsection{Torus fibrations on $\CP^2 \setminus D$ and its mirror}
\label{sec:torus-fibrations}

Let 
\begin{equation}
D = \{xyz-y^3 = 0\} \subset \CP^2  
\end{equation}
 be a binodal cubic
curve. We equip $\CP^2$ and $\CP^2 \setminus D$ with standard Fubini-Study symplectic forms. Both $\CP^2 \setminus D$ and its mirror $X^\vee = \{(u,v)\in
\C^2\mid uv \neq 1\}$ admit special Lagrangian torus fibrations. In
fact, these spaces are diffeomorphic, each being $\C^2$ minus a
conic. The torus fibrations are essentially the same on both sides,
but we are interested in the \emph{symplectic} affine structure
associated to the fibration on $\CP^2 \setminus D$ and the
\emph{complex} affine structure associated to $X^\vee$.

The construction is of the torus fibrations is taken from \cite[\S
5]{auroux07}. We have that
$\CP^2\setminus D$ is an affine algebraic variety with coordinates $x$
and $z$, where $xz\neq 1$. Hence we can define a map 
\begin{equation}
f: \CP^2\setminus D \to \C^* \quad f(x,z) = xz-1
\end{equation}
This map is a Lefschetz
fibration with critical point $(0,0)$ and critical value $-1$. The
fibers are affine conics, and the map is invariant under the $S^1$
action $e^{i\theta}(x,z) = (e^{i\theta}x,e^{-i\theta}z)$ that rotates
the fibers. Each fiber contains a distinguished $S^1$--orbit, namely
the vanishing cycle $\{|x| = |z|\}$. We can parametrize the other
$S^1$--orbits by the function $\delta(x,z)$ which denotes the signed
symplectic area between the vanishing cycle and the orbit through
$(x,z)$. The function $\delta$ is a moment map for the
$S^1$-action. Symplectic parallel transport in every direction
preserves the circle at level $\delta = \lambda$, and so by choosing
any loop $\gamma \subset \C^*$, and $\lambda \in (-\Lambda,\Lambda)$
(where $\Lambda = \int_{\CP^1} \omega$ is the area of a line), we
obtain a Lagrangian torus $T_{\gamma,\lambda} \subset \CP^2 \setminus
D$. If we let $T_{R,\lambda}$ denote the torus at level $\lambda$ over
the circle of radius $R$ centered at the origin in $\C^*$, we find
that $T_{R,\lambda}$ is special Lagrangian with respect to the form
\begin{equation}
\Omega = dx\wedge dz/(xz-1).
\end{equation}

The torus fibration on $X^\vee$ is essentially the same, except that
the coordinates $(x,z)$ are changed to $(u,v)$. For the rest of the
paper, we denote by 
\begin{equation}
w = uv-1
\end{equation} the quantity to which we project in
order to obtain the Lagrangian tori $T_{R,\lambda}$ (and later the
Lagrangian sections $L(d)$) as fibering over paths. For the time
being, and in order to enable the explicit computations in section
\ref{sec:tropicalization}, we will equip $X^\vee$ with the standard
symplectic form in the $(u,v)$--coordinates, so the quantity
\begin{equation}\delta(u,v) =|u|^2-|v|^2\end{equation}  is the standard moment map. In summary,
for $X^\vee$, we have 
\begin{equation}
T_{R,\lambda} = \{(u,v) \mid |w| = |uv-1| = R,
|u|^2-|v|^2 = \lambda\}.
\end{equation}

Each torus fibration has a unique singular fiber $T_{1,0}$, which is a
pinched torus.

Figure \ref{fig:torus-fibration} shows several fibers of the Lefschetz
fibration, with a Lagrangian torus that maps to a circle in the
base. The two marked points in the base represent a Lefschetz critical
value (filled-in circle), and a puncture (open circle).
\begin{figure}
\includegraphics[width=2.5in]{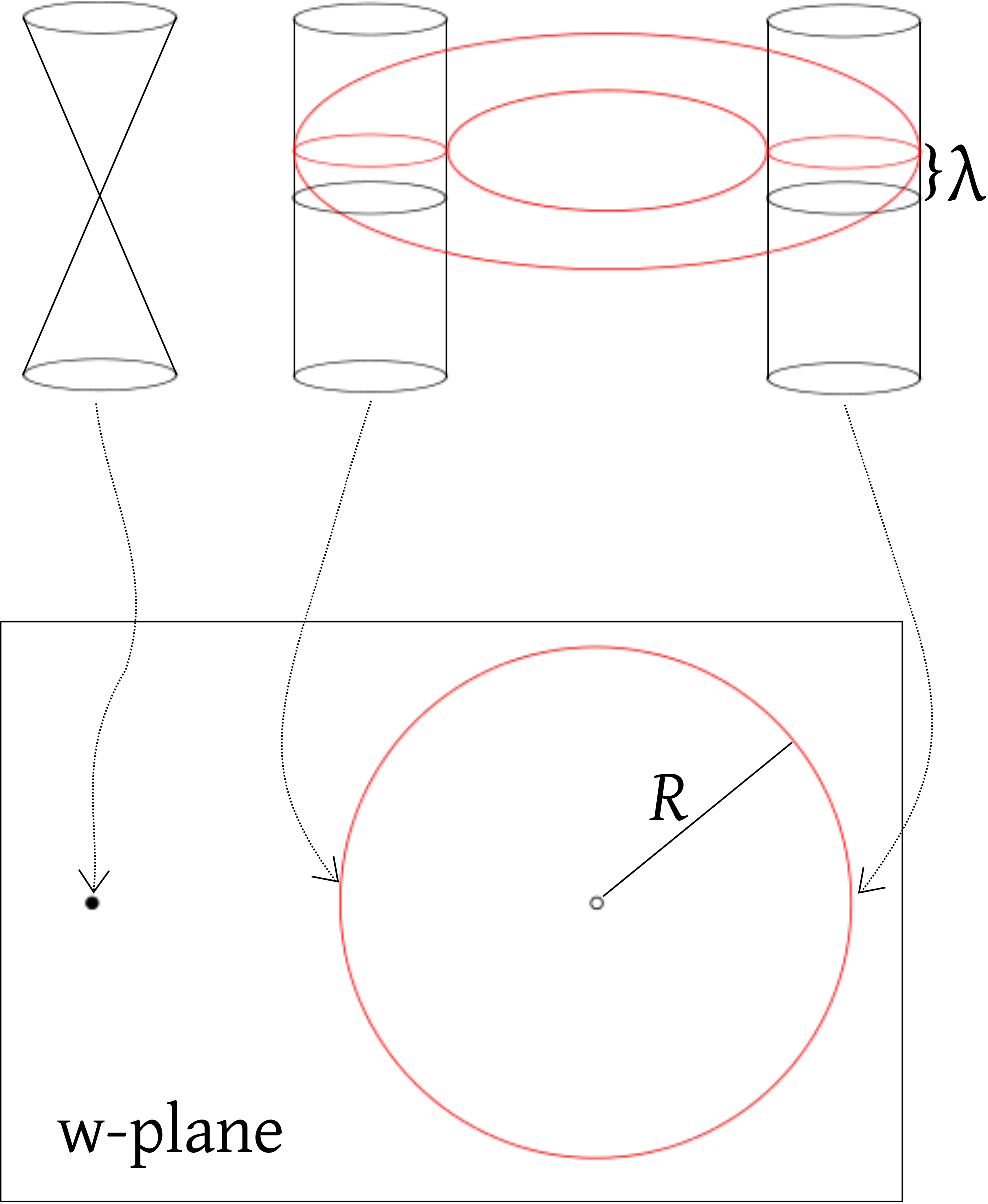}
\caption{The Lefschetz fibration with a torus that maps to a circle.}
\label{fig:torus-fibration}
\end{figure}

For the symplectic affine structure, the affine coordinates are
obtained by integrating the symplectic two-form $\omega$ over
one-cycles in the torus fibers to obtain one-forms on the base, which
may be integrated to functions. For the complex affine structure, we
instead integrate the imaginary part of the holomorphic volume form
$\im \Omega$ over $(n-1)$ cycles in the fiber to obtain one-forms on
the base, where in our case $n = 2$. See \cite[\S 2]{gross-syz-survey}
for an explanation of this construction.

We begin by describing the symplectic affine structure on the base $B$
of the torus fibration on $\CP^2 \setminus D$.  Recall that an
integral affine structure has a canonically defined local system of
integral tangent vectors. In the case when the affine manifold has
singularities, this local system may have monodromy around the
singular locus. The following result can be extracted from the
analysis of \cite[\S 5.2]{auroux07}.

\begin{proposition}
  The integral affine manifold $B$ is topologically a disk. It has one
  singular point, with monodromy conjugate to
  $\begin{pmatrix}1&0\\1&1\end{pmatrix}$. In affine coordinates, the
  boundary of $B$ consists of two line segments that are straight with
  respect to the affine structure. The corners are locally equivalent, by an affine linear transformation, to the standard quadrant $\R_{\geq 0}^2 \subset \R^2$.
\end{proposition}

\begin{proof}
  % This proposition can be extracted from the analysis in \cite[\S
  % 5.2]{auroux07}. 
 The symplectic affine coordinates are the
  symplectic areas of disks in $\CP^2$ with boundary on
  $T_{R,\lambda}$. Let $H$ denote the class of a line. We consider the cases $R > 1$ and $R < 1$. 

  On the $R > 1$ side, we take $\beta_1,\beta_2 \in
  H_2(\CP^2,T_{R,\lambda})$ to be the classes of two sections over the
  disk bounded by the circle of radius $R$ in the base, where
  $\beta_1$ intersects the $z$-axis and $\beta_2$ the $x$-axis. Then
  the torus fiber collapses onto line $\{y=0\}$ when
  \begin{equation}
    \langle [\omega], H-\beta_1-\beta_2 \rangle = 0.
  \end{equation}

  On the $R < 1$ side, we take $\alpha, \beta \in
  H_2(\CP^2,T_{R,\lambda})$, where $\beta$ is now the unique class of
  sections over the disk bounded by the circle of radius $R$, and
  $\alpha$ is the class of a disk connecting an $S^1$-orbit to the
  vanishing cycle within the conic fiber and capping off with the
  thimble. The torus fiber collapses onto the conic $\{xz-y^2 = 0\}$
  when
  \begin{equation}
    \langle [\omega], \beta \rangle = 0.
  \end{equation}

  The two sides $R > 1$ and $R < 1$ are glued together along the wall
  at $R = 1$, but
  the gluing is different for $\lambda > 0$ than for $\lambda < 0$,
  leading to the monodromy. Let us take $\eta = \langle
  [\omega],\alpha\rangle$ and $\xi = \langle [\omega],\beta\rangle$ as
  affine coordinates in the $R < 1$ region. We continue these across
  the $\lambda > 0$ part of the wall using correspondence between
  homology classes:
  \begin{equation}
    \begin{split}
      \alpha &\leftrightarrow \beta_1-\beta_2\\
      \beta &\leftrightarrow \beta_2\\
      H-2\beta-\alpha &\leftrightarrow H-\beta_1-\beta_2
      \end{split}
  \end{equation}
  Thus, in the $\lambda > 0$ part of the base, the conic appears as
  $\xi = 0$, while the line appears as 
  \begin{equation}
    0 = \langle [\omega],H - 2\beta - \alpha\rangle = \Lambda - 2\xi - \eta
  \end{equation}
  which is a line of slope of $-1/2$ with respect to the coordinates $(\eta,\xi)$. The pair of functions $\xi$ and $\Lambda -2\xi -\eta$ also form an affine coordinate system, and in this system the corner appears as a standard quadrant.

  In the $\lambda < 0$ part of the base, we instead use
    \begin{equation}
    \begin{split}
      \alpha &\leftrightarrow \beta_1-\beta_2\\
      \beta &\leftrightarrow \beta_1\\
      H-2\beta+\alpha &\leftrightarrow H-\beta_1-\beta_2
      \end{split}
  \end{equation}
  Hence in this region the conic appears as $\xi = 0$ again, while the
  line appears as 
  \begin{equation}
       0 = \langle [\omega],H - 2\beta + \alpha\rangle = \Lambda - 2\xi + \eta
  \end{equation}
  which is a line of slope $1/2$ with respect to the coordinates
  $(\eta,\xi)$. The pair of functions $\xi$ and $\Lambda -2\xi +\eta$ also form an affine coordinate system, and in this system the corner appears as a standard quadrant.
  
  The discrepancy between the two gluings represents the monodromy. As
  we pass from $\{R > 1, \lambda > 0\} \to \{R< 1, \lambda > 0\} \to
  \{R < 1,\lambda < 0\} \to \{R > 1, \lambda < 0\} \to \{R > 1,\lambda
  > 0\}$, the coordinates $(\eta,\xi)$ under go the transformation
  $(\eta,\xi) \to (\eta, \xi-\eta)$, which is indeed a simple shear.

\end{proof}

Figure \ref{fig:the-bigon} shows the affine manifold $B$. The marked
point is a singularity of the affine structure, and the dotted line is
a cut in the affine coordinates. The affine coordinates
$\eta,\xi,\psi$ are indicated. The function $\xi$ is undefined on the
cut below the singularity, while $\psi$ is undefined at points
directly above the singularity. Going around the singularity
counterclockwise, the monodromy of the local system of integral
tangent vectors $\begin{pmatrix}1&0\\1&1\end{pmatrix}$, which also
serves as the gluing map along the cut. The edges of the picture are
boundaries. The upper edge corresponds to points where the torus
collapses onto the line $\{y=0\}$. It has slope zero in this
picture. The lower edge corresponds to points where the torus
collapses onto the conic $\{xz-y^2 = 0\}$. On the left portion the
lower edge has slope $-1/2$, while on the right portion it has slope
$1/2$. The lower edge is actually straight with respect to the affine
structure, and the nontrivial gluing is what compensates for the
apparent bend.
\begin{figure}
\includegraphics[width=3in]{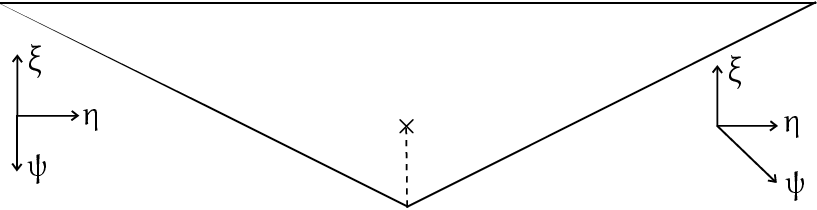}
\caption{The affine manifold $B$.}
\label{fig:the-bigon}
\end{figure}

On the mirror side, we compute the complex affine structure
(determined by the holomorphic volume form) on the base of the torus
fibration on $X^\vee$. The holomorphic volume form is
\begin{equation}
  \label{eq:vol-form-uv}
  \Omega = \frac{du\wedge dv}{uv-1} = \frac{du\wedge dv}{w}.
\end{equation}
Differentiating the defining equation $uv = 1 + w$ and substituting
gives the other formulas
\begin{equation}
 \label{eq:vol-form-uw}
 \Omega = \frac{du}{u}\wedge \frac{dw}{w}, \quad \text{when}\ u \neq 0,
\end{equation}
\begin{equation}
  \label{eq:vol-form-vw}
  \Omega = -\frac{dv}{v}\wedge \frac{dw}{w}, \quad \text{when}\ v \neq 0.
\end{equation}
The special Lagrangian fibration on $X^\vee$ to consider is constructed
in \cite{auroux07}. The fibers are the tori
\begin{equation}
  \label{eq:slags}
  T_{R,\lambda} = \{(u,v)\in X^\vee \mid |uv-1| = R, |u|^2-|v|^2 =
  \lambda\}, \quad (R,\lambda) \in (0,\infty)\times (-\infty,\infty),
\end{equation}
and the fiber $T_{1,0}$ is a pinched torus. Thus $(R,\lambda)$ are
coordinates on the base of this fibration. They are not affine
coordinates, which must be computed using $\im \Omega$. Due to the
simple algebraic form of this fibration, it is possible to find an
integral representation of the complex affine coordinates
explicitly.

\begin{proposition}
  \label{prop:affine-coords}
  The following functions are affine linear with respect to affine structure induced by $\im \Omega$.
  \begin{equation}
    \label{eq:affine-coords-uw}
    \begin{split}
      \eta &= \log|w| = \log R\\
      \xi &= \frac{1}{2\pi}\int_{T_{R,\lambda}\cap \{u \in \R_+\}}
      \log|u|\,
      d\arg(w)\\
      &= \frac{1}{2\pi}\int_0^{2\pi}
      \frac{1}{2}\log\left(\frac{\lambda +\sqrt{\lambda^2 + 4\cdot
            |1+Re^{i\theta}|^2}}{2}\right)\,d\theta\\
\psi&= \frac{1}{2\pi}\int_{T_{R,\lambda}\cap \{v\in \R_+\}} \log|v|\, d\arg(w)\\
      &= \frac{1}{2\pi}\int_0^{2\pi}
      \frac{1}{2}\log\left(\frac{-\lambda +\sqrt{\lambda^2 +
            4\cdot |1+Re^{i\theta}|^2}}{2}\right)\, d\theta
    \end{split}
  \end{equation}
  ($\eta$ is defined everywhere, $\xi$ is defined where $u \neq 0$, and $\psi$ is defined where $v \neq 0$).
  They satisfy the following relations on their common domain of definition.
  \begin{enumerate}
  \item At every point, $\eta$ is independent from $\xi$ and from $\psi$.
  \item In the subset where $R < 1$, the relation $\xi + \psi = 0$ holds.
  \item In the subset where $R > 1$, the relation $\xi + \psi = \eta$
    holds.
  \end{enumerate}
\end{proposition}

% \begin{proposition}
%   \label{prop:small-R-affine-coords}
%   In the subset of the base where $R < 1$, a set of affine coordinates is
%   \begin{equation}
%     \label{eq:affine-coords-uw}
%     \begin{split}
%       \eta &= \log|w| = \log R\\
%       \xi &= \frac{1}{2\pi}\int_{T_{R,\lambda}\cap \{u \in \R_+\}}
%       \log|u|\,
%       d\arg(w)\\
%       &= \frac{1}{2\pi}\int_0^{2\pi}
%       \frac{1}{2}\log\left(\frac{\lambda +\sqrt{\lambda^2 + 4\cdot
%             |1+Re^{i\theta}|^2}}{2}\right)\,d\theta
%     \end{split}
%   \end{equation}
%   Another set is
%   \begin{equation}
%     \label{eq:affine-coords-vw}
%     \begin{split}
%       \eta &= \log|w| = \log R\\
%       \psi&= \frac{1}{2\pi}\int_{T_{R,\lambda}\cap \{v\in \R_+\}} \log|v|\, d\arg(w)\\
%       &= \frac{1}{2\pi}\int_0^{2\pi}
%       \frac{1}{2}\log\left(\frac{-\lambda +\sqrt{\lambda^2 +
%             4\cdot |1+Re^{i\theta}|^2}}{2}\right)\, d\theta
%     \end{split}
%   \end{equation}
%   These coordinates satisfy
%   \begin{equation}
%     \label{eq:small-R-coord-relation}
%     \xi + \psi = 0.
%   \end{equation}
% \end{proposition}

\begin{proof}
  This is essentially straightforward so we only give an example of
  the computation.

  The general procedure for computing affine coordinates from the flux
  of the holomorphic volume form is as follows: we choose, over a
  local chart on the base, a collection of $(2n-1)$--manifolds
  $\{\Gamma_i\}_{i=1}^n$ in the total space $X$ such that the torus
  fibers $T_b$ intersect each $\Gamma_i$ in an $(n-1)$--cycle, and
  such that these $(n-1)$--cycles $T_b \cap \Gamma_i$ form a basis of
  $H_{n-1}(T_b;\Z)$. The affine coordinates $(y_i)_{i=1}^n$ are
  defined up to constant shift by the property that
  \begin{equation}
    y_i(b')-y_i(b) = \frac{1}{2\pi}\int_{\Gamma_i \cap \pi^{-1}(\gamma)} \im \Omega
  \end{equation}
  where $\gamma$ is any path in the local chart on the base connecting
  $b$ to $b'$. Because $\Omega$ is closed, this integral does not
  depend on the choice of $\gamma$.

  To get the coordinate $\eta$, start with the submanifold
  \begin{equation}
    \Gamma_1= \{w \in \R_+\}
  \end{equation}

  % To get the coordinate system \eqref{eq:affine-coords-uw}, we start
  % with the submanifolds defined by
  % \begin{equation}
  %   \Gamma_1= \{w \in \R_+\},\ \Gamma_2 = \{u \in \R_+\}
  % \end{equation}
  
  The intersection $\Gamma_1 \cap T_{R,\lambda}$ is a loop on
  $T_{R,\lambda}$. The function $\arg(u)$ gives a coordinate on this
  loop (briefly, $w\in \R_+$ and $|w| = R$ determine $uv$, along with
  $|u|^2 - |v|^2 = \lambda$ this determines the $|u|$ and $|v|$; the
  only parameter left is $\arg(u)$ since $\arg(v) = - \arg(u)$), and
  we declare the loop to be oriented so that $-d\arg(u)$ restricts to
  a positive volume form on it. Using \eqref{eq:vol-form-uw} we see
  \begin{equation}
    \label{eq:im-vol-form-uw}
    \im \Omega = d\arg(u) \wedge d\log|w| + d\log|u| \wedge d\arg(w) 
  \end{equation}
  Using the fact that $\arg(w)$ is constant on $\Gamma_1$, we see
  that for any path $\gamma$ in the subset of the base where $R < 1$
  connecting $b = (R,\lambda)$ to $b'=(R',\lambda')$, we have 
  \begin{equation}
    \int_{\Gamma_1\cap \pi^{-1}(\gamma)} \im \Omega =
    \int_{\Gamma_1\cap \pi^{-1}(\gamma)} d\arg(u) \wedge d\log|w|.
  \end{equation}
  But $d\arg(u) \wedge d\log|w| = d(-\log|w|\,d\arg(u)))$, so
  the integral above equals
  \begin{equation}
    \int_{\Gamma_1 \cap T_{b'}} -\log|w|\,d\arg(u) -
    \int_{\Gamma_1\cap T_b} -\log|w|\,d\arg(u) = 2\pi(\log R' - \log R)
  \end{equation}
  (the minus signs within the integrals are absorbed by the
  orientation convention for $\Gamma_1 \cap T_b$). Thus $\eta = \log
  R$ is the affine coordinate corresponding to $\Gamma_1$.

  The same idea applied to $\Gamma_2 = \{u \in \R_+\}$ yields the affine coordinate 
\begin{equation}
\xi = \frac{1}{2\pi} \int_{\Gamma_2 \cap T_b} \log|u|\,d\arg(w)
\end{equation}
To arrive at the second formula for $\xi$, we must solve for $|u|$
  in terms of $R,\lambda$, and $\theta = \arg(w)$. The equations $uv =
  1 + Re^{i\theta}$ and $|u|^2 - |v|^2 = \lambda$ imply $|u|^4 -
  \lambda|u|^2 = |1+Re^{i\theta}|^2$. Solving for $|u|^2$ by the
  quadratic formula and taking logarithms gives the result.

The formulas for $\psi$ are obtained by applying the same method to $\Gamma'_2 = \{v \in \R_+\}$. 

  % The intersection $\Gamma_2 \cap T_{R,\lambda}$ is a loop on
  % $T_{R,\lambda}$; together with the loop $\Gamma_1 \cap
  % T_{R,\lambda}$ it gives a basis of $H_1(T_{R,\lambda};\Z)$. The
  % function $\arg(w)$ gives a coordinate on this loop, and we orient
  % the loop so that $d\arg(w)$ restricts to a positive volume
  % form. Using \eqref{eq:im-vol-form-uw}, the fact that $\arg(u)$ is
  % constant on $\Gamma_2$, and the same reasoning as above, we see that
  % \begin{equation}
  %   \int_{\Gamma_2\cap \pi^{-1}(\gamma)}\im \Omega = \int_{\Gamma_2
  %     \cap T_{b'}} \log|u|\,d\arg(w) - \int_{\Gamma_2 \cap T_b} \log|u|\,d\arg(w).
  % \end{equation}
  % Thus $\xi =\frac{1}{2\pi} \int_{\Gamma_2 \cap T_b} \log|u|\,d\arg(w)$ is the
  % affine coordinate correspond to $\Gamma_2$.

  % To get the coordinate system \eqref{eq:affine-coords-vw}, we must
  % consider now the subset $\Gamma_2' = \{v\in \R_+\}$. This
  % intersects each fiber in a loop along which $\arg(w)$ is once again
  % a coordinate. Due to the minus sign in \eqref{eq:vol-form-vw}, we
  % must orient the loop so that $-d\arg(w)$ is a positive volume form
  % in order to get the formula we want. Otherwise, the derivation of
  % $\psi$ is entirely analogous to the the derivation of $\xi$ from $\Gamma_2$.

  To prove the linear relations, we find that $\xi + \psi$ reduces by the law of logarithms and the Cauchy integral formula to
  \begin{equation}
    \frac{1}{2\pi}\int_0^{2\pi} \log|1+Re^{i\theta}|\,d\theta = \begin{cases}0, & R<1\\ \log R, &R > 1\end{cases}
  \end{equation}

\end{proof}

Proposition \ref{prop:affine-coords} determines the monodromy around
the singular point (at $\eta = \xi = \psi = 0$) of the base, and show
that the affine structure is in fact integral. Once again, the
monodromy is a shear.

\subsection{The topology of the map $W$}
\label{sec:topology-W}

A direct computation shows that the superpotential $W$ given by
\eqref{eq:main-lg-model} has three critical points
\begin{equation}
  \label{eq:W-crit-pts}
  \Crit(W) = \{(v = e^{\Lambda/3}e^{2\pi i (n/3)}, w = 1) \mid n = 0, 1, 2\},
\end{equation}
and corresponding critical values
\begin{equation}
  \label{eq:W-crit-vals}
  \Critv(W) = \{3e^{-\Lambda/3}e^{-2\pi i (n/3)} \mid n = 0, 1, 2\}.
\end{equation}
As expected, $\Critv(W)$ is the set of eigenvalues of quantum
multiplication by $c_1(T\CP^2)$ in $QH^*(\CP^2)$, that is,
multiplication by $3h$ in the ring $\C[h]/\langle h^3 = e^{-\Lambda}\rangle$.

\begin{proposition}
  Any regular fiber $W^{-1}(c) \subset X^\vee$ is a twice-punctured elliptic curve.  
\end{proposition}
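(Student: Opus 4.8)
The plan is to realize $W^{-1}(c)$ as a smooth plane cubic with two points removed. First I would clear denominators. On $X^\vee = \{uv\neq 1\}$ the equation $W=c$ is equivalent to the polynomial equation
\[
 G(u,v) := (uv-1)(u-c) + e^{-\Lambda}v^2 = 0,
\]
since $W-c = G/(uv-1)$. Conversely, $G(u,v)=0$ already forces $uv\neq 1$: if $uv=1$ then $G = e^{-\Lambda}v^2$, which vanishes only for $v=0$, contradicting $uv=1$. Hence $W^{-1}(c)$ is exactly the affine plane curve $C_c=\{G=0\}\subset\C^2$, and, since the only degree-three term of $G$ is $u^2 v$, this curve is a cubic. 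Because $G=(uv-1)(W-c)$ as functions on $X^\vee$, on $C_c$ one has $dG=(uv-1)\,dW$, and as $uv-1$ never vanishes on $C_c$ the singular points of $C_c$ coincide with the critical points of $W$ lying in the fiber. For a regular value $c$ there are none, so $C_c$ is a smooth affine cubic.

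Next I would pass to the projective closure $\bar C_c\subset\PP^2$. Homogenizing gives $F(U,V,T)=U^2V - cUVT - UT^2 + cT^3 + e^{-\Lambda}V^2T$, and $\bar C_c\cap\{T=0\} = \{U^2V=0\}$ consists of precisely the two points $p_1=[1:0:0]$ and $p_2=[0:1:0]$, the intersection with the line at infinity being transverse at $p_1$ and tangential (of multiplicity two) at $p_2$. A short computation in the affine charts $\{U=1\}$ and $\{V=1\}$ gives $\partial F/\partial V = 1$ at $p_1$ and $\partial F/\partial T = e^{-\Lambda}\neq 0$ at $p_2$, so $\bar C_c$ is smooth at both points at infinity, for every $c$.

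Combining the two steps, for a regular value $c$ the cubic $\bar C_c$ is smooth everywhere, hence an irreducible smooth plane cubic, that is, an elliptic curve (of genus one by the degree--genus formula). Since $W^{-1}(c)=C_c=\bar C_c\setminus\{p_1,p_2\}$ and $p_1\neq p_2$, the regular fiber is a twice-punctured elliptic curve. The one place that needs genuine care is the local analysis at $p_2=[0:1:0]$: there $\bar C_c$ is tangent to the line at infinity, so smoothness cannot be read off the leading terms of the equation and must be checked by hand; everything else is bookkeeping. (Alternatively, the projection $(u,v)\mapsto u$ exhibits $C_c$ as a double cover of $\C$ branched over four points, whence genus one follows from Riemann--Hurwitz once one checks that the cover extends to a morphism at infinity with no further ramification.)
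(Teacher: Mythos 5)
Your proof is correct and follows essentially the same route as the paper: clear denominators to get the affine cubic, note disjointness from $\{uv=1\}$ and smoothness at regular values, then pass to the projective closure and identify the two points at infinity (transverse at $[1:0:0]$, tangent at $[0:1:0]$). Your explicit smoothness check at the two points at infinity is a detail the paper leaves implicit, and it is a worthwhile addition since it is what justifies calling the closure an elliptic curve.
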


\begin{proof}
  In the $(u,v)$ coordinates, $W^{-1}(c)$ is defined by the equation
  \begin{equation}
    u + \frac{e^{-\Lambda}v^2}{uv-1} = c,
  \end{equation}
  \begin{equation}
    u(uv-1) + e^{-\Lambda}v^2 = c(uv-1).
  \end{equation}
  This is an affine cubic plane curve, and it is disjoint from the
  affine conic $\{uv - 1=0\}$. It is smooth as long as $c$ is a regular value.

  The projective closure of $W^{-1}(c)$ in $(u,v)$ coordinates is given
  by the homogeneous equation (with $\xi$ as the third coordinate)
  \begin{equation}
    u(uv-\xi^2) + e^{-\Lambda}v^2\xi = c\xi(uv-\xi^2).
  \end{equation}
  This is a projective cubic plane curve, hence elliptic, and it
  intersects the line at infinity $\{\xi = 0\}$ when $u^2v = 0$. So it
  is tangent to the line at infinity at $(u:v:\xi) = (0:1:0)$ and
  intersects it transversely at $(u:v:\xi) = (1:0:0)$. Hence the
  affine curve is the projective curve minus these two points.
\end{proof}

\begin{remark}
  The function $W$ above is to be compared to the ``standard''
  superpotential for $\CP^2$, namely,
  \begin{equation}
    \label{eq:toric-W}
    W = x + y + \frac{e^{-\Lambda}}{xy}
  \end{equation}
  corresponding to the choice of the toric boundary divisor, a union
  of three lines, as anticanonical divisor. This $W$ has the same
  critical values, and its regular fibers are all thrice-punctured
  elliptic curves. Hence smoothing the anticanonical divisor to the
  union of a conic and a line corresponds to compactifying one of the
  punctures of $W^{-1}(c)$. 
\end{remark}

\subsection{Tropicalization in a singular affine structure}
\label{sec:tropicalization}

Our goal is to find the same affine manifold $B$ (that comes from
symplectic structure of $\CP^2\setminus D$) embedded in the base of
the torus fibration on $X^\vee$, equipped with the \emph{complex}
affine structure. We shall see that it is obtained as a bounded
chamber inside a particular tropical curve, the tropicalization of the
fiber of $W$. 

 For some real number $\epsilon > 0$, consider the curve
$W^{-1}(e^{\epsilon \Lambda})$:
\begin{equation}
  \label{eq:fiber-to-trop}
  W = u + \frac{e^{-\Lambda}v^2}{w} = e^{\epsilon \Lambda}
\end{equation}
 The tropicalization corresponds to the limit $\Lambda \to \infty$, or
$t := e^{-\Lambda} \to 0$. 
The amoeba of $A(W^{-1}(t^{-\epsilon}))$ is the image of the curve in the affine base of the torus fibration on $X^\vee$.
We want to produce a tropical curve in the base that reflects the asymptotic geometry of these amoebas $A(W^{-1}(t^{-\epsilon}))$ as $t \to 0$. In the standard situation, this is done by rescaling the amoebas by $\log t$ and taking the Hausdorff limit. Our situation is not standard because the affine manifold in which our tropical curve is to live has a singularity. Thus, none of the standard tropicalization techniques \cite{tropical-AG} apply directly. Since we do not know of any general theory of tropicalization when the affine structure is singular, we will here content ourselves with an ad hoc method involving explicit computation in two coordinate charts, and checking that the results fit together in the singular affine structure.

In the standard picture of tropicalization, one considers a family
of subvarieties of an algebraic torus $V_t \subset (\C^*)^n$. The map
$\Log: (\C^*)^n \to \R^n$ given by $\Log(z_1,\dots,z_n) =
(\log|z_1|,\dots,\log|z_n|)$ projects these varieties to their amoebas
$\Log(V_t)$, and the rescaled limit of these amoebas is the
tropicalization of the family $V_t$. The tropicalization is also given
as the non-archimedean amoeba of the defining equation of $V_t$.

The map $\Log: (\C^*)^n \to \R^n$ is special Lagrangian fibration. Its
fibers are the tori $\{|z_1| = r_1, \dots,|z_n| = r_n\}$. These tori
are Lagrangian with respect to the standard symplectic form, and they
are special with respect to the holomorphic volume form
\begin{equation}
  \label{eq:toric-vol-form}
  \Omega_{\text{toric}} = \frac{dz_1}{z_1} \wedge \cdots \wedge \frac{dz_n}{z_n},
\end{equation}
which has logarithmic poles along the coordinate hyperplanes in
$\C^n$. The complex affine coordinates are $\log |z_i|$.

The torus fibration on $X^\vee$ is approximated by this standard
structure as follows. Equation \eqref{eq:vol-form-uw} shows that in the
$(w,u)$ coordinates (where $u \neq 0$), the holomorphic volume form is
standard. If the special Lagrangian fibration were also standard, the
affine coordinates would be $(\log|w|,\log|u|)$. Proposition
\ref{prop:affine-coords} shows that, while $\eta = \log|w|$ is still
an affine coordinate (reflecting the fact that there is still an
$S^1$-symmetry), the other affine coordinate $\xi$ is the average
value of $\log|u|$ along a loop in the fiber. We also see that as
$|\lambda|$ becomes large, the approximation $\xi \approx \log|u|$
holds with increasing accuracy. Similarly, in the $(w,v)$ coordinates
(when $v \neq 0$), the holomorphic volume form is standard, and $\eta
= \log|w|$ and $\psi \approx \log|v|$ form affine coordinates.

We shall see that as $t = e^{-\Lambda} \to 0$, the amoebas
$A(W^{-1}(t^{-\epsilon}))$ move farther away from the
singularity, where the approximations $\xi \approx \log|u|$ and $\psi \approx
\log|v|$ hold with increasing accuracy, while $\eta = \log|w|$ holds exactly
everywhere.

% In the case at hand, we have a pencil of curves $W^{-1}(c)$ in $X^\vee \cong \C^2
% \setminus \{uv - 1 = 0\}$. The total space $X^\vee$ must now play the role that
% $(\C^*)^2 = \C^2 \setminus \{uv = 0\}$ plays in ordinary tropical
% geometry. 

% Now we consider the fiber $W^{-1}(c)
% \subset X^\vee$ of the superpotential, where the torus fibration on
% $X^\vee$ plays the role of the log map.

% This means that at the level
% of tropical amoebas, we can actually identify the tropical coordinates
% $\xi$ and $\log|u|$, $\psi$ and $\log|v|$, in appropriate regions on
% the base of the torus fibration, 

The approximations $\xi \approx \log|u|$ and $\psi \approx \log|v|$ and the identity $\eta = \log|w|$ suggest the approach to finding the tropicalization. We represent the equation $W = u + e^{-\Lambda}v^2/w = e^{\epsilon \Lambda}$ as a polynomial equation in the coordinates $(w,u) \in (\C^*)^2$. We then compute the corresponding tropical curve, using the standard procedures \cite{tropical-AG}, and we plot the result in the affine plane whose coordinates are $(\log|w|, \log|u|)$. This is shown in Figure \ref{fig:tropical-fiber}(a). We repeat the process in the coordinate system $(w,v)$, and plot the resulting tropical curve in the affine plane whose coordinates are $(\log|w|, \log|v|)$. This is shown in Figure \ref{fig:tropical-fiber}(b). We then transfer these tropical curves into the singular affine manifold by simply identifying $\xi$ with $\log|u|$ and $\psi$ with $\log|v|$. 

We now observe that the two curves actually match up, at least away from the vertical line passing through the singularity, and we claim that the result is as depicted in Figure \ref{fig:tropical-fiber}(c). This is evident in comparing the left-hand portions of parts (a) and (b) of Figure \ref{fig:tropical-fiber}. On the right-hand portion, we must take into account that the coordinate $\psi \approx \log|v|$ has been affected by a shear in passing from (b) to (c).

 There is apparently a problem along the vertical line passing through the singularity, since the curves depicted in (a) and (b) have vertical legs there, which we claim do not appear in (c). Our reasoning is this: the coordinate system $(w,u) \in (\C^*)^2$ only covers the locus where $u \neq 0$, which is also where the function $\xi$ is well defined. Thus we cannot expect the curve in (a) to be valid near the vertical going down from the singularity, it is valid above the singularity. Conversely, the figure in (b) is only valid in the region below the singularity. Thus the extra vertical legs are illusory. 

We call the resulting tropical curve $T_\epsilon$.

\begin{proposition}
  \label{prop:tropical-fiber}
  For $\epsilon > 0$, $T_\epsilon$ is a trivalent graph with two
  vertices, a cycle of two finite edges, and two infinite edges.

%    Furthermore, this graph is the Hausdorff limit of the amoebas $A_t(W^{-1}(t^\epsilon))$.
\end{proposition}

% Figure \ref{fig:tropical-fiber} shows the tropicalization of the fiber
% of $W$ for $\epsilon > 0$. The horizontal coordinate is $\eta = \log|w|$, and the vertical coordinate is $\xi \approx \log|u|$. The marked point is the singularity of the
% affine structure, and the dotted line is a cut in the affine
% coordinates. The lower edge of the figure is in fact straight. 
% In the case where $(-1/3) < \epsilon < 0$, the horizontal edge moves below the singular point of the affine structure, and so it is no longer straight. This creates a new vertex in the middle of this edge, and a new edge going upwards in order to make the graph balanced. Since graph can only have two infinite ends (being the tropicalization of a curve with two punctures), this edge will terminate at the singular point.
\begin{figure}
\includegraphics[width=5.75in]{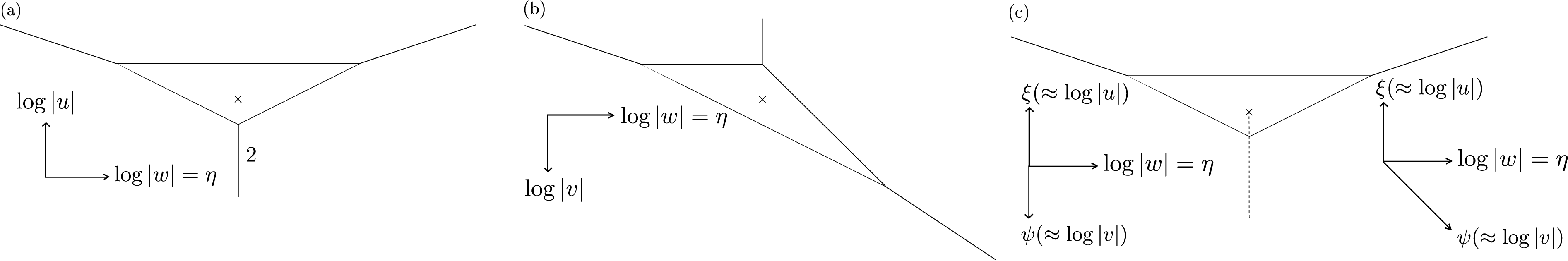}
\caption{The tropical fiber of $W$ for $\epsilon > 0$. (a) $(w,u)$ coordinates. (b) $(w,v)$ coordinates. (c) The tropical fiber $T_\epsilon$.}
\label{fig:tropical-fiber}
\end{figure}

\begin{proposition}
  \label{prop:tropical-fiber-chamber}
  For $\epsilon > 0$, the complement of $T_\epsilon$ has a bounded
  component that is an integral affine manifold with singularities
  that is isomorphic, after rescaling, to the base $B$ of the special Lagrangian
  fibration on $\CP^2 \setminus D$ with the affine structure coming
  from the symplectic form.
\end{proposition}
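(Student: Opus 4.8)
The plan is to prove this by an explicit comparison, since both sides are given completely explicitly: the bounded chamber of $\R^2\setminus T_\epsilon$ carries the complex affine structure restricted from the base of the torus fibration on $X^\vee$, whose charts and monodromy are recorded in Propositions~\ref{prop:small-R-affine-coords} and~\ref{prop:large-R-affine-coords}, while $B$ carries the symplectic affine structure whose charts (symplectic areas of the disk classes $\alpha,\beta$, resp. $\beta_1,\beta_2$) and monodromy are given in the proposition on $B$ proved above. Combinatorially both are bigons: for $\epsilon>0$ the curve $T_\epsilon$ is a cycle of two finite edges with two infinite tails, so its complement has a single bounded component, bounded by exactly two finite edges, whereas $B$ has the two boundary edges ``conic'' ($\xi=0$) and ``line'' (slope $-1/2$ for $\lambda>0$, $+1/2$ for $\lambda<0$). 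The bounded chamber contains the singular point $\eta=\xi=\psi=0$ of the complex affine structure — consistent with, and in fact forced by, the fact that $B$ carries a singularity — and the isomorphism I want will carry singularity to singularity and the two finite edges of $T_\epsilon$ to the conic and line boundaries of $B$.

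I would build the isomorphism chamber by chamber. Both $B$ and the bounded chamber of $\R^2\setminus T_\epsilon$ are glued out of an $\{R<1\}$ piece and an $\{R>1\}$ piece along the wall $\{R=1\}$, which is the monodromy-invariant line through the singularity and carries the branch cut. On each piece the affine structure is that of an honest open subset of $\R^2$, and on each piece the monodromy around the singularity is (by the cited propositions) the same simple shear; so on each piece there is an integral affine identification of the $X^\vee$-chart with the $\CP^2\setminus D$-chart, unique up to $\Aff(\Z^2)$. I would fix the remaining ambiguity by reading off, from the proof of Proposition~\ref{prop:tropical-fiber}, the slopes and intercepts of the two finite edges of $T_\epsilon$ in the honest affine coordinates $(\eta,\xi)$, $(\eta,\psi)$, $(\xi,\psi)$, and demanding that, after the overall rescaling by $\Lambda=-\log t$ built into the normalization $A_t=\Log/\log t$, they land on $\xi=0$ and $\Lambda-2\xi\mp\eta=0$.

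It then remains to check that the two chamberwise maps glue across $\{R=1\}$. This is where one genuinely uses that $\CP^2\setminus D$ and $X^\vee$ carry ``the same'' torus fibration (each is $\C^2$ minus a conic with the same $S^1$-invariant Lefschetz fibration to $\C^*$): the continuation of the complex affine coordinates of $X^\vee$ from the $\{\lambda>0\}$ side of the wall to the $\{\lambda<0\}$ side is governed by the homological relations among loops on $T_{R,\lambda}$ appearing in the proof of Proposition~\ref{prop:small-R-affine-coords}, while the continuation of the symplectic coordinates of $\CP^2\setminus D$ is governed by the relations among the disk classes ($\alpha\leftrightarrow\beta_1-\beta_2$, and so on) appearing in the proof of the proposition on $B$; under the identification of the two fibrations these two sets of relations correspond, so the gluing data match and the chamberwise maps assemble to a global isomorphism of singular integral affine manifolds.

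The main obstacle is the precise matching of the edges of $T_\epsilon$ with the boundary lines of $B$. The approximations $\xi\approx\log|u|$ and $\psi\approx\log|v|$ used to glue $T_\epsilon$ together from the three coordinate patches are only exact in the limit $|\lambda|\to\infty$, so a naive tropicalization places the two finite edges at mutually incompatible intercepts and will not match $B$ by a single rescaling; to get the edges to land exactly on $\xi=0$ and $\Lambda-2\xi\mp\eta=0$ one must use the exact integral representations of the affine coordinates from Propositions~\ref{prop:small-R-affine-coords} and~\ref{prop:large-R-affine-coords} and keep careful track of which portion of each finite edge is contributed by which of the patches $(u,w)$, $(v,w)$, $(u,v)$ near the wall and the branch cut. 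A secondary, essentially routine, step is to confirm that the resulting linear coordinate changes are integral, so that the isomorphism is one of \emph{integral} affine manifolds, as asserted.
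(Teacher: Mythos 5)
The paper states Proposition \ref{prop:tropical-fiber-chamber} without proof, so there is no argument of the author's to measure yours against; your plan---compute $T_\epsilon$ and its bounded chamber from the exact affine coordinates of Propositions \ref{prop:small-R-affine-coords} and \ref{prop:large-R-affine-coords}, then match chamber-by-chamber across the wall $\{R=1\}$ against the description of $B$ and its shear monodromy from section \ref{sec:torus-fibrations}---is the natural way to fill this in, and its skeleton is sound. But your normalization claim is off. Carrying out the tropicalization of $u+e^{-\Lambda}v^{2}/w=e^{\epsilon\Lambda}$ in the coordinates $(\eta,\xi)$, using $\xi+\psi=0$ for $R<1$ and $\xi+\psi=\eta$ for $R>1$, the bounded chamber is the bigon bounded by the horizontal edge $\{\xi=\epsilon\Lambda\}$ (the $u$-term) and the edges of slope $\pm\tfrac12$ on which $2\xi+(1+\epsilon)\Lambda=|\eta|$ (the $e^{-\Lambda}v^{2}/w$-term), with vertices at $\eta=\pm(1+3\epsilon)\Lambda$, $\xi=\epsilon\Lambda$ and the singular point $\eta=\xi=0$ in its interior. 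Dividing by $\Lambda$ therefore does not place the edges on $\xi=0$ and $\Lambda-2\xi\mp\eta=0$: the chamber is larger than $B$ by the $\epsilon$-dependent factor $1+3\epsilon$ (consistent with its collapse at the critical value $\epsilon=-1/3$), so the ``rescaling'' must include this factor and a translation before your edge-matching prescription can pin down the chamberwise identifications.

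The genuine gap is that, once the edges are matched, the singular points need not correspond, so the isomorphism you promise---``singularity to singularity and edges to edges''---does not exist for arbitrary $\epsilon>0$ without qualification. The lattice distance from a focus--focus point to a boundary edge, measured along its monodromy-invariant line, is an invariant of the singular integral affine structure, and rescaling multiplies all such distances by one factor; for the chamber the distance to the $u$-edge is $\epsilon\Lambda$ against a width of $2(1+3\epsilon)\Lambda$, a ratio depending on $\epsilon$, whereas for $B$ it is the fixed area $\langle[\omega],\beta\rangle$ at $(R,\lambda)=(1,0)$ against width $2\Lambda$. These agree for at most one $\epsilon$, so the proposition can only be proved up to translating the singular point along its invariant line (harmless for everything $B$ is used for later, cf.\ the closing remark of section \ref{sec:trop-fuk}, but your gluing argument would collide with this and must say so), or for specially tuned parameters. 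By contrast, the difficulty you single out as the main obstacle is not one: along the amoeba of $W^{-1}(e^{\epsilon\Lambda})$, which stays an affine distance of order $\epsilon\Lambda$ from the singular fiber, the discrepancies $\xi-\log|u|$ and $\psi-\log|v|$ are bounded (indeed tend to zero), hence are annihilated by the $1/\log t$ rescaling; this is exactly the paper's observation in the paragraph preceding Proposition \ref{prop:tropical-fiber}, and the exact integral representations are needed only to justify that convergence and to locate the singular point, not to correct any intercepts.
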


\begin{remark}
 The topology of the tropical curve $T_\epsilon$ corresponds to that of a
  twice-punctured elliptic curve.
\end{remark}

\begin{remark}
  We can apply the same patching procedure for $\epsilon$ in the range $(-1/3) < \epsilon < 0$. This changes Figure \ref{fig:tropical-fiber} so that the triangles in (a) and (b) become smaller and they lie entirely below point marked with $\times$. Thus, assuming that (a) is valid above the singularity, while (b) is valid below the singularity, we find that $T_\epsilon$ is essentially the same as what appears in (b), but that the vertical leg must terminate at the singularity, in order to be consistent with the fact that in (a) no part of the curve appears above the singularity. Thus we find that $T_\epsilon$ is a trivalent graph with
  three vertices, a cycle of three finite edges, two infinite edges
  and one edge connecting a vertex to the singular point of the
  affine structure.

  We note that the value $\epsilon = -1/3$ corresponds to the critical values of
  $W$.
  
\end{remark}

\begin{remark}
  This proposition is another case of the phenomenon, described in
  Abouzaid's paper \cite{abouzaid06}, that for toric varieties, the
  bounded chamber of the fiber of the superpotential is isomorphic to
  the moment polytope. In the general case of a manifold $X$ with
  effective anticanonical divisor $D$, the boundary of the symplectic
  affine base of the torus fibration on $X\setminus D$ corresponds to
  a torus fiber collapsing onto $D$, a particular class of holomorphic
  disks having vanishing area, and the corresponding term of the
  superpotential having unit norm. On the other hand, the
  tropicalization of the fiber of the superpotential has some parts
  corresponding to one of the terms having unit norm, and it is
  expected that these bound a chamber which is isomorphic to the base
  of the original torus fibration.
\end{remark}

\section{Symplectic constructions}
\label{sec:main-construction}

Let $B$ the affine manifold which is the bounded chamber of the
tropicalization of the fiber of $W$, constructed in the previous
section. In this section we construct a symplectic manifold $X(B)$,
which is a torus fibration over $B$, together with a Lefschetz
fibration $w: X(B) \to X(I)$, where $X(I)$ is an annulus. We also
construct a collection $\{L(d)\}_{d\in \Z}$ of Lagrangian
submanifolds. These submanifolds are sections of the torus fibration
$X(B) \to B$, and they fiber over paths with respect to the Lefschetz
fibration $w: X(B)\to X(I)$. The Lagrangians have an admissibility
property governing their behavior at the boundary of
$X(B)$. Corresponding to the two sides of $B$, and hence to the two
terms of $W = u + e^{-\Lambda}v^2/(uv-1)$, we have horizontal boundary
faces $\partial^h X(B)$, along each of which the symplectic connection
for the Lefschetz fibration defines a foliation. Choosing a leaf of
the foliation on each face defines a boundary condition (corresponding
to the fiber of $W$) for our Lagrangian submanifolds.

The motivation for these constructions is existence of the map
$w = uv-1: X^\vee \to \C^*$, which is a Lefschetz fibration with
general fiber an affine conic and a single critical value. The tori in
the SYZ fibration considered in section \ref{sec:fiber-superpotential}
fiber over loops in this projection, so it is natural to attempt to
use it to understand as much of the geometry as possible. In
particular it will allow us to apply the techniques of
\cite{seidel-book}, \cite{seidel01a}, \cite{seidel01b}.

\subsection{Symplectic monodromy associated to a Hessian metric}
\label{sec:monodromy-hessian}

Let $B$ be a two-dimensional affine manifold with affine coordinates
$(\eta,\xi)$ that embed $B$ as a subset of $\R^2$. In this subsection
$B$ does not have any singularities. Suppose that $\eta: B \to \R$ is
a submersion over some interval $I \subset \R$, and that the fibers of
this map are connected intervals. For our purposes, we consider the
case where $B$ is a quadrilateral, bounded on two opposite sides by
line segments of constant $\eta$ (the \emph{vertical} boundary
$\partial^v B$), and on the other two sides by line segments that are
transverse to the projection to $\eta$ (the \emph{horizontal} boundary
$\partial^h B$).

%This setup is a tropical model of a Lefschetz fibration. 

Associated to $B$ and $I$, we define complex manifolds $X(B)$ and
$X(I)$. The space $X(B)$ is the subset of the complex torus $(\C^*)^2$
with coordinates $w$ and $z$ such that $(\eta,\xi) =
(\log|w|,\log|z|)$ lies in $B$, and $X(I)$ is the subset of $(\C^*)$
with coordinate $w$ such that $\eta = \log|w|$ lies in $I$. Thus
$X(I)$ is an annulus, and we have a map $w : X(B) \to X(I)$, which is
a non-singular fibration with fibers isomorphic to annuli. Philosophically speaking, the map
$\eta: B \to I$ is a tropical model of the map $w: X(B)\to X(I)$.

In this situation, the most natural way to prescribe a K\"{a}hler
structure on $X(B)$ is through a Hessian metric on the base $B$. This
is a metric $g$ such that locally $g = \Hess K$ for some function $K:
B\to \R$, where the Hessian is computed with respect to an affine
coordinate system. If $\pi: X(B) \to B$ denotes the projection, then
$\phi = K\circ \pi$ is a K\"{a}hler potential on $X(B)$, and the
positivity of $g = \Hess K$ corresponds to the positivity of the real
closed $(1,1)$-form $\omega = -dd^c \phi$. Explicitly, if $y_1,\dots,
y_n$ are affine coordinates corresponding to complex coordinates
$z_1,\dots,z_n$ on $(\C^*)^n$, then
\begin{equation}
  \label{eq:hessian-metric}
  g = \sum_{i,j=1}^n \frac{\partial^2 K}{\partial y_i\partial y_j}dy_idy_j
\end{equation}
\begin{equation}
  \label{eq:hessian-kahler-form}
  \omega = -dd^c \phi = \frac{\sqrt{-1}}{2}\sum_{i,j=1}^n  \frac{\partial^2 K}{\partial y_i\partial y_j}\frac{dz_i}{z_i} \wedge \frac{d\bar{z}_j}{\bar{z}_j}
\end{equation}

Any such K\"{a}hler structure is invariant under the $S^1$--action
$e^{i\theta}(z,w) = (e^{i\theta}z,w)$ that rotates the fibers of the
map $w: X(B)\to X(I)$.

We now have a K\"{a}hler structure on $X(B)$ such that the fibration
$w: X(B) \to X(I)$ has symplectic fibers. Thus there is a symplectic
connection on this fibration whose horizontal subspaces are the
symplectic orthogonal spaces to the fibers. This connection defines a
notion of symplectic parallel transport along paths in the base
$X(I)$, and symplectic monodromy around loops in the base. Since our
fibers have boundary we must show that the symplectic parallel
transport preserves the boundary. Given that, symplectic parallel
transport defines symplectomorphisms between the fibers.

We now compute the symplectic connection. Let $X \in T_{(z,w)}X(B)$
denote a tangent vector. Let $Y \in \ker dw$ denote the general
vertical vector. The relation defining the horizontal distribution is
$\omega(X,Y) = 0$, or,
\begin{equation}
\begin{split}
 0 &=  \left\{K_{\eta\eta}\frac{dw}{w} \wedge \frac{d\bar{w}}{\bar{w}} + K_{\eta\xi}\left(\frac{dw}{w}
 \wedge \frac{d\bar{z}}{\bar{z}} + \frac{dz}{z} \wedge \frac{d\bar{w}}{\bar{w}}\right) + K_{\xi\xi}
   \frac{dz}{z} \wedge \frac{d\bar{z}}{\bar{z}}\right\}(X,Y)\\
&= K_{\eta\xi}\left(\frac{dw(X)}{w}\frac{d\bar{z}(Y)}{\bar{z}} -
  \frac{d\bar{w}(X)}{\bar{w}}\frac{dz(Y)}{z}\right) +
K_{\xi\xi}\left(\frac{dz(X)}{z}\frac{d\bar{z}(Y)}{\bar{z}}-
  \frac{dz(Y)}{z}\frac{d\bar{z}(X)}{\bar{z}}\right)\\
&= \left(K_{\eta\xi}\frac{dw(X)}{w} + K_{\xi\xi}
  \frac{dz(X)}{z}\right)\frac{d\bar{z}(Y)}{\bar{z}} - \text{ complex conjugate}
\end{split}
\end{equation}
Since $dz(Y)$ can have any phase, this shows that the quantity in
parentheses on the last line must vanish:
\begin{equation}
  \label{eq:symplectic-connection}
  d\log z (X) = -\frac{K_{\eta\xi}}{K_{\xi\xi}}d\log w (X)
\end{equation}
Tropically, this formula has the following interpretation: In the
$(\eta,\xi)$ coordinates, the vertical tangent space is spanned by the
vector $(0,1)$. The $g$-orthogonal to this space is spanned by the
vector $(K_{\xi\xi}, -K_{\eta\xi})$, whose slope with respect to the
affine coordinates is the factor $-K_{\eta\xi}/K_{\xi\xi}$ appearing in the
formula for the connection.

\begin{lemma}
  \label{lem:monodromy-hessian}
   Let $K$ be such that  $\partial^h B$ is $g$-orthogonal to the fibers of the map $\eta: B\to I$, and assume that the slope $\sigma_F$ of each boundary face $F$ of $B$ is rational. Then 
   \begin{enumerate}
   \item parallel transport around the loop $\{|w|= R\}$ acts on the fiber over $w = R$ by rotating each circle of constant $|z|$ through a phase $2\pi (-K_{\eta\xi}/K_{\xi\xi})$, and 
   \item for each boundary face $F$, we have $2\pi(-K_{\eta\xi}/K_{\xi\xi}) = 2\pi\sigma_F$, the part of $\partial^h X(B)$ lying over $F$ is foliated by multi-sections of the fibration that are horizontal with respect to the symplectic connection.
   \end{enumerate}
\end{lemma}

\begin{proof}
Consider the parallel transport of the connection around the loop $\{|w|
= R\}$, which is a generator of $\pi_1(X(I))$.
%This loop cannot be seen tropically.
As $w$ traverses the path $R\exp(it)$, the initial
condition $(z,w) = (r\exp(i\theta), R)$ generates the solution $(r\exp
(i \theta + (-K_{\eta\xi}/K_{\xi\xi})it), R\exp(it))$, where the
expression $-K_{\eta\xi}/K_{\xi\xi}$ is constant along the solution
curve. As a self-map of the fiber over $w = R$, this monodromy
transformation maps circles of constant $|z|$ to themselves, but
rotates each by the phase $2\pi (-K_{\eta\xi}/K_{\xi\xi})$.

We now consider the behavior of the symplectic connection near the
horizontal boundary $\partial^h X(B)$. Let $F$
be a component of $\partial^h B$. Since $F$ is a straight line segment
$g$-orthogonal to the fibers of $\eta$, the function
$-K_{\eta\xi}/K_{\xi\xi}$ is constant on $F$ and equal to its slope,
which we denote $\sigma = \sigma_F$. This slope is rational by assumption. The part of $X(B)$ lying over $F$ is defined by the
condition $\log|z| = \sigma \log|w| + C$. Let $w = w_0\exp(\rho(t) + i
\phi(t))$ describe an arbitrary curve in the base annulus $X(I)$. If
$(z_0,w_0)$ is an initial point that lies over $F$, then
\begin{equation}
  z = z_0\exp\left\{\sigma(\rho(t)+i\phi(t))\right\}, w = w_0\exp(\rho(t)+i\phi(t))
\end{equation}
is a path in $X(B)$ that lies entirely over $F$, and which by virtue
of this fact also solves the symplectic parallel transport
equation. Thus the part of $\partial^h X(B)$ that lies over $F$ is
foliated by horizontal sections of the fibration, namely $(w/w_0)^\sigma =
(z/z_0)$ where $\pi(z_0,w_0) \in F$. Take note that $\sigma$ is merely
rational, so these horizontal sections may actually be multi-sections.
\end{proof}

Examples of the Hessian metrics such that $\partial^hB$ is
$g$-orthogonal to the fibers of $\eta$ may be constructed by starting
with the function
\begin{equation}
  F(x,y) = x^2 + \frac{y^2}{x} 
\end{equation}
\begin{equation}
  \Hess F = 
  \begin{pmatrix}
    2+ 2\frac{y^2}{x^3} & -2\frac{y}{x^2}\\
    -2 \frac{y}{x^2} & 2\frac{1}{x}
  \end{pmatrix}
\end{equation}
\begin{equation}
  \frac{-F_{xy}}{F_{yy}} = \frac{y}{x}
\end{equation}
Thus the families of lines $x = c$ and $y = \sigma x$ form an
orthogonal net for $\Hess F$. By taking $x$ and $y$ to be shifts of
the affine coordinates $\eta$ and $\xi$ on $B$, we can obtain a
Hessian metric on $B$ such that the vertical boundary consists of lines
of the form $x = c$, while the horizontal boundary consists of
lines of the form $y = \sigma x$.

\subsection{Focus-focus singularities and Lefschetz singularities}
\label{sec:focus-focus-lefschetz}

Now we consider the case where the affine structure on $B$ contains a
focus-focus singularity, that is, a singularity with monodromy conjugate to $\begin{pmatrix}1& 0\\ 1 &1\end{pmatrix}$, and the monodromy invariant direction of this
singularity is parallel to the fibers of the map $\eta: B \to I$. The
goal is to construct a symplectic manifold $X(B)$, along with a
Lefschetz fibration $w: X(B) \to X(I)$. Topologically $X(B)$ is a
torus fibration over $B$ with a single pinched torus fiber over the
singular point.

Let $B$ be an affine manifold with a single focus-focus
singularity, and $\eta: B \to I$ a globally defined affine
coordinate. Suppose $B$ has vertical boundary consisting of fibers of
$\eta$, as before, and suppose that the horizontal boundary consists
of line segments of rational slope. If we draw the singular affine
structure with a branch cut, one side will appear straight while the
other appears bent, though the bending is compensated by the monodromy
of the focus-focus singularity. Figure \ref{fig:tropical-fibration} shows the projection $B \to I$,
with the fibers drawn as vertical lines.
\begin{figure}
\includegraphics[width=3in]{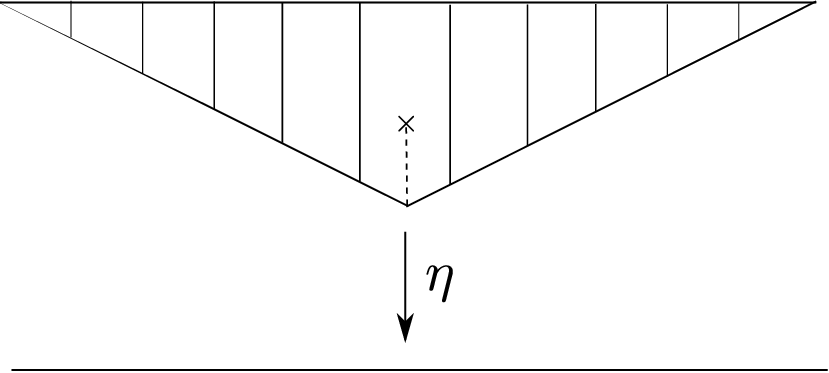}
\caption{The fibration $B \to I$.}
\label{fig:tropical-fibration}
\end{figure}

\begin{proposition}
  \label{prop:focus-focus-lefschetz}
  The manifold $X(B)$ admits an exact symplectic structure making $w : X(B)\to X(I)$ a symplectic Lefschetz fibration, with a single Lefschetz critical point. This critical point coincides with focus-focus singularity of the torus fibration $\pi : X(B)\to B$. There is a neighborhood $U \subset B$ of the fiber of $\eta$ through the singularity such that on the complement of $\pi^{-1}(U)$, the symplectic form is locally given by a Hessian metric of the form considered in Lemma \ref{lem:monodromy-hessian}.
\end{proposition}

\begin{proof}
%  We divide $B$ along
% the invariant line of the singularity, and use symplectic structures of
% the form considered in Lemma \ref{lem:monodromy-hessian} on either
% side of the invariant line, and we connect these pieces together in such a
% way that the critical point of the Lefschetz fibration occurs at the
% singular point of the focus-focus singularity. 

Suppose for convenience that the singularity occurs at $\eta =
0$. For the first step, divide the base $B$ into regions $B_{-\epsilon}=
\eta^{-1}(-\infty,-\epsilon]$ and $B_{+\epsilon} =
\eta^{-1}[\epsilon,\infty)$. On these affine manifolds we may take the
Hessian metrics and associated K\"{a}hler forms considered in Lemma
\ref{lem:monodromy-hessian}, using functions $K_-$ and $K_+$ derived from the example following that Lemma. Hence we get a fibration with symplectic
connection over the disjoint union of two annuli: $w: X(B_{-\epsilon}
\coprod B_{+\epsilon}) \to X(I_{-\epsilon} \coprod I_{+\epsilon})$

% The first step is to connect the two sides by going ``above'' and
% ``below'' the singularity. At the level of affine manifolds, we
% consider two bands connecting $B_{-\epsilon}$ to $B_{+\epsilon}$ near
% the two horizontal boundary faces. After adding appropriate affine linear functions to $K_-$ and $K_+$, we can glue $K_-$ and $K_+$ in each band to yield a convex fuction there. Since the boundary faces are
% straight in the affine structure, we can do this in
% such a way that the boundary faces are still orthogonal to the fibers
% of $\eta$, and so the portion of $X(B)$ lying over these faces is
% foliated by the symplectic connection. This extends the symplectic
% structure to the portion of $X(B)$ that lies over $B$ minus a disk
% around the singular point.

The second step is extend the Lefschetz fibration over a band connecting the two annuli. Consider the two annuli $X(I_{-\epsilon})\coprod
X(I_{+\epsilon}) \subset \C^*$. Choose a path connecting these two
annuli, along the positive real axis, say. By identifying the fibers
over the end points, the fibration extends over this path. By
thickening the path up to a band and filling in the fibers over the
band, we get a Lefschetz fibration over a surface which is
topologically a pair of pants. Since the fibration over the band is trivial, the gluing can be done in such a way that the horizontal boundary components of the total space are still foliated by multi-sections of the symplectic connection.

% If we also include the portions we
% filled in near the horizontal boundary, then we have a manifold with
% boundary, where one part of the boundary lies over the horizontal and
% vertical boundary of $B$, while the other is topologically an $S^3$. 

The third step is to fill in the rest of the region between the two annuli with the standard model of a
Lefschetz singularity. In order for this to make sense, we need the
symplectic monodromy around the loop in the base being filled in to be
a Dehn twist. This can be seen by comparing the symplectic monodromy
transformations around the loops in $X(I_{-\epsilon})$ and
$X(I_{+\epsilon})$. Let $z_{-}$ and $z_{+}$ denote complex coordinates
on $X(B_{-\epsilon})$ and $X(B_{+\epsilon})$ corresponding to a direction transverse to the fibration $\eta : B \to I$, so that $z_{-}$ and $z_{+}$ give coordinates on the fibers of $w: X(B)\to X(I)$. Assume these
coordinates match up in one of the bands connecting $B_{-\epsilon}$ to
$B_{+\epsilon}$. Due to the monodromy of the affine manifold they do
not coincide in the other band, where we put the branch cut. Let $F_1$
and $F_0$ denote the top and bottom faces of $\partial^h B$
respectively, and suppose that $F_0$ is split into two parts $F_{0+}$
and $F_{0-}$ by the branch cut. Associated to each of these we have a
slope $\sigma_F$.

Now we compute the monodromy obtained when we traverse a loop in $X(I_{-\epsilon})$ in the negative sense
followed by a loop in $X(I_{+\epsilon})$ in the positive sense,
connecting these paths through the band connecting the two annuli. We use Lemma \ref{lem:monodromy-hessian} to measure the difference between the amounts of phase
rotation in the $z_{-}$ and $z_{+}$ coordinates along the top and
bottom horizontal boundaries under symplectic parallel transport,
encoding this as an overall twisting. By Lemma \ref{lem:monodromy-hessian}, as we transport around the
negative loop in $X(I_{-\epsilon})$, the $z_{-}$ coordinate on
$\pi^{-1}(F_1)$ rotates through an angle $-2\pi\sigma_{F_1}$ and the $z_{-}$ coordinate on $\pi^{-1}(F_{0-})$ rotates through an angle $-2\pi\sigma_{F_{0-}}$. Thus the relative twist of the two ends of the fiber is $-(\sigma_{F_1} - \sigma_{F_{0-}})$ turns. By the same token, on the other
side the $z_+$ coordinate on $\pi^{-1}(F_1)$ rotates through an angle $2\pi\sigma_{F_1}$, while the $z_+$ coordinate on $\pi^{-1}(F_{0+})$ rotates through $2\pi\sigma_{F_{0+}}$, and the overall relative twisting is $(\sigma_{F_1} -
\sigma_{F_{0+}})$ turns. Composing these monodromy transformations, we find that the fiber undergoes a twisting of
$\sigma_{F_{0-}}-\sigma_{F_{0+}}$ turns. Because the monodromy of the affine
structure is a shear, this difference of slopes equals $-1$. Thus the fiber undergoes a right-handed Dehn twist (whose local model is obtained by taking the annulus $\{1\leq |z|\leq 2\}$ and rotating the outer boundary by one \emph{clockwise} turn).

This allows us to fill in the fibration with a standard fibration with
a single Lefschetz singularity whose vanishing cycle is the equatorial
circle on the cylinder fiber. Because the top and bottom boundaries are fixed under the monodromy transformation, the foliation of the horizontal boundary extends over the gluing. Since this local model is symmetric
under the $S^1$--action which rotates the fibers, choosing an
$S^1$--invariant gluing allows us to define a symplectic $S^1$--action
on $X(B)$ which rotates the fibers of $w: X(B)\to X(I)$.

Since the total space is $S^1$--symmetric, we can construct the Lagrangian
tori as in section \ref{sec:torus-fibrations}, by taking circles of
constant $|w|$ in the base and $S^1$--orbits in the fiber. These
actually coincide with the tori found in $X(B_{-\epsilon} \coprod
B_{+\epsilon})$ as fibers of the projection to $B$, so this
construction extends the torus fibrations on $X(B_{-\epsilon} \coprod
B_{+\epsilon})$ to all of $X(B)$.
\end{proof}

\begin{remark}
Since this construction is local on the base $X(I)$, the construction
extends in an obvious way to the situation where several focus-focus
singularities with parallel monodromy-invariant directions are
present. See Section \ref{sec:other-manifolds} for further discussion.
\end{remark}

\begin{remark}
  There is a more direct way to obtain the sort of symplectic structure we desire, based on a version of the mapping-torus construction. Start with a Lefschetz fibration over an rectangle, with a critical value corresponding to the singular point of $B$. Trivialize the fibration over the top and bottom sides of the base rectangle, and glue the top and bottom together, identifying the fibers using a symplectomorphism, yielding a fibration over an annulus that we identify with $X(I)$. Choosing the gluing symplectomorphism appropriately, we can ensure that the monodromies around the sub-annuli $X(I_{\pm \epsilon}) \subset X(I)$ agree with what is obtained from the Hessian metric construction. Our preference for the construction described above is that it proceeds more naturally from the affine geometry of $B$.
\end{remark}

\begin{remark}
\label{rem:cut-corner}
In the main example of this paper, the affine manifold does not have
vertical boundary, but rather corners at the extreme values of
$\eta$. To define the symplectic structure, we deal with this by
cutting off arbitrarily small pieces of $B$ at the corners so that the
result has vertical boundaries. However, in terms of the
correspondence to tropical geometry, we still consider the unmodified
manifold $B$.
  \end{remark}

  \begin{definition}
    Let $B$ be the affine manifold appearing in the mirror of $(\CP^2,D)$. We denote by $X(B)$ be the symplectic manifold obtained from Proposition \ref{prop:focus-focus-lefschetz}, after cutting off the corners of $B$ as in Remark \ref{rem:cut-corner}.
  \end{definition}
  
Recall that the horizontal boundary $\partial^h X(B)$ is the union
of two faces $(\partial^h X(B))_1$ and $(\partial^h X(B))_0$
corresponding to $F_1$ and $F_0$, the top and bottom faces of
$B$. Likewise we speak of the top and bottom boundary circles of the fibers of $w$. The following proposition summarizes key properties of $X(B)$. It follows from the analysis of the monodromies in the proof of Proposition \ref{prop:focus-focus-lefschetz} and Lemma \ref{lem:monodromy-hessian}.
\begin{proposition}
  \label{prop:key-properties}
  The Lefschetz fibration $w : X(B) \to X(I)$ has the following properties:
  \begin{enumerate}
  \item  The monodromy given by traversing the inner boundary loop \emph{clockwise} fixes the top boundary circle of the fiber, and rotates the bottom boundary circle through a half turn, in such a way that the square of this operation is isotopic to a \emph{right-handed} Dehn twist.
  \item The monodromy given by traversing the outer boundary loop \emph{clockwise} fixes the top boundary circle and acts on the bottom boundary circle by a half-turn, except that the square of this operation is isotopic to a \emph{left-handed} Dehn twist.
\item As the top face $F_1$ has integral slope (zero in our diagrams), the leaves of the foliation on $(\partial^h
X(B))_1$ are single-valued sections of the $w$-fibration. 
\item As the bottom face $F_2$ has half-integral slope ($\pm 1/2$ on either side of the cut in our diagrams), the leaves of
the foliation on $(\partial^h X(B))_0$ are two-valued sections of the
$w$-fibration.
  \end{enumerate}
\end{proposition}

\begin{remark}
  \label{rem:leaves-W}
  The foliation of the boundary faces by horizontal sections is related to the superpotential $W= u +e^{-\Lambda}v^2/(uv-1)$ studied in the previous Section \ref{sec:fiber-superpotential}. The general fiber $W = \text{constant}$ is a twice-punctured torus. On the other hand, we can consider the curves defined by the individual terms of $W$. The equation $u = \text{constant}$ defines a curve such that the projection $w = uv-1: X^\vee \to \C$ is one-to-one, so this curve is a section of $w$. The curves defined by the second term $v^2/(uv-1) = \text{constant}$ are likewise two-valued sections of $w: X^\vee \to \C$. Our perspective is that the leaves of the foliations on the boundary components of $X(B)$ correspond roughly to these curves defined by the individual terms of the superpotential. 
\end{remark}

\subsection{Lagrangians fibered over paths}
\label{sec:lags-construction}

The base of the Lefschetz fibration is the annulus $X(I) = \{R^{-1}
\leq |w| \leq R\}$ with a critical value at $w = -1$. The symplectic
structures constructed in \ref{sec:focus-focus-lefschetz} have the
property that the symplectic connection is flat throughout the annuli
$X(I_{-\epsilon}) = \{R^{-1} \leq |w| \leq e^{-\epsilon}\}$,
$X(I_{+\epsilon}) = \{e^\epsilon \leq |w| \leq R\}$, as well as
through a band along the positive real axis joining these annuli.

We call the Lagrangians $L(d)$ constructed here \emph{sections}, because they will turn out to be sections of the torus fibration on $X(B)$. At the same time, they are \emph{not} sections of the Lefschetz fibration, but rather fiber over paths in $X(I)$.

\subsubsection{The zero-section}
\label{sec:zero-section}

The first step is to construct the Lagrangian submanifold $L(0)
\subset X(B)$, which we will use as a zero-section of the torus fibration and reference point
through out the paper.

\begin{definition}
  Let  $\ell(0) \subset X(I)$ be the path that runs along
the positive real axis. Trivializing the fibration along that path, take a path in the fiber cylinder connecting the top and bottom boundary circles. The Lagrangian tori intersect the fibers of the Lefschetz fibration in circles, and we choose our fiber path so that it intersects each such circle once. We denote by $L(0) \subset X(B)$ the product Lagrangian submanifold. 
\end{definition}
If we want to be specific, we could take the
factor in the fiber to be the positive real locus of the coordinates
$z_-$ or $z_+$. By construction, $L(0)$ intersects each Lagrangian torus once.

Once $L(0)$ is chosen, it selects leaves of the foliations by horizontal sections on each
boundary face, namely those leaves containing the fiberwise boundary of $L(0)$. Call these
leaves $\Sigma_0$ and $\Sigma_1$ (bottom and top
respectively). Clearly we could have chosen these leaves first and
then constructed $L(0)$ accordingly.

\subsubsection{The degree $d$ section}
\label{sec:other-sections}

We can now use $L(0)$ as a reference to construct the other
Lagrangians $L(d)$.

\begin{definition}
 Let $\ell(d)$ be a base path, with the same end
points and midpoint as $\ell(0)$, and which winds $d$ times (relative
to $\ell(0)$) in $X(I_{-\epsilon})$ and also $d$ times in
$X(I_{+\epsilon})$. The winding of $\ell(d)$ is clockwise as we go
from smaller to larger radius. The Lagrangian $L(d)$ is defined to coincide with $L(0)$ in the fiber over the common endpoint of $\ell(0)$ and $\ell(d)$ on the inner boundary circle of $X(I)$, and to be defined elsewhere as the submanifold swept out by symplectic parallel transport along the path $\ell(d)$.
\end{definition}
Recall that we cut off the
corners of $B$ so that we actually have fibers at the endpoints of $\ell(0)$ and $\ell(d)$. Because the boundary leaves $\Sigma_0$ and
$\Sigma_1$ are preserved by symplectic parallel transport, the boundary of $L(d)$ is contained in these same leaves, which as in Remark \ref{rem:leaves-W} correspond to hypersurfaces defined by the individual terms of the superpotential.

The behavior of $L(d)$ in the fiber direction is determined with the aid of Proposition \ref{prop:key-properties}. 

\begin{proposition}
  \label{prop:fiber-behavior}
As we parallel transport clockwise along $\ell(d)$ in inner annulus $X(I_{-\epsilon})$, the fiber component of $L(d)$ is transformed by half-right-handed Dehn twists, while as we parallel transport clockwise along $\ell(d)$ in the outer annulus $X(I_{+\epsilon})$, we pick up half-left-handed Dehn twists. In the end, all of the twisting cancels out, so that $L(d)$ and $L(0)$ again coincide over the common endpoint of $\ell(0)$ and $\ell(d)$ on the outer boundary circle of $X(I)$.
\end{proposition}

The proposition is illustrated in Figure \ref{fig:lags-over-paths}, which  depicts $L(0)$, $L(1)$, and $L(2)$. The lower portion of the figure shows the
base: the straight line is $\ell(0)$, while the spirals are $\ell(1)$
and $\ell(2)$. The marked point is the Lefschetz critical value. The
upper portion of the figure shows the five fibers where $\ell(0)$ and
$\ell(2)$ intersect. The behavior of $L(i)$ for $i = 0,1,2$ in these fibers is depicted.
\begin{figure}
\includegraphics[width=3in]{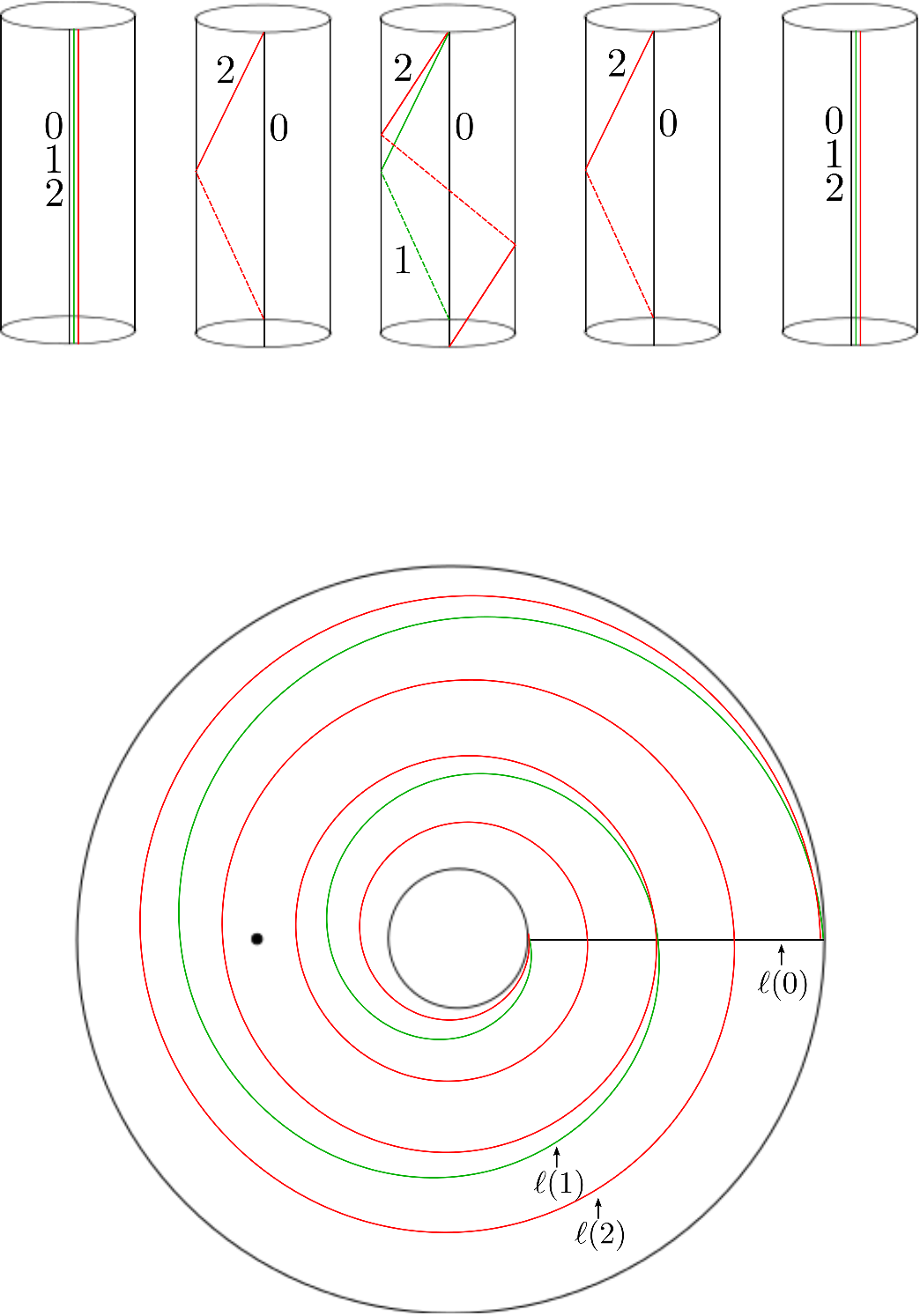}
\caption{The Lagrangians $L(0)$, $L(1)$, and $L(2)$. Parts of the figure corresponding to $L(i)$ are labeled by $i$.}
\label{fig:lags-over-paths}
\end{figure}

The Lagrangian submanifold $L(d)$ is indeed a section of the torus
fibration. If $T_{R,\lambda}$ is the torus over the circle $\{|w| = R\}$
at height $\lambda$, then since $\ell(d)$ intersects $\{|w| = R\}$ at
one point, there is exactly one fiber of $w:X(B)\to X(I)$, where
$L(d)$ and $T_{R,\lambda}$ intersect. Since $L(d)$ intersects each
$S^1$--orbit in that fiber once, we find that $L(d)$ and
$T_{R,\lambda}$ indeed intersect once.

\subsubsection{Admissibility}
\label{sec:admissibility}

We now explain in what sense these Lagrangians are admissible. The
relevant notion of admissibility is the one found in \cite[\S
7.2]{auroux07}, where admissibility with respect to a reducible
hypersurface whose components correspond to the terms of the
superpotential is discussed. In our case, we have two components
$\Sigma_0$ and $\Sigma_1$, and the admissibility condition is that,
near $\Sigma_i$, the holomorphic function $z_i$ such that $\Sigma_i =
\{z_i = 1\}$ satisfies $z_i|L \in \R$. The Lagrangian $L(d)$ will have
this property if the symplectic structure is chosen so that the
symplectic monodromy near $\Sigma_1$ is actually trivial, while the
symplectic monodromy near $\Sigma_0$ is a rigid rotation by
$\pi$. Otherwise, we can only say that the phase of $z_i$ varies
within a small range near $\Sigma_i$. In the computations in section
\ref{sec:degeneration} we actually perturb these Lagrangians in a
prescribed manner near the boundary.

The notion of admissibility also prescribes the behavior over the
endpoints of the base path $\ell(d)$. Recall that we cut off the
corners of $B$ in order to define the symplectic structure. Our notion
of admissibility remembers that there was a corner at this point, by
requiring the $L(d)$ to coincide with $L(0)$ over the endpoints of
$\ell(d)$. The justification for this is that, if we try to extend the
geometry all the way to the corner of $B$, we see that the two
components $\Sigma_0$ and $\Sigma_1$ would intersect, and we need the
Lagrangian to behave tamely at this intersection. A local model for
this sort of situation is the two complex curves $\Sigma_0 = \{x=0\}$,
$\Sigma_1 = \{y= 0\}$ intersecting at $(0,0)$ in $\C^2$, with the
Lagrangian being $L = (\R_{\geq 0})^2$, and with the fibration being
$w = x+y : \C^2 \to \C$.

% In the T-duality picture, one constructs a Lagrangian $L \subset
% X^\vee$ from a line bundle $\mathcal{E} \to X$ by taking a
% $U(1)$-connection on $\mathcal{E}$ which is flat on the SYZ fibers of
% $X$, plotting this as a submanifold of $X^\vee$, which is naively the
% moduli space of SYZ fibers with flat $U(1)$-connection. As the SYZ
% torus fibers degenerate along the anticanonical boundary divisor, one
% of the monodromies necessarily becomes unity, and as the torus fiber
% degenerates in the corner to a point, all the monodromies become
% unity. Hence, for each corner of $X$ there is a distinguished point in
% $X^\vee$ through which all the Lagrangians must pass. This corresponds
% to the requirement that all the Lagrangians coincide (or at least
% become close) over the endpoints of the base paths.

\subsubsection{Positive perturbations}
\label{sec:pert}
The Lagrangians $L(d)$ constructed above intersect each other on the
boundary of $X(B)$, and in particular it is not clear whether such
intersection points are supposed count toward the Floer
cohomology. There is a canonical way to perturb the Lagrangians so as
to push all intersection points which should count toward Floer
cohomology into the interior of $X(B)$, also used by Abouzaid
\cite{abouzaid06}. This construction is not
symmetric with respect to switching the Lagrangians. 

Let $K$ and $L$ be two of the Lagrangians $L(d)$. The perturbation
process we use in computing morphisms $CF^*(K,L)$ from $K$ to $L$ has two
steps. First we perturb the Lagrangians near the boundary, staying
within the class of Lagrangians fibering over paths, so that they
intersect at the boundary in such way that the tangent space to the base path of $L$ is a small \emph{counterclockwise} rotation of the tangent space to base path of $K$, and similarly in the fiber. If $K = L(d_1)$ and $L = L(d_2)$ with $d_1 < d_2$, this will create new intersection points in the interior, while if $d_1 > d_2$ the pair $K,L$ is already in the desired position.

The second step is that we perturb $K$ and $L$ at the boundary intersection so as to destroy the intersection there. Alternatively, we could just forget about the boundary intersection points in our computations, but actually removing them is more convenient in the technical part of the paper.
%  Since we do not care about the intersection points on the boundary, it
% does not affect our Floer cohomology computations if we further make a
% small perturbation near the boundary that destroys the boundary
% intersection points without creating new intersections.
 The pair of
Lagrangians obtained from this process is called \emph{positively
  perturbed}. This has the effect that one of Lagrangians in the pair
may not admissible in the sense above, since it does not have boundary
on $\Sigma_0$ and $\Sigma_1$. In section \ref{sec:degeneration}, the
Lagrangians we work with are positive perturbations of admissible
Lagrangians.

Please see figure \ref{fig:perturbation} for a local picture of the positive perturbation, and the end result in the case of $K = L(0)$ and $L = L(2)$.
\begin{figure}
\includegraphics[width=3in]{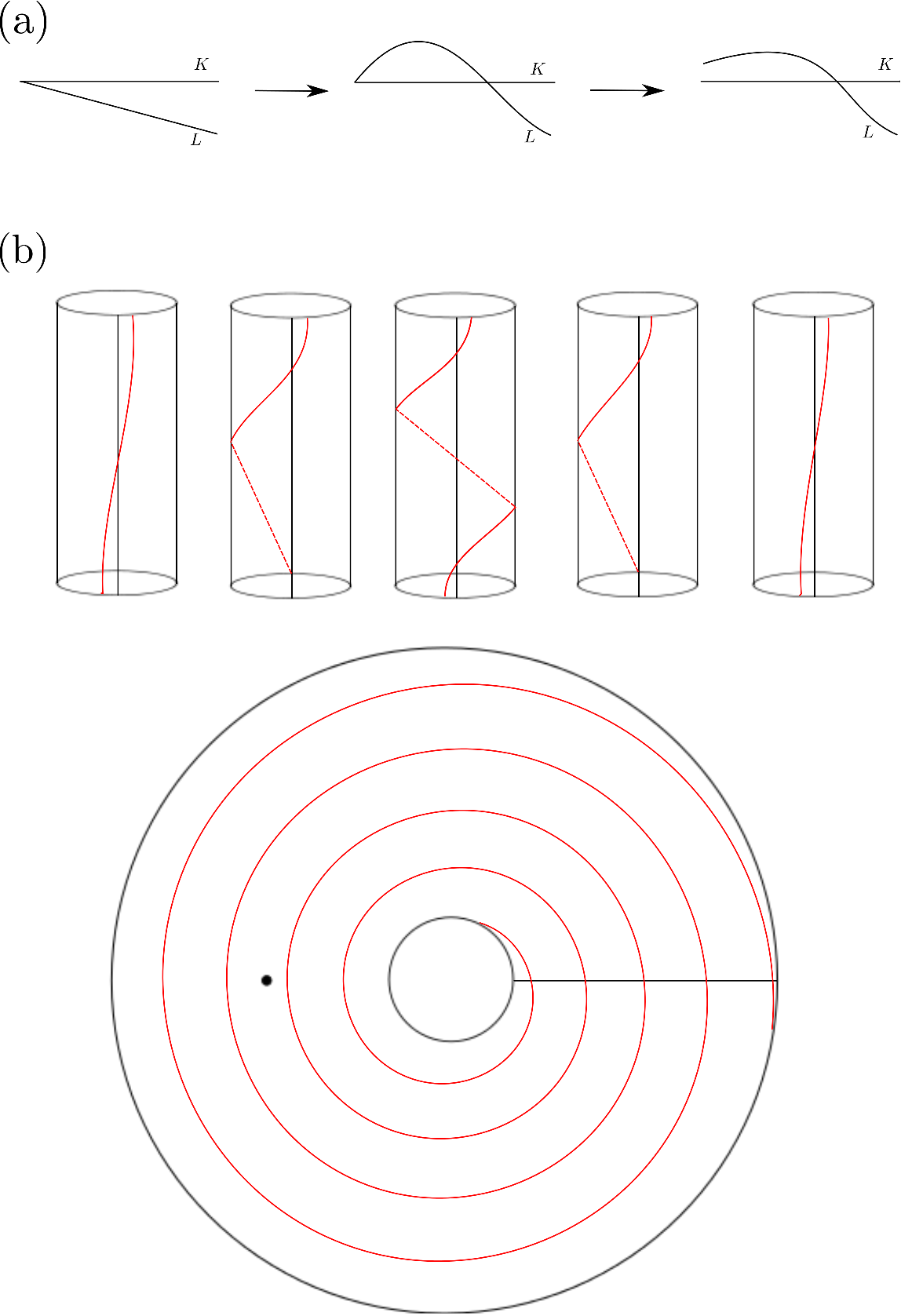}
\caption{(a) The local picture of positive perturbation. (b) The end result when $K = L(0)$ and $L= L(2)$.}
\label{fig:perturbation}
\end{figure}

\begin{remark}
  The asymmetry in the perturbations is essential for the mirror symmetry statement that we wish to prove. Mirror symmetry predicts that $HF^*(L(d_1),L(d_2))$ is isomorphic to $\Ext^*(\cO_{\CP^2}(d_1),\cO_{\CP^2}(d_2)) \cong H^*(\CP^2,\cO_{\CP^2}(d_2-d_1))$. (The main purpose of the paper is to verify that this is true for $d_1 < d_2$ in a way compatible with the product structures.) Even at the level of total dimension, this space is not symmetric with respect to swapping $d_1$ and $d_2$, so it makes sense that we must treat intersection points differently depending on whether we consider them to be morphisms from $L(d_1)$ to $L(d_2)$ or vice versa.

While we only discuss the product $\mu^2$ on Floer cohomology in this paper, it would be possible to use this sort of perturbation in the definition of higher $A_\infty$ operations, and to define a Fukaya category, by adapting the analogous constructions in Abouzaid's work \cite{abouzaid06,abouzaid09}.
\end{remark}

% The perturbation appropriate for computing Floer cohomology
% $HF^*(L(0),L(d))$ with $d> 0$ is the following: we perturb the base
% path $\ell(d)$ near the end points by creating a new intersection
% point in the interior, in addition to the one on the boundary. We also
% perturb the part of $L(d)$ over the fiber at $w = R^{-1}$, which was
% the initial condition for the parallel transport construction of
% $L(d)$, so that rather than coinciding $L(d)$ intersects $L(0)$ once
% in the interior as well as on the boundary, and at this intersection
% point, the tangent space of $L(d)$ is a small clockwise rotation of
% the tangent space of $L(0)$. After parallel transport this will ensure
% the intersections of $L(0)$ and $L(d)$ over other points of $\ell(0)
% \cap \ell(d)$ are transverse as well. With an appropriate choice of
% complex volume form $\Omega$ for the purpose of defining gradings on
% Floer complexes, all of the interior intersection points will have
% degree 0 when regarded as morphisms going from $L(0)$ to $L(d)$, while
% the intersection points on the boundary have degree 2. We then make a
% further perturbation at the boundary that destroys all of the boundary
% intersection points.

% On the other hand, when computing Floer cohomology $HF^*(L(d),L(0))$
% with $d > 0$, we perform the perturbation in the opposite
% direction. This does not create new intersections in the interior, and
% the boundary intersection points are forgotten, so there are actually
% fewer generators of $CF^*(L(d),L(0))$ than there are for
% $CF^*(L(0),L(d))$ when $d > 0$.

\subsubsection{Gradings and degrees}
\label{sec:gradings}

We now explain how the degrees of the intersection points between the positively perturbed Lagrangians $L(d)$ is determined. Our conventions follow Seidel \cite[\S 11]{seidel-book}. We choose a smooth trivialization of the canonical bundle of $X(B)$ given by a complex volume form $\Omega$. Our $X(B)$ is diffeomorphic to the manifold $X^\vee$ considered in section \ref{sec:fiber-superpotential}, which carries the volume form
\begin{equation}
  \Omega_{X^\vee} = \frac{du \wedge dv}{w} = \frac{du}{u} \wedge \frac{dw}{w}
\end{equation}
having the convenient property that, away from the singular fiber of the Lefschetz fibration, $\Omega_{X^\vee}$ decomposes as a wedge product of one-forms with respect to the local product structure determined by the coordinates $(u,w)$. We can transfer this form over to $X(B)$, and deform it to a complex volume form $\Omega$ that is compatible with the fibration structure, in the sense that, when $\Omega$ is restricted to the tangent space of a point on a smooth fiber, it decomposes into a wedge product of a one-form on the vertical tangent space and a one-form on the horizontal space.

 We can also describe the trivialization determined by $\Omega$ in terms of the foliation by the tori $T_{R,\lambda}$. The tangent space to $X(B)$ at any point is the complexification of the tangent space to the torus $T_{R,\lambda}$ passing through that point, so an orientation on $T_{R,\lambda}$ trivializes the canonical bundle of $X(B)$ along that torus. Since all of these tori can be oriented in a consistent manner, we obtain a trivialization of the canonical bundle over the complement of the critical point in $X(B)$. This trivialization is homotopic to the one determined by $\Omega$ for the following reason. The holomorphic volume form $\Omega_{X^\vee}$ on $X^\vee$ admits a family of special Lagrangian tori, which means precisely that $\Omega_{X^\vee}$ restricted to each torus is a real volume form times a constant phase factor. When we transfer $\Omega_{X^\vee}$ over to $X(B)$, this foliation by special Lagrangian tori goes over to a foliation by tori that is isotopic to the foliation by the tori $T_{R,\lambda}$. (The fact that the tori are Lagrangian is not important here, only that they are totally real.) The foliation by the tori $T_{R,\lambda}$ is also compatible with the fibration structure, as each torus fibers over a circle in the base $X(I)$ and intersects each fiber in a circle. Thus we obtain foliations by circles on the base and the fiber. 

The point of these choices is that we will be able to compute the degrees of our intersection points in terms of amounts of rotation with respect to the circle foliations on the base and the fiber. Because the Lagrangian $L(d)$ satisfies $H^1(L(d)) = 0$, it admits gradings. Moreover, because it fibers over a path in the base, and consists of a path in each fiber, it may be graded separately in the base and fiber directions. To say what we mean by this, recall the notion of a \emph{squared phase map} from \cite{seidel-book}. This is the map
\begin{equation}
  \alpha_{L(d)} : L(d) \to S^1, \quad \alpha_{L(d)}(p) = \frac{\Omega(v_1\wedge v_2)^2}{|\Omega(v_1\wedge v_2)|^2}
\end{equation}
where $\{v_1, v_2\}$ is a basis of $T_pL(d)$. A grading is then a real-valued lift $\tilde{\alpha}_{L(d)}: L(d) \to \R$ such that $\exp(2\pi i \tilde{\alpha}_{L(d)}) = \alpha_{L(d)}$. In our situation, we can decompose this squared phase map into base and fiber components
\begin{equation}
  \alpha_{L(d)}^b,\  \alpha_{L(d)}^f : L(d) \to S^1
\end{equation}
such that $\alpha_{L(d)} = \alpha_{L(d)}^b\cdot \alpha_{L(d)}^f$, such that $\alpha_{L(d)}^b$ is the squared phase with respect to the foliation by circles on the base, and $\alpha_{L(d)}^f$ is the squared phase with respect to the foliation by circles on the fiber. Normalizing and homotoping the volume form $\Omega$ if necessary, we may assume that the squared phase of the torus $T_{R,\lambda}$ is $1 \in S^1$ for all three versions, and so that the squared phase of $L(0)$ is $-1 \in S^1$ for both $\alpha^b$ and $\alpha^f$ (and hence its total phase is $\alpha = 1$). 

Now, if we set $d > 0$, and we consider $L(d)$, we find that the tangent space of $L(d)$, in either the base or fiber, is obtained from that of $T_{R,\lambda}$ by a small counterclockwise rotation. This means that $\alpha_{L(d)}^b$ and $\alpha_{L(d)}^f$ lie on the upper half circle between $1$ and $-1$ in $S^1$. 

Next, choose lifts $\tilde{\alpha}^b$ and $\tilde{\alpha}^f$ for the base and fiber squared phase maps of the manifolds $L(d)$ for $d \geq 0$, preferring values the interval $[0,1]$. Note that $L(0)$ has 
\begin{equation}
\tilde{\alpha}_{L(0)}^b = \tilde{\alpha}_{L(0)}^f = \frac{1}{2}  
\end{equation}
For the other $L(d)$ with $d > 0$ the value of $\tilde{\alpha}_{L(d)}^b$ will always lie in the interval $(0,\frac{1}{2})$, while the value of $\tilde{\alpha}^f$ lies in the interval $(0,1)$, since the intersection of $L(d)$ with any fiber is always a path that is transverse to the foliation by circles. 

Finally, with the gradings in place, we can compute the degrees of the intersection points between $L(0)$ and $L(d)$ for $d > 0$. 

\begin{proposition}
 Suppose $d_1 < d_2$. The the Floer complex $CF^*(L(d_1),L(d_2))$ of the positively perturbed pair $L(d_1), L(d_2)$ is concentrated in degree zero. The Floer complex $CF^*(L(d_2)), L(d_1))$ for the positively perturbed pair $L(d_2),L(d_1)$ is concentrated in degree two. 
\end{proposition}

\begin{proof}
Let us first consider the case $d_1 = 0$ and $d_2 = d > 0$.
Using the local splitting given by the fibration, this degree is a sum of contributions from the base and fiber. Since the base and fiber are complex one-dimensional, the base and fiber contributions are for $p \in L(0) \cap L(d)$ using \cite[Eq. 11.35]{seidel-book},
\begin{align}
  i^b(L(0),L(d),p) & = [\tilde{\alpha}^b_{L(d)}-\tilde{\alpha}^b_{L(0)}]+1\\
 i^f(L(0),L(d),p) & = [\tilde{\alpha}^f_{L(d)} - \tilde{\alpha}^f_{L(0)}]+1
\end{align}d
where $[\cdot]$ denotes the greatest integer function. Look at the diagram shows that, at any intersection point $p$, the quantity inside the $[\cdot]$ is negative, as the tangent space to $L(d)$ is a small \emph{clockwise} rotation of the tangent space to $L(0)$. Thus both $i^b(L(0),L(d),p)$ and $i^f(L(0),L(d),p)$ are zero, and the total degree is zero.

If we keep $d > 0$, but swap the roles of $L(0)$ and $L(d)$, we find that
\begin{align}
  i^b(L(d),L(0),p) & = [\tilde{\alpha}^b_{L(0)}-\tilde{\alpha}^b_{L(d)}]+1 = 1\\
  i^f(L(d),L(0),p) & = [\tilde{\alpha}^f_{L(0)} - \tilde{\alpha}^f_{L(d)}]+1 = 1
\end{align}
so that the intersection point $p$, regarded as a morphism from $L(d)$ to $L(0)$, has degree 2.

A similar analysis shows that the degrees of intersections $L(d_1)\cap L(d_2)$, regarded as a morphism from $L(d_1)$ to $L(d_2)$, depends on $d_2 - d_1$. If $d_2 - d_1$ is positive, they have degree $0$, while if $d_2 - d_1$ is negative, they have degree $2$.
\end{proof}

Since, in each case, the complex is concentrated in a single degree, the differential vanishes, and we may identify the complex $CF^*(L(d_1),L(d_2))$ with its cohomology $HF^*(L(d_1),L(d_2))$.

\subsubsection{Intersection points and integral points}
\label{sec:intersections}

With the above prescription for perturbing the Lagrangians $L(d)$, we
now work out the bijection between the intersection points of $L(0)$
and $L(d)$ for $d>0$, regarded as morphisms from $L(0)$ to $L(d)$, and
the $(1/d)$-integral points of $B$.

\begin{proposition}
  Let $B$ be scaled so that the top face has length $2$. Take $d > 0$. Then the intersection points $L(0) \cap L(d)$ giving a basis of $CF^0(L(0),L(d))$ correspond to $(1/d)$-integral points of $B$, including those on the boundary of $B$. For $d < 0$, the intersection points giving a basis of $CF^2(L(0),L(d))$ correspond the same set but with points on the boundary of $B$ excluded. For the pair $L(d_1),L(d_2)$, the same conclusion holds with $d = d_2 - d_1$. 
\end{proposition}

This proposition is proved by explicitly indexing all of the various points. We start at the intersection point of $\ell(0)$ and $\ell(d)$ near the
inner radius of the annulus $X(I)$. This intersection point survives
the perturbation. In the fiber over this point there is one
intersection point that survives after perturbation. As we transport
around the inner part of the annulus, we pick up half-twists in the
fiber (Proposition \ref{prop:fiber-behavior}), which increases the number of intersection points by one after
every two turns in the base. This pattern continues until $\ell(d)$
reaches the middle radius and starts winding around the other side of
the Lefschetz singularity, where the pattern reverses. (See Figure
\ref{fig:lags-over-paths}.)

Assign the rational numbers \[[-1,1] \cap (1/d)\Z = \{-1,
-(d-1)/d,\dots, -1/d, 0, 1/d, \dots, 1\}\] to the intersection points
of $\ell(0)$ and $\ell(d)$, then over the point indexed by $a/d$ the
number of intersection points in the fiber is $1+\left\lfloor\frac{d-|a|}{2}\right\rfloor$. We index the points in a given fiber using the index $i \in \{0,1,\dots, \left\lfloor\frac{d-|a|}{2}\right\rfloor\}$, starting at the top of the fiber.

If we scale $B$ so that the top face has affine length $2$, the $1/d$
integral points of $B$ are also organized by the projection $\eta: B \to I$
into columns indexed by $[-1,1] \cap (1/d)\Z$. By inspection, the column over
$a/d$ has $1+ \left\lfloor\frac{d-|a|}{2}\right\rfloor$ points. These are also indexed by $i \in \{0,1,\dots, \left\lfloor\frac{d-|a|}{2}\right\rfloor\}$, starting at the top of the fiber.

Under this bijection the $(1/d)$-integral points on the boundary of $B$ correspond to intersections of the unperturbed $L(0)$ and $L(d)$ that lie on the boundary of $X(B)$. According to our definition of positive perturbation, these points do (do not) contribute when $d > 0$ ($d < 0$).

% A convenient way to index the intersection points in each column is by
% their distance from the top of the fiber. So in the column over $a/d$,
% we have intersections indexed by $i \in \{0,1,\dots,
% \left\lfloor\frac{d-|a|}{2}\right\rfloor\}$ which lie at distances
% $i/\left(\frac{d-|a|}{2}\right)$ from the top of the fiber.

\begin{definition}
\label{def:indexing} Take $d > 0$.
For $a \in \{-d,\dots,d\}$, and $i \in
\{0,1,\dots,\left\lfloor\frac{d-|a|}{2}\right\rfloor\}$, let $q_{a,i}(n)
\in L(n)\cap L(n+d)$ be the intersection which lies in the column indexed by $a/d$, and
which is the $i$th from the top of the fiber.
\end{definition}

We can already verify mirror symmetry at the level of the Hilbert polynomial:
\begin{equation}
  \label{eq:hilbert-poly}
  |L(0)\cap L(d)| = \left|B\left(\frac{1}{d}\Z\right)\right| =
  \frac{(d+2)(d+1)}{2} = \dim H^0(\CP^2, \cO_{\CP^2}(d))
\end{equation}

Figure \ref{fig:intersections} shows the points of $B(\frac{1}{4}\Z)$
representing the basis of morphisms $L(d) \to L(d+4)$.
\begin{figure}
\includegraphics[width=3in]{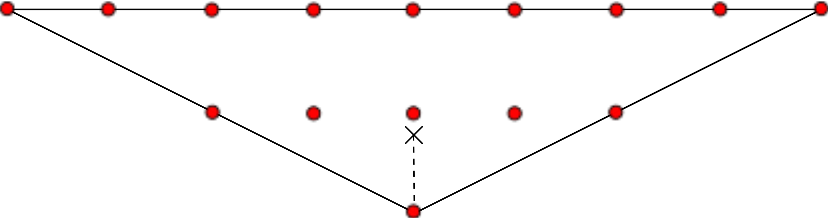} 
\caption{The $1/4$--integral points of $B$.}
\label{fig:intersections}
\end{figure}

\subsubsection{Hamiltonian isotopies}
\label{sec:ham-iso}

There is an alternative way to express the relationship between $L(d)$
and $L(0)$, which is by a Hamiltonian isotopy. There is a Hamiltonian
function $H$ on $X(B)$ such that the time-$d$ flow of $H$ takes $L(0)$
to $L(d)$.  During the intermediate times of this isotopy the
Lagrangian will not be admissible (or even close to it), but at
the end of the isotopy admissibility is restored. This observation is
used when we consider wrapped Floer cohomology in section
\ref{sec:complements}.

This isotopy also allows us to identify the intersection points $L(n)\cap L(n+d)$ for different values of $n$. For this reason, we will write $q_{a,i}(n)$ as simply $q_{a,i}$.

\section{A degeneration of holomorphic triangles}
\label{sec:degeneration}

Since we have set up our Lagrangians as fibered over paths, a
holomorphic triangle with boundary on the Lagrangians composed with
the projection is a holomorphic triangle in the base, which is an
annulus, with boundary along the corresponding paths. The triangles
that are most interesting are those that pass over the critical value
$w = -1$ (possibly several times). In general, such triangles are
immersed in the annulus, and, after passing the the universal cover of
the annulus, are embedded. Hence, we can regard such triangles as
sections over a triangle in the base of a Lefschetz fibration having
as base a strip with a $\Z$--family of critical values. Once this is
done, we can apply a TQFT for counting sections of Lefschetz
fibrations developed by Seidel \cite{seidel-book,seidel-les}.

We consider the deformation of the Lefschetz fibration over the
triangle where the critical values bubble out along one of the
sides. At the end of this degeneration, we count sections of a trivial
fibration over a $(k+3)$--gon, along with sections of $k$ identical
fibrations, each having a disk with one critical value and one
boundary marked point. Each of these fibrations is equipped with a
Lagrangian boundary condition given by following the degeneration of
the original Lagrangian submanifolds. The sections of the trivial
fibration over the $(k+3)$--gon can be reduced to counts in the
fiber, while the counts of the $k$ other parts are identical, and can
be computed directly using the techniques of \cite{seidel-les}.

\subsection{Triangles as sections}
\label{sec:triangles-sections}

Let $q_1 \in HF^0(L(0),L(n))$ and $q_2 \in HF^0(L(n),L(n+m))$ be two
degree zero morphisms whose Floer product $\mu^2(q_2,q_1)$ we wish to
compute. Suppose that $p \in HF^0(L(0),L(n+m))$ contributes to this
product. Let $S$ denote a disk with three boundary punctures, and a
complex structure that is allowed to vary. To find the coefficient of
$p$ in $\mu^2(q_2,q_1)$, we count pseudo-holomorphic triangles, that
is, pseudo-holomorphic maps $u: S \to X(B)$ that send the punctures to
$q_1,q_2,p$ and the boundary components to $L(0),L(n),L(n+m)$. Various authors have studied the construction of such invariants; we follow the theory as developed by Seidel in \cite{seidel-book,seidel-les}.

Because we want to make contact with the theory of pseudo-holomorphic sections of Lefschetz fibrations, we need to import the setup from \cite[\S 2.1]{seidel-les}. Let $\pi : E \to B$ be a exact symplectic Lefschetz fibration. We choose primitive $\theta$ for the symplectic form $\omega$ such that the Liouville vector field $Z$ defined on the fiber $M$ by $\iota_Z(\omega|_M) = \theta|_M$ points outward along the boundary of $M$. The flow of $Z$ defines a collar $[-\epsilon,0]\times M \to M$, and we let $\sigma$ be the function on a neighborhood of $\partial M$ given by projection to the $[-\epsilon,0]$ factor. We pick an almost complex structure $j$ on the base of the fibration, we always consider an almost complex structure $J$ on the total space which is \emph{compatible relative to $j$}, meaning
\begin{enumerate}
\item $J$ is integrable in a neighborhood of each critical point,
\item $\pi$ is a $(J,j)$-holomorphic map,
\item For each fiber $E_z$, the form $\omega(\cdot, J\cdot)|TE_z$ is symmetric and positive definite
\item on a neighborhood of the horizontal boundary of $E$, $J$ satisfies $\theta \circ J = d(e^\sigma)$.
\end{enumerate}
Later on, in Section \ref{sec:horiz-sections}, we will also need to consider $J$ which are \emph{horizontal}, meaning that $J$ preserves the horizontal subspaces of the symplectic connection. Once we extend the Lagrangian boundary condition Section \ref{sec:extend-fiber} so that our Lagrangians are compact in each fiber, these conditions imply that the curves we wish to count lie in a compact subset disjoint from the horizontal boundary \cite[Lemma 2.2]{seidel-les}. The second condition is particularly useful because it allows us to study pseudo-holomorphic curves by projecting them to the base of the fibration.

Consider the projection of such a triangle to the base by $w : X(B)\to
X(I)$. This yields a 2-chain on the base with boundary on the
corresponding base paths $\ell(0),\ell(n),\ell(n+m)$ with corners at
the points $w(q_1),w(q_2),w(p)$. This projection is not necessarily embedded, so the next step is to pass to the universal cover of the
base. Let $\tilde{X}(I)$ denote infinite strip which is the universal
cover of the annulus, and let $\tilde{w}: \tilde{X}(B) \to
\tilde{X}(I)$ denote the induced fibration. We denote by $\pi:
\tilde{X}(B) \to X(B)$ the covering map. When drawing pictures in the
base $\tilde{X}(I)$, we can represent it as $[-1,1] \times \R$, with
the infinite direction drawn vertically. With this convention, the
path $\ell(d)$ lifts to a $\Z$-family of paths which have slope $-d$.

Figure \ref{fig:universal-cover} shows the universal cover of $X(I)$,
with the base paths for $L(0)$, $L(1)$, and $L(2)$.
\begin{figure}
\includegraphics[width=2in]{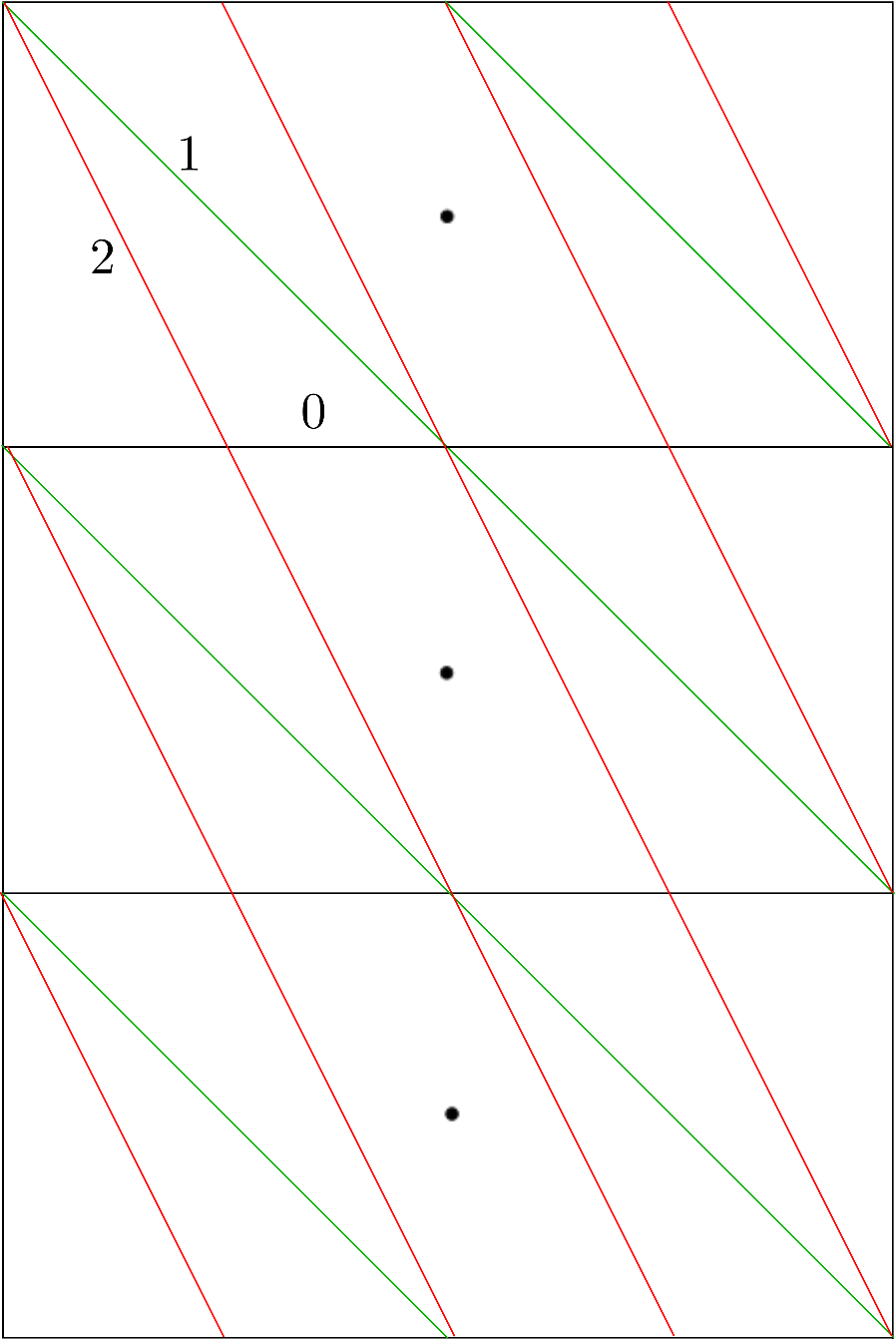}
\caption{The universal cover of $X(I)$.}
\label{fig:universal-cover}
\end{figure}

The choice of lift of $q_1$ determines a lift of $L(0)$ and $L(n)$,
which then determines a lift of $q_2$ and of $L(n+m)$, which in turn
determines where the lift of any $p$ must lie. By looking at the
slopes of the base paths $\ell(0),\ell(n),\ell(n+m)$ involved, we
obtain the following proposition:

\begin{proposition}
  In the terminology of Definition \ref{def:indexing}, Suppose that $q_1 =
  q_{a,i}$ lies in the fiber indexed by $a/n$, and that $q_2 =
  q_{b,j}$ lies in the fiber indexed by $b/m$. Then any $p$
  contributing to the product is $q_{a+b,h}$ for some $h$, that is, it
  lies in the fiber indexed by $(a+b)/(n+m)$
\end{proposition}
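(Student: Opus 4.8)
The plan is to work entirely in the universal cover $\tilde{X}(I) \cong [-1,1]\times\R$, where the base path $\ell(d)$ lifts to a $\Z$-family of straight segments of slope $-d$. The key point is that a holomorphic triangle with boundary on $L(0), L(n), L(n+m)$ projects to a $2$-chain in $\tilde{X}(I)$ whose boundary runs along three such lifted segments, and whose corners are forced to sit over $w(q_1), w(q_2), w(p)$. So the question reduces to a purely affine/combinatorial one: given a lift of $\ell(0)$ (say through the origin at the central fiber level), a lift of $\ell(n)$ passing through the lift of $q_1$, and a lift of $\ell(n+m)$ passing through the lift of $q_2$, determine the $\eta$-coordinate (equivalently the horizontal coordinate in $[-1,1]$) of the unique point where the lifts of $\ell(0)$ and $\ell(n+m)$ meet.

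First I would set up coordinates: parametrize $\tilde{X}(I)$ so that $\eta\in[-1,1]$ is the horizontal coordinate, fix the lift of $\ell(0)$ to be the vertical segment $\eta=0$ (its slope $-0$), and recall from Section~\ref{sec:intersections} that the intersection point $q_{a,i}\in L(0)\cap L(n)$ lies over the point of $\ell(0)\cap\ell(n)$ indexed by $a/n$, i.e.\ at horizontal coordinate $\eta = a/n$ (after the rescaling in which the top face has affine length $2$). Next I would pin down the lift of $\ell(n)$: it has slope $-n$, and since it must pass through the lift of $q_1=q_{a,i}$, which lies over $\eta=a/n$ on the lift of $\ell(0)$, I can write its equation as $t = -n\,\eta + c_1$ where the constant $c_1$ is determined by forcing passage through that point. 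Then the lift of $\ell(n+m)$, of slope $-(n+m)$, is determined by the requirement that it pass through the lift of $q_2=q_{b,j}$, which in turn lies on the lift of $\ell(n)$ over $\eta=b/m$; this gives $t=-(n+m)\eta + c_2$ with $c_2$ computed from $c_1$ and $b/m$. Finally I would solve the two linear equations $\eta=0$-lift (i.e.\ the lift of $\ell(0)$) against the lift of $\ell(n+m)$: the horizontal coordinate of their intersection comes out to $\eta = (a+b)/(n+m)$ after the constants telescope, and since the intersection points in that fiber are exactly the $q_{a+b,h}$ for the allowable range of $h$, any contributing $p$ must be of this form.

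The one genuinely substantive point — and the place I'd be most careful — is verifying that the lift of $q_2$ really is forced to sit at horizontal coordinate $b/m$ \emph{on the already-fixed lift of $\ell(n)$}: one must check that the indexing of $L(n)\cap L(n+m)$ by fractions $b/m$ is compatible, under the common lift determined by $q_1$, with the indexing of $L(0)\cap L(n)$, so that the arithmetic $a/n$ then $b/m$ genuinely adds to $(a+b)/(n+m)$ rather than picking up a spurious shift from the choice of lift or from the winding conventions (clockwise as radius increases). This amounts to checking that the slopes and the winding numbers of $\ell(0),\ell(n),\ell(n+m)$ are consistent with a single global trivialization over the band, which follows from the construction in Section~\ref{sec:other-sections} where each $\ell(d)$ shares the same endpoints and midpoint with $\ell(0)$ and winds $d$ times on each side. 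Once that compatibility is in hand, the rest is the elementary linear-algebra computation sketched above, and the conclusion that $p = q_{a+b,h}$ for some $h$ follows immediately from Definition~\ref{def:indexing}.
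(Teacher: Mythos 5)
Your argument is correct and is essentially the paper's own: pass to the universal cover $[-1,1]\times\R$, where the lift of $q_1$ pins down lifts of $\ell(0)$ and $\ell(n)$, hence of $q_2$ and $\ell(n+m)$, and the slopes $0,-n,-(n+m)$ force the remaining corner to sit over $\eta=(a+b)/(n+m)$ (your constants indeed telescope: $\ell(n)$ is $t=-n\eta+a$, so $\ell(n+m)$ is $t=-(n+m)\eta+(a+b)$). One small slip to fix in the write-up: the lift of $\ell(0)$ is the slope-zero \emph{horizontal} line (say $t=0$) running across the strip, not ``the vertical segment $\eta=0$'' --- your subsequent computation already treats it this way, and with that correction the compatibility of the $a/n$ and $b/m$ indexings that you single out is exactly the point the paper relies on as well.
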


We can rephrase this proposition as saying that we can introduce a second
grading on $HF^*(L(d),L(d+n))$ where $HF^{*,a}(L(d),L(d+n))$ is
generated by $q_{a,i}$ for $i \in \{0,1,\dots\left\lfloor\frac{n-|a|}{2}\right\rfloor\}$, and that $\mu^2$ respects this grading.

We must now consider two cases: either the input generators $q_1$ and $q_2$ lie in different fibers of $w$, or they lie in the same fiber. The rest of this section is devoted to case where $q_1$ and $q_2$ lie in different fibers, which is the more difficulty one. In the case where $q_1$ and $q_2$ lie in the same fiber, the preceding proposition says that the output $p$ must also lie in the same fiber. This means that we can compute the contribution of $p$ to $\mu^2(q_2,q_1)$ by counting triangles that lie entirely in this fiber, since the projection of such a triangle to the base must be constant. We will employ an alternative strategy where we perturb one of the base paths, say $\ell(0)$, by a small amount so that there are no points where all three base paths intersect. This forces the triangle in the fiber to spread out over a small triangle in the base, so that in particular it cannot be constant. With this proviso, the arguments in this section apply without exception.

Now we show that any triangles contributing to the product of interest
are sections of the Lefschetz fibration $\tilde{w}: \tilde{X}(B)\to
\tilde{X}(I)$: 

\begin{proposition}
\label{prop:triangles-sections}
Let $u: S \to X(B)$ be a pseudo-holomorphic triangle contributing to
the component of $p$ in $\mu^2(q_2,q_1)$, such that $q_1$ and $q_2$ do not lie in the same fiber of $w$. Then there is a triangle $T$
in $\tilde{X}(I)$ bounded by appropriate lifts of $\ell(0),\ell(n)$
and $\ell(n+m)$, a holomorphic isomorphism $\tau: S\to T$, and a
pseudo-holomorphic section $s :T \to \tilde{X}(B)$, such that
$u = \pi\circ s \circ \tau$.

  Conversely, any pseudo-holomorphic section $s: T \to \tilde{X}(B)$
  with boundary on $L(0),L(n),L(n+m)$ which maps the corners to $q_2,
  q_1, p$ contributes to the coefficient of $p$ in $\mu^2(q_2,q_1)$.
\end{proposition}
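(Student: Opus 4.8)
The plan is to reduce the three statements to standard facts about pseudo-holomorphic maps to Lefschetz fibrations, as developed in \cite{seidel-book}. First I would establish that $\tilde u$ lands in $\tilde X(B)$, i.e., that the triangle lifts: since $u$ has boundary on the lifts-compatible triples $L(0), L(n), L(n+m)$ — which were \emph{defined} to lift to $\Z$-families of paths in $\tilde X(I)$ — and since $S$ is simply connected, the composite $w \circ u : S \to X(I)$ admits a lift $S \to \tilde X(I)$, and correspondingly $u$ lifts to $\tilde u : S \to \tilde X(B)$. The choice of lift of the incoming puncture $q_1$ rigidifies all other choices, exactly as described in the paragraph preceding the proposition, so the lifted boundary arcs lie on specific lifts of $\ell(0), \ell(n), \ell(n+m)$, and the lifted corners lie at the prescribed points (this is where the previous proposition on the second grading gets used to identify which lift of $p$ occurs).

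\textbf{Key step: sectionhood.} The heart of the argument is showing $\tilde w \circ \tilde u : S \to \tilde X(I)$ is an (unbranched, degree-one) branched cover onto its image triangle $T$, bounded by the relevant lifts of the base paths. Here I would use positivity of intersection: $\tilde w$ is a (pseudo-)holomorphic map away from its critical fibers, $\tilde u$ is pseudo-holomorphic, and $L(d)$ was constructed as a \emph{section} of the torus fibration, hence meets each fiber of $w$ in a single point. One computes the topological degree of $\tilde w \circ \tilde u$ over a generic point of $\tilde X(I)$ from the winding of the boundary arcs of $T$ along the base paths $\ell(0), \ell(n), \ell(n+m)$; the boundary data forces this degree to be $1$. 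Combined with positivity of intersection with fibers (each intersection of $\tilde u$ with a fiber of $\tilde w$ contributes positively), degree one forces $\tilde u$ to meet each fiber over $T$ transversely in exactly one point and to miss the critical fibers entirely except possibly at isolated branch points over $\partial T$, which the boundary condition excludes. Hence $\tilde u$ is a section over $T$, and $\tilde w \circ \tilde u : S \to T$ is a biholomorphism $\tau$ (a degree-one proper holomorphic map of disks-with-three-punctures); setting $s = \tilde u \circ \tau^{-1}$ gives $\tilde u = s \circ \tau$ with $s$ a pseudo-holomorphic section.

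\textbf{Converse.} For the converse, given a pseudo-holomorphic section $s : T \to \tilde X(B)$ with the prescribed boundary and corner conditions, I would precompose with the unique biholomorphism $\tau : S \to T$ (the standard three-punctured disk is biholomorphic to $T$) to get a map $S \to \tilde X(B)$; projecting down to $X(B)$ yields a pseudo-holomorphic triangle $u$ with boundary on $L(0), L(n), L(n+m)$ and corners $q_1, q_2, p$. By construction this $u$ lies in the moduli space whose count defines the coefficient of $p$ in $\mu^2(q_2, q_1)$, so it contributes. One should also check the correspondence is bijective and index-compatible — that the moduli space of such triangles and the moduli space of such sections are identified as spaces (not merely as sets), so that the signed/weighted counts agree — but this is immediate once the map $u \mapsto (T, s)$ and its inverse are in hand, since the reparametrization group $\mathrm{Aut}(S)$ acts compatibly on both sides.

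\textbf{Main obstacle.} The step I expect to require the most care is ruling out that $\tilde u$ develops fiber components or branches over a critical value of $\tilde w$: a priori a holomorphic triangle in $X(B)$ could "bubble" a piece running into a Lefschetz thimble. The clean way around this is the degree/positivity count above — the total intersection number of $\tilde u$ with a smooth fiber is $1$, and every local intersection (including any contribution hidden in a critical fiber, by the standard analysis of holomorphic curves meeting a nodal fiber) is a positive integer — so no room is left for extra components. Making this rigorous means invoking the precise intersection-positivity statements for holomorphic sections of Lefschetz fibrations from \cite{seidel-book}, together with the fact (established in Section~\ref{sec:main-construction}) that the $L(d)$ are genuine sections of $w : X(B) \to X(I)$ meeting each fiber exactly once.
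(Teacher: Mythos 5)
Your proposal is correct and follows essentially the same route as the paper: positivity of intersection with the fibers of $\tilde{w}$ plus the boundary winding data force $\tilde{w}\circ\tilde{u}$ to have degree one, hence to be a biholomorphism $S\to T$, and the converse follows by uniformization of the three-punctured disk and projecting back to $X(B)$. The only cosmetic difference is that the paper first invokes the maximum principle to confine the projected $2$-chain to $T$ before running the positivity/degree argument, a step your degree-counting handles implicitly.
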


\begin{proof}
  Since $u : S \to X(B)$ is a map from a simply-connected domain,
  there is a lift $\tilde{u}: S \to \tilde{X}(B)$. Since $q_1$ and
  $q_2$ are not in the same fiber, the image of $u$ cannot be
  contained entirely within a fiber. The triangle $T$ and the lifts of
  the $\ell(d)$ are determined by the considerations from the previous
  proposition. Clearly $\tilde{w}\circ \tilde{u}$ defines a 2-chain in
  $\tilde{X}(I)$, which by maximum principle is supported on $T$. By
  positivity of intersection with the fibers of $\tilde{w}$, all
  components of this 2-chain are positive, and the map $\tilde{w}\circ
  \tilde{u} : S\to T$ is a ramified covering. However, if the degree
  were greater than one, then $\partial S$ would have to wind around
  $\ell(0),\ell(n),\ell(n+m)$ more than once, contradicting the
  boundary conditions we placed on the map $u$.

  Since the projection $\tilde{w}: \tilde{X}(B) \to \tilde{X}(I)$ is
  holomorphic, the composition $\tilde{w}\circ \tilde{u}: S \to T$ is
  a holomorphic map which sends the boundary to the boundary and the
  punctures to the punctures, so it is a holomorphic isomorphism, and
  we let $\tau$ be its inverse.

  For the converse, uniformization for the disk with three boundary
  punctures yields a complex structure on $S$ and a map $\tau: S \to
  T$, such the composition $\tilde{u} = s \circ \tau$ is the desired
  triangle in $\tilde{X}(B)$. Composing this with $\pi :
  \tilde{X}(B)\to X(B)$ yields the triangle in $X(B)$.
\end{proof}

\begin{proposition}
  \label{prop:k-criticalpoints}
  Suppose $q_1= q_{a,i}$ lies in the fiber indexed by $a/n$, and $q_2
  = q_{b,j}$ in the fiber
  indexed by $b/m$. Then the sections in Proposition
  \ref{prop:triangles-sections} cover the critical values of the
  Lefschetz fibration $k$ times, where
  \begin{itemize}
  \item $k = 0$ if $a$ and $b$ are both non-negative or both non-positive.
  \item $k = \min(|a|,|b|)$ if $a$ and $b$ have different signs.
  \end{itemize}
\end{proposition}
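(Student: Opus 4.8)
\section*{Proof proposal}

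The plan is to transport the whole question into the universal cover $\tilde{X}(I)$, where it becomes elementary plane geometry. Present $\tilde{X}(I)$ as the strip $[-1,1]\times\R$ with coordinates $(\rho,\phi)$, where $\rho$ is (a rescaling of) $\log|w|$, so that the central circle $|w|=1$ is $\{\rho=0\}$, and $\phi$ is (a rescaling of) $\arg w$. Normalize so that the lifts of $\ell(0)$ are the horizontal lines at integer heights; then, as recorded in the discussion preceding Proposition~\ref{prop:triangles-sections}, the lifts of $\ell(d)$ are the straight lines of slope $-d$ through integer heights on $\{\rho=0\}$, i.e.\ the lines $\phi=-d\rho+k$ with $k\in\Z$. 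The critical value $w=-1$ lies on $\{\rho=0\}$ at $\arg w = \pi$ modulo $2\pi$, so the $\Z$-family of critical values of $\tilde{w}$ sits on $\{\rho=0\}$ exactly at the half-integer heights, midway between consecutive lifts of $\ell(0)$.

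First I would pin down the triangle $T$ of Proposition~\ref{prop:triangles-sections} and the positions of its three vertices. Fix the lift of $q_1=q_{a,i}$ so that $L(0)$ lifts to $\{\phi=0\}$. In the terminology of Definition~\ref{def:indexing}, $q_1$ sitting in the fiber indexed by $a/n$ means that $w(q_1)$ has $\rho$-coordinate $a/n$; since the lift of $L(n)$ through $q_1$ is one of the lines $\phi=-n\rho+k$ and $a\in\Z$, that lift must be $\phi=-n\rho+a$. Repeating this one level down, $q_2=q_{b,j}$ lies on the line $\phi=-n\rho+a$ at $\rho$-coordinate $b/m$, hence $w(q_2)=(b/m,\,a-nb/m)$; the lift of $L(n+m)$ through $q_2$, being a line $\phi=-(n+m)\rho+k'$ with $k'\in\Z$, must then be $\phi=-(n+m)\rho+(a+b)$, and its intersection with $\{\phi=0\}$ puts $w(p)$ at $\rho$-coordinate $(a+b)/(n+m)$ (recovering in passing the position of $p$ found in the preceding proposition). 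Thus $T$ is the bounded triangle cut out by the three lines $\phi=0$, $\phi=-n\rho+a$, $\phi=-(n+m)\rho+(a+b)$, with vertices $w(q_1)=(a/n,0)$, $w(q_2)=(b/m,a-nb/m)$ and $w(p)=((a+b)/(n+m),0)$.

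It then remains to count the critical values lying in $T$. Since $T$ is convex and every critical value lies on the line $\{\rho=0\}$, this count equals the number of half-integers in the segment $T\cap\{\rho=0\}$. If $a$ and $b$ are both $\ge 0$ or both $\le 0$, the affine function $\rho$ takes values of a single sign at all three vertices (this includes the degenerate sub-cases $a=b=0$ and, more generally, $a/n=b/m$, where $T$ collapses to a point), so $T$ lies in one closed half-plane bounded by $\{\rho=0\}$ and meets that line in at most a single vertex; the only vertices of $T$ that can lie on $\{\rho=0\}$ are $w(q_1)$ when $a=0$, $w(q_2)$ when $b=0$, and $w(p)$ when $a+b=0$, and each of these has integer height, hence is not a critical value, so $k=0$. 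If instead $a>0>b$ (the case $b>0>a$ is symmetric), exactly two edges of $T$ cross $\{\rho=0\}$; reading the crossing heights off the line equations, the segment $T\cap\{\rho=0\}$ runs between heights $a+b$ and $a$ when $a+b\ge 0$, and between heights $0$ and $a$ when $a+b\le 0$. In either case the endpoints are integers and the length of the segment is $\min(|a|,|b|)$ (using that $a+b\ge 0 \iff |a|\ge|b|$), so the number of intervening half-integers, equivalently of critical values of $\tilde{w}$ inside $T$, is $\min(|a|,|b|)$, as claimed.

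The one place demanding care is the identification of $T$ in the second step: one must be sure that the indexing of Definition~\ref{def:indexing} really translates into the stated $\rho$-coordinates and into the stated lifts of $L(n)$ and $L(n+m)$, the crucial point being that $a$, and then $a+b$, are integers, so that these lines genuinely pass through lifts of the base point $w=1$. Once the three lines and the half-integer lattice on $\{\rho=0\}$ are correctly in place, the count is routine. The degenerate cases $n=0$ or $m=0$ (one of $q_1,q_2$ is then a unit and there is nothing to prove) and $a=0$ or $b=0$ (subsumed in the same-sign case) require only a remark.
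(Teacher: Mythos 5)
Your proposal is correct and follows essentially the same route as the paper: pass to the universal cover $[-1,1]\times\R$, note that the critical values sit at half-integer heights on $\{0\}\times\R$, locate where the lifts of $\ell(0)$, $\ell(n)$, $\ell(n+m)$ cross that line (heights $0$, $a$, $a+b$), and count the half-integers in the resulting segment, splitting cases according to the sign of $a+b$ exactly as the paper does. The extra work you do pinning down the explicit line equations and vertex coordinates is detail the paper leaves implicit from the preceding propositions, not a different argument.
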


\begin{proof}
  We identify $\tilde{X}(I)$ with $[-1,1]\times \R$. The critical
  values lie at the points $\{0\} \times (\Z + \frac{1}{2})$.

  If $a$ and $b$ are both non-negative or both non-positive, then the
  triangle $T$ is entirely to one side of the vertical line $\{0\}
  \times \R$ where all the critical values lie.

  Suppose $a$ and $b$ have opposite signs and $|a| \leq |b|$. Then the
  output point lies at $(a+b)/(n+m)$, which has the same sign as
  $b$. The side of $T$ corresponding to $\ell(n)$ crosses the line
  $\{0\}\times \R$ at $(0,a)$, while the side corresponding to
  $\ell(0)$ crosses at $(0,0)$, so the distance is $|a|$, and in fact
  the set $\{0\} \times (\Z+\frac{1}{2})$ contains $|a|$ points in
  this interval.

  If $|b| \leq |a|$, the output point at $(a+b)/(n+m)$ has the same
  sign as $a$, and so we need to look at where $\ell(n+m)$ intersects
  the line $\{0\}\times \R$. This happens at $(0,a+b)$, and the
  distance to $(0,a)$ is $|b|$.
\end{proof}

With the notation introduced so far, we can state the main result of
our computation for $\mu^2(q_{b,j},q_{a,i})$. 

\begin{proposition}
  \label{prop:counts-summary}
  Suppose that $q_{a,i} \in HF^0(L(0),L(n))$ and $q_{b,j} \in
  HF^0(L(n),L(n+m))$ as in Definition \ref{def:indexing}, and let $k$ be as in
  Proposition \ref{prop:k-criticalpoints}. Then
  \begin{equation}
    \mu^2(q_{b,j},q_{a,i}) = \sum_{s=0}^k \binom{k}{s}q_{a+b,i+j+s}
  \end{equation}
\end{proposition}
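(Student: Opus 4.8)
The plan is to compute the coefficient of $q_{a+b,i+j+s}$ in $\mu^2(q_{b,j},q_{a,i})$ by counting pseudoholomorphic sections of the Lefschetz fibration $\tilde w: \tilde X(B)\to\tilde X(I)$ over the triangle $T$, following the strategy announced at the start of Section~\ref{sec:degeneration}. By Proposition~\ref{prop:triangles-sections} this count equals the number of triangles contributing to the Floer product, and by Proposition~\ref{prop:k-criticalpoints} the triangle $T$ covers exactly $k$ critical values. First I would degenerate the Lefschetz fibration over $T$: pull the $k$ critical values out through one edge of $T$ (the edge that does not pass near any critical value, i.e.\ a side along $\ell(0)$ or $\ell(n+m)$ depending on which of $|a|,|b|$ is smaller), so that in the limit $T$ breaks into a $(k+3)$-gon carrying a \emph{trivial} cylinder fibration (no critical values) together with $k$ small bigons, each carrying a disk fibration with a single critical value and one boundary marked point. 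By Seidel's gluing/TQFT formalism for counting sections of Lefschetz fibrations \cite{seidel-book}, the total count factors as a product over these pieces, summed over the possible ways of distributing the boundary/matching data.

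The second step is to evaluate each piece. The $k$ ``critical-value bigon'' pieces are all biholomorphic and carry identical Lagrangian boundary conditions (inherited from the degeneration of $L(0),L(n),L(n+m)$ in the fiber cylinder), so each contributes the \emph{same} count, and this count can be read off from the Floer-theoretic long exact sequence for the Dehn twist acting on Lagrangians in the cylinder fiber (two points and the vanishing circle), essentially one of the connecting maps in that sequence — I expect each such piece to contribute a factor of $1$, with the choice in that piece being whether the section ``jumps'' (moving the output index up by one) or not. The $(k+3)$-gon piece carries a trivial fibration, so counting its sections reduces to a count of holomorphic $(k+3)$-gons in the fiber cylinder with boundary on the relevant fiber Lagrangians; with the explicit configuration of intersection points set up in Section~\ref{sec:intersections} (the $q_{a,i}$ indexed by distance from the top of the fiber) this polygon count in the fiber is a straightforward combinatorial/geometric computation and should yield a unique contribution once the ``jump data'' at the $k$ bigons is fixed. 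Assembling: choosing $s$ of the $k$ bigons to jump shifts the fiber-index of the output from $i+j$ to $i+j+s$, each choice is counted with multiplicity $1$ from the bigons times $1$ from the polygon, and there are $\binom{k}{s}$ such choices, giving $\mu^2(q_{b,j},q_{a,i})=\sum_{s=0}^{k}\binom{k}{s}q_{a+b,i+j+s}$.

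A few points need care. One must check that the degeneration is \emph{admissible} — that no sections escape to the horizontal boundary $\partial^h X(B)$ during the neck-stretching, which follows from the flat boundary conditions $\Sigma_0,\Sigma_1$ and a maximum-principle argument as in Proposition~\ref{prop:triangles-sections} — and that no sections bubble off constant or unwanted components (regularity of the relevant moduli spaces, and an index count showing the expected dimension is zero). One must also verify that the fiber-Lagrangian configuration is exactly the one appearing in the long exact sequence, i.e.\ that under parallel transport around the critical value the $S^1$-orbits and the section $L(d)$ degenerate to the two-point-plus-circle picture, and that gradings match so the only output is in degree zero.

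\textbf{Main obstacle.} The hard part will be the second step, identifying the contribution of the ``critical-value'' pieces with a connecting map in the Floer long exact sequence for the Dehn twist in the cylinder and pinning down that each contributes exactly a factor $1$ (rather than some sign or higher multiplicity), together with the bookkeeping that shows the $k$ pieces' choices are \emph{independent} so that the multiplicities multiply and the $\binom{k}{s}$ appears cleanly. Getting the gluing compatibilities and orientation signs right across all $k+1$ pieces, and ruling out extra configurations in the limit, is where the real work lies; the $(k+3)$-gon count in the fiber, by contrast, I expect to be routine given the explicit description of the $q_{a,i}$.
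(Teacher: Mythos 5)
Your overall strategy coincides with the paper's: triangles as sections (Proposition \ref{prop:triangles-sections}), a degeneration pushing the $k$ critical values out of the triangle so that the base breaks into a $(k+3)$-gon with a trivial fibration plus $k$ disks each carrying one critical value, Seidel's TQFT gluing (Proposition \ref{prop:TQFT-gluing}), separate counts over the pieces, and a separate treatment of signs (Proposition \ref{prop:signs}). The genuine problem is where you locate the multiplicity $\binom{k}{s}$. In the paper the critical values are pushed toward the $\ell(n)$ side, so each single-critical-value disk carries boundary conditions interpolating between $\tau^{i-1}L(n)$ and $\tau^{i}L(n)$; these two Lagrangians meet in a \emph{single} degree-zero point $x_i$, and the count of sections asymptotic to it is exactly one (a horizontal section of a nonnegatively curved fibration, Proposition \ref{prop:horiz-sections}). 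There is no binary ``jump/no-jump'' choice available at these pieces: their asymptotics and their contribution are forced. Consequently all of the combinatorics producing $\binom{k}{s}$ must come from the other factor, namely sections of the trivial fibration over the $(k+3)$-gon, i.e.\ holomorphic maps of the $(k+3)$-punctured disk into the fiber with boundary on $L(0),L(n),\tau L(n),\dots,\tau^k L(n),L(n+m)$ and with all inputs fixed at $q_{a,i},x_1,\dots,x_k,q_{b,j}$ and only the output $q_{a+b,h}$ undetermined.

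That polygon count is precisely the step you dismiss as routine, and it is where the real work lies. One must classify the homotopy classes of such polygons — the windings $\delta_r$ around the twisted copies of $L(n)$, which are constrained to the $2^k$ alternating sequences and yield exactly $\binom{k}{h-(i+j)}$ classes mapping to the output $h$ (Proposition \ref{prop:which-output}) — and then prove that each class has exactly one holomorphic representative for \emph{every} conformal structure of the domain, an index-zero problem with fixed conformal structure in which slits and branch points can occur; the paper handles this with an existence argument via the positive-winding property and branched covers, plus a properness and degree-one argument for the forgetful map $\cM(\phi)\to\cR^{k+3}$ (Lemmas \ref{lem:existence}--\ref{lem:degree-1}). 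Conversely, the step you flag as the main obstacle — pinning down the factor $1$ from each critical-value piece, e.g.\ via the exact triangle for the Dehn twist — is the comparatively soft part, done by the horizontal-section and nonnegative-curvature argument of Proposition \ref{prop:horiz-sections}. So your plan reaches the correct formula, but its division of labor would not survive execution: the disk pieces contribute no choices, and the $(k+3)$-gon count cannot be reduced to ``one polygon per choice of jump data'' without carrying out the homotopy-class and fixed-conformal-structure analysis that your sketch omits.
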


\begin{proof}
  This proposition is the combination of Propositions
  \ref{prop:TQFT-gluing}, \ref{prop:horiz-sections},
  \ref{prop:which-output}, \ref{prop:polygon-count}, and \ref{prop:signs}.
\end{proof}

The proof of Proposition \ref{prop:counts-summary} is where most of the technical work of this paper is spent, and it will occupy Sections \ref{sec:extend-fiber}--\ref{sec:signs}. Sections \ref{sec:extend-fiber} and \ref{sec:degenerated-spaces} present a technique for degenerating the total space of the Lefschetz fibration to break these counts into simpler pieces. Sections \ref{sec:horiz-sections} and \ref{sec:sections-polygon} compute these pieces, and Section \ref{sec:signs} determines the signs.

We shall now reformulate Proposition \ref{prop:counts-summary} in
algebro-geometric terms. Let
\begin{equation}
  \label{eq:homog-coord-ring}
  A = \bigoplus_{d=0}^\infty A_d = \bigoplus_{d=0}^\infty H^0(\PP^2, \cO_{\PP^2}(d)) \cong \K[x,y,z]
\end{equation}
be the homogeneous coordinate ring of $\PP^2$. We write elements of $A_d
= H^0(\PP^2,\cO_{\PP^2}(d))$ as degree $d$ homogeneous polynomials in
the variables $x,y,z$. Define
\begin{equation}
  \label{eq:P}
  p = xz - y^2,
\end{equation}
and set, for $a \in \{-d,\dots,d\}$, $i \in \{0,\dots,\left\lfloor\frac{d-|a|}{2}\right\rfloor\}$,
\begin{equation}
  \label{eq:big-Q}
  Q_{a,i} = \left\{
  \begin{aligned}
    x^{-a}p^iy^{d+a-2i}& \quad \text{if } a \leq 0\\
    z^ap^iy^{d-a-2i} & \quad \text{if } a > 0\\
  \end{aligned} \right\} \in A_d.
\end{equation}
\begin{proposition}
  \label{prop:polynomial-algebra}
  Take $Q_{a,i} \in A_n$ and $Q_{b,j} \in A_m$, then in $A$,
  \begin{equation}
    \label{eq:polynomial-algebra}
    Q_{a,i}Q_{b,j} = \sum_{s=0}^{k}\binom{k}{s} Q_{a+b,i+j+s}
  \end{equation}
  where $k = \min(|a|,|b|)$ if $a$ and $b$ have different signs, and
  $k = 0$ otherwise.
\end{proposition}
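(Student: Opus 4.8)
The plan is to verify \eqref{eq:polynomial-algebra} by a direct computation in the polynomial ring $A = \K[x,y,z]$, splitting into cases according to the signs of $a$ and $b$, and using the single relation $xz = y^2 + p$ to rewrite products. The key observation is that for $a \leq 0$ one has $Q_{a,i} = x^{-a}p^i y^{n+a-2i}$, a monomial in $x$, $p$, $y$ (with no $z$), while for $a > 0$ one has $Q_{a,i} = z^a p^i y^{n-a-2i}$, a monomial in $z$, $p$, $y$ (with no $x$). Multiplying two of these together, the only way to create a ``crossing'' is to multiply a power of $x$ by a power of $z$, which is precisely where the relation $xz = y^2 + p$ and hence the binomial coefficients enter.

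First I would dispose of the easy case: if $a$ and $b$ are both $\leq 0$ (resp. both $> 0$), then $Q_{a,i}Q_{b,j}$ is again a monomial in $x,p,y$ (resp. $z,p,y$), and one checks directly that it equals $Q_{a+b,i+j}$, matching the right-hand side with $k = 0$. (One should also note the harmless boundary case $a$ or $b$ equal to $0$, where the two formulas agree.) Second, I would treat the mixed case; by the symmetry $x \leftrightarrow z$, $a \leftrightarrow -a$, $b \leftrightarrow -b$ one may assume $a < 0 < b$, and by symmetry in the two factors one may assume $|a| \leq |b|$, so $k = |a| = -a$. Then
\begin{equation}
  Q_{a,i}Q_{b,j} = x^{-a}p^i y^{n+a-2i}\cdot z^b p^j y^{m-b-2j}
  = (xz)^{-a} z^{b+a} p^{i+j} y^{(n+m)+a-b-2(i+j)}.
\end{equation}
Now substitute $xz = y^2 + p$ and expand $(xz)^{-a} = (y^2+p)^{-a} = \sum_{s=0}^{-a}\binom{-a}{s} p^s y^{2(-a-s)}$, which gives
\begin{equation}
  Q_{a,i}Q_{b,j} = \sum_{s=0}^{k}\binom{k}{s}\, z^{b+a} p^{i+j+s} y^{(n+m)-a-b-2(i+j+s)}.
\end{equation}
Since $a + b > 0$, each term $z^{b+a}p^{i+j+s}y^{\cdots}$ is exactly $Q_{a+b,\,i+j+s}$ by \eqref{eq:big-Q}, so this is the desired sum.

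The only genuine thing to check — and the step I expect to require the most care — is that the exponents appearing on the right are all in the valid range, i.e. that $Q_{a+b,i+j+s}$ is a legitimate basis element of $A_{n+m}$: one needs $i+j+s \geq 0$ (clear), $|a+b| \leq n+m$, and the $y$-exponent $(n+m)-|a+b|-2(i+j+s) \geq 0$ for each $s$ with $0 \leq s \leq k$. These follow from the constraints $0 \leq i \leq \lfloor (n-|a|)/2\rfloor$, $0 \leq j \leq \lfloor (m-|b|)/2\rfloor$ together with $k = \min(|a|,|b|)$; the worst case is $s = k$, and there a short arithmetic estimate using $|a+b| = |b| - |a|$ (valid since $a<0<b$, $|a|\le|b|$) shows the $y$-exponent is nonnegative. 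I would also remark that this proposition combined with Proposition \ref{prop:counts-summary} and the obvious bijection $q_{a,i} \mapsto Q_{a,i}$ is what yields Theorem \ref{thm:triangle-products}, once one checks that the bijection sends the basis $\{x^a p^i y^{n-a-2i}\}\cup\{z^a p^i y^{n-a-2i}\}$ of the theorem statement to the $Q_{a,i}$, which is immediate from \eqref{eq:big-Q}.
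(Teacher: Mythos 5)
Your proposal is correct and follows essentially the same route as the paper's proof: dispose of the equal-sign case directly, reduce the mixed case to $a<0<b$, $|a|\le|b|$, factor out $(xz)^{k}$, and expand via $xz=y^2+p$ to produce the binomial coefficients. The extra verification that each $Q_{a+b,i+j+s}$ has admissible exponents is a fine (and easy) addition, since the $y$-exponent is a sum of the nonnegative input $y$-exponents plus $2(k-s)\ge 0$.
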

\begin{proof}
  The case where $a$ and $b$ have the same sign is obvious. 

  Suppose that $a \leq 0$ and $b \geq 0$, and suppose that $|a| \leq
  |b|$. Then we have $a+b \geq 0$, and $k = -a$.
  \begin{equation}
    Q_{a,i}Q_{b,j} = x^{-a}p^iy^{n+a-2i}z^bp^jy^{m-b-2j} =
    z^{a+b}(xz)^kp^{i+j}y^{n+m+a-b-2(i+j)}
  \end{equation}
  Since $xz = p + y^2$, we have 
  \begin{equation}
    (xz)^k = \sum_{s=0}^k \binom{k}{s}p^sy^{2(k-s)}
  \end{equation}
  \begin{equation}
    Q_{a,i}Q_{b,j} = \sum_{s=0}^k
    \binom{k}{s}z^{a+b}p^{i+j+s}y^{(n+m)-(a+b)-2(i+j+s)}
  \end{equation}
  and the monomial on the right is just $Q_{a+b,i+j+s}$. The other cases are similar.
\end{proof}

The operation $\mu^2$ determines a product, $q_2 \cdot q_1 =
(-1)^{|q_1|} \mu^2(q_2,q_1)$. The sign is present to connect the
conventions for an $A_\infty$-category at those of a dg-category; in
the case all morphisms have degree zero the sign is trivial. The
following proposition states how our Lagrangian intersections give
rise to a distinguished basis of the homogeneous coordinate ring $A$. From the preceding propositions, we can deduce Theorem \ref{thm:triangle-products}.

\begin{proposition}
  \label{prop:coord-ring-iso}
  The map $\psi_{d,n}: HF^0(L(d),L(d+n)) \to A_n$ defined by
  \begin{equation}
    \psi_{d,n}: q_{a,i} \mapsto Q_{a,i}
  \end{equation}
  is an isomorphism. We have
  \begin{equation}
    \psi_{d,n+m} (q_1 \cdot q_2) = \psi_{d,n}(q_1) \cdot \psi_{d+n,m}(q_2)
  \end{equation}
\end{proposition}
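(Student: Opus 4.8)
The plan is to split the statement into the claim that $\psi_{d,n}$ is a linear isomorphism and the claim that it intertwines the products, and to observe that the second claim is an immediate consequence of Proposition~\ref{prop:counts-summary} and Proposition~\ref{prop:polynomial-algebra} once the first is known, so that essentially all of the content lies in showing that the polynomials $Q_{a,i}$ of \eqref{eq:big-Q} form a basis of $A_n$. After the perturbation of Section~\ref{sec:pert} the complex $CF^\ast(L(d),L(d+n))$ is concentrated in degree $0$, so $HF^0(L(d),L(d+n)) = CF^0(L(d),L(d+n))$ and, by Definition~\ref{def:indexing}, the intersection points $\{q_{a,i}\}$ form a basis of it. Thus $\psi_{d,n}$ is a well-defined $\K$-linear map carrying a basis to the set $\{Q_{a,i}\}\subset A_n$, and by \eqref{eq:hilbert-poly} this set has cardinality $\dim_\K A_n$. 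Hence $\psi_{d,n}$ is an isomorphism if and only if the $Q_{a,i}$ are linearly independent.

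To prove linear independence I would use a leading-term argument. Fix a monomial order on $\K[x,y,z]$ for which the leading monomial of $p = xz - y^2$ is $xz$ — for instance the lexicographic order with $z > x > y$. Expanding $(xz-y^2)^i$ in \eqref{eq:big-Q} and selecting the term in which every factor of $p$ contributes its leading monomial $xz$, one reads off that the leading term of $Q_{a,i}$ is $x^{i-a}z^{i}y^{n+a-2i}$ when $a \le 0$ and $x^{i}z^{a+i}y^{n-a-2i}$ when $a > 0$, in each case with coefficient $1$. These leading monomials are pairwise distinct: when $a \le 0$ the exponent of $x$ is at least the exponent of $z$ (their difference being $-a$), when $a > 0$ it is strictly smaller, and within either range the pair (exponent of $x$, exponent of $z$) recovers $(a,i)$. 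A collection of polynomials with pairwise distinct leading monomials is linearly independent, so $\{Q_{a,i}\}$ is a basis of $A_n$ and $\psi_{d,n}$ is an isomorphism. (In fact these leading monomials exhaust all degree-$n$ monomials in $x,y,z$, which gives a Gröbner-style normal form, but this is not needed.)

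For the multiplicativity, recall the sign convention $q_1\cdot q_2 = (-1)^{|q_1|}\mu^2(q_2,q_1)$; since every class involved has degree $0$, this reads $q_1\cdot q_2 = \mu^2(q_2,q_1)$. Take $q_1 = q_{a,i}\in HF^0(L(d),L(d+n))$ and $q_2 = q_{b,j}\in HF^0(L(d+n),L(d+n+m))$. Proposition~\ref{prop:counts-summary} gives $q_1\cdot q_2 = \mu^2(q_{b,j},q_{a,i}) = \sum_{s=0}^{k}\binom{k}{s}q_{a+b,i+j+s}$ with $k$ as in Proposition~\ref{prop:k-criticalpoints}; applying $\psi_{d,n+m}$ and then Proposition~\ref{prop:polynomial-algebra} yields $\psi_{d,n+m}(q_1\cdot q_2) = \sum_{s=0}^{k}\binom{k}{s}Q_{a+b,i+j+s} = Q_{a,i}Q_{b,j} = \psi_{d,n}(q_1)\cdot\psi_{d+n,m}(q_2)$. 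Extending bilinearly over the bases $\{q_{a,i}\}$ gives $\psi_{d,n+m}(q_1\cdot q_2) = \psi_{d,n}(q_1)\cdot\psi_{d+n,m}(q_2)$ for all $q_1,q_2$, which completes the proof.

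In short, every step here is formal given the two product formulas already established; the one place requiring an argument is the basis claim for $\{Q_{a,i}\}$, and the only (minor) obstacle there is organizing the case distinction $a\le 0$ versus $a>0$ so that the two resulting families of leading monomials are manifestly disjoint — the rest is routine monomial bookkeeping.
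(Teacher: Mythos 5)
Your proposal is correct and follows essentially the same route as the paper: the multiplicativity is exactly the combination of Propositions \ref{prop:counts-summary} and \ref{prop:polynomial-algebra}, and the isomorphism claim is ``a basis maps to a basis.'' The only difference is that you supply the leading-monomial argument showing $\{Q_{a,i}\}$ is in fact a basis of $A_n$, a point the paper leaves implicit; your argument for it (distinct leading terms under an order with $xz > y^2$, plus the cardinality count \eqref{eq:hilbert-poly}) is sound.
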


\begin{proof}
  That $\psi_{d,n}$ is an isomorphism is because it maps a basis to a
  basis. The other statement is the combination of 
Propositions \ref{prop:counts-summary} and
  \ref{prop:polynomial-algebra}.
\end{proof}

Thus the proof of Theorem \ref{thm:triangle-products} will be complete once we prove Proposition \ref{prop:counts-summary}, which in turn depends on  Propositions \ref{prop:TQFT-gluing}, \ref{prop:horiz-sections},
  \ref{prop:which-output}, \ref{prop:polygon-count}, and \ref{prop:signs}. The rest of this section is devoted to the proofs of these propositions.

\subsection{Extending the fiber}
\label{sec:extend-fiber}

One problem with our Lagrangian boundary conditions $L(d)$ is that
they intersect the horizontal boundary $\partial^h X(B)$. This raises
the possibility that as we degenerate the fibration, part of a
pseudo-holomorphic section can escape through $\partial^h X(B)$.

We now describe a technical trick that, by attaching bands to the
fiber, allows us to close up the Lagrangians for the purpose of a
particular computation, and thereby use only the results in the
literature on sections with Lagrangian boundary conditions disjoint
from the boundaries of the fibers. Proposition \ref{prop:extended-sections} states that this attachment does not change the spaces of holomorphic curves that we wish to count, because they do not enter the bands. On the other hand, when we degenerate the fibration, we find that some of these curves degenerate into curves that do enter the bands. 

% It appears that the way we do this makes no real difference, and in
% fact almost all of our arguments will concern curves which necessarily
% remain inside our original cylinder fibers. However, when we try to
% find the element $c \in HF^*(L,\tau(L))$ appearing in the Floer
% cohomology exact sequence, we will find a situation where sections
% actually can escape our original fiber, depending on the particular
% choice of perturbations.

The starting point for this construction is, given inputs $a_1$ and
$a_2$, to consider the portion of the fibration $\tilde{X}(B)|T \to T$
lying over the triangle $T$ in the base. We recall the assumption from
section \ref{sec:focus-focus-lefschetz} that the symplectic connection
is actually flat near the horizontal boundary. After passing to the
universal cover of the base, the symplectic monodromies around the
boundaries of $X(I)$ can be trivialized, and the fibration is actually
symplectically trivial near the horizontal boundary. We also assume
that the boundary intersections of our Lagrangians have been
positively perturbed as in Section \ref{sec:pert}, so that they do not
intersect at the boundary. After trivializing the fibration
near the horizontal boundary, we find that in each fiber $F_z$ of
$\tilde{X}(B)|T \to T$, there are six points on $\partial F_z$ (three
on either component), arising as the symplectic parallel transport of
the boundary points of $L(0)$, $L(n)$, and $L(n+m)$. These are the
points where the Lagrangians $L(0),L(n),L(n+m)$ are allowed to
intersect $\partial F_z$ (though  $L(d) \cap \partial F_z$ is
only nonempty if $z \in \ell(d)$ is on the appropriate boundary
component of the triangle $T$). The two sets of three points on each
component of $\partial F_z$ are matched according to which Lagrangian
they come from, and we extend the fiber $F_z$ to $\hat{F}_z$ by
attaching three bands running between the two components of $F_z$
according to this matching. We call the resulting fibration $\hat{X}_T
\to T$. We have an embedding $\iota: \tilde{X}(B)|T \to \hat{X}_T$.

Over the component of $\partial T$ where the Lagrangian boundary
condition $L(0)$ lies, we extend $L(0)$ to $\hat{L}(0)$, closing it up
fiberwise to a circle by letting it run through the corresponding band
in $\hat{F}_z$. Similarly we close up $L(n)$ and $L(n+m)$ to their
hat-versions, each passing through a different band.

Figure \ref{fig:extended-fiber} shows the cylinder with a band
attached. The actual extended fiber has three such bands.
\begin{figure}
\includegraphics[width=2in]{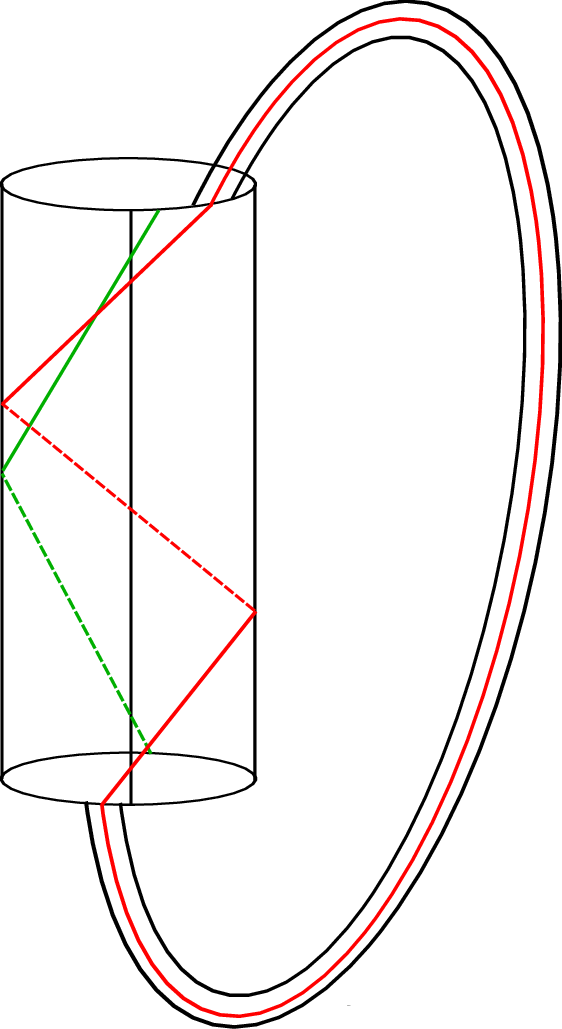} 
\caption{Attaching a band to close up one of the Lagrangians in the fiber.}
\label{fig:extended-fiber}
\end{figure}

It is immediate from the construction that $\hat{X}_T \setminus \image
(\iota)$ is symplectically a product. Let us use a complex structure
which is also a product in this region. Transversality can be achieved
using such structures because all intersection points lie in $\image
(\iota)$, and hence any pseudo-holomorphic section must also pass
through $\image (\iota)$, where we are free to perturb $J$ as usual. We
have the following proposition:

\begin{proposition}
\label{prop:extended-sections}
  Any pseudo-holomorphic section $s:T \to \hat{X}_T$ with boundary
  conditions $\hat{L}(d)$, $d = 0, n, n+m$ lies within $\image
  (\iota)$.
\end{proposition}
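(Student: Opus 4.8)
The plan is to combine a maximum-principle argument in the added bands with positivity of intersection, so that a section cannot stray into the product region $\hat{X}_T \setminus \image(\iota)$. First I would set up coordinates on the product region: by construction $\hat{X}_T \setminus \image(\iota)$ is symplectically a product $T \times (\text{three strips})$, and we have chosen $J$ to be a product there, so the projection to the strip factor is $(J,j)$-holomorphic. Each such strip is a rectangle with two Lagrangian boundary edges — the two arcs of some $\hat{L}(d)$ running through the band — and two ``ends'' where it is glued to $\image(\iota)$. The key point is that the strip factor carries a bounded holomorphic coordinate (or, more invariantly, the band deformation-retracts onto an interval and the complex structure can be taken so that projection to that interval is harmonic), so any holomorphic section that entered the band would be a holomorphic map from (a piece of) $T$ into a strip with boundary on the two Lagrangian edges.

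Next I would run the maximum principle. Suppose for contradiction that $s(T)$ meets the interior of some band. Consider the ``height'' function $h$ on the band (the coordinate measuring how far into the band one has gone, vanishing on $\partial(\image\iota)$ and attaining its interior maximum in the middle of the band). Because the fibration is flat and a product near the horizontal boundary, $h \circ s$ is subharmonic on the region of $T$ mapping into the band; along the Lagrangian boundary edges $\hat{L}(d)$, the Lagrangians are horizontal (they run straight through the band in the product structure), so $\partial_\nu (h \circ s) \le 0$ there — i.e.\ the boundary edges contribute no violation. Hence $h \circ s$ attains its maximum on the part of $\partial T$ lying over the strip-glued ends, which is contained in $\image(\iota)$; but there $h \circ s = 0$, forcing $h\circ s \le 0$ throughout, a contradiction. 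An alternative to the boundary maximum principle, which may be cleaner, is to invoke positivity of intersection with the fibers of the band direction: a section escaping into the band would have positive intersection number with the ``core circle'' divisors of the extended fiber, whereas $\hat{L}(d)$ and all three marked intersection points $q_1,q_2,p$ lie in $\image(\iota)$, and the homology class of $s$ (determined by its boundary on the $\hat{L}(d)$ and its corners) has zero intersection with those divisors — contradicting positivity.

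The main obstacle I expect is making the boundary behavior of the Lagrangians precise: I must verify that, in the chosen product structure near $\partial^h\hat{X}_T$, the closed-up Lagrangians $\hat{L}(d)$ really do run through the bands as ``flat'' (product-form) arcs so that the Neumann/reflection argument applies without sign errors, and that the perturbations from Remark \ref{rem:pert} (which destroyed the boundary intersection points) do not reintroduce any subtlety near where the bands are attached. Once that normalization is in place, the maximum principle (or positivity-of-intersection) argument is essentially formal. I would present the positivity-of-intersection version as the primary argument since it sidesteps the need to control the connection precisely, and remark that all intersection points and hence all section corners lie in $\image(\iota)$ by the discussion preceding the proposition.
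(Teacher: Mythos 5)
Your maximum-principle argument is essentially the paper's proof: the paper simply projects the escaping portion of the section to the fiber factor of the product region, obtaining a holomorphic map into a band with boundary on the Lagrangian core of that band, which the maximum principle forces to be constant, contradicting the boundary condition; your subharmonic height-function formulation is the same mechanism in slightly more elaborate clothing. The variant you say you would make primary --- positivity of intersection with ``core circle divisors'' --- is the one that needs caution: the core of each band is Lagrangian, not a holomorphic divisor, and the boundary of the section lies on $\hat{L}(d)$, which runs through that core, so intersection numbers with slices passing through the core are not homotopy invariants. One can repair this by intersecting with the holomorphic slices $T \times \{pt\}$ for points $pt$ in the band interior off the core (these are $J$-holomorphic because $J$ is a product there, and they are disjoint from all boundary conditions and from the asymptotic corners), but showing that the relative class of the section meets such slices zero times essentially reduces to the same constancy statement, so the detour buys little; the paper's one-line projection argument is the cleaner route. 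One small correction to your setup: $\hat{L}(d)$ traverses each band along a single core arc joining the two boundary points of $L(d)$, not along two separate arcs, which is exactly why the projected curve has boundary on a one-dimensional core and the maximum principle applies directly.
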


\begin{proof}
  Let $p: \hat{X}_T \setminus \image(\iota) \to \hat{F}$ denote the
  projection to the fiber, whose image consist of the bands. Let us
  consider one band containing the Lagrangian $L$. Introduce
  coordinates $(s,t) \in [0,a]\times [-1,1]$ on the band such that the
  part of the Lagrangian within the band is $L = [0,a]\times \{0\}$.
  
  Let $V \subset T$ be the preimage of the open band $(0,a)\times
  [-1,1]$ under $p \circ s$. The set $V$ is relatively open. Let
  $V^\circ = V \setminus \partial T$. As the map $p\circ s :V^\circ
  \to [0,a]\times[-1,1]$ is actually a holomorphic map between Riemann
  surfaces, the open mapping theorem implies that the image $W =
  (p\circ s)(V^\circ)$ is an open subset of $(0,a)\times
  (-1,1)$. Also, the boundary of $W$ is contained in the union of the
  Lagrangian $[0,a]\times \{0\}$ (where the boundary of the domain is
  sent) and the ends of the band $\{0,a\}\times (-1,1)$ (where
  image of $V$ connects to the rest of the holomorphic
  curve). The only such $W$ is the empty set.
\end{proof}

\subsection{Degenerating the fibration}
\label{sec:degenerated-spaces}

By propositions \ref{prop:triangles-sections} and
\ref{prop:extended-sections}, in order to compute the Floer product
between two morphisms $q_1 \in HF^0(L(0),L(n))$ and $q_2 \in
HF^0(L(n),L(n+m))$, it is just as good to count sections of the
fibration $\hat{X}_T \to T$ with Lagrangian boundary conditions
$\hat{L}(0)$, $\hat{L}(n)$, $\hat{L}(n+m)$.

In order to obtain these counts, we will consider a degeneration of
the Lefschetz fibration. We consider a one-parameter family of
Lefschetz fibrations $\hat{X}^r \to T^r$, which for $r = 1$ is
simply the one we started with. As $r$ goes to zero, the fibration
deforms so that all of the $k$ critical values contained within $T$
move toward the side of $T$ corresponding to $\ell(n)$. In the limit,
a disc bubble appears around each critical value, and at $r = 0$, the
base $T$ has broken into a $(k+3)$-gon, with $k$ ``new'' vertices
along the side corresponding to $\ell(n)$, each of which joins to a
disk with a single critical value. We can equip each fibration with
Lagrangian boundary conditions varying continuously with $r$, and
degenerating to a collection of Lagrangian boundary conditions for
each of the component fibrations at $r = 0$.

The base of the Lefschetz fibration undergoes the degeneration shown
in figure \ref{fig:degeneration}. This figure shows specifically the
case for the product of $x \in CF^*(L(0),L(1))$ and $z \in
CF^*(L(1),L(2))$. The marked point on the upper portion is the Lefschetz
critical value, while the marked points on the lower portion are the Lefschetz
critical value and a node.
\begin{figure}
\includegraphics[width=1.5in]{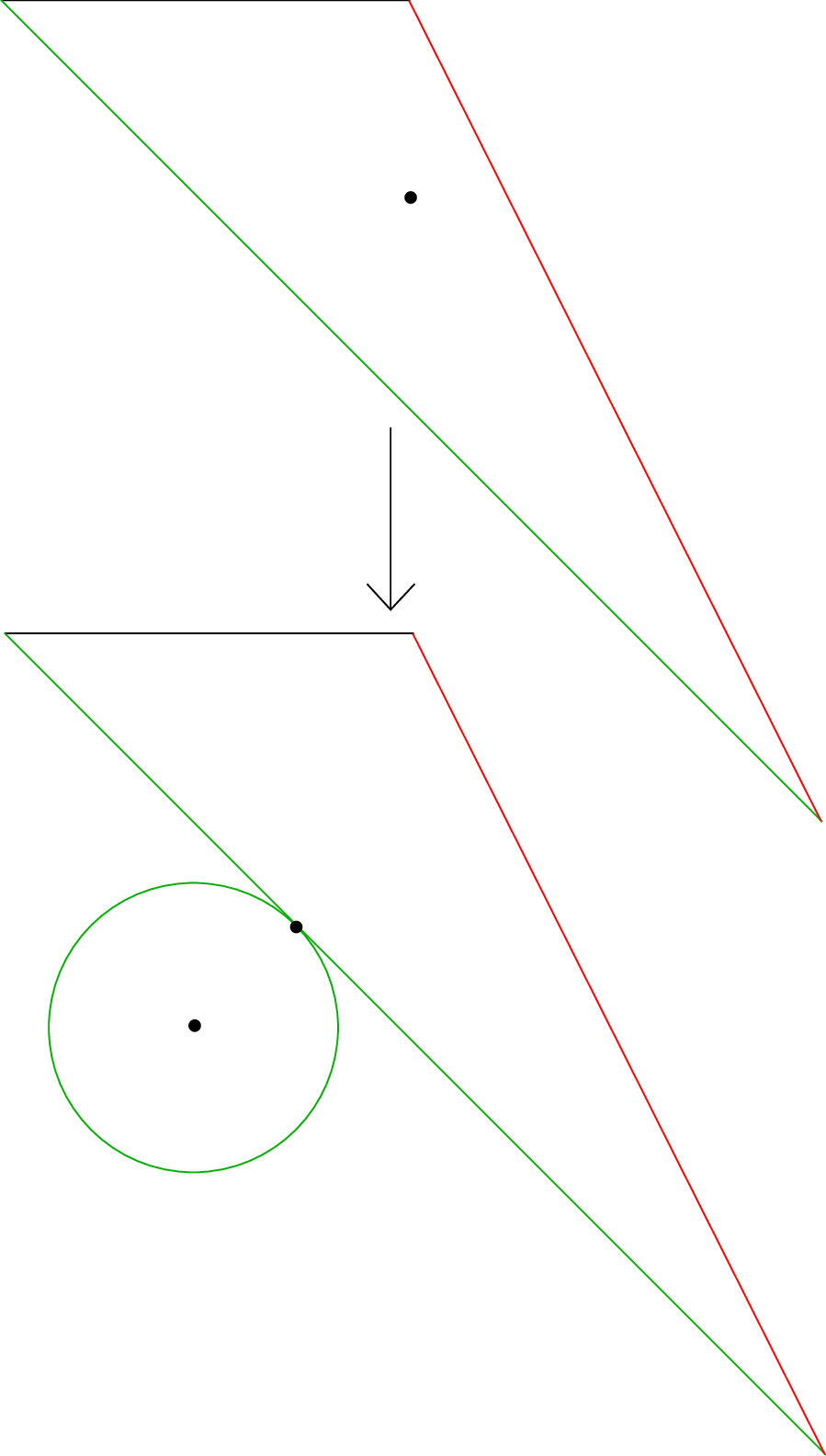} 
\caption{Degenerating the fibration.}
\label{fig:degeneration}
\end{figure}

When performing this construction carefully, it is better to describe
this family as a smoothing of the degenerate $r = 0$ end. Let $T^0$ be
a disk with $(k+3)$ boundary punctures. We emphasize that the
conformal structure of $T^0$ is fixed. Let $\hat{X}^0 \to T^0$ be a
symplectically trivial Lefschetz fibration. Let $D_1,\dots,D_k$ denote
the $k$ disks with one boundary puncture that are our ``bubbles''. For
each $i$ let $E_i \to D_i$ denote a Lefschetz fibration with a single
critical value, and which is trivial near the puncture. Let the
symplectic monodromy around $\partial D_i$ be denoted $\tau_i$.

We will equip each of the components of this fibration with a
Lagrangian boundary condition as follows.

\begin{itemize}
\item The base $T^0$ has one boundary component corresponding to
  $\ell(0)$, one boundary component corresponding to $\ell(n+m)$, and
  $(k+1)$ boundary components corresponding to $\ell(n)$. Since the
  fibration $\hat{X}^0 \to T^0$ is trivial, it suffices to describe
  each boundary condition in the fiber. Over the point where $\ell(0)$
  and $\ell(n+m)$ come together, we identify the fiber with that of
  $\hat{X}^1 \to T^1$, and take $\hat{L}(0)^0$ and $\hat{L}(n+m)^0$ to
  be the corresponding Lagrangians.

\item Over the $k+1$ boundary components of $T^0$ corresponding to
  $\ell(n)$, we construct a sequence $\hat{L}(n)^0_i$ of
  Lagrangians. At the puncture where $\ell(0)$ and $\ell(n)$ come
  together, we take $\hat{L}(n)^0_0$ to have the same position
  relative to $\hat{L}(0)^0$ (already constructed) that $\hat{L}(n)$
  has relative to $\hat{L}(0)$ in the original fibration. As we pass
  each of the new punctures where the disks are attached, the
  monodromies $\tau_i$ must be applied. So we let 
  \begin{equation}
  \label{eq:Ln-twists}
    \hat{L}(n)^0_i = \tau_i(\hat{L}(n)^0_{i-1})
  \end{equation}
  This can be done so that, over the puncture where $\ell(n)$ and
  $\ell(n+m)$ come together, $\hat{L}(n)^0_k$ and $\hat{L}(n+m)^0$
  intersect as the original $\hat{L}(n)$ and $\hat{L}(n+m)$ do.

\item Over the $k$ disks $D_i$, we take a Lagrangian boundary
  condition which interpolates between $\hat{L}(n)^0_{i-1}$ and
  $\hat{L}(n)^0_i$. This is possible because each $D_i$ contains a
  single Lefschetz critical value.
% by equation \eqref{eq:Ln-twists}.
\end{itemize}

Note that at this stage we only care about the twists $\tau_i$ up to
Hamiltonian isotopy, but we will make a particular choice in
\S\ref{sec:horiz-sections}, as required by the technical
considerations there. This gives us $\tau_i$ such that
$\hat{L}(n)^0_{i-1}$ and $\hat{L}(n)^0_i$ have one intersection point
over $i$-th new puncture of $T^0$.

Since the boundary conditions agree over the corresponding punctures,
we can glue the components over $T^0$ and $D_1,\dots,D_k$ with large
gluing length in order to obtain $\hat{X}^\epsilon \to T^\epsilon$ for
small $\epsilon > 0$, which has three boundary components and $k$
critical values all near the $\ell(n)$ boundary. There is a
family of Lefschetz fibrations interpolating between $\hat{X}^\epsilon
\to T^\epsilon$ back to our original $\hat{X}^1 \to T^1$, along which
the critical values move back to their original points. 

We equip $T^0$ and $D_1,\dots,D_k$ with complex structures, and equip $T^\epsilon$ with a family of complex structures $j^\epsilon$ converging to those in the limit. All total spaces are equipped with relatively compatible almost complex structures. 

Now we appeal to the gluing theorem \cite[Proposition 2.2]{seidel-les} telling us how the curve counts behave under the degeneration. 

\begin{proposition}
\label{prop:TQFT-gluing}  
  The count of pseudo-holomorphic sections of $\hat{X}^1 \to T^1$,
  with Lagrangian boundary conditions $\hat{L}(d)$, $d = 0 ,n ,n+m$ is
  obtained from the counts of sections of $\hat{X}^0 \to T^0$ and $E_i
  \to D_i$ by gluing together sections whose values over the punctures match.
\end{proposition}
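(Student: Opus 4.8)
The plan is to realize $\hat{X}^1 \to T^1$ as one end of a smooth one-parameter family of Lefschetz fibrations whose other end is the disjoint union $\hat{X}^0 \to T^0$ together with the $E_i \to D_i$, and to apply the standard gluing theory for pseudoholomorphic sections of Lefschetz fibrations to identify the section counts at the two ends. Concretely, I would construct the family $\hat{X}^r \to T^r$ for $r \in (0,1]$ by the inverse of the degeneration described in Section~\ref{sec:degenerated-spaces}: start from the nodal configuration at $r=0$, equip it with the Lagrangian boundary conditions $\hat{L}(0)^0$, $\hat{L}(n)^0_i$, $\hat{L}(n+m)^0$ constructed there (these match over the attaching punctures by \eqref{eq:Ln-twists}), and glue with gluing parameter $r$ to obtain $\hat{X}^r \to T^r$. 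For $r$ close to $1$ this is Hamiltonian-isotopic to the original fibration, so the section counts agree.

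The key steps, in order, are as follows. First, verify that the glued object $\hat{X}^\epsilon \to T^\epsilon$ genuinely is (symplectically isotopic to) $\hat{X}^1 \to T^1$: this is bookkeeping on the positions of the $k$ critical values and on the monodromies $\tau_i$, using that near the horizontal boundary the fibration is trivial (Section~\ref{sec:focus-focus-lefschetz}) so the only constraint is matching the $k$ Dehn twists. Second, invoke the gluing theorem for pseudoholomorphic sections of Lefschetz fibrations from \cite{seidel-book}: for gluing length sufficiently large, every rigid section of $\hat{X}^\epsilon \to T^\epsilon$ arises uniquely by gluing a rigid section over $T^0$ to rigid sections over each $D_i$ with matching boundary values at the attaching punctures, and this gluing is a bijection on the relevant moduli spaces (with signs, to be addressed in Proposition~\ref{prop:signs}). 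Third, rule out bubbling and strip-breaking along the deformation from $r=\epsilon$ to $r=1$: since $\hat{L}(0)$, $\hat{L}(n)$, $\hat{L}(n+m)$ are sections of the torus fibration closed up by bands, they bound no nonconstant disks or strips even topologically (by Proposition~\ref{prop:extended-sections} any section stays in $\image(\iota)$, where the relevant relative homotopy groups vanish), so the moduli spaces of sections form a smooth cobordism over $r \in [\epsilon, 1]$ and the counts are deformation-invariant. Combining these three gives the stated identity of counts.

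The main obstacle is the second step: ensuring that the gluing map is an honest bijection rather than merely a surjection or injection in some range. This requires (a) transversality for the sections over each piece, which is available because all intersection points lie in $\image(\iota)$ where $J$ may be perturbed freely, and (b) a compactness/no-escape argument showing that for $r$ near $\epsilon$ no section of $\hat{X}^r$ can fail to be close to a broken configuration --- i.e.\ the usual "no other limits" half of a gluing theorem. Both are standard in the Lefschetz-fibration TQFT framework of \cite{seidel-book}, but care is needed because the base $T$ has corners (it is a polygon) and the Lagrangian boundary conditions are piecewise, so one must check that the a~priori energy bound and the exclusion of boundary bubbling at the corners go through; this is where I expect the bulk of the technical work, though no genuinely new phenomenon arises. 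The sign comparison is deferred to Proposition~\ref{prop:signs} and does not affect the present statement, which concerns only the matching of section-count data under gluing.
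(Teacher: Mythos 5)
Your proposal follows essentially the same route as the paper: construct the broken configuration at $r=0$ with matched boundary conditions, glue with large gluing length to obtain $\hat{X}^\epsilon \to T^\epsilon$ and invoke Seidel's gluing/TQFT framework to identify sections with matching asymptotics, then deform from $r=\epsilon$ to $r=1$ using the fact that the Lagrangians bound no strips even topologically to rule out breaking. The extra technical caveats you flag (transversality via perturbing $J$ inside $\image(\iota)$, no escape through the horizontal boundary) are exactly the points the paper handles via Proposition~\ref{prop:extended-sections} and the band-attachment trick, so no substantive difference arises.
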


\begin{proof}
Considering pseudo-holomorphic sections of the fibrations
$\hat{X}^\epsilon \to T^\epsilon$, as long as the gluing length is
large, the sections for $r = \epsilon$ will be obtained from sections
over $T^0$ and $D_1,\dots,D_k$ by gluing sections with matching values
at the punctures \cite[Proposition 2.2]{seidel-les}. During the deformation from $r = \epsilon$ to $r =
1$, no Floer strip breaking can occur because our Lagrangians
$\hat{L}(d)$, $d = 0, n, n+m$ do not bound any strips, even
topologically, so the count remains the same at $r = 1$.
\end{proof}

\subsection{Horizontal sections over a disk with one critical value}
\label{sec:horiz-sections}

We now determine the counts of pseudo-holomorphic sections of the
Lefschetz fibrations $E_i \to D_i$ using the theory of horizontal
sections developed by Seidel in \cite{seidel-les}. In particular we
will apply results from section 2.5 of that paper, so we adopt its
notation. This theory involves analysis of the symplectic connection, whose horizontal subspaces are the symplectic orthogonal subspaces to the fibers, and the symplectic parallel transport defined by horizontal lifting of paths in the base of the fibration.

In order for this technique to work, we need to set up carefully a
model Lefschetz fibration over a base $S$, the disk with one end, in
order to ensure that all the sections we need to count are in fact
horizontal. The key properties are:

\begin{itemize}
\item Transversality of the boundary conditions over the strip-like
  end. This means that we cannot use a standard model Dehn twist
  fibration, but need to introduce a perturbation somewhere.

\item Non-negative curvature. The curvature of the symplectic
  connection is a two-form on the base with values in functions on the
  fibers (the Lie algebra of the group of Hamiltonian
  diffeomorphisms), and we require that this two-form evaluated on a
  positive basis of the tangent space to the base yields a non-negative
  function on the fiber. The standard model Dehn twist fibration has
  non-negative curvature, but requiring the perturbation to have
  non-negative curvature imposes a constraint.

\item Vanishing action of horizontal sections. This imposes a further
  constraint on the perturbation.
\end{itemize}

As we progress through the construction, the definitions of all of
these terms will be recalled.

The first step is to construct a fibration which is flat away from the
critical point, following section 3.3 of \cite{seidel-les}. Let $S$ be
the Riemann surface $\{\re z \leq 0, |\im z| \leq 1\} \cup \{|z| \leq 1\}$ (a negative half-strip which has been rounded off with a
half-disk). 

Let $M$ denote the fiber, which is a cylinder with three bands
attached. The vanishing cycle $V$ is the equator of the cylinder. The
circle running through the core of one of these bands is $L$. Equip
$M$ with a symplectic form $\omega = d\theta$ such that $L$ is exact. Over $S^- =
(-\infty,-1]\times [-1,1]$, we let $\pi: E^- \to S^-$ be a trivial
fibration and equip $E$ with 1-form $\Theta$ and 2-form $\Omega =
d\Theta$ which are pulled back from the fiber (to get a symplectic
form we add the pullback a positive 2-form $\nu \in \Omega^2(S)$, but
this does not affect the symplectic connection). Following the pasting
procedure described in \cite[\S 3.3]{seidel-les} (though with the cut
on the right rather than the left), we can complete this fibration
$\pi: E \to S$ to one where the monodromy around the boundary is
$\tau_V$, a standard model Dehn twist supported near the vanishing
cycle. The result has non-negative curvature, is actually trivial on
the part of the fiber away from the support of the Dehn twist, and is
flat over the part of the base away from the critical value.

For this fibration, there is a Lagrangian boundary condition which
over the end corresponds to the pair $(L,\tau_V(L))$. Of course, these
are not transverse since $\tau_V$ is identity in the band. Hence we
will perturb the symplectic form by a term which depends on a
Hamiltonian. We will introduce the perturbation in a neighborhood of
the edge $(-\infty,-1]\times \{1\}$. Let $\beta$ be a cutoff function
which

\begin{itemize}
\item is supported in $U = \{s+it\mid -2\leq s \leq -1, 1-\epsilon \leq t
  \leq 1 \}$, 
\item vanishes along the bottom, left and right sides of $U$, and
\item has $\partial\beta/\partial t \geq 0$.
\end{itemize}

Figure \ref{fig:horizontal-sections} shows the base of the fibration; the region where $\beta$ has support is shaded.
\begin{figure}
\includegraphics[width=2.5in]{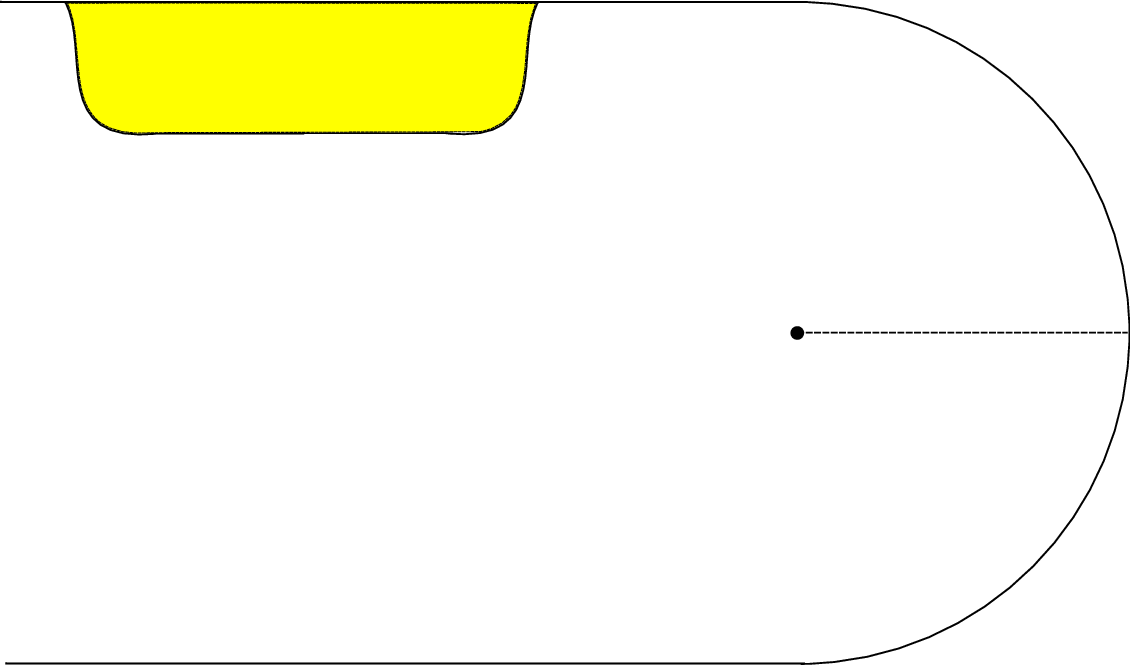} 
\caption{The base of the fibration with the region of perturbation shaded.}
\label{fig:horizontal-sections}
\end{figure}

Let $H$ be a Hamiltonian function on the fiber $M$, and let $X_H$ be
its vector field defined by $\omega(\cdot, X_H) = dH$. Then over
$S^-$, where our fibration was originally trivial with $\Omega =
d\Theta$ pulled back from the fiber, consider 
\begin{equation}
  \Theta' = \Theta + H \beta ds
\end{equation}
\begin{equation}
  \Omega' = d\Theta' = \Omega + \beta dH \wedge ds - H(\partial
  \beta/\partial t) ds\wedge dt
\end{equation}

This modifies the symplectic connection as follows: Let $Y \in TE^v$
denote the general vertical vector. Then if $Z \in TS$, and $Z^h$ is
its horizontal lift
\begin{equation}
  0= \Omega'(Y,Z^h) = \Omega(Y,Z^h) + \beta dH(Y)ds(Z) 
\end{equation}

If we denote by $Z$ again the horizontal lift with respect to the
trivial connection, we have $Z^h = Z - \beta ds(Z) X_H$. Since $\beta
\geq 0$, this means that as we parallel transport in the \emph{negative}
$s$-direction through the region $U$, we pick up a bit of the
Hamiltonian flow of $H$, compared with the trivial connection. By
adjusting the function $\beta$, we can ensure that the parallel
transport along the boundary in the positive sense picks up $\phi_H$, the time 1
flow of $H$.

We must compute the curvature of this connection. This is the 2-form
on the base with values in functions on the fibers given by
$\Omega'(Z_1^h,Z_2^h)$. It will suffice to compute for $Z_1
= \partial/\partial s$ and $Z_2 = \partial/ \partial t$, a positive
basis. We have $Z_1^h = \partial/\partial s - \beta X_H$ and $Z_2
= \partial/\partial t$.
\begin{equation}
\begin{split}
  \Omega'(Z_1^h,Z_2^h) &=\\
 \Omega(Z_1^h,Z_2^h) &+ \beta
  dH(Z_1^h)ds(Z_2^h) - \beta dH(Z_2^h)ds(Z_1^h) -
  H(\partial\beta/\partial t) ds\wedge dt(Z_1^h,Z_2^h)
\end{split}
\end{equation}
The first term vanishes because $Z_2^h$ is horizontal for the trivial
connection, the second term because $ds(Z_2^h) = 0$, and the third
because $dH(Z_2^h) = 0$. This leaves $-H(\partial\beta/ \partial
t)ds\wedge dt(Z_1^h,Z_2^h) = -H(\partial \beta/\partial t)$. By our
assumptions on $\beta$, this is non-negative as long as $H \leq 0$.

We equip the deformed fibration with a Lagrangian boundary condition $Q$
given by parallel transport of $L$ around the boundary. This picks up
a Dehn twist and the time 1 flow of $H$, so over the end we have the
pair $(L,\phi_H(\tau_V(L)))$.

A \emph{horizontal section} is a map $u: S \to E$ such that $Du(TS) =
TE^h$. The importance of such sections is that, while they are
determined by the symplectic connection, they are pseudo-holomorphic
for any \emph{horizontal complex structure} $J$, meaning an almost
complex structure that preserves $TE^h$ and for which the projection $E \to S$ is holomorphic.

The \emph{action} $A(u)$ of a section $u$ is defined to be $\int_S u^*
\Omega'$. The symplectic area of $u$ is then $A(u) + \int_S \nu$. The
identity relating action to energy is (\cite{seidel-les}, eq. 2.31)
\begin{equation}
  \label{eq:energy-action}
\frac{1}{2}  \int_S ||(Du)^v||^2 + \int_S f(u) \nu = A(u) + \int_S
||\bar{\partial}_J u||^2
\end{equation}
for any horizontal complex structure $J$. Here $Du = (Du)^h + (Du)^v$
is the splitting induced by the connection, and $f$ is the function
determined by the curvature as $f(\pi^*\nu|TE^h) = \Omega'|TE^h$. In
our example $f$ is supported near the critical point and in $\Supp
\beta$, where $f = -H(\partial\beta/\partial t)$. 

A direct consequence of \eqref{eq:energy-action} is the following:
\begin{proposition}
\label{prop:action-zero}
  If the curvature of $\pi: E \to S$ is non-negative, and if $u$ is a
  $J$-holomorphic section with $A(u) = 0$, then $u$ is horizontal
  and the curvature function $f$ vanishes on the image of $u$.
\end{proposition}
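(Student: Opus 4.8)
The plan is to read the conclusion off directly from the energy--action identity \eqref{eq:energy-action}, once one observes that every term occurring in it has a definite sign. For the given horizontal complex structure $J$, the identity \eqref{eq:energy-action} applies, and since $u$ is $J$--holomorphic we have $\bar\partial_J u = 0$, so the right-hand side of \eqref{eq:energy-action} reduces to $A(u)$. Using the hypothesis $A(u) = 0$, the identity becomes
\begin{equation*}
  \frac{1}{2}\int_S \|(Du)^v\|^2 + \int_S f(u)\,\nu = 0.
\end{equation*}
The first integrand is pointwise nonnegative, being the squared norm (with respect to a metric compatible with the horizontal complex structure) of the vertical part of $Du$. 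For the second integral, the nonnegative curvature hypothesis means exactly that the curvature function $f$ satisfies $f \geq 0$ --- this is the same sign convention under which the perturbed fibration constructed above was checked to have nonnegative curvature (there $f = -H(\partial\beta/\partial t) \geq 0$ when $H \leq 0$) --- while $\nu$ is a positive two--form on $S$; hence $f(u)\,\nu \geq 0$ pointwise. A sum of two nonnegative quantities that vanishes forces each of them to vanish.

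From $\int_S \|(Du)^v\|^2 = 0$ and the pointwise nonnegativity of the integrand we conclude $(Du)^v \equiv 0$, i.e.\ $Du(TS) \subseteq TE^h$ at every point of $S$, which is precisely the statement that $u$ is horizontal. Likewise $\int_S f(u)\,\nu = 0$ together with $f(u)\,\nu \geq 0$ pointwise forces $f(u) = 0$ everywhere on $S$, i.e.\ $f$ vanishes along the image of $u$. There is no real obstacle here: all of the content is packaged into the identity \eqref{eq:energy-action}, quoted from \cite{seidel-les}, and the only point requiring care is to match sign conventions so that ``nonnegative curvature'' is correctly translated into $f \geq 0$ and into the positivity of $f(u)\,\nu$; once the signs are pinned down the proposition is immediate.
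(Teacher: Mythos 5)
Your proof is correct and is exactly the argument the paper intends: the proposition is stated there as a "direct consequence" of the energy--action identity \eqref{eq:energy-action}, and your spelling-out (holomorphicity and $A(u)=0$ kill the right-hand side, nonnegativity of $\|(Du)^v\|^2$ and of $f(u)\,\nu$ then force each term to vanish) is precisely that consequence. Nothing is missing, and the sign bookkeeping for $f\geq 0$ matches the paper's convention.
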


With all this in mind, we choose our Hamiltonian $H: M \to \R$ as
follows:
\begin{itemize}
\item $H \leq 0$ and $H = 0$ near $\partial M$. This ensures that the
  fibration is still trivial near the horizontal boundary and that the
  curvature is non-negative within $\Supp \beta$.
\item $H$ achieves its global maximum of $0$ near $\partial M$ and on
  the ``cocore'' of the band in $M$. It achieves its minimum along the
  vanishing cycle, and has no other critical points in the cylinder or
  lying on $L$ (which intersects the vanishing cycle and the cocore
  once). The first condition implies that horizontal sections passing
  through the cocore do not pick up any curvature, while the second is
  there in order to ensure that $(L,\phi_H(\tau_V(L)))$ is a
  transverse pair.
\item The flow $\phi_H$ has the property that $L \cap
  \phi_H(\tau_V(L))$ consists of one point $x$ lying on the cocore of
  the band. At this point, the tangent space to $\phi_H(\tau_V(L))$ is a small clockwise rotation of the tangent space to $L$. 

This implies that this intersection point has degree zero. To see this, recall from Section \ref{sec:gradings} that the fiber has foliation by circles. We extend this foliation into the attached bands, foliating each band by intervals. The Dehn twist $\tau_V$ preserves this foliation, so it admits a grading in the sense of \cite[(12i)]{seidel-book}, namely an $\R$-valued lift of the $S^1$-valued function $\alpha(D\tau_V(\Lambda))/\alpha(\Lambda)$ defined on the Lagrangian Grassmannian bundle. We can choose the lift to be zero in the bands, where $\tau_V$ is the identity. The Lagrangians $L$ and $\phi_H(\tau_V(L))$ do not rotate with this foliation, so they admit gradings. Thus, if we equip $L$ with some grading and $\phi_H(\tau_V(L))$ with the induced grading, the $\R$-valued grading functions for $L$ and $\phi_H(\tau_V(L))$ differ by a small amount. Since tangent space to $\phi_H(\tau_V(L))$ is a small clockwise rotation of the tangent space to $L$, the intersection point $x$ has degree zero.
\end{itemize}

After this setup, we come to the problem of determining the set
$\cM_J(x)$ of $J$-holomorphic sections $u: S \to E$ satisfying $u(\partial S) \subset Q$ which are
asymptotic to $x \in L \cap \phi_H(\tau_V(L))$ over the end.

\begin{proposition}
  \label{prop:horiz-sections}
  Let $J$ be a horizontal complex structure. Then $\cM_J(x)$ consists
  of precisely one section. It is horizontal, has $A(u) = 0$, and is regular.
\end{proposition}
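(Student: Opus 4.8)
The plan is to reduce the entire moduli space $\cM_J(x)$ to a single explicit section and then check transversality. First I would exhibit one horizontal section. The point $x$ was arranged to lie on the cocore of one of the bands attached to the cylinder fiber $M$, and the vanishing cycle $V$ --- the only place where the Dehn twist $\tau_V$ and the curvature of the standard model are supported --- is disjoint from that cocore. Since moreover $H \le 0$ attains its maximum value $0$ along the cocore, we have $X_H = 0$ there, so the perturbation term in the connection also acts trivially on $x$. Hence parallel transport along every path in $S$ fixes $x$, the ``constant'' section $u_0$ with value $x$ in each fiber is horizontal, it satisfies the Lagrangian boundary condition $Q$ (the parallel transport of $L$, and $x \in L$ is monodromy-fixed), and it is asymptotic to $x$ over the end. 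Its action vanishes: $(Du_0)^v = 0$ by horizontality, and the curvature function $f$ vanishes along the cocore (there $H = 0$, so $f = -H(\partial\beta/\partial t) = 0$, and the cocore avoids the region near the critical point), so \eqref{eq:energy-action} gives $A(u_0) = 0$.

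Next I would show that every $J$-holomorphic section asymptotic to $x$ has action zero. By \eqref{eq:energy-action} and the nonnegativity of the curvature, any such section satisfies $A(u) = \frac12\int_S \|(Du)^v\|^2 + \int_S f(u)\nu \ge 0$. On the other hand $\Omega'$ is closed, so $A(u)$ depends only on the relative homotopy class of $u$ (rel its fixed boundary on $Q$ and fixed asymptote $x$); the three design conditions recalled before the proposition --- transversality of the boundary pair, nonnegative curvature, and vanishing action of horizontal sections --- are precisely what force this class to be unique, so that $A(u) = A(u_0) = 0$ for all $u \in \cM_J(x)$. Concretely, the difference of actions of two such sections is $\int_{S^2}\Omega'$ over the $2$-sphere obtained by gluing them along their common boundary arc and asymptote, and this vanishes because the relevant spheres can be pushed off the critical fiber, on whose complement $\Omega'$ is exact. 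Granting this, Proposition \ref{prop:action-zero} applies and shows that every $u \in \cM_J(x)$ is horizontal with $f$ vanishing on its image.

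It then remains to see that the only such horizontal section is $u_0$, and that $u_0$ is regular. For uniqueness: $f$ is strictly positive on the two loci where curvature is concentrated --- the fiber region near $V$ over the part of the base near the critical value, and the region inside $\Supp\beta$ where $H < 0$ and $\partial\beta/\partial t > 0$ --- so a horizontal section on whose image $f$ vanishes stays within the part of $M$ where the connection agrees with the flat product connection. A horizontal section of a flat fibration is determined by its value at a single point, and here that value is the asymptotic limit $x$; hence $u = u_0$ and $\cM_J(x) = \{u_0\}$. For regularity: the linearization $D_{u_0}$ is a $\bar\partial$-operator on $u_0^*TE^v$ over $S$ with totally real boundary condition induced by $Q$ and nondegenerate asymptote $x$, of index $0$ since $x$ has degree $0$; its surjectivity follows from Seidel's general principle that a horizontal section of a nonnegatively curved fibration with vanishing action is automatically regular \cite{seidel-les}, and can alternatively be checked by a direct computation of $D_{u_0}$ at the constant section.

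The main obstacle is the identification $A(u) = 0$ for \emph{all} sections in $\cM_J(x)$, equivalently ruling out $J$-holomorphic sections of positive action that ``escape'' toward the critical value and wind around $V$. This is exactly what the careful placement of $x$ on the cocore away from $\Supp\tau_V$, together with the nonnegative-curvature and vanishing-action conditions built into the choices of $H$ and $\beta$, is designed to prevent; once it is in place, Proposition \ref{prop:action-zero} and the flat-connection rigidity argument finish the proof with little further effort.
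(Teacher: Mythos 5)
Your construction of the horizontal section through $x$, the vanishing of its action, the use of nonnegative curvature together with Proposition \ref{prop:action-zero}, the uniqueness via parallel transport, and the appeal to \cite{seidel-les} for regularity all follow the paper's own route. The one place where your argument as written would fail is the justification that every $u \in \cM_J(x)$ has the same action as the horizontal section. Two sections asymptotic to $x$ do \emph{not} share a boundary arc: each has its own boundary curve inside the Lagrangian boundary condition $Q$, so gluing them does not produce a sphere, and the assertion that the three design conditions (transverse ends, nonnegative curvature, vanishing action of horizontal sections) ``force the relative homotopy class to be unique'' is a non sequitur --- those conditions say nothing about homotopy classes, and uniqueness of the class is neither needed nor clear. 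The fact you need is the standard one in Seidel's exact framework, and it is what the paper implicitly invokes: $\Omega' = d\Theta'$ is globally exact (so no ``pushing off the critical fiber'' is relevant), and $Q$ is swept out by parallel transport of $L$, so $\Omega'|Q = 0$; with the boundary data of \cite{seidel-les} the action of a section with boundary on $Q$ is then a function of its asymptotic limit alone. Hence $A(u') = A(u) = 0$ for all $u' \in \cM_J(x)$, with no topological case analysis. (Your preliminary observation that $A(u) \ge 0$ for holomorphic sections is correct but not needed.)

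Two smaller points. First, your uniqueness step takes a detour: from ``$f$ vanishes on the image'' you infer the section stays in a flat region and then invoke flatness; this is more than is needed and slightly delicate (vanishing of the curvature function along the image does not by itself trivialize the connection nearby --- you have to use that points of the image in $\Supp\beta$ are maxima of $H$, so $X_H = 0$ there). The paper's shortcut is cleaner: a horizontal section is determined by parallel transport from its asymptotic value, and both sections limit to $x$. Second, for regularity there is no general principle that a horizontal section of a nonnegatively curved fibration with vanishing action is automatically regular; the paper's argument is the correct one --- the boundary Lagrangian subspaces tilt clockwise by a small amount, giving $\ind D_{u,J} = 0$, and injectivity of $D_{u,J}$ is Lemma 2.27 of \cite{seidel-les}, whence surjectivity. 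Your ``direct computation'' fallback would amount to exactly this, so the fix is to state it rather than cite a principle that does not exist in that generality.
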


\begin{proof}
  The first step is to construct a horizontal section. Any horizontal
  section, if it exists, is determined by parallel transport of the
  point $x \in L\cap \phi_H(\tau_V(L))$ throughout $E$. Consider the
  section over $S^-$ given by $u^-: (s,t) \mapsto (s,t,x)$. This is
  horizontal outside $\Supp \beta$, since the fibration is trivial
  over $S^- \setminus \Supp \beta$. In $\Supp \beta$, the fact that
  $x$ lies at a critical point of $H$ means that $(TE^h)_{(s,t,x)}$ is
  the same as for the trivial connection, so the section is horizontal
  there as well. Near the singularity, the fibration is trivial in the
  band where $x$ lies, so $u^-$ extends to a horizontal section $u: S
  \to E$.

  For this section, we compute $A(u) = \int_S u^*\Omega'$. Since $u$
  lies in the region where $\tau_V$ is trivial, $\int_{S \setminus
    \Supp \beta} u^* \Omega' = 0$. Since $u$ passes through the point
  $x$ where $H(x) = 0$, the contribution $\int_{\Supp \beta}
  u^*\Omega' = \int_{\Supp \beta} -H(x)(\partial \beta/\partial t)\nu$
  vanishes.

  Since $A(u) = 0$, any $u' \in \cM_J(X)$ must also have action
  $0$. To see this, note that $\Theta'$ is exact on each fiber of the boundary Lagrangian $Q$. This implies that $\Theta'|_Q = \pi^*\kappa_Q + dK_Q$, where $\kappa_Q$ is a one-form on $\partial S$, and $K_Q$ is a function on $Q$. Thus
  \begin{equation}
    A(u) = \int_S u^*\Omega' = \int_{\partial S} u^*\Theta' = \int_{\partial S} \kappa_Q + \int_{\partial S} dK_Q
  \end{equation}
  The first term on the right-hand side does not depend on $u$, and the second term, which is the net change in $K_Q$ along $u(\partial S)$, depends on the asymptotic data for $u$, which is the same for all $u \in \cM_J(X)$.

  Because the curvature of $\pi: E\to S$ is non-negative,
  proposition \ref{prop:action-zero} implies that $u'$ is
  horizontal. Since $u'$ and $u$ are both asymptotic to $x$, they are
  equal. Hence $\cM_J(x) = \{u\}$.

  It remains to show that $u$ is regular. The linearization of
  parallel transport along $u$ trivializes $u^*(TE^v)$ such that the
  boundary condition $u^*(TQ)$ is mapped to a family of Lagrangian
  subspaces which, as we traverse $\partial S$ in the positive sense,
  tilt clockwise by a small amount. That $\ind D_{u,J} = 0$ follows from Proposition 11.13 of \cite{seidel-book}.  On the
  other hand, Lemma 2.27 of \cite{seidel-les} applies to the section
  $u$, implying $\ker D_{u,J} = 0$. We could also appeal to Lemma 11.5 of \cite{seidel-book} (with $\mu(\rho_1) = 0$ and $|\Sigma^-| = 1$) to see that $\ker D_{u,J} = 0$. Hence $\coker D_{u,J} = 0$ as well.
\end{proof}

\subsection{Polygons with fixed conformal structure}
\label{sec:sections-polygon}

Recall from section \ref{sec:degenerated-spaces} the trivial fibration
$\pi: \hat{X}^0 \to T^0$, with fiber $M$, where $T^0$ is a a disk with
$(k+3)$ boundary punctures. We have $\hat{X}^0 = M \times T^0$
symplectically. We equip $\hat{X}^0$ with a product almost complex
structure $J = J_M \times j$, where $j$ is the complex structure on $T^0$.

\begin{proposition}
  \label{prop:sections-polygon}
  The $(j,J)$-holomorphic sections $u: T^0 \to \hat{X}^0$ are in
  one-to-one correspondence with $(j,J_M)$-holomorphic maps $u_M: T^0
  \to M$.
\end{proposition}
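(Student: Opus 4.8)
The plan is to observe that a section of the trivial fibration $\pi: \hat{X}^0 = M \times T^0 \to T^0$ is literally the graph of a map $T^0 \to M$, and that the product almost complex structure $J = J_M \times j$ makes the Cauchy--Riemann equation decouple. First I would write an arbitrary section as $u(z) = (u_M(z), z)$; this sets up, at the level of smooth maps, the bijection $u \leftrightarrow u_M$ between sections $u: T^0 \to \hat{X}^0$ and maps $u_M: T^0 \to M$. Then, since $Du = (Du_M, \mathrm{id})$ under the product splitting $T\hat{X}^0 = TM \oplus TT^0$, and $J = J_M \times j$ respects this splitting, the equation $J \circ Du = Du \circ j$ breaks into its $TT^0$-component, which reads $j \circ \mathrm{id} = \mathrm{id} \circ j$ and holds identically, and its $TM$-component, which reads $J_M \circ Du_M = Du_M \circ j$, i.e. $u_M$ is $(j,J_M)$-holomorphic. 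Hence $u$ is $(j,J)$-holomorphic precisely when $u_M$ is, which is the asserted correspondence.

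The one thing that actually needs attention is bookkeeping: that the boundary and asymptotic data transport correctly under $u \leftrightarrow u_M$. Because $\hat{X}^0 \to T^0$ was set up in Section~\ref{sec:degenerated-spaces} to be symplectically trivial, the Lagrangian boundary conditions $\hat{L}(0)^0$, $\hat{L}(n)^0_i$, $\hat{L}(n+m)^0$ are, in the chosen trivialization, of the form (Lagrangian circle in $M$) $\times$ (arc of $\partial T^0$); so $u$ satisfies them exactly when $u_M$ sends each such arc into the corresponding circle, and the constraint that $u$ be asymptotic to a prescribed intersection point over a puncture is equivalent to the constraint that $u_M$ be asymptotic to the corresponding point of the relevant pairwise intersection of circles. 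Thus the bijection restricts to a bijection of the moduli spaces actually being counted. It is also worth recording that the correspondence is compatible with index and regularity: the linearized operator at $u$ splits as $D_{u_M} \oplus \bar{\partial}_{T^0}$ with the second summand an isomorphism, so $u$ is regular iff $u_M$ is, and a generic choice of $J_M$ on $M$ achieves transversality.

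I do not expect a genuine obstacle here; the statement is formal, and the only place one could slip is in conflating two trivializations — one must use the \emph{same} identification $\hat{X}^0 \cong M \times T^0$ that defined the $\hat{L}(\cdot)^0$ in Section~\ref{sec:degenerated-spaces}, so that the circles appearing above are exactly the Lagrangians described there. With the reduction in hand, the remaining work is the genuinely geometric count of the holomorphic maps $u_M: T^0 \to M$ — equivalently, holomorphic $(k+3)$-gons in the fiber $M$ with the prescribed boundary circles and corners — which is carried out separately in Proposition~\ref{prop:polygon-count}.
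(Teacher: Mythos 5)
Your argument is the same as the paper's: write a section as $u = (u_M, u_{T^0})$ with respect to the product splitting, note that the product almost complex structure makes each component pseudoholomorphic and that $u_{T^0}$ is the identity because $u$ is a section, so the correspondence is immediate and invertible. The extra remarks on boundary conditions and regularity are consistent with how the paper handles those points elsewhere, but the core proof is identical.
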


\begin{proof}
  Given $u: T^0 \to \hat{X}^0$, write $u =(u_M, u_{T^0})$ with respect
  to the product splitting. Since $J$ is a product each component is
  pseudo-holomorphic in the appropriate sense. But $u_{T^0}$ is the
  identity map because $u$ is a section. This correspondence is
  clearly invertible.
\end{proof}

This reduces the problem of counting sections $u:T^0 \to \hat{X}^0$ to
the problem of counting maps $u_M: T^0 \to M$. We emphasize that we
are counting maps from a Riemann surface with a fixed conformal
structure.

The maps $u_M:T^0 \to M$ are holomorphic maps between Riemann surfaces,
and hence their classification is mostly combinatorial. However,
because we are in a situation where the conformal structure on the
domain is fixed, we are not quite in the situation described, for
example, in \cite[\S 13]{seidel-book}. The holomorphic curves we are
looking for have non-convex corners and hence boundary branch points
or ``slits,'' and if the situation is complicated enough they may also
have branch points in the interior. However, the condition that the
conformal structure of the domain is fixed makes this an index zero
problem, which is to say it prevents these slits and branch points
from deforming continuously. The question is then, given a
combinatorial type of such a curve, what conformal structures (with
multiplicity) can be realized by holomorphic representatives?

 The $(k+3)$ boundary components of $T^0$ are equipped
with Lagrangian boundary conditions $\hat{L}(0)^0, \hat{L}(n)^0_0,
\dots, \hat{L}(n)^0_k, \hat{L}(n+m)^0$. Recall that over the ends
where disk bubbles are attached we have $\hat{L}(n)^0_i = \tau_i(
\hat{L}(n)^0_{i-1})$, where $\tau_i$ is the monodromy around the
$i$-critical point inside the $i$-th disk bubble.
In section \ref{sec:horiz-sections}, we refined the construction and
made a particular choice for this monodromy:
\begin{equation}
  \label{eq:monodromies-equal}
  \tau_i = \phi_H \circ \tau_V
\end{equation}
Since all of these symplectomorphisms are isotopic, we will denote
them all by $\tau$ for most of this section.  

In order to simplify notation, for the rest of this section
\ref{sec:sections-polygon} we will drop the hats and superscript zeros
and denote by
\begin{equation}
  L(0),L(n),\tau L(n), \tau^2 L(n),\dots \tau^k L(n), L(n+m) \subset M
\end{equation}
the Lagrangians in the fiber $M$ which give rise to the boundary
conditions over the $(k+3)$-punctured disk $T$. Though the monodromies
are all denoted by $\tau$, we actually choose the perturbations
slightly differently so as to ensure that this collection of
Lagrangians is in general position in $M$; this is necessary for the
argument in Lemma \ref{lem:existence}.

Recall that $q_1 = q_{a,i}$ and $q_2 = q_{b,j}$ are the morphisms
whose product we wish to compute. We now regard $q_{a,i} \in L(0)\cap
L(n)$ and $q_{b,j}\in \tau^kL(n)\cap L(n+m)$. Let $x_i \in
\tau^{i-1}L(n) \cap \tau^i L(n)$ denote the unique intersection point.

Recall that the possible output points $q_{a+b,h} \in L(0)\cap L(n+m)$
are indexed by $h \in \{0,1,\dots,
\left\lfloor\frac{(n+m)-|a+b|}{2}\right\rfloor\}$.

We can now state the main results of this section.

\begin{proposition}
\label{prop:which-output}
If $h$ is such that $0 \leq h-(i+j) \leq k$, then there are
$\binom{k}{h-(i+j)}$ homotopy classes of maps $u:T\to M$ satisfying
the boundary conditions and asymptotic to
$q_{a,i},x_1,\dots,x_k,q_{b,j},q_{a+b,h}$ at the punctures. For $h$
outside this range there are no holomorphic maps satisfying these
conditions.
\end{proposition}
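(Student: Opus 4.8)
The plan is to reduce Proposition~\ref{prop:which-output} to a purely topological enumeration in the open surface $M$ and then carry that enumeration out by a local analysis near the vanishing cycle. The fiber $M$ is a cylinder with finitely many bands attached, hence an open surface: $\pi_1(M)$ is free and $\pi_2(M)=0$. Consequently a map $u\colon T^0\to M$ of the $(k+3)$-punctured disk, with the prescribed Lagrangian boundary conditions and asymptotic to $q_{a,i},x_1,\dots,x_k,q_{b,j},q_{a+b,h}$, is determined up to homotopy (rel the asymptotic conditions) by the homotopy classes rel endpoints of its $k+3$ boundary arcs in $L(0),L(n),\tau L(n),\dots,\tau^kL(n),L(n+m)$; conversely, a compatible collection of such arcs extends to some $u$ exactly when the concatenated boundary loop is null-homotopic in $M$, and then, because $\pi_2(M)=0$, the extension is unique up to homotopy. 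So the first step is to replace the count of homotopy classes of $u$ by the count of compatible arc collections with null-homotopic total boundary.

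Next I would set up a local model at the ``finger'' intersection points. Recall from Section~\ref{sec:horiz-sections} that $\tau=\phi_H\circ\tau_V$, with $\tau_V$ a model Dehn twist supported in an annular neighborhood $N$ of the vanishing cycle $V$, and that $x_\ell$ is the unique point of $\tau^{\ell-1}L(n)\cap\tau^\ell L(n)$, which lies in $N$. In the standard Dehn-twist model, $\tau^\ell L(n)$ agrees with $\tau^{\ell-1}L(n)$ outside $N$ and differs from it inside $N$ by one extra turn, so the two strands cross exactly once, at $x_\ell$. A boundary arc entering along $\tau^{\ell-1}L(n)$, turning the corner at $x_\ell$, and leaving along $\tau^\ell L(n)$ therefore has precisely two local homotopy types: one that stays on the side of $x_\ell$ away from $V$, and one that runs once around $V$ before continuing. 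Meanwhile the arcs on the ``long'' Lagrangians $L(0)$ and $L(n+m)$ (and the arc on $\tau^0L(n)=L(n)$ and on $\tau^kL(n)$ joining to $q_{a,i}$, $q_{b,j}$) are to be shown rigid once the global null-homotopy constraint is imposed — no winding around the cylinder factor of $M$ and no excursion into an attached band is compatible with bounding a disk — using the explicit picture of how $L(0)$, $L(n+m)$ were closed up in Section~\ref{sec:extend-fiber}.

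The count then goes as follows. Let $s\in\{0,1,\dots,k\}$ be the number of the points $x_1,\dots,x_k$ at which the boundary arc makes the wrapping choice. Relative to the choice $s=0$ the class of the total boundary loop changes by $s[V]$, and by the description in Section~\ref{sec:intersections} of how the points of $L(0)\cap L(n+m)$ are stacked within a fixed fiber — consecutive stack positions differing by a push across $V$ — together with Definition~\ref{def:indexing}, the loop closes up in $M$ exactly when the output vertex is $q_{a+b,\,i+j+s}$. Hence a compatible arc collection exists only if $h=i+j+s$ for some such $s$, i.e.\ only if $0\le h-(i+j)\le k$; and in that case the homotopy classes of $u$ correspond bijectively to the size-$s$ subsets of $\{x_1,\dots,x_k\}$ at which the wrapping choice is made, with $s=h-(i+j)$. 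This yields exactly $\binom{k}{h-(i+j)}$ classes, and none otherwise, which is the assertion.

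The main obstacle is the middle step: rigorously establishing that iterating $\tau$ along the chain $L(n),\tau L(n),\dots,\tau^kL(n)$ produces at each $x_\ell$ a genuine binary choice with \emph{independent} contributions, and that no boundary arc can acquire hidden freedom by winding in the cylinder direction of $M$ or threading one of the bands. This requires fixing an explicit model of $M$ near $V$ and of $L(0),L(n),L(n+m)$ (in particular the band closure of Section~\ref{sec:extend-fiber}) and tracking the effect of $\tau_V$ on $H_1(M)$ and $\pi_1(M)$ precisely; once that is done, the bookkeeping relating $s$, the stack position of the output, and $\binom{k}{s}$ is routine.
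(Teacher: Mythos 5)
Your reduction of the count to homotopy classes of boundary-arc collections in the open surface $M$ with contractible total boundary is exactly the paper's starting point, and the shape of your answer (binary data at the $k$ crossings, binomially distributed over the outputs) is the right one. But the step you label ``the main obstacle'' is not a technical detail to be deferred --- it \emph{is} the proof. As a local statement, ``the corner at $x_\ell$ has precisely two homotopy types'' is false: the boundary arc between $x_\ell$ and $x_{\ell+1}$ must lie on the circle $\tau^\ell L(n)$, so rel endpoints it carries a full $\Z$ of homotopy classes (its wrapping number $\delta_\ell$), and nothing in the local geometry near the vanishing cycle cuts this down. Likewise an arc cannot literally ``run once around $V$'' while staying on its Lagrangian; the move you have in mind is the correlated change $(\delta_{\ell-1},\delta_\ell)\mapsto(\delta_{\ell-1}+1,\delta_\ell-1)$, whose net effect on the boundary loop is conjugate to a loop around the cylinder. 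The restriction of the $(k+1)$-tuple $(\delta_0,\dots,\delta_k)\in\Z^{k+1}$ to exactly $2^k$ admissible values is a \emph{global} consequence of contractibility of the whole boundary word in the free group $\pi_1(M)$; in the paper this is the explicit winding computation giving the word $\alpha^{i+j-h+k}\prod_{r=0}^k(\alpha^r\beta)^{\delta_r}$ together with Lemma \ref{lem:beta-cancel} ($\delta_r\in\{-1,0,1\}$, alternating signs, first $+1$, last $-1$), followed by the bijection with sequences $(s_r)\in\{0,1\}^k$ that yields the count $\binom{k}{h-(i+j)}$. None of this is supplied in your proposal, and the same goes for the rigidity of the arcs on $L(0)$ and $L(n+m)$ (the paper disposes of this with a short free-group argument about the attached bands) and for the identification of which output $q_{a+b,h}$ a given class hits, which you assert via ``stack positions differing by a push across $V$'' rather than compute. (Incidentally, your convention attaches the unwrapped class to $h=i+j$, whereas the straight class with all $\delta_r=0$ actually lands at $h=i+j+k$; binomial symmetry hides this from the final count, but it is a sign that the bookkeeping has not been done.)

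So: correct framework and correct expected answer, but the enumerative core --- bounding the wrapping numbers and organizing the admissible combinations --- is missing, and your proposed local justification for it would not survive scrutiny because the freedom is carried by whole arcs on circle Lagrangians and is only constrained globally. To complete the argument along your lines you would in effect have to reproduce the paper's $\pi_1$ computation, at which point the two proofs coincide.
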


\begin{proposition}
\label{prop:polygon-count}
For each feasible homotopy class from Proposition
\ref{prop:which-output}, and for each complex structure $j$ on $T$,
there is exactly one holomorphic representative $u: T \to M$.
\end{proposition}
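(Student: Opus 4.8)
The plan is to invoke Proposition~\ref{prop:sections-polygon} to replace the section count by a count of $(j,J_M)$-holomorphic maps $u\colon T\to M$ from the fixed $(k+3)$-punctured disk $(T,j)$ into the fiber surface $M$, lying in a prescribed homotopy class and asymptotic to $q_{a,i},x_1,\dots,x_k,q_{b,j},q_{a+b,h}$ at the punctures. First I would pin down the image. Since $u$ has positive local degree onto its image and the $k+3$ Lagrangians are in general position, they cut $M$ into polygonal regions, and the homotopy class of $u$ determines a locally constant multiplicity function on the complement of the Lagrangians; one checks from the combinatorics behind Proposition~\ref{prop:which-output} that this function is supported on a sub-surface $R$ which is a topological disk, and that $u$ factors through a proper branched cover $T\to R$ of the corresponding total degree, respecting the decomposition of $\partial R$ into Lagrangian arcs and sending the corners of $T$ to the prescribed intersection points.

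Next comes the Euler-characteristic bookkeeping. Feeding the covering data into Riemann--Hurwitz, noting that the convex corners contribute nothing while each slit (boundary branch point) and each interior branch point carries its usual deficiency, and using $\chi(T)=1$, one finds that the homotopy class forces a definite branch profile: no interior branch points and exactly $k$ boundary branch points, located at (or near) the corners $x_1,\dots,x_k$. This is precisely the statement that the problem is of index zero, and it pins down the combinatorial type of $u$ within each homotopy class; the $\binom{k}{h-(i+j)}$ homotopy classes of Proposition~\ref{prop:which-output} are then distinguished by a combinatorial choice (a size-$(h-i-j)$ subset of the $k$ crossings determining how the output height $i+j+s$ is distributed among the slits).

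For existence I would construct the branched cover abstractly: glue together copies of the closed regions of $M$ cut along the Lagrangians, with the multiplicities just found, cutting and regluing along $k$ slits in the pattern dictated by the homotopy class, to obtain an abstract disk $\widehat{R}$ with $k+3$ boundary marked points whose conformal type varies with the $k$ slit-length parameters. A properness-plus-degree argument then shows the resulting parameter-to-moduli map surjects onto the $k$-dimensional moduli space of $(k+3)$-gons, so some slit configuration realizes $(T,j)$; composing the biholomorphism $\widehat{R}\cong(T,j)$ with the tautological map $\widehat{R}\to M$ yields a holomorphic representative. (General position of the Lagrangians, invoked for Lemma~\ref{lem:existence}, is exactly what keeps the regions and slits nondegenerate; an alternative route is induction on $k$, splitting off one slit at a time by a further degeneration within the fiber.) Uniqueness amounts to showing this same parameter-to-moduli map is injective, for then the slit configuration, and hence $u$ itself --- the $(k+3)$-gon has no automorphisms --- is determined by $(T,j)$. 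Finally, each such $u$ is regular for the fixed-domain problem by the argument already used in Proposition~\ref{prop:horiz-sections}: the linearized operator has index zero and the Maslov/tilting data leave no kernel.

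The hard part will be the injectivity in the uniqueness step --- ruling out two genuinely different arrangements of the $k$ slits in $\widehat{R}$ that produce conformally identical $(k+3)$-gons. Surjectivity of the parameter map is soft, but injectivity needs real input: I expect a monotonicity lemma for the extremal lengths of the relevant curve families in $\widehat{R}$ as a single slit is lengthened with the others fixed, and then the genuine difficulty is upgrading this infinitesimal monotonicity (non-degenerate Jacobian) to global injectivity over the whole parameter domain, uniformly in the choice of conformal structure $j$.
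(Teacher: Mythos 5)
Your reduction to maps into the fiber, the index-zero/regularity observation, and the existence-for-some-conformal-structure step are all sound and broadly parallel to what the paper does (Proposition \ref{prop:sections-polygon}, Lemma \ref{lem:no-kernel}, Lemma \ref{lem:existence}). But the uniqueness step, which you yourself flag, is a genuine gap and not a removable one within your framework: you need the map from slit-length parameters to the conformal moduli $\cR^{k+3}$ of $(k+3)$-gons to be globally injective, uniformly over the homotopy class, and the "monotonicity of extremal length in each slit" lemma you hope for is neither stated nor obviously true, let alone upgradable from an infinitesimal statement to global injectivity. Note also that your surjectivity argument ("properness plus degree") already presupposes a degree computation; without pinning that degree down you cannot exclude that the slit-parameter map is a covering of degree greater than one, in which case surjectivity holds but uniqueness fails. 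A secondary soft spot is the Riemann--Hurwitz bookkeeping: ruling out interior branch points and localizing the boundary branch points requires knowing the multiplicity pattern of the image a priori, which is exactly the kind of case analysis the paper's argument is designed to avoid.

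The paper sidesteps all of this by working with the moduli space of holomorphic representatives with \emph{varying} domain: it shows the forgetful map $\pi: \cM(\phi) \to \cR^{k+3}$ is a proper submersion of relative dimension zero, i.e.\ a finite covering (Lemma \ref{lem:covering}, using $\ker D_{u,(j,J_M)}=0$ and the absence of bigons to prevent strip breaking), and then computes the covering degree not at a generic conformal structure but at a maximally degenerate stable disk in $\bar{\cR}^{k+3}$ (Lemma \ref{lem:degree-1}). There the domain breaks into triangles, the node positions are forced by a tree-induction because any three of the Lagrangians bound at most one triangle determined by two of its corners, so the fiber over the corner has at most one point; combined with nonemptiness from Lemma \ref{lem:existence} the degree is exactly one, which gives uniqueness for \emph{every} $j$ simultaneously. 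In other words, the global injectivity you would need to prove by hand is obtained for free from the homotopy invariance of the degree of a covering, with the actual counting done only in a maximally degenerate (purely combinatorial) limit. If you want to salvage your slit-parameter picture, the most direct repair is to replace the injectivity claim by exactly this degree argument, i.e.\ degenerate the domain rather than trying to control the period map of slit lengths.
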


The strategy of proof is first to prove Proposition
\ref{prop:which-output}, which is done in \S\ref{sec:homotopy-classes}. Then we show the existence of holomorphic
representatives for some conformal structure (\S\ref{sec:existence}), and show that the
number of representatives does not depend on the conformal
structure. By degenerating the domain we are able to show uniqueness (\S\ref{sec:moduli-space}).

\subsubsection{Homotopy classes}
\label{sec:homotopy-classes}

The analysis of homotopy classes requires some explicit combinatorics,
which we shall now set up. Recall that $M$ is a cylinder with three
bands attached, one for each of $L(0),L(n),L(n+m)$. We will classify
homotopy classes of maps $u: T\to M$ by their boundary loop $\partial
u$, which must be contractible, traverse
$L(0),L(n),\dots,\tau^kL(n),L(n+m)$ in order, and hit the intersection
points $q_{a,i},x_1,\dots,x_k,q_{b,j},q_{a+b,h}$ in order. That they
arise as the boundary of a holomorphic map controls their behavior
within the bands.

Let us use $L(n)$ to frame the cylinder, so that winding around the
cylinder is computed with respect to $L(n)$. Let $x \in M$ be a
basepoint which is located in the band near the intersection points
$x_r$. Let $\alpha \in \pi_1(M,x)$ denote a loop that enters the
cylinder from the bottom, veers right, winds around once, and goes
back downward to $x$. We also have a class $\beta \in \pi_1(M)$ that is
represented by the loop $L(n)$, oriented upward through the
cylinder. Note that $\alpha$ and $\beta$ generate a free group in
$\pi_1(M)$.

\begin{lemma}
  $L(0)$ winds $-(n-|a|)/2$ times relative to $L(n)$. $\tau^r L(n)$ winds $r$ times relative to $L(n)$. $L(n+m)$ winds $(m-|b|)/2 + k$ times relative to $L(n)$.
\end{lemma}

\begin{proof}
We are using the symbols $L(0), \tau^rL(n), L(n+m)$ to represent certain Lagrangians in a single fiber $M$, but we can compute the windings by comparing our situation to certain fibers of the original fibration. In the original fibration, $L(0)$ and $L(n)$ intersect in the fiber containing $q_{a,i}$. Thus we have that $L(0)$ winds $-(n-|a|)/2$ times relative to $L(n)$. Now $\tau^r L(n)$ winds $r$ times relative to $L(n)$ by construction. Finally, in the original fibration $L(n+m)$ and $L(n)$ intersect in the fiber containing $q_{b,j}$, and the relative winding in that fiber is $(m-|b|)/2$. In the current context, this becomes the winding of $L(n+m)$ relative to $\tau^k L(n)$. Thus the winding of $L(n+m)$ relative to $L(n)$ in current context is $(m-|b|)/2 + k$.
\end{proof}

The only unknown is how many times the boundary path traverses $L(0)$,
$L(n)$, $\tau L(n)$, etc., as we traverse the boundary in the positive
sense. The argument from Proposition \ref{prop:extended-sections}
shows that a holomorphic map cannot enter the bands corresponding to
$L(0)$ and $L(n+m)$. Hence the portion of our loop along $L(0)$ and
$L(n+m)$ lies within the cylinder, and therefore it is determined by
the choices of $q_{a,i}$, $q_{b,j}$ and $q_{a+b,h}$.

As for the portion of the loop along $\tau^r L(n)$, this can be
represented by a sequence of integers $(\delta_r)_{r=0}^k$, where
$\delta_r$ represents the number of times we wind around $\tau^r
L(n)$.

With these conventions in place, we can compute the winding of a
choice of paths satisfying the boundary and asymptotic conditions. We
record the parts of the computation:

\begin{itemize}
\item Passing from $x_k$ to $q_{b,j}$ by a short upward path on $\tau^k L(n)$
contributes
\begin{equation}
  \left(1-\frac{j}{(m-|b|)/2}\right)\left(k\right)
\end{equation}

\item The winding around the cylinder along $L(n+m)$ as we pass from
  $q_{b,j}$ to $q_{a+b,h}$ is
\begin{equation}
  \left[\frac{h}{(n+m-|a+b|)/2} - \frac{j}{(m-|b|)/2}\right]
  (-1)((m-|b|)/2 + k)
\end{equation}

\item The winding around the cylinder along $L(0)$ as we pass from
  $q_{a+b,h}$ to $q_{a,i}$ is 
  \begin{equation}
    \left[\frac{i}{(n-|a|)/2} - \frac{h}{(n+m-|a+b|)/2}\right] (n-|a|)/2
  \end{equation}

\item Passing from $q_{a,i}$ to $x_1$ by a short downward path on $L(n)$ contributes $0$
to the winding around the cylinder. 
\end{itemize}

Adding up these contributions and using the fact that $(m-|b|)/2 +
(n-|a|)/2 + k = (n+m-|a+b|)/2$, we get a total of $i+j - h+ k$. Thus,
if we go up on $\tau^k L(n)$ and down on $L(n)$, we pick up the class
$\alpha^{i+j-h+k} \in \pi_1(M,x)$ for the loop from $x_k$ to $x_1$.

Thus, the class $\alpha^{i+j-h+k}$ corresponds to the choice
$\delta_r = 0$ for $0 \leq r \leq k$. The homotopy class of any
other path can be computed from this as follows:

\begin{itemize}

\item Taking another path on $L(n)$ contributes a factor
  $\beta^{\delta_0}$ on the right.

\item Passing from $x_{r}$ to $x_{r+1}$ along $\tau^r L(n)$
  contributes the word
  \begin{equation}
    (\alpha^r\beta)^{\delta_r}
  \end{equation}
  where $\delta_r \in \Z$, and this class is added on the right.

\item Going down on $\tau^kL(n)$ rather than up contributes
  the class $(\alpha^k\beta)^{\delta_k}$ on the left, which up to
  conjugation is the same as adding the class $(\alpha^k\beta)^{\delta_k}$ on the right.
\end{itemize}
% Putting this together, and splitting into cases on the orientations of
% $L(n)$ and $\tau^k L(n)$, we obtain the
% following boundary classes in $\pi_1(M,x)$:

% \begin{itemize}
% \item Up on $\tau^kL(n)$, down on $L(n)$:
% \begin{equation}
%   \alpha^{i+j-h + k} \left\{\prod_{r=1}^{k-1}
%   (\alpha^r\beta)^{\delta_r}\right\}
% \end{equation}
% \item Up on $\tau^kL(n)$, up on $L(n)$:
% \begin{equation}  
%   \alpha^{i+j-h+k} \beta \left\{\prod_{r=1}^{k-1}
%   (\alpha^r\beta)^{\delta_r}\right\}
% \end{equation}
% \item Down on $\tau^kL(n)$, down on $L(n)$:
%   \begin{equation}
%     \alpha^{i+j-h+k}  \left\{\prod_{r=1}^{k-1}
%   (\alpha^r\beta)^{\delta_r}\right\}(\alpha^k\beta)^{-1}
%   \end{equation}
% \item Down on $\tau^kL(n)$, up on $L(n)$:
% \begin{equation}
%     \alpha^{i+j-h+k} \beta \left\{\prod_{r=1}^{k-1}
%   (\alpha^r\beta)^{\delta_r}\right\}(\alpha^k\beta)^{-1}
% \end{equation}
% \end{itemize}
% These four cases can be unified setting $\delta_0 = 0$ and $\delta_k =
% 0$ in the first case, $\delta_0 = 1$ and $\delta_k = 0$ in the second,
% $\delta_0 = 0$ and $\delta_k = -1$ in the third, and $\delta_0 = 1$ and
% $\delta_k = -1$ in the fourth.

Thus the class in question is
\begin{equation}
  \label{eq:homotopy-class}
  \alpha^{i+j-h+k} \prod_{r = 0}^k (\alpha^r \beta)^{\delta_r}
\end{equation}
 The key condition is that this class is trivial in
$\pi_1(M)$. This means in particular that all of the $\beta$'s must
cancel out. Because $\alpha$ and $\beta$ generate a free group we have
the following:

\begin{lemma}
\label{lem:beta-cancel}
  In the word \eqref{eq:homotopy-class}, the $\beta$'s cancel out if
  and only if $\delta_r \in \{-1,0,1\}$ for $0 \leq r \leq k$, the
  first nonzero $\delta$ is $1$, the last nonzero $\delta$ is $-1$, and
  the nonzero $\delta$'s alternate in sign.
\end{lemma}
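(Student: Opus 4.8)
The plan is to treat \eqref{eq:homotopy-class} purely as a word in the free group $F=\langle\alpha,\beta\rangle$ and to decide when its free reduction contains no $\beta$-letter. The prefix $\alpha^{i+j-h+k}$ contributes only $\alpha$-letters, hence is irrelevant to the cancellation of $\beta$'s, so it suffices to analyse $V=\prod_{r=0}^{k}(\alpha^{r}\beta)^{\delta_{r}}$. Let $r_{1}<\dots<r_{s}$ be the indices with $\delta_{r_{t}}\neq 0$, and write $V=B_{1}\cdots B_{s}$ with $B_{t}=(\alpha^{r_{t}}\beta)^{\delta_{r_{t}}}$. Reading $(\alpha^{r}\beta)^{m}=\alpha^{r}\beta\alpha^{r}\beta\cdots$ for $m>0$ and $\beta^{-1}\alpha^{-r}\beta^{-1}\alpha^{-r}\cdots$ for $m<0$, each $B_{t}$ is already a freely reduced word, so all reduction in $V$ occurs at the $s-1$ junctions $B_{t}B_{t+1}$. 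As a preliminary observation, applying the homomorphism $\alpha\mapsto 0$, $\beta\mapsto 1$ shows that $\sum_{t}\delta_{r_{t}}=0$ is necessary; the real content is to upgrade this to the stated combinatorial condition.

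The key step is a local analysis of a single junction. I would show: at $B_{t}B_{t+1}$ a $\beta^{\pm1}$ is cancelled only if $B_{t}$ ends in $\beta$ and $B_{t+1}$ begins with $\beta^{-1}$, i.e.\ only if $\delta_{r_{t}}>0$ and $\delta_{r_{t+1}}<0$; moreover after that single cancellation $B_{t}$ presents a trailing $\alpha^{r_{t}}$ and $B_{t+1}$ a leading $\alpha^{-r_{t+1}}$, and since $r_{t}\neq r_{t+1}$ these combine to $\alpha^{r_{t}-r_{t+1}}\neq \mathrm{id}$, which halts the reduction there (and, one checks, does not propagate back into $B_{t-1}$ either, since $r_{t}>r_{t-1}\ge 0$). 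Consequently any $\beta$-letter lying strictly inside a block — which happens as soon as $|\delta_{r_{t}}|\ge 2$, with the degenerate case $r_{t}=0$, where $B_{t}=\beta^{\delta_{r_{t}}}$, handled by noting that reading toward either end still meets a surviving $\alpha$-power from a neighbouring block or from the prefix/end of the word — is flanked by nonzero $\alpha$-powers in the reduced word and can never cancel. This forces $\delta_{r}\in\{-1,0,1\}$ for all $r$.

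Granting $|\delta_{r}|\le 1$, the nonzero blocks are exactly $\alpha^{r_{t}}\beta$ (for $\delta_{r_{t}}=1$) and $\beta^{-1}\alpha^{-r_{t}}$ (for $\delta_{r_{t}}=-1$), each carrying a single $\beta$-letter. Tracking these with the junction analysis: a $(+1)$-block annihilates the $\beta^{-1}$ of the block immediately to its right precisely when that block is a $(-1)$-block, whereas a $(-1)$-block followed by a $(+1)$-block produces no $\beta$-cancellation. Hence the leftmost $\beta$-letter of $V$ is $\beta$ if $\delta_{r_{1}}=1$ and $\beta^{-1}$ if $\delta_{r_{1}}=-1$; in the latter case nothing to its left (only $\alpha$-powers, possibly the prefix) can cancel it and, being the first $\beta^{-1}$, nothing to its right can reach it past the intervening nonzero $\alpha$-power, so $\delta_{r_{1}}=1$ is necessary; symmetrically $\delta_{r_{s}}=-1$. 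If two consecutive nonzero $\delta$'s had the same sign, the first of them would leave an uncancelled $\beta^{\pm1}$, so the signs alternate. Conversely, for an alternating sequence $+1,-1,+1,-1,\dots,+1,-1$ one pairs each $(+1)$-block with the $(-1)$-block directly after it, collapsing $\alpha^{r}\beta\cdot\beta^{-1}\alpha^{-r'}=\alpha^{r-r'}$; the remaining junctions involve only $\alpha$-letters, so $V$ reduces to a power of $\alpha$ (in fact $\alpha^{\sum_{t}(-1)^{t+1}r_{t}}$, which is what one will want for the subsequent matching with $h$).

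\textbf{Main obstacle.} The genuinely delicate point is making the ``a trapped interior $\beta$ can never cancel'' claim airtight — in particular ruling out some longer-range cancellation that circumvents the junction-by-junction bookkeeping — and dealing cleanly with the corner cases $r_{t}=0$ where a block is a pure power of $\beta$. Once the local junction lemma is stated precisely, the rest is a short induction on $s$.
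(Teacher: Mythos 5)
Your argument is correct and is essentially the paper's own proof: both treat \eqref{eq:homotopy-class} as a word in the free group generated by $\alpha$ and $\beta$ and analyse free cancellation directly, your ``trapped interior $\beta$'' and junction bookkeeping being the same mechanism as the paper's observation that $(\alpha^r\beta)^{\pm 2}$ and consecutive same-sign factors contain an isolated $\beta^{\pm 1}$. The only difference is cosmetic, namely how longer-range cancellation is excluded: you use the nonvanishing $\alpha$-residue left at each junction (which works because the exponents $r$ strictly increase from block to block), whereas the paper rules out the one remaining scenario (the $\beta$ of the first nonidentity factor cancelling that of the last) by noting that the $\alpha$-exponent equation $\sum_r r\delta_r = 0$ has no nontrivial solution with $\delta_r \in \{-1,0,1\}$ alternating in sign.
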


\begin{proof}
  The first thing to see is that $|\delta_r| \leq 1$ for $1 \leq r
  \leq k$. This is because $(\alpha^r\beta)^2 =
  \alpha^r\beta\alpha^r\beta$ contains an isolated $\beta$, while
  $(\alpha^r \beta)^{-2}$ contains an isolated $\beta^{-1}$. Then we
  can see that the nonzero $\delta$'s must alternate sign, since
  having two consecutive $\delta$'s equal to $1$ yields
  $\alpha^{r_1}\beta\alpha^{r_2}\beta$, which has an isolated beta,
  while having two consecutive $\delta$'s equal to $-1$ would yield an
  isolated $\beta^{-1}$.

  Since the nonzero $\delta$'s in the range $1 \leq r \leq k$
  alternate in sign, all the $\beta$'s coming from the
  range $1 \leq r \leq k$ cancel, except for possibly the first or the
  last. This implies that $|\delta_0| \leq 1$ as well, since the only
  thing that can cancel $\beta^{\delta_0}$ is the first non-identity factor or
  the last non-identity factor. 

  Now the $\beta$ from the first non-identity factor can only cancel
  the $\beta$ from the last non-identity factor if all the $\alpha$'s
  as well as $\beta$'s in between cancel. This means that
    \begin{equation}
    \sum_{r = a}^b r\delta_r = 0
  \end{equation}
  for the appropriate range of $r$: $a \leq r \leq b$. Since the
  $\delta$'s are in $\{-1,0,1\}$ and they alternate in sign, the only
  solution to this equation is when all $\delta = 0$. In this case,
  the first and the last non-identity factors are in fact
  consecutive, the first has $\delta = 1$, while the last has $\delta
  = -1$. This shows that it is impossible to have $\beta^{-1}$ from
  the first factor cancel a $\beta$ from the last factor.

  In general, we find that each $\beta$ is canceled by a $\beta^{-1}$
  from the next non-identity factor, so that the nonzero $\delta$'s
  alternate sign for $0 \leq r \leq k$, the first nonzero $\delta$ is
  $1$, and the last nonzero $\delta$ is $-1$.
\end{proof}

By passing to $H_1(M;\Z)$, we obtain
the relations
\begin{equation}
\label{eq:sum-zero}
  \sum_{r=0}^k \delta_r = 0
\end{equation}
\begin{equation}
\label{eq:get-h}
  i+j-h+k + \sum_{r=0}^k r\delta_r =0
\end{equation}
Equation \eqref{eq:sum-zero} is implied by Lemma \ref{lem:beta-cancel},
while \eqref{eq:get-h} determines which $h$ the homotopy class
contributes to.

The sequences $(\delta_r)_{r=0}^k$ which solve the constraints are in
one-to-one correspondence with sequences $(s_r)_{r=0}^{k-1}$ such that
$s_r \in \{0,1\}$. In one direction, we extend the sequence by $s_{-1}
= 0 = s_k$, and set
\begin{equation}
  \delta_r = s_r - s_{r-1}
\end{equation}
In the other direction, given $\delta_r$, we can solve for $s_r$ using
this equation the initial condition $s_{-1}=0$. Since
the signs of the nonzero $\delta_r$ alternate, and the first nonzero
term is $1$, we will have $s_r \in \{0,1\}$, and since the final
nonzero term is $-1$, we will have $s_k = 0$, thus inverting the
correspondence. This yields $2^k$ solutions.

Plugging this into the summation in \eqref{eq:get-h}, we have
\begin{equation}
  \sum_{r=0}^k r\delta_r = \sum_{r=0}^k r(s_r-s_{r-1}) =
  \sum_{r=0}^{k-1} (-1)s_r
\end{equation}
because the summation telescopes. This is simply minus the number of
$1$'s in the sequence $s_r$. Thus we obtain
\begin{equation}
  \label{eq:get-h-2}
  h-(i+j) = k - \sum_{r=0}^{k-1}s_r
\end{equation}
The right hand side is always an integer between $0$ and $k$, and it
takes the value $s$ for $\binom{k}{s}$ choices of the sequence
$(s_r)_{r=0}^{k-1}$. Thus homotopy classes of maps exist for $h$ such
that $0 \leq h-(i+j) \leq k$, and there are $\binom{k}{h-(i+j)}$ such
classes. This proves Proposition \ref{prop:which-output}.

\subsubsection{Existence of holomorphic representatives for some
  conformal structure}
\label{sec:existence}

The first step in characterizing the holomorphic representatives of
these homotopy classes is to prove the existence of holomorphic
sections for some conformal structure. This is also essentially combinatorial.

We begin with some general concepts that will be useful in the proof.

\begin{definition}
  Let $\gamma: S^1 \to \C$ be a piecewise smooth loop. A
  \emph{subloop} $\gamma'$ of $\gamma$ is the restriction $\gamma' =
  \gamma|\bigcup_\alpha I_{\alpha}$ to a collection of intervals
  $\bigcup_\alpha I_{\alpha}$. The indexing set inherits a cyclic
  order from $S^1$, and we require that for each $\alpha$,
  $\gamma(\max I_{\alpha}) = \gamma(\min I_{\alpha+1})$ is a
  self-intersection of $\gamma$. Thus $\gamma'$ simply ``skips'' the
  portion of $\gamma$ between $\max I_\alpha$ and $\min
  I_{\alpha+1}$. A subloop is called \emph{simple} if it is non-self-intersecting.
\end{definition}

Note that a subloop is not the same as a loop formed by segments of
$\gamma$ joining self-intersections. Such an object is only a subloop
if the segments appear in a cyclic order compatible with $\gamma$.

\begin{definition}
  A piecewise smooth loop $\gamma: S^1 \to \C$ is said to have the
  \emph{(weak) positive winding property} (PWP) if the winding number of
  $\gamma$ around any point in $\C \setminus \image(\gamma)$ is
  non-negative. The loop $\gamma$ is said to have the \emph{strong
    positive winding property} (SPWP) if every subloop $\gamma' \subset
  \gamma$ has the positive winding property.
\end{definition}

\begin{lemma}
  \label{lem:simple-spwp}
  A loop $\gamma: S^1 \to \C$ has SPWP if and only if every
  \emph{simple} subloop has PWP.
\end{lemma}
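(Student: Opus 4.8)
The plan is to prove the two implications separately, with essentially all the content in the nontrivial direction. The forward implication (SPWP $\Rightarrow$ every simple subloop has PWP) is immediate, since a simple subloop is in particular a subloop. For the converse, I would assume every simple subloop of $\gamma$ has PWP, fix an arbitrary subloop $\gamma'$, and show that $\gamma'$ itself has PWP by decomposing it into simple subloops and invoking additivity of winding numbers under concatenation.

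Concretely, I would induct on the complexity measure $N(\gamma') = \sum_{p}\binom{m_p}{2}$, where $m_p$ denotes the number of parameter values of $\gamma'$ mapping to $p$; this is a finite sum since the loops of interest have finitely many self-intersections (one may always reduce to this generic case), and $N(\gamma') = 0$ precisely when $\gamma'$ is simple, which is the base case and exactly the hypothesis. For the inductive step I would pick a point $q$ that $\gamma'$ visits at least twice, choose two cyclically consecutive parameters $t_1 < t_2$ with $\gamma'(t_1) = \gamma'(t_2) = q$, and split $\gamma'$ at $q$ into the two complementary loops $\gamma'_1 = \gamma'|_{[t_1,t_2]}$ and $\gamma'_2 = \gamma'|_{[t_2,t_1]}$, each based at $q$. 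Since $\gamma'$ is then the concatenation $\gamma'_1 \ast \gamma'_2$, for every $p \notin \image(\gamma')$ the winding number of $\gamma'$ about $p$ is the sum of those of $\gamma'_1$ and $\gamma'_2$; so if both pieces have PWP, so does $\gamma'$.

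Two bookkeeping points then need checking, and these are where I expect the only real friction. First, that $\gamma'_1$ and $\gamma'_2$ are again subloops of $\gamma$ (not merely of $\gamma'$): their defining intervals are sub-intervals of those of $\gamma'$, whose junction points are self-intersections of $\gamma$, and the single new break point $q$ is a self-intersection of $\gamma'$ and hence of $\gamma$ --- so ``a subloop of a subloop is a subloop'', once one also checks that the cyclic-order-compatibility clause in the definition is inherited. Second, that the induction terminates: a direct multiplicity count (using $m_q(\gamma'_1) = 1$, $m_q(\gamma'_2) = m_q(\gamma')-1$, and $m_p(\gamma'_1)+m_p(\gamma'_2) = m_p(\gamma')$ for $p\ne q$, together with the strict inequality $\binom{a+b}{2} > \binom{a}{2}+\binom{b}{2}$ for $a,b\ge 1$) gives $N(\gamma'_1)+N(\gamma'_2) < N(\gamma')$, so both pieces have strictly smaller complexity and the inductive hypothesis applies to each. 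The point of using $N$ rather than the raw number of double points is precisely that it decreases cleanly even when $\gamma'$ has triple or higher-multiplicity points, where splitting off an ``innermost'' loop need not reduce a naive count. Once these two points are in place, the winding-number additivity under concatenation is entirely standard, and the induction closes.
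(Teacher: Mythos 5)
Your proposal is correct and follows essentially the same route as the paper: the forward direction is definitional, and the converse is proved by splitting a non-simple subloop at a self-intersection into two proper subloops, iterating until all pieces are simple, and summing winding numbers. The only difference is that you make explicit the bookkeeping the paper leaves implicit (that a subloop of a subloop is a subloop, and termination via the complexity measure $N$), which is a reasonable tightening rather than a new idea.
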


\begin{proof}
  The ``only if'' direction is contained in the definition. Suppose
  that every simple subloop of $\gamma$ has PWP. If $\gamma'$ is a
  subloop that is not simple, then by splitting $\gamma'$ at a
  self-intersection, we can write $\gamma'$ as the composition of two
  proper subloops. Repeating this inductively, we can write $\gamma'$
  as the composition of simple subloops. By hypothesis, each of these
  subloops winds positively, and the winding of $\gamma'$ about a
  point is the sum of the contributions from each of the simple subloops.
\end{proof}

The next lemma shows that the strong positive winding property is
stable under branched covers. Suppose $\gamma$ is a loop and $y \in \C
\setminus \image(\gamma)$ is a point where the winding number of
$\gamma$ is $m > 1$. Taking the $m:1$ cover branched at $y$, we can
lift $\gamma$ to a path $\tilde{\gamma}$. The path $\tilde{\gamma}$ is
a loop that covers $\gamma$.
\begin{lemma}
  \label{lem:spwp}
  Suppose $\gamma$ has SPWP. Then $\tilde{\gamma}$ has SPWP.
\end{lemma}

\begin{proof}
  Suppose that $\tilde{\gamma}$ does not have SPWP. Then some subloop
  $\tilde{\gamma}'$ does not have PWP. By Lemma \ref{lem:simple-spwp},
  we may take $\tilde{\gamma}'$ to be a \emph{simple} subloop. Thus
  $\tilde{\gamma}'$ winds around some region once clockwise, and we
  have $\tilde{\gamma}' = \partial \tilde{C}$, where $\tilde{C}$ is
  the chain consisting of this region with coefficient $-1$. Pushing
  $\tilde{\gamma}'$ and $\tilde{C}$ forward under the branched cover,
  we obtain a subloop $\gamma' \subset \gamma$, and chain $C$ such
  that $\gamma' = \partial C$. Since $\tilde{C}$ is purely negative,
  no cancellation can occur when we push forward, and $C$ is purely
  negative as well. Thus $\gamma'$ winds negatively about a point in
  the support of $C$, which contradicts SPWP for $\gamma$.
\end{proof}

\begin{lemma}
  \label{lem:spwp-mapping}
  Suppose $\gamma: S^1 \to \C$ is a piecewise smooth loop with
  SPWP. Then there is a map $u:D^2 \to \C$ holomorphic on the interior of $D^2$ such that
  $\partial[u] = \gamma$.
\end{lemma}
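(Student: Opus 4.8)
\emph{Proof plan.} Here $[u]$ will denote the $2$-chain $u_*[D^2]$ and $\partial[u]$ its boundary, so the assertion is that the positive $1$-cycle $\gamma$ bounds a holomorphic disk; the relevant invariant of $\gamma$ is its winding-number function $z\mapsto w_\gamma(z)$ on $\C\setminus\image(\gamma)$, which the PWP hypothesis says is everywhere $\ge 0$. The plan is to induct on the number $N$ of self-intersections of $\gamma$, which we may take to be finite and transverse (this is the only case used in Section~\ref{sec:existence}, and in general is arranged by a small perturbation).

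\emph{Base case $N=0$.} Then $\gamma$ is a piecewise-smooth Jordan curve bounding a region $U$, and PWP forces $w_\gamma\equiv 1$ on $U$ and $w_\gamma\equiv 0$ on the unbounded component, so $\gamma$ is the positively oriented boundary of $U$. By the Riemann mapping theorem there is a biholomorphism $u\colon D^2\to U$, and since $\partial U=\gamma$ is a Jordan curve, Carath\'eodory's theorem extends $u$ to a homeomorphism $\overline{D^2}\to\overline U$ carrying $\partial D^2$ onto $\gamma$; after a boundary reparametrization this gives $\partial[u]=\gamma$.

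\emph{Inductive step.} For $N\ge 1$, pick a self-intersection $p=\gamma(t_1)=\gamma(t_2)$ and cut there: $\gamma$ becomes a concatenation $\gamma_1\cdot\gamma_2$ of two loops based at $p$, each a \emph{subloop} of $\gamma$ in the sense defined above, hence each enjoying the strong positive winding property, and each with strictly fewer self-intersections ($N_1+N_2=N-1$). It is precisely here that SPWP, rather than plain PWP, is needed, since a subloop of a merely PWP loop need not be PWP. By the induction hypothesis there are holomorphic $u_i\colon D^2\to\C$ with $\partial[u_i]=\gamma_i$, both boundary loops passing through $p$. The remaining task is to assemble from $u_1,u_2$ a single holomorphic $u\colon D^2\to\C$ with $\partial[u]=\gamma_1\cdot\gamma_2=\gamma$: glue the two domain disks at their boundary points lying over $p$ to obtain a disk with one boundary node, carrying a holomorphic map $v$ with $\partial[v]=\gamma$, and then smooth that boundary node by the standard holomorphic gluing construction (Lemma~\ref{lem:spwp-mapping} is only used to produce \emph{some} holomorphic disk, so the conformal type of the domain is free). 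I expect this recombining step to be the main obstacle, since it is where essentially all of the analytic content sits.

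\emph{An alternative for the inductive step}, which avoids the gluing whenever $m:=\max_z w_\gamma(z)\ge 2$, is to choose $y$ with $w_\gamma(y)=m$, let $\phi(\zeta)=y+\zeta^{m}$ be the $m$-fold cover of $\C$ branched over $y$, and lift $\gamma$ to one of the $m$ loops $\tilde\gamma$ that cover it once; by Lemma~\ref{lem:spwp}, $\tilde\gamma$ has SPWP. If a suitable secondary induction produces $\tilde u$ with $\partial[\tilde u]=\tilde\gamma$, then $u:=\phi\circ\tilde u$ is automatically holomorphic and $\partial[u]=\phi_*[\tilde\gamma]=\gamma$; when $m\le 1$, SPWP forces $w_\gamma$ to be the indicator function of a Jordan domain (there can be no holes, else a subloop of $\gamma$ would wind negatively), returning us to the base case. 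Either way, the one genuinely delicate point is realizing the sum of two holomorphic disks, sharing only the boundary value $p$, as a single honest holomorphic disk with the prescribed boundary cycle.
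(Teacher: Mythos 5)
Your main inductive step does not close, and the missing piece is not a routine gluing. Everything in Route~1 is reduced to the recombination: given holomorphic discs $u_1,u_2$ with $\partial[u_1]=\gamma_1$, $\partial[u_2]=\gamma_2$, produce a single holomorphic disc with boundary cycle exactly $\gamma_1\cdot\gamma_2$. You invoke ``the standard holomorphic gluing construction,'' but no soft gluing can do this, because nothing in your formulation uses the transversality of the crossing, and the statement is actually \emph{false} at a tangential node: let $\gamma$ be the concatenation of the positively oriented boundary circles of two closed round discs $A$ and $B$ meeting at one point $p$. Its only subloops are $\partial A$, $\partial B$ and $\gamma$ itself, so it has SPWP, and each piece bounds a holomorphic disc; yet no holomorphic $u:D^2\to\C$ has $\partial[u]=\gamma$. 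Indeed, by the argument principle a point of winding number zero has no preimage in the open disc, so $u(\operatorname{int}D^2)$ lies in $A\cup B$; being open and connected it lies in the interior of a single lobe, say $A$, and then $u(\partial D^2)\subset A$ cannot cover $\partial B$. So a correct version of your step must exploit the transverse crossing in an essential way (regularity of the two discs for the totally real boundary-value problem with a corner at $p$, a gluing that controls the boundary cycle exactly, etc.), none of which is supplied; moreover, exactness of the boundary is the whole point, so your opening remark that the general case ``is arranged by a small perturbation'' of $\gamma$ also fails --- a disc bounding a perturbed loop is not a disc bounding $\gamma$.

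The paper avoids decomposing $\gamma$ altogether and instead does globally what you relegate to your ``alternative'': if some point has winding $m>1$, pass to the $m:1$ branched cover there; by Lemma~\ref{lem:spwp} the once-covering lift still has SPWP; iterate until the lift is a \emph{simple} loop, apply the Riemann mapping theorem upstairs, and compose with the holomorphic covering projection. Composition with a holomorphic map preserves both holomorphicity and the exact boundary cycle, so no gluing or perturbation argument is ever needed --- this is precisely what makes the proof work where Route~1 stalls. Your Route~2 is this argument in outline, but it is left conditional (``if a suitable secondary induction produces $\tilde u$'') with no account of why the iteration terminates in a simple loop, and your claim that $m\le 1$ together with SPWP forces a Jordan curve is only correct at transverse double points (there the four local winding numbers force a value $\ge 2$ somewhere); the tangential example above violates it. Fleshing out Route~2 along these lines recovers the paper's proof; Route~1 as written has a genuine gap at its central step.
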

\begin{proof}
  The idea of the proof is to iteratively take branched covers of the
  plane and lift $\gamma$ so as to reduce the density of winding. So
  let $y \in \C \setminus \image(\gamma)$ be a point where the winding
  number is $m > 1$. Taking the $m:1$ branched cover at $y$, we obtain
  as in Lemma \ref{lem:spwp} a lift $\tilde{\gamma}$ that covers
  $\gamma$ once and has SPWP. Repeating this process and using Lemma
  \ref{lem:spwp} to guarantee that the lift always has SPWP,
  eventually we obtain a \emph{simple} piecewise smooth loop with
  positive winding. This loop bounds a simply connected region in $M$, where $M$ is the Riemann surface resulting
  from the branched covering construction. By the Riemann mapping theorem, there is a map from this region to the unit disk $D^2$, biholomorphic on the interior and continuous on the closure, that maps each boundary segment to an arc in $\partial D^2$ and each corner to a point on $\partial D^2$. Let $\tilde{u} : D^2 \to M$ be the inverse of that map. Composing $\tilde{u}$ with
  the covering $M \to \C$ yields the desired map $u$.
\end{proof}

Fix a choice of homotopy class $\phi$ of polygons $u: T\to M$, which
essentially means fixing a choice for the sequence
$(\delta_r)_{r=0}^k$. Passing to the universal cover $\tilde{M}$ of
the fiber, fix a choice of lift $\tilde{u}:T \to \tilde{M}$. Let $y
\in \tilde{M}$ be any point. Because the boundary loop $\partial[u]$
is contractible in $M$, it lifts to a closed loop
$\partial[\tilde{u}]$ in $\tilde{M}$. In fact $\partial[\tilde{u}]$ is
contained in a domain which is isomorphic to a domain in $\C$, and with
this identification, we have the following.
% The image of $\partial[\tilde{u}]$ in
% $\pi_1(\tilde{M}\setminus\{y\}) \cong \Z$ (generated by a small
% counterclockwise loop around $y$) yields an element $w(y,\partial[\tilde{u}]) \in \Z$, which is the winding
% number of $\partial[\tilde{u}]$ around $y$.

\begin{lemma}
  \label{lem:positive-winding}
  The boundary loop $\partial[\tilde{u}]$ has SPWP.
\end{lemma}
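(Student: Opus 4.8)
Recall that $\partial[\tilde u]$ is, by construction, the concatenation in cyclic order of arcs lying on the chosen representatives of $L(0)$, of $L(n)=\tau^0L(n),\tau L(n),\dots,\tau^kL(n)$, and of $L(n+m)$, joined at (lifts of) the intersection points $q_{a,i},x_1,\dots,x_k,q_{b,j},q_{a+b,h}$, together with the band detours recorded by the nonzero $\delta_r$ in \eqref{eq:homotopy-class}. Since that word is trivial in $\pi_1(M)$ and never enters the bands attached for $L(0)$ or $L(n+m)$, the loop closes up inside the universal cover $\Sigma$ of the subsurface of $M$ consisting of the cylinder together with the band of $L(n)$; this $\Sigma$ is planar and is the domain referred to in the statement. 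First I would fix an explicit model of $\Sigma$: unroll the cylinder to a strip $\R\times[0,1]$, with horizontal coordinate measuring the winding class against the framing $\beta=[L(n)]$ and with copies of the $L(n)$-band attached wherever the loop demands a detour. In this model a lift of $L(0)$ crosses the strip monotonically with total horizontal displacement $-(n-|a|)/2$, a lift of $\tau^rL(n)$ with displacement $r$, and a lift of $L(n+m)$ with displacement $(m-|b|)/2+k$; after an ambient isotopy these may be taken to be straight segments of those slopes, and the essential point is that the slopes are \emph{weakly increasing} in the cyclic order in which the sides are met,
\begin{equation*}
  -\tfrac{n-|a|}{2}\;\le\;0\;\le\;1\;\le\;\cdots\;\le\;k\;\le\;\tfrac{m-|b|}{2}+k .
\end{equation*}

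\textbf{Reduction and local analysis.} By Lemma~\ref{lem:simple-spwp} it suffices to show that every \emph{simple} subloop $\gamma'\subset\partial[\tilde u]$ winds nonnegatively, and since $\gamma'$ is an embedded piecewise-linear loop in the plane this amounts to showing $\gamma'$ is positively oriented. The corners of $\gamma'$ are of two kinds: genuine corners of $\partial[\tilde u]$ (at one of the $q$'s or $x_r$'s), and new corners where two constituent segments of $\partial[\tilde u]$ cross. At a genuine corner the incoming and outgoing segments have slopes differing by the jump built into the construction; because each of $q_{a,i},q_{b,j},q_{a+b,h}$ is the degree-zero intersection point and the winding of the base paths $\ell(d)$ is clockwise by convention, these are all left turns through an angle in $(0,\pi)$. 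At a crossing point, the requirement that $\gamma'$ respect the cyclic order of $\partial[\tilde u]$ together with the slope ordering again forces a left turn. The only features that can produce a right turn are the band detours; here one uses the explicit shape of the admissible sequences $(\delta_r)$ from Lemma~\ref{lem:beta-cancel} — values in $\{-1,0,1\}$, alternating in sign, beginning with $+1$ and ending with $-1$ — to see that each detour is a ``slit'' flanked by left turns, and that a simple subloop which passes through the turnaround of such a slit still has enough compensating left turns that its total turning equals $+2\pi$ rather than $-2\pi$.

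\textbf{The main obstacle.} The genuinely delicate step is this last assertion: controlling the non-convex ``slit'' corners coming from the band detours and the analogous ``backward'' crossings, and showing that they can never accumulate so as to reverse the orientation of a simple subloop. A simple subloop through a slit turnaround carries a corner with exterior angle $-\pi$, so one must produce $\ge 3$ more compensating left turns than slits — which is precisely what the structure of the $\delta$-sequences forced by $\pi_1$-triviality (Lemma~\ref{lem:beta-cancel}) should guarantee. Making this quantitative, together with pinning down the orientation conventions at all three band-bearing Lagrangians so that they are globally consistent with the degree-zero condition on the corner intersection points, is where the real care is needed; everything else reduces to linear bookkeeping in the model $\Sigma$.
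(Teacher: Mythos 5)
There is a genuine gap, and you name it yourself: the entire content of the lemma is concentrated in the step you defer ("the main obstacle"), namely showing that the non-convex corners — the slits coming from the band detours $\delta_r\neq 0$ and from boundary branch points — can never make a simple subloop wind negatively. Your turning-number bookkeeping is not carried out, and the local claims it rests on are not justified: it is not true that every genuine corner of $\partial[\tilde u]$ is a left turn (the paper itself notes that these polygons have non-convex corners, which is exactly why slits and branch points occur), and the assertion that "at a crossing point the cyclic order forces a left turn" is unsupported. Without a quantitative statement of the form "each right turn is compensated by enough left turns," the argument does not establish SPWP; it only restates it in turning-angle language.

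For contrast, the paper's proof avoids turning angles altogether. It frames the cylinder by $L(0)$, so that every side of the lifted boundary loop has \emph{positive slope}, with the slopes decreasing monotonically along the cyclic order ($L(0)$ of slope $\infty$, then $[(n-|a|)/2]^{-1}$, $[r+(n-|a|)/2]^{-1}$, down to $[(n+m-|a+b|)/2]^{-1}$), so that along $\partial[\tilde u]$ or any subloop the slope can increase at most once (where the subloop uses or skips $L(0)$). It then proves an elementary plane-geometry fact: a polygonal loop all of whose sides have positive slope and which winds negatively about some point must have at least two vertices where the slope increases — checked directly for triangles, extended to simple polygons by removing a side and inducting, and extended to self-intersecting loops via the simple-subloop reduction of Lemma~\ref{lem:simple-spwp}. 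Since the boundary loop admits at most one slope increase, no subloop can wind negatively. If you want to salvage your approach, you would need to prove an analogous compensating statement for your slit corners using the structure of the $\delta$-sequences from Lemma~\ref{lem:beta-cancel}; as written, that step is missing.
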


\begin{proof}
  The key observation is that the slopes of the paths $L(0),
  L(n),\dots,\tau^k L(n), L(n+m)$ through the cylinder are positive
  and monotonically decreasing. If we frame the cylinder using $L(0)$,
  then
  \begin{itemize}
  \item $L(0)$ has slope $\infty$
  \item $L(n)$ has slope $[(n-|a|)/2]^{-1}$
  \item $\tau^r L(n)$ has slope $[r+(n-|a|)/2]^{-1}$
  \item $L(n+m)$ has slope $[(n+m-|a+b|)/2]^{-1}$.
  \end{itemize}
  Only at the intersection between $L(n+m)$ and $L(0)$ does the slope
  increase.

  Hence as we traverse $\partial[\tilde{u}]$, or any subloop thereof,
  the slope can only increase at one point. This point is either where
  the subloop either uses or skips over $L(0)$.

  Now we use some elementary plane geometry. Suppose that $P$ is an
  oriented polygonal path in the plane (possibly self-intersecting),
  all of whose sides $(S_i)_{i=1}^N$ have positive slope. Suppose that
  $P$ winds negatively around some point $y$. 
We claim the slope has to increase at no fewer than two vertices. We prove this by induction on the number of sides $N$.

Checking cases proves the claim when $N=3$ and $P$ is a
  triangle. See figure~\ref{fig:spwp}(a).

  Suppose
  for induction that the claim is true for $N < N_0$. Now we prove the claim if $P$ is a \emph{simple} $N_0$-gon, so assume for a contradiction that $P$ is a simple $N_0$-gon that winds negatively and has at most one vertex where the slope increases. We assume that $N_0 > 3$, so there must be a side $S_i$ such that at both ends there is a decrease in slope. We remove the
  side $S_i$ from the $N_0$-gon $P$ and extend the two incident sides
  $S_{i-1}$ and $S_{i+1}$ in order to obtain an $(N_0-1)$-gon $P'$. Since $P$ has at most one vertex where the slope increases, so does $P'$: Since the slope decreased at both $S_{i-1}S_i$ and at
  $S_iS_{i+1}$, it will decrease at the new vertex
  $S_{i-1}S_{i+1}$. See Figure~\ref{fig:spwp}(b). Since the original simple loop $P$ winds
  negatively, the new loop $P'$ must also wind negatively around some point. This
  contradicts the induction hypothesis. Now we have established the claim for simple $N_0$-gons.

  \begin{figure}
    \includegraphics[width=3.75in]{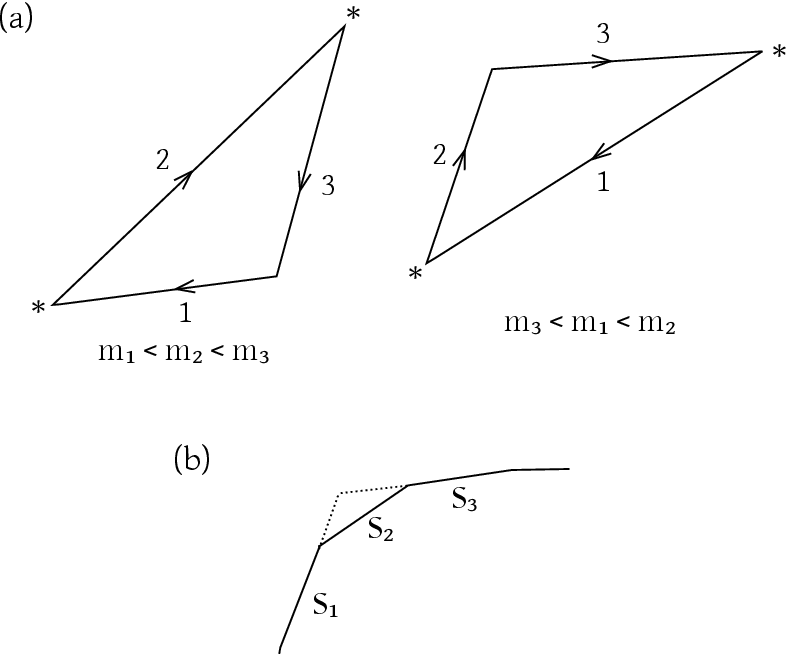} 
    \caption{(a) Negatively winding triangles with positive slopes
      have two slope increases ($m$ denotes slope). (b) Removing the side $S_2$.}
    \label{fig:spwp}
  \end{figure}

  Now suppose $P$ is a self-intersecting $N_0$-gon, that winds negatively around
  $y$. Using Lemma \ref{lem:simple-spwp}, we can find a simple subloop
  $P'$ winding negatively around $y$. Since $P'$ is a simple $N$-gon for some $N \leq N_0$, it must have at least two vertices where the slope increases. For each vertex $v$ of $P'$ where the slope
  of $P'$ increases, there is a vertex in the original polygon $P$
  where the slope increases, either at $v$ itself, or at some
  point in the interval of $P$ that was deleted at $v$. Thus, since
  $P'$ has at least two vertices where the slope increases, so does $P$.

 % Then $P= P' * P''$, where $P'$ is a simple polygon winding
 %  negatively around $y$, and $P''$ is some other polygon, not
 %  necessarily simple, and $*$ denotes composition of loops. Hence by
 %  the claim for simple polygons, the slope increases at least two
 %  vertices of $P'$. If these two vertices are also vertices of $P$, we
 %  are done. Otherwise, one is the ``new'' vertex where the composition
 %  of paths is done. From the point of view of $P''$, however, the
 %  slope actually \emph{decreases} at this vertex, because of the
 %  orientations. Because $P''$ is a closed polygonal path, the slope
 %  must increase at some other vertex, which is also a vertex of
 %  $P$. Hence the slope increases at no fewer than two vertices of $P$.
\end{proof}

Having chosen a lift $\partial[\tilde{u}]$ of the boundary loop,
define a $2$-chain $C$ on $\tilde{M}$ whose multiplicity at $y$ is the
winding number of $\partial[\tilde{u}]$ around $y$. This has $\partial C
= \partial[\tilde{u}]$.

\begin{lemma}
\label{lem:existence}
There is a complex structure $j$ on $T$ and a holomorphic map
$\tilde{u}: T \to \tilde{M}$ such that $\tilde{u}_*[T] = C$.
\end{lemma}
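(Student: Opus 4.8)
The plan is to obtain Lemma~\ref{lem:existence} by running the branched-covering construction from the proof of Lemma~\ref{lem:spwp-mapping} on the boundary loop $\gamma:=\partial[\tilde u]$, while carrying along the $k+3$ marked points of $\gamma$ coming from the asymptotics $q_{a,i},x_1,\dots,x_k,q_{b,j},q_{a+b,h}$, so that at the end they organize the domain into a $(k+3)$-punctured disk. All of the genuine content is in Lemmas~\ref{lem:positive-winding}, \ref{lem:spwp}, and~\ref{lem:spwp-mapping}; what remains is bookkeeping and a check that the image stays inside $\tilde M$.

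First I would fix, as in the discussion preceding Lemma~\ref{lem:positive-winding}, a simply connected planar domain $\Omega\subset\tilde M$ containing both $\gamma$ and the support of the $2$-chain $C$ (possible since $\tilde M$ is simply connected and planar and $\gamma\cup\operatorname{Supp}C$ is compact), together with an identification of $\Omega$ with a domain in $\C$. By Lemma~\ref{lem:positive-winding} the loop $\gamma$ has SPWP, and $\partial C=\gamma$ with multiplicities equal to winding numbers. Now I would apply the iterative procedure from the proof of Lemma~\ref{lem:spwp-mapping}: repeatedly choose an interior point $y\notin\image(\gamma)$ with winding number $m>1$, pass to the $m{:}1$ branched cover of $\C$ branched at $y$, and lift $\gamma$ to the loop covering it once, which again has SPWP by Lemma~\ref{lem:spwp}; since the branch points lie off $\image(\gamma)$, the $k+3$ marked points of $\gamma$ and the arcs between them lift along unchanged. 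After finitely many steps the loop becomes a simple Jordan loop $\gamma_\infty$ bounding a Jordan domain $D_\infty$ in the iterated branched cover $\hat M$.

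A Riemann map together with Carath\'eodory's theorem gives a biholomorphism $D^2\to D_\infty$ extending to a homeomorphism of the closures; composing with the covering projection $\hat M\to\C$ produces a holomorphic map $u_0\colon D^2\to\C$ with $\partial[u_0]=\gamma$ and $(u_0)_*[D^2]=C$. Because a holomorphic map has only positive local degrees, $\image(u_0)$ lies in the union of $\image(\gamma)$ with the support of $C$, hence in $\Omega\subset\tilde M$, so in fact $u_0\colon D^2\to\tilde M$. It remains to transfer the marked points: the $k+3$ distinguished points of $\gamma$ occur in a fixed cyclic order, and since $\gamma_\infty$ is simple their preimages under $u_0|_{\partial D^2}$ are $k+3$ distinct points of $\partial D^2$. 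Declaring these to be punctures identifies $(D^2;\text{these points})$ with $(T,j)$ for a unique complex structure $j$ on the $(k+3)$-punctured disk, and under this identification $u_0$ becomes the sought map $\tilde u\colon T\to\tilde M$: by construction its boundary arcs lie on the lifts of $L(0),L(n),\dots,\tau^kL(n),L(n+m)$ traversed in order, it is asymptotic to the prescribed intersection points, and $\tilde u_*[T]=C$.

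The step I expect to be the main obstacle is this last piece of bookkeeping rather than any new analysis: one must check that the iterated branched covers, always taken at interior points off $\image(\gamma)$, never identify or delete any of the $k+3$ boundary marked points and never move a boundary arc off the lift of its Lagrangian, so that the resulting disk is conformally an honest $(k+3)$-punctured disk. One should also note that at a non-convex (reflex) corner of $\gamma$ the Riemann map, and hence $\tilde u$, acquires a boundary branch point, and that the iterated covers may produce interior branch points; these are perfectly admissible for a holomorphic map, and it is precisely their rigidity that makes this an index-zero problem, consistent with the fixed conformal structure $j$.
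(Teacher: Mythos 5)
Your proposal is correct and follows essentially the same route as the paper: invoke Lemma~\ref{lem:positive-winding} to get SPWP for $\partial[\tilde{u}]$, then run the branched-cover plus Riemann mapping construction of Lemma~\ref{lem:spwp-mapping}, taking the complex structure $j$ on $T$ from uniformization of the region bounded by the final simple lift, with punctures at the corners of the loop. The extra bookkeeping you flag (marked points surviving the covers, image contained in $\tilde{M}$, boundary branch points at reflex corners) is consistent with, and slightly more detailed than, the paper's own argument.
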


\begin{proof}
  Lemma \ref{lem:positive-winding} allows us to apply Lemma
  \ref{lem:spwp-mapping}, which yields map $\tilde{u} : T \to \tilde{M}$. More
  precisely, the complex structure on $T$ is the one obtained from
  uniformization of the region bounded by the simple lift of $\gamma =
  \partial[u]$ at the end of the construction in
  \ref{lem:spwp-mapping}, as a Riemann surface with boundary and
  punctures (at the non-smooth points of the loop).
\end{proof}

% \begin{proof}
%   If $C$ has multiplicity $1$ everywhere within its support, then
%   $\partial C$ is a simple closed contractible loop in a contractible
%   Riemann surface $\tilde{M}$. Hence $\Supp C$, with the complex
%   structure induced by $M$, is a simply-connected Riemann surface with
%   boundary and boundary punctures. Since the Lagrangians are in
%   general position, the number of boundary punctures is correct so
%   that uniformization of $\Supp C$ as a Riemann surface with boundary
%   and boundary punctures yields the complex structure $j$ on $T$ and
%   the map $\tilde{u}$.

%   If $C$ has multiplicity greater than $1$ somewhere, we can pass to a
%   branched cover of $\tilde{M}$ in order to reduce its
%   complexity. That is to say, let $\tilde{M}'$ be a $2$-fold branched
%   covering of $\tilde{M}$ over a point $y$ where $w(y,\partial C) \geq
%   2$. Choosing a lift $C'$ of $C$, we find that $C'$ has lower
%   multiplicities at the preimages of $y$. Repeating this process at
%   different points, we obtain a branched covering $\tilde{M}^{(k)}$
%   and a $2$-chain $C^{(k)}$ which has multiplicity $1$ within its
%   support. Then as above we obtain a complex structure $j$ on $T$ and
%   a map $\tilde{u}^{(k)}: T \to \tilde{M}^{(k)}$, which after
%   composing with the covering $\tilde{M}^{(k)} \to \tilde{M}$, gives
%   the desired result.
% \end{proof}

Pushing the map $\tilde{u}$ from Lemma \ref{lem:existence} down to
$M$, we obtain the existence of a holomorphic representative in the
homotopy class $\phi$, for a particular complex structure on the domain.

\subsubsection{The moduli space of holomorphic representatives with
  varying conformal structure}
\label{sec:moduli-space}

Let $\cM(\phi,j)$ denote the moduli space of $(j,J_M)$-holomorphic
maps $u: T \to M$ in the homotopy class $\phi$. Let $\cM(\phi) =
\bigcup_{j} \cM(\phi,j)$ denote 
the moduli space of such maps with varying conformal structure on the
domain. Let $\cR^{k+3}$ denote the moduli space of conformal
structures on the disk with $(k+3)$ boundary punctures. There is a
natural map $\pi: \cM(\phi) \to \cR^{k+3}$ which forgets the map.

\begin{lemma}
  \label{lem:no-kernel}
  For any $u \in \cM(\phi,j)$, we have $\ind D_{u,(j,J_M)} = 0$ and $\ker D_{u,(j,J_M)} = 0$.
\end{lemma}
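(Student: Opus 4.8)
The plan is to reduce the statement to a linear Cauchy--Riemann problem on a complex line bundle over the fixed-modulus domain, compute its index by Riemann--Roch, and then obtain regularity for free from automatic transversality in real dimension two. By Proposition~\ref{prop:sections-polygon}, $u$ is the same data as a $(j,J_M)$-holomorphic map $u_M\colon T\to M$, and under this identification $D_{u,(j,J_M)}$ becomes the linearised $\bar\partial$-operator $D_{u_M}$ on $W^{1,p}$-sections of the complex line bundle $u_M^*TM\to T$ --- note that $\dim_{\R}M=2$, so this bundle has complex rank one --- with totally real boundary conditions pulled back from the Lagrangians $L(0),L(n),\tau L(n),\dots,\tau^kL(n),L(n+m)$ along the successive boundary arcs, and exponential-decay asymptotics at the $k+3$ punctures prescribed by the transverse intersection points $q_{a,i},x_1,\dots,x_k,q_{b,j},q_{a+b,h}$. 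Since the conformal structure $j$ on $T$ is held fixed, the index of $D_{u_M}$ is precisely the virtual dimension of the fibre $\cM(\phi,j)$ of $\pi\colon\cM(\phi)\to\cR^{k+3}$.

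I would then compute this index by the Riemann--Roch formula for Cauchy--Riemann operators on a disk with strip-like ends (Seidel~\cite{seidel-book}). As the bundle has complex rank one and $T$ is a disk, the boundary-arc and puncture contributions combine so that $\ind D_{u_M}$ equals $\deg(\mathrm{output})-\sum\deg(\mathrm{inputs})$, the difference of the gradings of the asymptotic Floer generators. But all of these are degree-zero generators: $q_{a,i}\in HF^0(L(0),L(n))$, $q_{b,j}\in HF^0(L(n),L(n+m))$, $q_{a+b,h}\in HF^0(L(0),L(n+m))$ by the construction of Section~\ref{sec:intersections}, and each $x_r$ was arranged in Section~\ref{sec:horiz-sections} to be the unique intersection point of $\tau^{r-1}L(n)$ and $\tau^rL(n)$, of degree zero. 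Hence $\ind D_{u_M}=0$; this is exactly the ``index zero'' phenomenon noted before Proposition~\ref{prop:which-output}, i.e.\ the statement that, after cutting down by the $k$-dimensional $\cR^{k+3}$, the fibre $\cM(\phi,j)$ is virtually a point.

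For the vanishing of the kernel I would invoke automatic transversality for Cauchy--Riemann operators on a holomorphic line bundle over a bordered Riemann surface: a nonzero element of $\ker D_{u_M}$ is a holomorphic section of $u_M^*TM$ over the disk $T$ whose totally real boundary behaviour and transverse asymptotics would force a strictly negative total vanishing order (interior zeros counted positively, boundary zeros with positive half-integer multiplicity), which is impossible; equivalently, such an operator is injective as soon as its index is $\le 0$. This is the surface-specific regularity underlying the discussion of \cite[\S13]{seidel-book}, and fixing $j$ does not interfere with it, since the obstruction is a purely topological zero-count. With $\ind D_{u_M}=0$ we conclude $\ker D_{u_M}=0$, hence $\coker D_{u_M}=0$ as well, so $u$ is regular; the argument is uniform in $u$, in $j\in\cR^{k+3}$, and in the homotopy class $\phi$.

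The step I expect to require the most care is the index bookkeeping --- arranging the boundary-arc and puncture contributions so that the answer comes out manifestly $0$ rather than some nonzero multiple of $k$ --- together with fixing the normalisation in the automatic-transversality inequality in this punctured, fixed-conformal-structure setting. The underlying geometric content, namely the rigidity and regularity of holomorphic maps between Riemann surfaces, is standard.
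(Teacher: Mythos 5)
Your proposal is correct and follows essentially the same route as the paper: reduce to a Cauchy--Riemann operator on the complex line bundle $u^*TM$ over the punctured disk, get $\ind D_{u,(j,J_M)}=0$ from the fact that all asymptotic generators have degree zero (the paper cites Proposition 11.13 of \cite{seidel-book}), and deduce $\ker D_{u,(j,J_M)}=0$ from the line-bundle automatic-transversality/zero-counting argument (the paper cites Lemma 11.5 in Section (11d) of \cite{seidel-book}, with $\mu(\rho_1)=0$ and $|\Sigma^-|=1$). The only difference is bookkeeping of citations, not substance.
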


\begin{proof}
  Let the intersection points $q_{a,i},x_1,\dots,x_k,q_{b,j}$ be
  regarded as positive punctures and let $q_{a+b,h}$ be regarded as a
  negative puncture. Then by the conventions for Maslov index, we have
  that all of these intersection points have index $0$, for as we go
  $L(0)\to L(n)$, $L(n) \to \tau L(n)$, \dots, $\tau^k L(n) \to
  L(n+m)$, and $L(0) \to L(n+m)$, the Lagrangian tangent space tilts
  clockwise by a small amount. Then by Proposition 11.13 of
  \cite{seidel-book} (with $|\Sigma^-| = 1$ for the negative
  puncture), we have $\ind D_{u,(j,J_M)} = 0$.

  Furthermore, the operator $D_{u,(j,J_M)}$ is a Cauchy--Riemann operator acting on
  the line bundle $u^*TM$, so the results of Section (11d) of
  \cite{seidel-book} apply. The hypotheses of Lemma 11.5 are satisfied
  with $\mu(\rho_1) = 0$ and $|\Sigma^-| = 1$, so $\ker D_{u,(j,J_M)}
  = 0$.
\end{proof}

\begin{lemma}
  \label{lem:covering}
  The map $\pi: \cM(\phi) \to \cR^{k+3}$ is a proper submersion of
  relative dimension zero (that is, a finite covering).
\end{lemma}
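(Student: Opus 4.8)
The plan is to check the two listed properties of $\pi$ separately---that it is a submersion of relative dimension zero, and that it is proper---and then invoke the elementary fact that a proper submersion with zero-dimensional fibers over the connected base $\cR^{k+3}$ is a finite covering map.

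For the submersion statement the essential input is Lemma \ref{lem:no-kernel}, and the point is that \emph{transversality is automatic here, with no genericity argument needed}. Fix $j \in \cR^{k+3}$ and $u \in \cM(\phi,j)$. The fiber $\cM(\phi,j) = \pi^{-1}(j)$ is the zero locus of the $\bar{\partial}_{(j,J_M)}$-section of the relevant Banach bundle over the space of maps $T \to M$ with the prescribed boundary and asymptotic conditions, and its linearization at $u$ is $D_{u,(j,J_M)}$. By Lemma \ref{lem:no-kernel}, $\ind D_{u,(j,J_M)} = 0$ and $\ker D_{u,(j,J_M)} = 0$, hence $\coker D_{u,(j,J_M)} = 0$ and $D_{u,(j,J_M)}$ is an isomorphism. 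Letting $j$ vary, the extended linearized operator (adding the $T_j\cR^{k+3}$ directions) is then surjective with index $\ind D_{u,(j,J_M)} + \dim\cR^{k+3} = \dim \cR^{k+3}$, so $\cM(\phi)$ is a smooth manifold of dimension $\dim\cR^{k+3}$. The kernel of $d\pi_u$ is canonically $\ker D_{u,(j,J_M)} = 0$, so $d\pi_u$ is an injection between vector spaces of equal dimension, hence an isomorphism; thus $\pi$ is a local diffeomorphism.

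For properness I would take $j_n \to j_\infty$ in $\cR^{k+3}$ and $u_n \in \cM(\phi, j_n)$, and extract a subsequence converging to some $u_\infty \in \cM(\phi,j_\infty)$ by Gromov compactness. Since $j_\infty$ lies in the open stratum (not on the boundary of $\overline{\cR}^{k+3}$), the domains do not degenerate. The energy $\int_T u_n^*\omega$ equals the topological quantity $\int_\phi \omega$, independent of $n$. No sphere or disk bubbles form, because $(M,\omega = d\theta)$ is exact, so $\omega$ vanishes on $\pi_2(M)$ and on $\pi_2(M,L)$ for each of the Lagrangians $L(0),\tau^r L(n),L(n+m)$. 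Breaking of Floer strips at the punctures is excluded because the generators $q_{a,i}, x_1,\dots,x_k, q_{b,j}, q_{a+b,h}$ all have index zero, so a nonconstant broken piece would have to be a holomorphic strip of negative index between two of these Lagrangians, which is impossible (equivalently, the pairwise Floer differentials vanish). Finally the images of the $u_n$ stay in a fixed compact region of the noncompact surface $M$: lying in the single homotopy class $\phi$, their winding numbers around the cylinder are bounded by the combinatorics of Section \ref{sec:homotopy-classes}, which confines them (alternatively, one chooses $M$ convex at infinity and applies the maximum principle). Hence a subsequence converges in $C^\infty$ to a $(j_\infty,J_M)$-holomorphic map in class $\phi$, i.e.\ to a point of $\cM(\phi,j_\infty)$.

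Combining the two parts, $\pi$ is a proper local diffeomorphism over the connected base $\cR^{k+3}$, hence a finite covering, with fiber cardinality equal to the number of holomorphic representatives of $\phi$ for any (equivalently, every) conformal structure on $T$---which is exactly the count needed for Proposition \ref{prop:polygon-count}. I expect the properness argument to be the only real obstacle, and within it the delicate points are the two ways energy could be lost (bubbling, strip breaking) and the possibility that images escape to infinity in the noncompact $M$; all three are controlled by exactness of the fiber geometry together with the homotopy-class confinement established in the previous subsection.
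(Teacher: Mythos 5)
Your argument is essentially the paper's: the submersion part is exactly the intended use of Lemma \ref{lem:no-kernel} (index zero plus vanishing kernel, with $\ker d\pi$ identified with $\ker D_{u,(j,J_M)}$), and properness is Gromov--Floer compactness with the only danger being strip breaking at the punctures, bubbling and domain degeneration being excluded as you say.

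The one place where your justification is weaker than the paper's is the exclusion of strip breaking. Since all generators have degree zero, a strip breaking off would have index zero, not negative index as you assert; ruling out nonconstant index-zero strips by a dimension count requires regularity of $J_M$ for strips, which has not been arranged here (transversality for the polygons was obtained by the automatic argument of Lemma \ref{lem:no-kernel}, not by genericity), and the parenthetical ``the pairwise Floer differentials vanish'' is a statement about the algebraic count of index-one strips and does not by itself preclude holomorphic strips from appearing in the limit. The paper avoids this entirely with a stronger, purely topological observation: the boundary conditions bound no bigons in $M$ even topologically (in the universal cover the relevant curves have distinct monotone slopes, so two lifts meet at most once), so no strips exist for any almost complex structure. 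Your argument becomes complete if you replace the index remark by this no-bigon statement, which is also what the paper invokes at the analogous point in Proposition \ref{prop:TQFT-gluing}.
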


\begin{proof}
  The index $\ind D_{u,(j,J_M)} = 0$ is the expected dimension of the space of curves with fixed conformal structure. When we allow the conformal structure to vary, this adds $k = \dim \cR^{k+3}$ dimensions, so $\cM(\phi)$ has expected dimension $k$. Lemma \ref{lem:no-kernel} implies $\coker D_{u,(j,J_M)} = 0$, so $\cM(\phi)$ is a manifold of dimension $k$.

  % The relative dimension is the dimension of a generic fiber, which is
  % $\ind D_{u,(j,J_M)}$ for some fixed $j$. By the previous Lemma this
  % is zero. Thus the dimension of $\cM(\phi)$ is equal to that of
  % $\cR^{k+3}$, which is $k$.

  If $u \in \cM(\phi,j) \subset \cM(\phi)$ is a point where the map
  $\pi: \cM(\phi) \to \cR^{k+3}$ is not a submersion, we must have
  $\ker D\pi \neq 0$. On the other hand $\ker D\pi$ consists of
  infinitesimal deformations of the map which do not change the
  conformal structure on the domain, and so is equal to $\ker
  D_{u,(j,J_M)}$, which is zero by the previous lemma. Hence $\pi$ is
  a submersion.

  The properness of $\pi$ is an instance of Gromov--Floer
  compactness. The only thing to check is whether, as we vary $j \in
  \cR^{k+3}$, any strips can break off. This is impossible because our
  boundary conditions do not bound any bigons in $M$.
\end{proof}

\begin{lemma}
  \label{lem:degree-1}
  The map $\pi: \cM(\phi) \to \cR^{k+3}$ has degree one.
\end{lemma}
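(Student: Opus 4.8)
The plan is to compute the degree of $\pi$ by degenerating the conformal structure on the domain $T$ toward the boundary of the compactified moduli space $\bar\cR^{k+3}$ and counting the limiting broken configurations. Since $\cR^{k+3}$ is connected and $\pi$ is a finite covering (Lemma~\ref{lem:covering}), the degree may be computed near any one convenient point, and I would carry this out by induction on $k$.

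For the base case $k=0$ the domain $T$ is a triangle, $\cR^{3}$ is a single point, and the assertion is that there is exactly one holomorphic triangle $u:T\to M$ with the prescribed corners in the homotopy class $\phi$. Existence is Lemma~\ref{lem:existence}. For uniqueness, observe that any holomorphic representative pushes forward in the universal cover $\tilde M$ to the $2$-chain $C$ determined, via winding numbers, by $\phi$; by Lemma~\ref{lem:positive-winding} (for $k=0$ the Lagrangians $L(0),L(n),L(n+m)$ have strictly decreasing slopes through the cylinder, with a single slope increase at $L(n+m)\cap L(0)$) the boundary loop has SPWP, so the branched-cover resolution of Lemma~\ref{lem:spwp-mapping} exhibits every representative as a biholomorphism onto one and the same region with three marked boundary points. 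Two such biholomorphisms differ by an automorphism of the triangle fixing its three vertices, hence coincide.

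For the inductive step I would take the degenerate conformal structure $j_0$ in the codimension-one stratum of $\bar\cR^{k+3}$ along which $T$ splits, at a single boundary node, into a triangle $T''$ with boundary conditions $\tau^{k-1}L(n),\tau^{k}L(n),L(n+m)$ and corners $x_k,q_{b,j},q'$, together with a $(k+2)$-punctured disk $T'$ with boundary conditions $L(0),L(n),\dots,\tau^{k-1}L(n),L(n+m)$ and corners $q_{a,i},x_1,\dots,x_{k-1},q',q_{a+b,h}$, where $q'\in\tau^{k-1}L(n)\cap L(n+m)$ is a new intersection point. As $j\to j_0$, Gromov--Floer compactness applies with no disk or strip bubbling, since our Lagrangians bound no non-constant holomorphic disks or strips (the no-bigon property already used in Lemma~\ref{lem:covering}); hence each of the $\deg\pi$ representatives converges to a pair $(u',u'')$ with $u'':T''\to M$ a holomorphic triangle and $u':T'\to M$ a holomorphic $(k+2)$-gon sharing the asymptotic value $q'$. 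Rigidity of $u''$, which by the index formula of Lemma~\ref{lem:no-kernel} forces $q'$ to have degree zero, puts $q'$ in the form $q_{b',j'}$, and the base case gives a unique such $u''$ for each admissible $q'$ and homotopy class; the $(k+2)$-gon problem is exactly the case $k-1$, where $\pi$ has degree one by induction, so $u'$ is determined by its homotopy class. Regularity of the broken configuration (Lemma~\ref{lem:no-kernel}) and the gluing theorem then show that exactly one representative near $j_0$ limits to each such configuration.

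The remaining point, and the step I expect to be the main obstacle, is to show that the homotopy class $\phi$ determines the limiting configuration uniquely: the splitting of $\phi$ into the homotopy class of $u'$, the value $q'$, and the homotopy class of $u''$ is unique. I would deduce this from the explicit parametrization of homotopy classes by the sequences $(\delta_r)_{r=0}^{k}$, equivalently $(s_r)_{r=0}^{k-1}$, of Section~\ref{sec:homotopy-classes}, together with the positive-winding constraints: a negatively winding subloop in either piece would be a negatively winding subloop of $\partial[\phi]$, contradicting SPWP, and combined with the requirement that $q'$ have degree zero this pins down the last term $s_{k-1}$ and hence the whole split. Granting this, $\deg\pi=1$, and Proposition~\ref{prop:polygon-count} follows.
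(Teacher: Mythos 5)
Your overall strategy---computing the degree by degenerating the domain and counting broken configurations, using properness of $\pi$, absence of strip breaking, regularity and gluing---is the same kind of argument as the paper's; the difference is that the paper passes in one step to a maximally degenerate corner of $\bar{\cR}^{k+3}$ (an all-triangle stable disk) and determines the nodes by tree-induction, whereas you induct on $k$ through a codimension-one stratum. The inductive variant can be made to work, but as written there is a genuine gap exactly at the step you flag, together with a misstep that feeds into it.

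First, the split-off triangle $u''$ is not an instance of your base case. Its boundary conditions $\tau^{k-1}L(n)$, $\tau^{k}L(n)$, $L(n+m)$ contain two curves passing through the \emph{same} band of $M$, and the analysis of section \ref{sec:homotopy-classes} (the analogue of Lemma \ref{lem:beta-cancel} with only the two windings $\delta_{k-1},\delta_k$) shows there are exactly two admissible homotopy classes, $(\delta_{k-1},\delta_k)=(0,0)$ and $(1,-1)$, with two \emph{different} nodes $q'\in \tau^{k-1}L(n)\cap L(n+m)$; both classes carry holomorphic representatives (the SPWP/existence argument applies verbatim), and both must occur for the binomial counts of Proposition \ref{prop:which-output} to be reproduced. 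So the two fixed corners $x_k,q_{b,j}$ do not determine $u''$, and the appeal to ``$q'$ has degree zero'' does no work: with the gradings in use, \emph{every} point of $\tau^{k-1}L(n)\cap L(n+m)$ has degree zero (cf.\ Lemma \ref{lem:no-kernel}). Second, and consequently, the uniqueness of the splitting $\phi$ into (class of $u'$, node $q'$, class of $u''$) is not established by the SPWP-subloop remark. What actually pins it down is the $(\delta_r)$-bookkeeping: the winding $\delta_k$ is carried entirely by $u''$, and since the only admissible triangle classes are $(0,0)$ and $(1,-1)$, the value $\delta''_{k-1}=-\delta_k$ is forced (note $\delta_k\in\{0,-1\}$ for any admissible $\phi$, since a nonzero last entry must be $-1$); one then checks, using the alternation constraints of Lemma \ref{lem:beta-cancel}, that the complementary sequence $(\delta_0,\dots,\delta_{k-2},\delta_{k-1}+\delta_k)$ is admissible and that no other splitting is. This gives exactly one broken configuration in the class $\phi$ over the chosen stratum, and with regularity and gluing as you indicate the induction then closes, in agreement with the paper's one-shot degeneration to triangles.
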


\begin{proof}
  For this Lemma we will pass to the Gromov--Floer compactification
  $\bar{\pi}: \bar{\cM}(\phi) \to \bar{\cR}^{k+3}$. Because no bigons can
  break off, this compactification consists entirely of stable disks,
  and $\bar{\pi}$ is also a proper submersion. Hence to count the
  degree of $\pi$, it will suffice to count the points in the fiber of
  $\bar{\pi}$ over a corner of $\bar{\cR}^{k+3}$, which is to say when
  the domain is a maximally degenerate stable disk.

  A maximally degenerate stable disk $S = (G, (S_\alpha))$ consists of a
  trivalent graph $G = (V,E^{fin} \cup E^\infty)$ without cycles, with
  $(k+3)$ infinite edges $E^\infty$, and a disk $S_\alpha$ with three
  boundary punctures for each $\alpha \in V$. The boundary punctures
  of $S^\alpha$ are labeled by elements of $E^{fin}\cup E^\infty$. The
  elements of $E^{fin}$ correspond to nodes of the stable disk, while
  the elements of $E^\infty$ correspond to boundary punctures of the
  smooth domains in $\cR^{k+3}$. The homotopy class $\phi$ determines
  the Lagrangian boundary conditions on each component $S_\alpha$, and
  the asymptotic values at the boundary punctures labeled by
  $E^\infty$. The position of the nodes labeled by $E^{fin}$ is not
  determined \emph{a priori}.

  Looking at the Lagrangians $L(0)$, $L(n)$, \dots, $L(n+m)$ shows
  that any three of them bound triangles, and that such triangles are
  determined by two of the corners. Hence by tree-induction starting
  at the leaves of stable disk (those $S_\alpha$ for which two
  punctures are labeled by $E^\infty$), the positions of all the nodes
  are determined by $\phi$, or we run into a contradiction because no
  triangles consistent with the labeling exist.

  Furthermore, in each homotopy class of triangles consistent with the
  labeling of $S_\alpha$, there is exactly one holomorphic
  representative. 

  Hence there is at most one stable map from the stable domain $S =
  (G,(S_\alpha))$ to $M$ consistent with the homotopy class $\phi$.

  Thus we have shown that the degree of $\pi: \cM(\phi) \to \cR^{k+3}$ is either
  zero or one. On the other hand, Lemma \ref{lem:existence} shows that
  $\cM(\phi)$ is not empty, so the degree must be one.
\end{proof}

Proposition \ref{prop:polygon-count} follows immediately from Lemmas \ref{lem:covering} and \ref{lem:degree-1}.

\subsection{Signs}
\label{sec:signs}

In order to determine the signs appearing in the counts of triangles,
we need to specify the brane structures on the Lagrangians
$L(d)$. Since $L(d)$ fibers over a curve in the base, and its
intersection with each fiber is a curve, the tangent bundle of $L(d)$ is
trivial, and we can define a framing of $TL(d)$ using the vertical and
horizontal tangent vectors at each point. Using this framing, we can
give $L(d)$ a trivial $\Spin(2)$ structure, which is induced by
product of the trivial $\Spin(1)$ structures on the horizontal an
vertical tangent bundles.

Although we have not said much about it up until now, strictly
speaking the generators of $CF^*(L(d_1),L(d_2))$ are not canonically
identified with intersection points $q \in L(d_1)\cap L(d_2)$. Rather,
each intersection point $q$ gives rise to an abstract $1$-dimensional
$\R$--vector space, the orientation line $o(q)$. Following
\cite[Ch. 11]{seidel-book}, this line is canonically isomorphic up to
multiplication by a positive number with the determinant line of a
certain Fredholm operator $D_q$. The moduli space of
pseudo-holomorphic curves is canonically oriented relative to these lines.

In the case $d_1 \leq d_2$, where all the intersections have degree
$0$, there is a preferred choice of trivialization for $o(q)$. Let $q
\in L(d_1)\cap L(d_2)$. Then we have horizontal-vertical splittings
\begin{equation}
  T_qL(d_i) = (T_qL(d_i))^h \oplus (T_qL(d_i))^v
\end{equation}
The intersection point $q$ has degree $0$ as a morphism from $L(d_1)$
to $L(d_2)$, and moreover both $(T_qL(d_i))^h$ and $(T_qL(d_i))^v$
tilt clockwise by a small amount as we pass from $L(d_1)$ to
$L(d_2)$. 

Let $H$ denote the half-plane with a negative puncture. We define the
orientation operator $D_q = D_q^h \oplus D_q^v$, acting on the product
bundle $\C\times \C \to H$, where the boundary condition in the first
factor is the short path $(T_qL(d_1))^h\to (T_qL(d_2))^h$, while that
in the second factor is the short path $(T_qL(d_1))^v \to
(T_qL(d_2))^v$. By \cite[Eq. (11.39)]{seidel-book}, there is a
canonical isomorphism
\begin{equation}
  \det(D_q) \cong o(q)
\end{equation}
which for our purposes we take as the definition of $o(q)$.  On the
other hand, $D_q$ is the direct sum of the operators $D_q^h$ and
$D_q^v$, which have vanishing kernel and cokernel. Hence
\begin{equation}
  \det(D_q) \cong \det(D_q^h)\otimes \det(D_q^v) \cong \R \otimes \R
  \cong \R
\end{equation}
where all isomorphisms are canonical. This gives us a preferred choice
of isomorphism $o(q) \cong \R$.

\begin{proposition}
  \label{prop:signs}
  Taking the preferred isomorphisms $o(q) \cong \det(D_q)\cong \R$ for all generators
  $q \in CF^0(L(d_1),L(d_2))$ for $d_1 \leq d_2$, all of the
  holomorphic triangles found above have the same sign. Hence, after possibly reversing all off the preferred isomorphisms $o(q) \cong \R$, all of them have a positive sign.
\end{proposition}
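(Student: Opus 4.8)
The plan is to reduce the sign computation to a gluing statement for determinant lines, using the preferred trivializations of the orientation lines already set up before the statement. Concretely, a holomorphic triangle $u$ contributing to $\mu^2(q_{b,j},q_{a,i})$ with output $q_{a+b,h}$ carries an orientation operator $D_u$ whose determinant line is, by \cite{seidel-book}, canonically isomorphic to $o(q_{a,i})^\vee \otimes o(q_{b,j})^\vee \otimes o(q_{a+b,h})$; the sign of $u$ is the comparison of the canonical generator of $\det(D_u)$ coming from $\ker D_u = \coker D_u = 0$ (regularity was established in Proposition \ref{prop:polygon-count} and Lemma \ref{lem:no-kernel}) with the generator obtained from the preferred trivializations of the three orientation lines. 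So the content is: these two generators agree.

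First I would exploit the product structure. Each Lagrangian $L(d)$ was given the product brane structure coming from the horizontal--vertical splitting $TL(d) = (TL(d))^h \oplus (TL(d))^v$, and by Proposition \ref{prop:sections-polygon} (together with the reduction in section \ref{sec:degeneration} to sections of a fibration and thence to maps into the fiber $M$) the relevant linearized operator splits compatibly. More precisely, after the degeneration of section \ref{sec:degenerated-spaces}, $D_u$ is glued from the operators over $T^0$ and over the disks $D_i$; the operator over $T^0$ is the linearization of a map into the fiber $M$, and since $M$ is (topologically) a cylinder with bands and $u^*TM$ is a trivial line bundle, $D_u$ over $T^0$ is a Cauchy--Riemann operator on $\C \to T^0$ with totally real boundary conditions that, along each boundary arc, tilt clockwise by a small amount — exactly the situation where the boundary condition is a small perturbation of a constant one. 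For such operators the determinant line has a canonical positive generator, and gluing at the (degree-zero, rigid) interior intersection points $x_i$ — whose orientation lines also carry the preferred trivialization by the same argument used to set up $o(q) \cong \R$ before the Proposition — preserves positivity by the standard associativity/coherence of gluing signs in \cite{seidel-book}, Sections 11--12. The operators over the disks $D_i$ contribute via Proposition \ref{prop:horiz-sections}: the unique horizontal section there is regular with index zero and its determinant line is canonically trivial, and one checks it is glued in with $+1$.

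The key steps in order: (1) identify $\det(D_u)$ with $o(q_{a,i})^\vee \otimes o(q_{b,j})^\vee \otimes o(q_{a+b,h})$ via \cite{seidel-book} (11.39); (2) use the degeneration of section \ref{sec:degenerated-spaces} and Proposition \ref{prop:TQFT-gluing} to write $D_u$ as a glued operator; (3) for the $T^0$-piece, invoke Proposition \ref{prop:sections-polygon} to see it as a Cauchy--Riemann operator on a trivial line bundle with boundary conditions that tilt only clockwise, hence with a canonical positive determinant, matching the preferred trivializations of $o(q_{a,i})$, $o(q_{b,j})$, $o(q_{a+b,h})$, and the $o(x_i)$, all of which were defined by exactly this recipe; (4) for each $D_i$-piece use Proposition \ref{prop:horiz-sections} and the nonnegative-curvature/horizontality setup to see the section is regular with canonically trivial, positively-oriented determinant; (5) assemble via the coherence of the gluing isomorphism for determinant lines. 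The main obstacle I anticipate is step (3)--(5): making the bookkeeping of orientations under the iterated gluing genuinely match the preferred trivializations — i.e. checking that no stray sign is introduced when the three "outer" orientation lines and the $k$ "inner" ones $o(x_i)$ are combined, and that the clockwise-tilting boundary conditions really do pin down the sign rather than merely the index. This is where one must be careful to use consistent conventions for positive versus negative punctures and for the ordering in the gluing, but it is exactly the kind of computation that Seidel's framework is designed to make routine once the branes and the splitting are fixed as above.
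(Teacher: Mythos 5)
Your underlying mechanism is the right one and is the same one the paper relies on: every operator in sight has index zero with vanishing kernel and cokernel, the Spin structures are trivial, so all determinant lines are canonically trivialized and the gluing isomorphism cannot introduce a sign. But your route is genuinely different from, and heavier than, the paper's. The paper never decomposes $D_u$ through the degeneration of section \ref{sec:degenerated-spaces}: it simply glues the chosen orientation operators $D_{q_2}$ and $D_{q_1}$ onto the full linearized operator $D_u$ of the original triangle (whose regularity is already available from the counting argument), obtaining an alternative orientation operator $D_{q_0}'$ for the output point; since $D_u$, $D_{q_1}$, $D_{q_2}$ are index-zero with vanishing kernel and cokernel, so is $D_{q_0}'$, hence $\det(D_{q_0}')\cong\R$ canonically and it induces the same trivialization of $o(q_0)$ as the preferred operator $D_{q_0}$ --- which is exactly the statement that the sign is $+1$. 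Your plan instead re-runs the degeneration at the level of determinant lines, splitting $D_u$ into the $T^0$-piece and the $k$ disk pieces and introducing the intermediate orientation lines $o(x_i)$; this could be made to work, but it buys nothing and costs precisely the verifications you flag as the obstacle: you would need the determinant-line identifications to persist through the deformation from $\hat{X}^1 \to T^1$ to the broken configuration (regularity along the family, so that signs are locally constant), and you would have to track ordering conventions through an iterated gluing rather than a single one. Also, the claim that the $T^0$ boundary conditions are ``a small perturbation of a constant one'' is neither needed nor accurate for large polygons; all that is used there is Lemma \ref{lem:no-kernel} (index zero, vanishing kernel), which already gives the canonical trivialization. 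If you replace your steps (2)--(5) by the single gluing of $D_{q_2}$ and $D_{q_1}$ onto $D_u$, then your step (1) together with the vanishing-kernel/cokernel observation is the entire proof.
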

\begin{proof}
  Let $u: S \to X(B)$ be a triangle with positive punctures at $q_1
  \in CF^0(L(d_1),L(d_2))$ and $q_2 \in CF^0(L(d_2),L(d_3))$ and
  negative puncture at $q_0 \in CF^0(L(d_1),L(d_3))$. Let $D_u$ denote
  the linearized operator at $u$. Gluing onto $D_u$ the chosen
  orientation operators $D_{q_2}$ and $D_{q_1}$ in that order gives
  another orientation operator $D_{q_0}'$ for the point $q_0$, and we want to compare the two orientations for the line $o(q_0)$. 
  
  The linearized operator $D_u$ is an operator in the pulled back tangent bundle $u^*TX(B)$. The bundle $TX(B)$ has a splitting into horizontal and vertical subspaces $TX(B) = TX(B)^h \oplus TX(B)^v$ given by the symplectic connection. Over the boundaries of $S$, the Lagrangian boundary condition also splits. Recall from the discussion on gradings in Section \ref{sec:gradings} that we have phase maps on horizontal and vertical subspaces, which measure rotation with respect to the foliations by circles on the base and fiber respectively. These serve to give us preferred trivializations of $u^*TX(B)^h$ and $u^*TX(B)^v$. With respect to these trivializations, the Lagrangian boundary conditions move only by a small amount: in the horizontal space, $TL^h$ has essentially constant phase, while in the vertical space, $TL^v$ moves in such a way that it never crosses the the line corresponding to the circle foliation on the fibers. 

The next step is to observe that the boundary conditions for the operator $D_u$ are essentially the same independent of the map $u$ and of the chosen degree zero generators $q_0, q_1, q_2$, and that hence all of the maps $u$ must contribute with the same sign. In all cases we have an operator in a trivial rank two vector bundle, with split Lagrangian boundary conditions, each factor of which is trivial. Additionally, each pair of Lagrangians at a puncture has the same local form, namely they are related by a small clockwise rotation in both the horizontal and vertical directions. The orientation operators $D_{q_i}$ used to define the orientation lines are also essentially the same for every point $q_i$. Hence the relative sign between the determinant of $D_{q_0}'$ and $o(q_0)$ must be the same for each map $u$.

If the signs are all positive, we are done. Otherwise by reversing our choice of generator for every line $o(q)$ we obtain bases for the groups $CF^0(L(d_1),L(d_2))$ in which all curves contribute positively.

% Since
%   all Spin structures involved are trivial, they introduce no
%   complication in this gluing. Since $D_u$, $D_{q_2}$, and $D_{q_1}$
%   are index zero operators with vanishing kernel and cokernel,
%   $D_{q_0}'$ is as well, and has $\det(D_{q_0}') \cong \R$
%   canonically. Hence the isomorphism $\det(D_{q_0}') \cong o(q_0)$
%   induces the same orientation as $\det(D_{q_0}) \cong o(q_0)$.
\end{proof}

\begin{remark}
  The preceding proposition is another example of the phenomenon observed by Abouzaid \cite[Lemma 3.21]{abouzaid06}. A similar argument is found in \cite[(13c)]{seidel-book}.
\end{remark}

\section{A tropical count of triangles}
\label{sec:trop-fuk}
Abouzaid, Gross and Siebert have proposed a definition of a category
defined from the tropical geometry of an integral affine manifold,
which is meant to describe some part of the Fukaya category of the
corresponding symplectic manifold. The starting point for this
definition is an integral affine manifold $B$. The objects are then
the non-negative integers, with $\hom^0(d_1,d_2) = \Span
B(\frac{1}{d_2-d_1}\Z)$ when $d_1 < d_2$, and chains on $B$ when $d_1
= d_2$. The composition is defined by counting a certain type of
tropical curve that is balanced after addition of \emph{tropical
disks}.

The motivation is that the non-negative integers correspond to certain
Lagrangian sections $L(n)$ of a special Lagrangian torus fibration
over $B$, such that the intersection points between $L(d_1)$ and $L(d_2)$ lie precisely
over the points in $B(\frac{1}{d_2-d_1}\Z)$. The tropical curves then
correspond to the pseudo-holomorphic polygons counted in the A$_\infty$ operations.

The Lagrangians considered above are essentially an example of this
symplectic setup, so it is encouraging that our computation agrees
with the expectation of Abouzaid-Gross-Siebert. The tropical triangles
counted in their definition correspond closely to the
pseudo-holomorphic triangles found in Section \ref{sec:degeneration}.

\subsection{Tropical polygons}
\label{sec:tropical-polygons}

Let $\nabla$ denote the canonical torsion-free flat connection on $B$
associated to the affine structure. The following definitions are due to
Abouzaid \cite{abouzaid-msri-dec09}.

\begin{definition}
  \label{def:tropical-disk}
  Let $B$ be a two-dimensional singular affine manifold with focus-focus singularities, and let $x \in B$ be a non-singular point. Let $\Gamma$ be a tree all of whose edges are finite. Label one univalent vertex $p_{\text{out}}$, and denote the other univalent vertices by $p_1,\dots,p_r$. A \emph{tropical disk} in $B$ ending at $x$ is a balanced tropical embedding $v : \Gamma \to B$, satisfying the following conditions.
  \begin{enumerate}
  \item $v(p_{\text{out}}) = x$,
  \item For $i = 1,\dots,r$, $v(p_i)$ is a singular point of the affine structure, and $v$ maps the (unique) edge incident to $p_i$ into a monodromy-invariant line of the singularity.
  \end{enumerate}
\end{definition}

In the case at hand, there is only one singularity, so the only such tropical disks are line segments emanating from the singular point in the monodromy-invariant direction. If there were more singularities with intersecting monodromy-invariant lines, we could obtain more complicated tropical disks.

\begin{definition}
  \label{def:tropical-polygon}
  Let $q_0, q_1, \dots, q_k$ be points of $B$, with $q_j \in
  B(\frac{1}{d_j}\Z)$. Let $\Gamma$ be a metric ribbon tree with $k+1$
  infinite edges. One infinite edge, the \emph{root}, is labeled with
  $q_0$ and it is the output. The other $k$ infinite edges, the
  \emph{leaves}, are labeled with $q_1,\dots,q_k$ in counterclockwise
  order and these are the inputs. Assign to the region between $q_j$
  and $q_{j+1}$ the weight $\sum_{i=1}^jd_i$, and give the region
  between $q_0$ and $q_1$ weight $0$. Orient the tree upward from the
  root, so that each edge has a ``left'' and a ``right'' side coming
  from the ribbon structure. To each edge $e$, assign a weight $w'_e$
  given by the weight on the left side of $e$ minus the weight on the
  right side of $e$. Define a corrected weight $w_e$ by
  \begin{equation}
    \begin{split}
      w_e = 0 & \text{ if $w'_e < 0$ and $e$ contains a leaf}\\
      w_e = 0 & \text{ if $w'_e > 0$ and $e$ contains the root}\\
      w_e = w'_e &\text{ otherwise}
    \end{split}
  \end{equation}
  Then a \emph{tropical polygon} modeled on $\Gamma$ is a map $u:
  \Gamma \to B$ such that:
  \begin{enumerate}
  \item $u$ converges on the root edge to $q_0$ and on the $j$-th leaf edge to $q_j$;
  \item on the edge $e$, the tangent vector $\dot{u}_e$ to the
    component $u_e$ satisfies
    \begin{equation}
      \nabla_{\dot{u}_e} \dot{u}_e = w_e \dot{u}_e
    \end{equation}
    and $\dot{u}_e$ converges to $0$ on the infinite ends of the root and leaf edges; this differential
    equation only holds outside a finite set of points where tropical
    disks are attached;
  \item there exists a finite collection of tropical disks
    $v_1,\dots,v_N$ such that the union $u \cup \{v_1,\dots,v_N\}$ is
    balanced; the balancing condition at a vertex $x$ is the vanishing
    of the sum of the derivative vectors $\dot{u}_e$ of the various
    components of $u$ incident at $x$ and the integral tangent vectors
    to $v_i$ at $x$, oriented toward the vertex.
  \end{enumerate}
\end{definition}

The heuristic motivation for these definitions is that, if we consider a deformation of complex structures in which the torus fibers collapse, we expect that a family of honest holomorphic polygons will likewise collapse onto a piecewise linear complex in $B$. We further expect that two types of local limiting behavior can occur. Some parts of the curve may limit to surfaces that fiber over straight lines in $B$ intersecting the torus fibers in paths joining two Lagrangian sections, while other parts may limit to surfaces that fiber over straight lines in $B$ intersecting the torus fibers in circles. The latter are what become the tropical disks, while the former become the rest of the tropical polygon. The balancing condition comes from the necessity that these parts can be connected up to form a topological polygon with boundary on the Lagrangian sections. See also \cite[Ch. 8]{d-branes-book}.

To unpack this definition, let us restate it in the simplest case,
which is that of tropical triangles (this mainly simplifies the issues
regarding weights): 

\begin{proposition}
  \label{prop:tropical-triangles}
  Let $q_1 \in B(\frac{1}{n}\Z)$ and $q_2 \in B(\frac{1}{m}\Z)$, with
  $n > 0$ and $m > 0$. Let $q_0 \in B(\frac{1}{n+m}\Z)$. Let $\Gamma$
  be the ribbon tree with one vertex and three infinite edges. Then a
  tropical triangle modeled on $\Gamma$ consists of three
  maps $u_0: [0,\infty) \to B$, $u_1: (-\infty,0] \to B$, $u_2:
  (-\infty,0] \to B$ such that
  \begin{enumerate}
  \item $u_0 \equiv q_0$ is a constant map;
  \item $u_1(0) = u_2(0) = q_0$;
  \item $u_1(-\infty) = q_1$, $u_2(-\infty) = q_2$;
  \item We have $\nabla_{\dot{u}_1} \dot{u}_1 =
    n\dot{u}_1$ and $\nabla_{\dot{u}_2} \dot{u}_2 = m\dot{u}_2$, outside
    of a finite set of points where tropical disks are attached;
  \item there exists a finite collection of tropical disks
    $v_1,\dots,v_N$ such that the balancing condition holds.
  \end{enumerate}
\end{proposition}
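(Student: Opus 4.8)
The plan is to read this off Definition~\ref{def:tropical-polygon} in the case $k=2$, with $d_1 = n$, $d_2 = m$, $d_0 = n+m$; all the content is the bookkeeping of weights, so this is essentially a translation rather than a new argument. First I would fix the combinatorial model: $\Gamma$ is the tripod with unique vertex $x$, the root edge $e_0$ labelled $q_0$, and the two leaf edges $e_1,e_2$ labelled $q_1,q_2$, so that in the cyclic (counterclockwise) order of edges around $x$ one reads $e_0,e_1,e_2$. The three complementary regions, in counterclockwise order, are the region between $q_0$ and $q_1$, the region between $q_1$ and $q_2$, and the region between $q_2$ and $q_0$; by the recipe in the definition (weight $\sum_{i=1}^j d_i$ for the region between $q_j$ and $q_{j+1}$, and $0$ between $q_0$ and $q_1$) these carry weights $0$, $d_1 = n$, and $d_1+d_2 = n+m$ respectively.

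Next I would orient $\Gamma$ upward from the root, so that $e_0$ points toward $x$ and $e_1,e_2$ point away from $x$, and read off the signed edge weights $w'_e = (\text{weight of the region to the left of }e) - (\text{weight of the region to the right of }e)$. Travelling outward along $e_1$ the region of weight $n$ lies on the left and the region of weight $0$ on the right, so $w'_{e_1} = n$; travelling outward along $e_2$ the region of weight $n+m$ lies on the left and that of weight $n$ on the right, so $w'_{e_2} = m$; travelling toward $x$ along $e_0$ the region of weight $n+m$ lies on the left and that of weight $0$ on the right, so $w'_{e_0} = n+m$. Applying the correction rule: $w'_{e_1} = n > 0$ and $w'_{e_2} = m > 0$ lie on edges containing a leaf, hence are unchanged, giving $w_{e_1} = n$ and $w_{e_2} = m$; while $w'_{e_0} = n+m > 0$ lies on the edge containing the root, hence is corrected to $w_{e_0} = 0$.

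Substituting these into condition (2) of Definition~\ref{def:tropical-polygon} gives, on the two leaves, exactly the equations $\nabla_{\dot u_1}\dot u_1 = n\dot u_1$ and $\nabla_{\dot u_2}\dot u_2 = m\dot u_2$ of item (4), and on the root the geodesic equation $\nabla_{\dot u_0}\dot u_0 = 0$ with $\dot u_0 = 0$ at the root end; by uniqueness for this second-order ODE the root component is the constant map at $q_0$, which is item (1). Items (2) and (3) are then just condition (1) of the definition (the asymptotic values $u_j(-\infty) = q_j$, together with the matching $u_1(0) = u_2(0) = q_0$ at $x$, the value $q_0$ being forced by item (1)), and item (5) is condition (3) of the definition verbatim. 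The only step that needs genuine care is the orientation/ribbon accounting in the middle paragraph: one must check that the leaf weights emerge as $+n$ and $+m$ rather than $-n$ and $-m$, which is precisely where the counterclockwise labelling convention of Definition~\ref{def:tropical-polygon} gets used. Once the cyclic order $e_0,e_1,e_2$ around $x$ is pinned down this is immediate, and I do not expect any further obstacle.
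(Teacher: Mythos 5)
Your proposal is correct and is essentially what the paper does: Proposition~\ref{prop:tropical-triangles} is presented there as a direct unpacking of Definition~\ref{def:tropical-polygon} for the tripod $k=2$, with no separate argument beyond the weight bookkeeping you carry out. Your region weights $0,n,n+m$, the edge weights $w_{e_1}=n$, $w_{e_2}=m$, $w_{e_0}=0$ after the root correction, and the resulting identification of items (1)--(5) with the conditions of the definition all agree with the paper's conventions (as confirmed by the later use of the factors $n$ and $m$ in the proof of Proposition~\ref{prop:tropical-our-case}).
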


We can also unpack the equation $\nabla_{\dot{u}} \dot{u} =
n\dot{u}$. The key outcome of the following Lemmas is the insight that
the tangent vector $\dot{u}$ increases by $n$ times the distance the
path $u$ travels. Recall that a path $\gamma$ is called a geodesic for a connection $\nabla$ if $\nabla_{\dot{\gamma}} \dot{\gamma} = 0$.

\begin{lemma}
  Let $\gamma: [0,1] \to B$ be a geodesic for $\nabla$. Define $u :
  (-\infty,0] \to B$ by $u(t) = \gamma(\exp(nt))$. Then at the point
  $x = \gamma(s)$, we have $\dot{u} = ns\dot{\gamma}$, and
  $\nabla_{\dot{u}} \dot{u} = n\dot{u}$.
\end{lemma}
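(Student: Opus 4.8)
The plan is to prove both assertions by a direct chain-rule computation, using the substitution $s = \exp(nt)$.

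First I would establish the formula for $\dot u$. Differentiating $u(t) = \gamma(\exp(nt))$ and using $\tfrac{d}{dt}\exp(nt) = n\exp(nt)$ gives $\dot u(t) = n\exp(nt)\,\dot\gamma(\exp(nt))$. Evaluating at the parameter $t$ for which $\exp(nt) = s$, that is, at the point $x = \gamma(s)$, yields $\dot u = ns\,\dot\gamma$ (with $\dot\gamma$ meaning $\dot\gamma(s)$). This is consistent with the requirement that $\dot u$ vanish at the leaf of the tropical polygon in Definition~\ref{def:tropical-polygon}: as $t \to -\infty$ we have $s \to 0$, so $\dot u \to 0$.

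Next I would compute the covariant acceleration $\nabla_{\dot u}\dot u$, interpreted as the covariant derivative $\tfrac{D}{dt}$ of the vector field $\dot u$ along the curve $u$. Writing $\dot u(t) = \phi(t)\,V(t)$ with $\phi(t) = n\exp(nt)$ and $V(t) = \dot\gamma(\exp(nt))$, the Leibniz rule gives $\tfrac{D}{dt}\dot u = \phi'(t)\,V(t) + \phi(t)\,\tfrac{D}{dt}V(t)$. The first term equals $n^2\exp(nt)\,\dot\gamma = n\,\dot u$. For the second term, the chain rule for covariant derivatives together with the geodesic equation $\nabla_{\dot\gamma}\dot\gamma = 0$ gives $\tfrac{D}{dt}V(t) = n\exp(nt)\,(\nabla_{\dot\gamma}\dot\gamma)|_{\exp(nt)} = 0$. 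Hence $\nabla_{\dot u}\dot u = n\,\dot u$, as claimed.

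There is no serious obstacle here; the only thing to keep track of is that the chain rule is applied twice --- once to the scalar factor $\exp(nt)$ and once inside the covariant derivative of $\dot\gamma\circ\exp(n\,\cdot\,)$ --- and that even torsion-freeness of $\nabla$ is not needed, only the Leibniz rule and the geodesic equation. This confirms the heuristic stated just before the lemma: along $u$ the velocity $\dot u$ grows by a factor proportional to $n$ times the distance traversed.
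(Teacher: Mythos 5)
Your proof is correct and follows essentially the same route as the paper's: chain rule for $\dot u = ns\dot\gamma$, then Leibniz rule plus the geodesic equation to get $\nabla_{\dot u}\dot u = n\dot u$ (the paper merely organizes the second step by computing $\nabla_{\dot\gamma}\dot u = n\dot\gamma$ in the $s$-parametrization and then rescaling by $ns$ using tensoriality of $\nabla$ in its subscript, which is the same calculation as your $D/dt$ computation in the $t$-variable).
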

\begin{proof}
  We set $s = \exp(nt)$. The equation $\dot{u} = ns\dot{\gamma}$ is just
  the chain rule (note that $\dot{u} = du/dt$ while $\dot{\gamma} =
  d\gamma/ds$). We have
  \begin{equation}
    \nabla_{\dot{\gamma}} \dot{u} = \nabla_{\dot{\gamma}} (ns\dot{\gamma}) = n\dot{\gamma}
    + ns\nabla_{\dot{\gamma}}\dot{\gamma} = n\dot{\gamma}
  \end{equation}
  since $\gamma$ is a geodesic. Multiplying this equation by $ns$, and
  using the fact that $\nabla$ is tensorial in its subscript, we
  obtain $\nabla_{\dot{u}} \dot{u} = n\dot{u}$.
\end{proof}

\begin{lemma}
\label{lem:differences}
  Let $u: [a,b] \to B$ solve $\nabla_{\dot{u}}\dot{u} =
  n\dot{u}$. Then if we patch together affine charts along $u$ to embed a
  neighborhood of $u$ into $\R^n$, we have
  \begin{equation}
    \dot{u}(b) - \dot{u}(a) = n(u(b)-u(a))
  \end{equation}
  where we use the affine structure of $\R^n$ to take differences of
  points and vectors at different points.
\end{lemma}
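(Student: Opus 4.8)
The plan is to reduce everything to a computation in a single flat chart. Since $[a,b]$ is contractible and (by hypothesis, implicit in the phrase ``patch together affine charts along $u$'') the curve $u$ stays in a region of $B$ where the affine structure can be developed, one obtains an affine immersion $\Phi$ of a neighborhood of $u([a,b])$ into $\R^n$, unique up to post-composition with an element of $\Aff(\R^n)$. This $\Phi$ is what is meant by ``embedding a neighborhood of $u$ into $\R^n$''; in it we write $u(t)\in\R^n$ for the coordinate expression of the curve. First I would note that the asserted identity is independent of the choice of $\Phi$: replacing $\Phi$ by $x\mapsto A\Phi(x)+c$ with $A\in\GL(n,\R)$ multiplies the point difference $u(b)-u(a)$ by $A$ and likewise multiplies the tangent vector difference $\dot u(b)-\dot u(a)$ by $A$ (tangent vectors transform by the linear part only), so both sides scale the same way.

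Next, in these coordinates $\nabla$ is the standard flat connection on $\R^n$, whose Christoffel symbols vanish, so the covariant derivative of $\dot u$ along $u$ is just the ordinary second derivative: $\nabla_{\dot u}\dot u=\ddot u$. Hence the hypothesis $\nabla_{\dot u}\dot u=n\dot u$ becomes the linear ODE $\ddot u(t)=n\dot u(t)$ on $[a,b]$. Then I would consider the $\R^n$-valued function $t\mapsto \dot u(t)-n\,u(t)$; its derivative is $\ddot u(t)-n\dot u(t)=0$, so it is constant on $[a,b]$. Evaluating at the endpoints yields $\dot u(b)-n\,u(b)=\dot u(a)-n\,u(a)$, i.e.
\begin{equation}
  \dot u(b)-\dot u(a)=n\bigl(u(b)-u(a)\bigr),
\end{equation}
which is the claim.

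The computation itself is a one-line integration; the only genuine content is the passage to a global affine chart along $u$, together with the observation that in such a chart $\nabla_{\dot u}\dot u$ is literally $\ddot u$. I expect that step to be the main (and essentially the only) obstacle, but it is precisely the hypothesis built into the statement, so in the write-up it amounts to invoking the developing map for a contractible base curve and recalling that an affine connection has vanishing Christoffel symbols in affine coordinates.
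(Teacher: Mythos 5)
Your proof is correct. It takes the same essential framework as the paper — develop a neighborhood of the curve into $\R^n$ via affine charts, where $\nabla$ is the standard flat connection — but the final computation differs: the paper reparametrizes $u$ by $s=\exp(nt)$ to get a geodesic $\gamma$ (tying back to the preceding lemma), writes $\gamma$ as an affine line $\gamma(s)=\gamma(\exp(na))+(s-\exp(na))\dot{\gamma}$ in the chart, and uses $\dot{u}=ns\dot{\gamma}$ to compare endpoints, whereas you simply observe that in the chart the hypothesis reads $\ddot{u}=n\dot{u}$ and integrate the conserved quantity $\dot{u}-nu$. Your route is slightly more direct and self-contained (it does not invoke the geodesic reparametrization at all), and your remark that the identity is invariant under post-composition by $\Aff(\R^n)$ is a worthwhile point the paper leaves implicit; the paper's version has the mild advantage of making visible the geometric picture used elsewhere in that section, namely that solutions of $\nabla_{\dot{u}}\dot{u}=n\dot{u}$ are exponentially reparametrized geodesics, i.e.\ straight segments traversed at exponentially increasing speed.
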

\begin{proof}
  Define $\gamma: [\exp(na),\exp(nb)] \to B$ by $\gamma(s) = u((\log
  s)/n)$, so that $\gamma$ is a geodesic. We can embed a neighborhood
  of $\gamma$ into $\R^n$ by patching together affine coordinate
  charts along $\gamma$. Using addition in this embedding, we can
  write $\gamma(s) = \gamma(\exp(na)) + (s-\exp(na))\dot{\gamma}$. We
  have $\dot{u}(\log(s)/n) = ns\dot{\gamma}(s)$, and
  \begin{equation}
    \dot{u}(b) - \dot{u}(a) =n(\exp(nb)-\exp(na))\dot{\gamma} =
    n(\gamma(\exp(nb)) - \gamma(\exp(na))) = n(u(b)-u(a))
  \end{equation}
\end{proof}

One more thing to note regarding these tangent vectors $\dot{u}$ is
how they represent homology classes on the torus fibers of the
fibration over $B$.

When considering a tropical curve $C^\trop$ corresponding to a closed
holomorphic curve $C$, each edge of the tropical curve carries an
integral tangent vector, which morally represents the class $[C\cap
T^2_b] \in H_1(T^2_b;\Z)$ that measures how the
holomorphic curve intersects the torus fiber $T^2_b = \pi^{-1}(b)$. Because the curve is closed, this class is locally constant along
each edge of $C^\trop$.

When considering a tropical curve $C^\trop$ representing a holomorphic
curve $C$ with boundary on Lagrangian sections $L(i),L(j)$, the
intersection of $C$ with $T^2_b$ would morally be a path on $T^2_b$
from $L(i)_b$ to $L(j)_b$, where $L(i)_b$ is intersection of $T^2_b$
and $L(i)$. Let $A(L(i)_b,L(j)_b) \subset
H_1(T^2_b,\{L(i)_b,L(j)_b\};\Z)$ be the subset consisting of such
cycles, which could also be described as the preimage of $L(j)_b -
L(i)_b \in H_0(\{L(i)_b,L(j)_b\};\Z)$ under the boundary
homomorphism. Hence $A(L(i)_b,L(j)_b)$ is a torsor for the kernel of
that homomorphism, which is $H_1(T^2_b;\Z)$. Using the group structure
on $T^2_b$, we can identify $A(L(i)_b,L(j)_b)$ with the coset
\begin{equation}
  [L(j)_b-L(i)_b] + H_1(T^2_b;\Z) \subset H_1(T^2_b;\R)
\end{equation}
On the other hand, there is an isomorphism $(T_bB)_\R \cong
H_1(T^2_b;\R)$. Hence the class of $[C\cap T^2_b]$ can be regarded as
a tangent vector to the base, which is in general real and varies
along the tropical curve as $L(i)$ and $L(j)$ move relative to one
another. The tangent vector $\dot{u}$ to the tropical curve is
this class.

The balancing condition at a vertex $b$ of the tropical polygon
amounts to requiring that the three paths $L(i)_b\to L(j)_b$, $L(j)_b
\to L(k)_b$ and $L(k)_b\to L(i)_b$ form a contractible loop. At a
point where a tropical disk is attached, the path $L(i)_b \to L(j)_b$
changes discontinuously by a loop in the homology class in
$H_1(T^2_b;\Z)$ corresponding to the integer tangent vector to the
tropical disk. 

The last thing to describe for tropical polygons is their
multiplicities. There is a multiplicity coming from the different ways to attach
a disk $v$ to the tropical polygon. If one incoming edge of the
polygon has tangent vector $\dot{u}_e$ corresponding to a path
$\gamma_1: L(i)_b \to L(j)_b$ on $T^2_b$, and the disk has tangent
vector $w$ corresponding to a loop $\gamma_2$ on $T^2_b$, there are
$|\gamma_1.\gamma_2| = |\det(\dot{u}_e, w)|$ points where the disk can
be attached, assuming this determinant is an integer (as it is in the
special case below). Otherwise, one must look carefully at exactly
where on the torus the paths $\gamma_1$ and $\gamma_2$ are located.

For a tropical disk $v$, we also expect the Gross--Siebert theory
to associate a multiplicity $m(v)$. Morally speaking, this number should be a virtual count of
holomorphic disks corresponding to $v$, and it should possible to extract this from the ``structure'' on the affine base in the sense of \cite{gs-affine-complex}. In the case at hand, where the base has just a single focus-focus singularity, all of these factors are expect to be one for simple disks, and zero for multiply covered disks \cite{abouzaid-comm}. In the rest of this section, we adopt this as an ansatz. Modulo this ansatz, we show in Proposition \ref{prop:tropical-our-case} that the tropical curve counts agree with the holomorphic curve counts computed in Section \ref{sec:degeneration}. From a certain point of view, this may be regarded as evidence for the ansatz itself, since tropical curve invariants are intentionally designed to correspond to holomorphic curve invariants, and we have computed the latter.

In summary, the multiplicity of a tropical polygon will have both the
Gross--Siebert factors $m(v)$ counting how many holomorphic disks are
in each class, as well as simpler factors counting how many ways these
classes of disks can be attached. 

\begin{remark}
  After this paper was originally written, an exposition of the closely related idea of \emph{jagged paths} appeared \cite[Definition 3.2]{gs-theta}. In that paper, the various multiplicities are packaged in a different way, but it turns out the that tropical curve counts defined there also agree, in the case of $(\CP^2,D)$, with the holomorphic curve counts computed in Section \ref{sec:degeneration}. The multiplicities we seek are packaged into certain series $f_{\mathfrak{d}} = 1 + \sum_p c_pz^p$ where $\mathfrak{d}$ runs over certain distinguished rays and lines in $B$. See \cite[\S 3]{gs-theta} for more explanation. In our case, there are two such rays emanating from the singularity of the affine structure in the monodromy invariant directions. Our ansatz is equivalent to the fact that the corresponding series reduce, in appropriate choices of coordinates, to $1 + z$ and $1+w$ respectively. The salient feature is that these series have a linear term with coefficient $1$ and no higher degree terms.
\end{remark}

\subsection{Tropical triangles for $(\CP^2, D)$}
\label{sec:tropical-our-case}

We now write out explicitly the tropical curves contributing to the
triangle products in the case of $(\CP^2, D)$. Let us use coordinates
$(\eta,\xi)$ where the point $q_{a,i} \in B(\frac{1}{n}\Z)$ has
coordinates $(a/n,-i/n)$

There is one family of simple tropical disks that emanate from the
singularity on the $\eta = 0$ line in the vertical direction. Their
primitive tangent vectors are $\pm (0,1)$.

Figure \ref{fig:tropical-triangle} shows the the tropical triangle
representing the contribution of $y^2p$ to the product of $x^2$ and
$z^2$. This triangle has multiplicity $2$. The singularity of the
affine structure is placed so as to emphasize the tropical disk ending
at the singularity.
\begin{figure}
\includegraphics[width=3in]{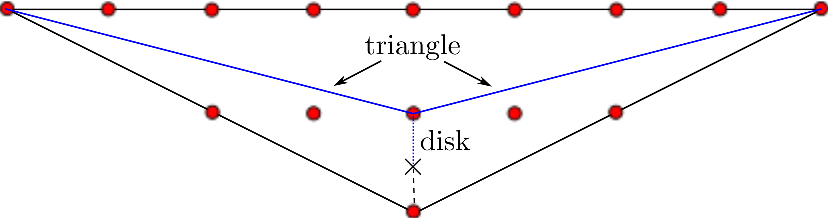} 
\caption{A tropical triangle. The dotted line extending up from the singular point is a tropical disk.}
\label{fig:tropical-triangle}
\end{figure}

\begin{proposition}
  \label{prop:tropical-our-case}
  Let $n > 0$ and $m > 0$, and take $q_{a,i}\in B(\frac{1}{n}\Z)$,
  $q_{b,j} \in B(\frac{1}{m}\Z)$, and $q_{a+b,h} \in
  B(\frac{1}{n+m}\Z)$.

  Suppose $a$ and $b$ have different signs, and let $k =
  \min(|a|,|b|)$, and $h = i+j+s$. If $0 \leq s \leq k$, there is one tropical triangle
  connecting these three points. It is balanced after the addition of
  either $s$ or $k-s$ tropical disks, depending on the position of the
  singularity. The multiplicity of this curve is $\binom{k}{s}$. If
  $s$ does not lie in this range, there is no triangle.

  Suppose that $a$ and $b$ have the same sign. Then unless $h = i+j$
  there is no triangle, and when $h = i+j$ there is exactly one, which
  is represented geometrically by the line segment joining $q_{a,i}$
  and $q_{b,j}$.
  
\end{proposition}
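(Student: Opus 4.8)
The plan is to integrate the defining ODE of a tropical triangle (Proposition \ref{prop:tropical-triangles}) using Lemma \ref{lem:differences}, classify the resulting piecewise-geodesic paths in $B$, and then read off the multiplicities by a combinatorial count that mirrors the analysis of homotopy classes in Section \ref{sec:homotopy-classes}. Throughout I use the coordinates $(\eta,\xi)$ in which $q_{a,i}\in B(\frac1n\Z)$ sits at $(a/n,-i/n)$, and I use that the only tropical disks present are the vertical rays $\pm(0,1)$ emanating from the single focus-focus singularity on the line $\eta=0$.

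First I would establish the structural fact that each non-constant leg $u_1,u_2$ of a tropical triangle is, in an affine chart patched along it, a concatenation of straight segments: on any subinterval with no disk attachment the leg is a reparametrized geodesic and, by Lemma \ref{lem:differences}, $\dot u-(\text{weight})\cdot u$ is constant there; a disk attachment can only occur where the leg meets $\eta=0$, and there $\dot u$ jumps by an integral vertical vector while its $\eta$-component is unchanged. Consequently $\dot u_\eta-n\,\eta(u)$ is globally constant along $u_1$, and since $\dot u=0$ at the leaf $q_{a,i}$ this constant is $-a$; hence $\eta(u_1)$ is strictly monotone away from the leaf, so $u_1$ meets $\eta=0$ at most once, and likewise $u_2$.

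Case $a,b$ of the same sign: then $a/n$, $b/m$, $(a+b)/(n+m)$ all have the same sign, so the convex hull of $q_{a,i},q_{b,j},q_{a+b,h}$ avoids the singular line $\eta=0$, no disks can be attached, and both legs are honest geodesic segments. The balancing condition at the vertex, using $\dot u_1=n(q_0-q_{a,i})$ and $\dot u_2=m(q_0-q_{b,j})$ there (Lemma \ref{lem:differences}), becomes $(n+m)q_0=n\,q_{a,i}+m\,q_{b,j}$, which in coordinates is exactly $h=i+j$ and puts $q_0$ on the segment $[q_{a,i},q_{b,j}]$; so the triangle degenerates to that segment, there is exactly one, and its multiplicity is $1$. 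Case $a,b$ of opposite signs: by the obvious symmetry it suffices to treat $a<0<b$ with $k=|a|\le|b|$, the remaining sub-cases being identical after interchanging the roles of $u_1,u_2$ and the orientation of the disk ray. Here $u_2$ is a single segment and $u_1$ crosses $\eta=0$ exactly once, at a point $p^\ast$ lying on the disk ray, attaching $s$ copies of the primitive disk there; otherwise $u_1$ is the segment $q_{a,i}\to p^\ast$ followed by $p^\ast\to q_{a+b,h}$. Writing $u_1(t)=q_{a,i}+e^{nt}v$ before the crossing, the conditions $\eta(p^\ast)=0$, the balancing at $q_0$, and $u_1(0)=q_{a+b,h}$ form a system that determines $v$ and the crossing parameter uniquely (one finds $\dot u_\eta=k$ at $p^\ast$), the $\xi$-equation then pinning down $h=i+j+s$ with $0\le s\le k$ forced. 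The multiplicity is obtained from the fiber over $p^\ast$: the boundary loop of the triangle meets the class of the disk boundary in $|\det(\dot u_1^-,(0,1))|=k$ transverse points, and choosing $s$ of these $k$ slots at which to insert the (identical, weight-one) disks contributes $\binom ks$ — exactly the tropical shadow of the $\binom ks$ homotopy classes produced by the sequences $(s_r)\in\{0,1\}^k$ in Section \ref{sec:homotopy-classes}. This recovers Proposition \ref{prop:counts-summary}.

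The step I expect to be the main obstacle is the multiplicity bookkeeping in the opposite-sign case: one must verify that successive disk insertions occupy \emph{distinct} slots (so that $s$ insertions give $\binom ks$ rather than $k^s/s!$), that the Gross--Siebert virtual count $m(v)$ equals $1$ for these primitive disks, and that the development of $u_1$ into $\R^2$ via Lemma \ref{lem:differences} is carried out consistently across the branch cut. It is exactly here that the stated dependence on the position of the singularity enters: depending on which side of the singularity the cut (equivalently the disk ray) is taken to run, one certifies the same geometric triangle as balanced using $s$ or $k-s$ disks, and since $\binom ks=\binom k{k-s}$ the answer is unambiguous. The rigidity that forces uniqueness of the triangle for each admissible $h$ is the constancy of $\dot u_\eta-n\,\eta(u)$ together with the fact that the balancing at $q_0$ determines $\dot u(q_0)$ independently of the location of $p^\ast$.
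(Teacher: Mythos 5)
Your proposal follows essentially the same route as the paper's proof: apply Lemma \ref{lem:differences} to each leg, note that disks can only attach where the leg crosses $\eta=0$ (forcing the same-sign case to degenerate to a segment with $h=i+j$), and in the opposite-sign case determine the crossing point by balancing, with the $s$ versus $k-s$ disk count depending on the side of the singularity/monodromy and the multiplicity $\binom{k}{s}$ coming from the $k=|\det(\dot u_1^-,(0,1))|$ attachment slots. The minor unproved points you flag (simple disks occupying distinct slots, multiple covers not contributing) are treated at the same level of rigor in the paper itself, so the proposal is correct and matches the paper's argument.
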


\begin{proof}
When $a$ and $b$ have the same sign the tropical triangle can have no
tropical disks attached to it. Therefore the tropical triangle is
simply a line segment which passes through the three points, and this
only exists if $q_{a+b,h}$ lies on the line between $q_{a,i}$ and $q_{b,j}$.

As for when $a$ and $b$ have different signs, let us consider the case
$a \leq 0$, $b \geq 0$, $a+b \geq 0$, and hence $k = -a$. The other
cases are related to this by obvious reflections and re-namings.

By Proposition \ref{prop:tropical-triangles}, a tropical triangle
consists essentially of two maps $u_1, u_2: (-\infty,0]\to B$,
together with some copies of the tropical disk and their multiples.

\begin{itemize}
\item The leg $u_2$ of the tree connecting $q_{b,j}$ to $q_{a+b,h}$
  cannot have any tropical disks attached, since both endpoints lie on
  the same side of the line $\eta = 0$. We can apply Lemma
  \ref{lem:differences} to obtain the tangent vector $\dot{u}_2$ at
  $q_{a+b,h}$ as $m$ times the difference between the
  endpoints, or
  \begin{equation}
    \dot{u}_2(q_{a+b,h}) = m(q_{a+b,h}-q_{b,j}) = \left(\frac{ma-nb}{n+m}, \frac{-(mi-nj+ms)}{n+m}\right)
  \end{equation}

\item The leg $u_1$ of the tree connecting $q_{a,i}$ to $q_{a+b,h}$
  crosses the line $\eta = 0$ at some point $x$, where tropical disks
  can be attached. It may bend there, and continue on to
  $q_{a+b,h}$, where the balancing condition $\dot{u}_1 + \dot{u}_2 =
  0$ must hold. This shows that the portion of $u_1$ connecting $x$ to
  $q_{a+b,h}$ must be parallel to $u_2$. Hence $x$ must be on the line
  joining $q_{b,j}$ and $q_{a+b,h}$. We obtain the position
  of $x$:
\begin{equation}
  x:   (\eta,\xi) = (0,(-aj+bi+bs)/(ma-nb))
\end{equation}

\item By Lemma \ref{lem:differences} at $x$ the tangent vector
  $\dot{u}_1$ is given by $n$ times
  the difference of the endpoints $x$ and $q_{a,i}$:
\begin{equation}
  \dot{u}_1(x)_L = n(x - q_{a,i}) = (-a,[a(mi-nj)+nbs]/(ma-nb))
\end{equation}
we use the subscript $L$ to denote this is the tangent vector coming
from the left.

\item If the singularity of the affine structure occurs below the
  point $x$, then tropical disks are line segments going up from the singularity to $x$ in the direction
  $(0,1)$.  Adding the vector $(0,s)$ to $\dot{u}_1(x)_L$, we obtain
  $\dot{u}_1(x)_R$, the tangent vector from the right,
  \begin{equation}
    \dot{u}_1(x)_R = \dot{u}_1(x)_L + (0,s) = (-a, a(mi-nj+ms)/(ma-nb))
  \end{equation}
  which is parallel to $\dot{u}_2(q_{a+b,h})$, as it must be. This
  shows that we must attach a collection of tropical disks whose total
  weight is $s$.
  
\item If the singularity of the affine structure occurs above $x$,
  then tropical disks are line segments going down from the singularity to $x$ in the direction $(0,-1)$, but there
  is also the monodromy to be taken into account. First we must act on
  $\dot{u}_1(x)_L$ by the monodromy
  $M=\begin{pmatrix}1&0\\1&1\end{pmatrix}$ to get
  \begin{equation}
    M\dot{u}_1(x)_L = (-a,[a(mi-nj)+nbs]/(ma-nb) - a) 
  \end{equation}
  Adding the vector $(0,-(k-s))$ to this, with $k = -a$, gives the
  same result as before for $\dot{u}_1(x)_R$. However, in this case we
  are attaching a collection of disks with total weight $(k-s)$. 

\item The leg $u_1$ propagates in the direction $\dot{u}_1(x)_R$ from
  $x$ to $q_{a+b,h}$. As it does so, the tangent vector $\dot{u}_1$
  increases by $\Delta = n(q_{a+b,h} - x)$, which is parallel to
  $\dot{u}_1(x)_R$. By comparing affine lengths of the segment of $u_1$ from $q_{a,i}$ to $x$ and the segment of $u_1$ from $x$ to $q_{a+b,h}$, we have the proportion
  \begin{equation}
    \dot{u}_1(x)_R:\Delta = [-a/n] : [(a+b)/(n+m)]
  \end{equation}
  and 
  \begin{equation}
    (\dot{u}_1(x)_R + \Delta):\dot{u}_1(x)_R = [(a+b)/(n+m) - a/n]:[-a/n] = [-(ma-nb)/(n+m)]:[-a]
  \end{equation}
  Thus 
  \begin{equation}
    \dot{u}_1(q_{a+b,h}) =
    \left(-\frac{ma-nb}{n+m},\frac{mi-nj+ms}{n+m}\right) = -\dot{u}_2(q_{a+b,h})
  \end{equation}
  Verifying the balancing condition at $q_{a+b,h}$.
\end{itemize}

Now that we know which tropical curves contribute, we must compute
their multiplicities. The tropical curve constructed above uses the
tropical disk $s$ or $(k-s)$ times. This means that either we attach a single
simple disk $s$ times, or we attach some multiple covers of the disk
in some fashion as to achieve a total multiplicity of $s$. By our ansatz, only the simple disks contribute, and each with multiplicity one.

Attaching simple disks does introduce a multiplicity, since there are
multiple places to attach this disk. In fact, we have
$\det(\dot{u}_1(x)_L,(0,1)) = -a = k$, so there are a total of $k$
places for disks to be attached. Thus we get the multiplicity
$\binom{k}{s}$ or $\binom{k}{k-s}$, which are equal and give the
desired result.

\end{proof} 

\begin{remark}
The fact that the multiplicities $\binom{k}{s}$ and
$\binom{k}{k-s}$ are equal is an illustration of the general
phenomenon that the exact position of the singularity along its
invariant line does not matter for tropical curve counts.  
\end{remark}

\section{Parallel monodromy--invariant directions}
\label{sec:other-manifolds}

In this section we describe a class of affine manifolds to which the
results obtained for $(\CP^2,D)$ naturally generalize. Consider an
integral affine manifold $B$, compact with boundary. We require that
$B$ has the following properties. Recall that a focus-focus
singularity has a monodromy invariant direction, corresponding to the
tangent vector field that is fixed by the monodromy.
\begin{enumerate}
\item The boundary faces of $B$ are straight with respect to the
  affine structure.
\item The singularities of $B$ are focus-focus singularities, and the monodromy--invariant directions of all singularities are parallel.
\item There is an affine linear function $\eta: B \to \R$ such that any corners of $B$ occur at extreme values of $\eta$.
\end{enumerate}

\begin{remark}
  The condition that the monodromy--invariant directions are parallel
  is equivalent to the existence of a global integral vector field, or
  a vector invariant under parallel transport along an arbitrary path
  in $B$ minus the singular points. This vector field spans $\ker
  d\eta$.
\end{remark}

\subsection{Symplectic forms}
\label{sec:other-symplectic}

Let $x_1,\dots,x_n$ denote the singularities of $B$. Let
$[a_0,a_{n+1}]$ be the image of $B$ under $\eta$, and let $a_i =
\eta(x_i)$. We split up $B$ along the monodromy invariant lines of
each singularity, and obtain intervals $I_i =
[a_{i}+\epsilon,a_{i+1}-\epsilon]$, with fibrations $X(B_i) \to
X(I_i)$. Choosing an affine function $\xi$ on $B_i$ such that
$(\eta,\xi)$ are coordinates, $X(B_i)$ has corresponding complex
coordinates $(w,z_i)$, while the coordinate on $X(I_i)$ is $w$.

Each piece $B_i$ is an affine manifold whose horizontal boundary
consists of two straight lines (since $B$ has no corners but at the
extreme values of $\eta$), and hence we are in the situation of
section \ref{sec:monodromy-hessian}. We obtain a symplectic form for which the symplectic connection of $X(B_i)
\to X(I_i)$ foliates the horizontal boundary facets of $X(B_i)$.

Corresponding to each focus--focus singularity, we glue in a Lefschetz
singularity. The discussion in section
\ref{sec:focus-focus-lefschetz} applies directly. The result is a
manifold $X(B)$ with a Lefschetz fibration $w: X(B)\to X(I)$, and such
that the horizontal boundary faces of $X(B)$ are foliated by the
symplectic connection. Let $w_1,\dots,w_n$ denote the critical values
of $w: X(B)\to X(I)$. 

If $B$ does not have vertical boundary facets, but rather corners, we simply cut off the corners to introduce horizontal boundary facets. This modification only applies to the symplectic geometry constructions, and the relevant tropical geometry is still that of the original $B$.

We recall that $X(I)$ is an annulus $\{a_0 \leq \log|w| \leq a_{n+1}\}$. Let $M_0$ denote the fiber over $e^{a_0}$, and $M_1$ the fiber over $e^{a_{n+1}}$.

\subsection{Lagrangian submanifolds}
\label{sec:other-lagrangian}

As before, the construction of Lagrangian sections proceeds by taking
paths in the base and a Lagrangian in the fiber, and sweeping out a
Lagrangian in the total space by symplectic parallel transport. Potentially, we
have more freedom than in the mirror to $\CP^2$.

The admissible Lagrangian submanifolds we consider have boundary
conditions given by a complex curve in $\partial X(B)$ along each
boundary face. This gives two complex curves $\Sigma_0$ and $\Sigma_1$
for the bottom and top horizontal boundary faces. At the vertical
boundary faces, we have corresponding complex curves $M_0$ and $M_1$. These are fibers of the fibration $X(B)\to X(I)$, and to be
admissible requires the Lagrangian to end on this fiber. If $B$ has a
corner rather than a vertical boundary, then the fiber $M_i$ corresponds
to a vertical boundary created by cutting off the corner, and the
admissibility condition requires the Lagrangians to intersect this
fiber in a prescribed real curve.

Thus, a Lagrangian submanifold may be constructed by taking a
Lagrangian $L_0$ in $M_0$, and a path $\ell$ in the base joining $M_0$
to $M_1$, and taking the parallel transport. If $M_0$ corresponds to a
corner, we have only one choice for $L_0$, and if $M_1$ corresponds to
a corner, this imposes a constraint on $L_0$ and $\ell$. In order to
obtain sections of the torus fibration, we choose $L_0$ to be a curve
which is a section of the fibration by circles of
constant $\xi$ on each fiber, and $\ell$ to be a section of the fibration of the base
by circles of constant $\eta$. Thus when drawing $X(I)$ as an annulus,
$\ell$ appears as a spiral. 

As an example, consider the degree six del Pezzo surface $X_6$, which may be obtained from $\PP^1\times \PP^1$ by blowing up two points that do not lie on the same ruling line. The moment polytope of $\PP^1\times \PP^1$ is a rectangle. By doing toric blowups at opposite corners, we obtain $X_6$ with a toric boundary divisor consisting of six $(-1)$-curves, and the moment polytope is a hexagon. Alternatively, by doing almost toric blowups \cite{symington} on opposite sides of the hexagon, we obtain $X_6$ with an anticanonical divisor of four components with self-intersections $-1,0,-1,0$. The base affine manifold $B$ has four sides and two singularities with parallel monodromy invariant directions.

Passing over to the mirror side, we consider Lagrangians in the symplectic manifold $X(B)$. Then we can choose independently
\begin{enumerate}
\item the number of times the initial Lagrangian $L_0$ winds around
  the fiber $M_0$,
\item the number of times $\ell$ winds around the base between the
  $M_0$ and the first singularity,
\item the number of times $\ell$ winds around the base between the two
  singularities, and
\item the number of times $\ell$ winds around the base between the
  second singularity and $M_1$.
\end{enumerate}
This gives rise to a $4$-parameter family of Lagrangians. Under mirror
symmetry, all of them correspond to line bundles. We expect that
$X(B)$ is a mirror to the degree $6$ del Pezzo surface $X_6$ with a
$4$-component anticanonical divisor. The $4$ parameters correspond to
$\Pic(X_6) \cong \Z^4$. 

Though we can construct many Lagrangians this way, in order to compute
Floer cohomology and identify the basis with $B(\frac{1}{d}\Z)$, we
must choose a family of Lagrangians $\{L(d)\}_{d\in\Z}$ corresponding
to the tensor powers of an ample line bundle. First we choose $\ell(0)$ as a
reference path in the base, over which lies $L(0)$, a Lagrangian
satisfying the boundary conditions. We take $\ell(1)$ to be a certain
path in the base: the number of times that $\ell(1)$ must wind between
the singularities and the vertical boundaries/corners is determined by
$B$: it is essential that the number of turns $\ell(1)$ makes between
two consecutive singularities (or between a boundary and the
neighboring singularity) is the affine width of the corresponding
portion of $B$. An equivalent condition is that the $\ell(1)$ winds at
unit speed. If $B$ has a vertical boundary face rather than a corner,
the intersection of $L(1)$ with the fiber at that boundary must be a
curve that, relative to $L(0)$, makes a number of turns equal to the
affine length of the corresponding vertical boundary face. Then we
choose $\ell(d)$ to be a path in the base whose slope is $d$ times the
slope of $\ell(1)$, relative to $\ell(0)$. We also make sure that in
the fiber, the slope $L(d)$ is $d$ times the slope of $L(1)$ (relative
to $L(0)$ in each fiber).

\subsection{Holomorphic and tropical triangles}
\label{sec:other-triangles}

Having chosen a family of Lagrangians $\{L(d)\}_{d\in\Z}$
corresponding to the powers of an ample line bundle, we find that
after positively perturbation, $HF^*(L(d_1),L(d_2))$ is concentrated
in degree $0$ when $d_1 \leq d_2$. The techniques of section
\ref{sec:degeneration} allow us to compute the holomorphic triangles
contributing to the multiplication
\begin{equation}
  HF^*(L(d_2),L(d_3))\otimes HF^*(L(d_1),L(d_2))\to HF^*(L(d_1),L(d_3))
\end{equation}
Looking at the winding numbers of the Lagrangians in the base once
again yields an auxiliary $\Z$-grading. For fixed values of this
$\Z$-grading on the input, one can determine the number of times that
triangles contributing to the product cover the critical values $w_1,
\dots, w_n$; call these numbers $k_1,\dots,k_n$. Then the degeneration
process breaks the triangle into $k = \sum_{i=1}^n k_i$ copies of the
fibration over a disk with single critical value, as well a trivial
fibration over a $(k+3)$-gon. Over the disks the count of sections is
$1$, while the analysis of sections over the $(k+3)$-gon still goes
through because, in the fiber, the Lagrangian boundary condition is
still a sequence of curves on the cylinder whose slope changes
monotonically. Hence the matrix coefficients of this product are
binomial coefficients of the form $\binom{k}{s}$. 

In this degeneration argument, the Lefschetz singularities that come
from different focus-focus singularities are not distinguished, while
in the case of tropical triangles, different singularities of the
affine structure contribute differently to the tropical curves. We
find that this family of triangles, with total count $\binom{k}{s}$,
corresponds to several tropical triangles $T_{s_1,\dots,s_n}$ with
$s_1+\cdots+s_n = s$, indexed by ordered partitions of $s$, with $0$
allowed as a part (of which there are $\binom{s+n}{s}$). The triangle
$T_{s_1,\dots,s_n}$ uses the tropical disk emanating from the $i$-th
singularity either $s_i$ or $k_i-s_i$ times, and the multiplicity of
$T_{s_1,\dots,s_n}$ is
$\binom{k_1}{s_1}\binom{k_2}{s_2}\cdots\binom{k_n}{s_n}$. The equality
of the total counts
\begin{equation}
  \binom{k}{s} = \sum_{\{s_1,\dots,s_n \mid s_i \geq 0,
    \sum_{i=1}^n s_i = s\}} \prod_{i=1}^n \binom{k_i}{s_i}
\end{equation}
follows from comparing the coefficients of $x^s$ in the equation
\begin{equation}
  (1+x)^k = \prod_{i=1}^n (1+x)^{k_i}
\end{equation}

\section{Mirrors to divisor complements}
\label{sec:complements}

In this section we examine the relationship between the wrapped Floer
cohomology of our Lagrangians $L(d)$ and the cohomology of coherent
sheaves on complements of components of the anticanonical divisor in
$\CP^2$. Let $D = C \cup L$ denote the anticanonical divisor which is
the union of a conic $C$ and a line $L$. We can consider the
divisor complements $U_D =\CP^2 \setminus D$, $U_C = \CP^2 \setminus
C$, and $U_L = \CP^2 \setminus L$. The torus fibration $\CP^2
\setminus D$ can be restricted to such a complement, and T-duality
gives the same space $X^\vee$ as before, but with a different
superpotential, reflecting the counts of holomorphic disks
intersecting the remaining components of the anticanonical
divisor.

\begin{table}
  \centering
  \begin{tabular}{ll|ll}
    Variety & Anticanonical divisor & Mirror space & Superpotential\\
    \hline
    $\CP^2$ & $D = C \cup L$ & $X^\vee = \{(u,v) \mid uv\neq 1\}$ & $W
    = u + \frac{e^{-\Lambda}v^2}{uv-1}$\\
    $U_L = \CP^2 \setminus L$ & $C \setminus (C\cap L)$ & $X^\vee$ & $W_L =
    u$\\
    $U_C = \CP^2 \setminus C$ & $L \setminus (L\cap C)$ & $X^\vee$ & $W_C =
    \frac{e^{-\Lambda}v^2}{uv-1}$\\
    $U_D = \CP^2 \setminus D$ & $\emptyset$ & $X^\vee$ & $W_D = 0$
  \end{tabular}
  \caption{Mirrors to divisor complements.}
  \label{tab:divisor-complements}
\end{table}

Once again, the cohomology of coherent sheaves $\cO(d)$ corresponds to
Floer cohomology of the Lagrangian submanifolds $L(d)$.

Removing a divisor $D$ from a compact variety $X$ changes the cohomology of a
coherent sheaf $\cF$, since, for example, sections of $\cF$ with poles
along $D$ are regular on the complement $U=X\setminus D$, so that
$H^0(U,\cF)$ is not finitely generated in general.

On the symplectic side, changing the superpotential by dropping a term
modifies the boundary condition for our Lagrangian submanifolds
$L(d)$. Some parts of $L(d)$ that were required to lie on the fiber of
$W$ are no longer so constrained, and it is appropriate to \emph{wrap}
these parts of $L(d)$. The algebraic structure associated to $L(d)$ is
then \emph{wrapped Floer cohomology} $HW^*(L(d_1),L(d_2))$, which is the
limit $\lim_{w\to \infty} HF^*(\phi_{wH}(L(d_1)),L(d_2))$, where
$H$ is an appropriate Hamiltonian function (a more precise definition
is given below). The limit
$HW^*(L(d_1),L(d_2))$ will not be finitely generated in general, since
it potentially contains trajectories of $H$ joining $L(d_1)$ to
$L(d_2)$ of \emph{any} length. The general theory of wrapped Floer
cohomology is developed in \cite{as-open-string}.

\subsection{Algebraic motivation}
\label{sec:algebraic-motivation}

In order to motivate the symplectic constructions of wrapped Floer
cohomology, it is useful to understand the algebraic side first. The
starting point is the following proposition (\cite{seidel-a-infinity-subalgebras}, statement
(1.10)).
\begin{proposition}
  Let $X$ be a smooth quasiprojective variety over $\C$, $Y \subset
  X$ a hypersurface, and $U = X \setminus Y$ the complement. Write $Y
  = s^{-1}(0)$, where $s$ is the canonical section of the line bundle
  $\cL = \cO_X(Y)$. Let $\cF$ be a coherent sheaf on
  $X$. Multiplication by $s$ defines an inductive system
  \begin{equation}
    \xymatrix{
      H^*(X, \cF\otimes \cL^{r-1}) \ar[r] & H^*(X,\cF\otimes
      \cL^r) \ar[r]&
      H^*(X,\cF \otimes \cL^{r+1}) \ar[r] & \cdots
    }
  \end{equation}
  and the limit is
  \begin{equation}
    \lim_{r\to \infty} H^*(X,\cF\otimes \cL^r) \cong H^*(U , \cF|U)
  \end{equation}
\end{proposition}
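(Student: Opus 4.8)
The plan is to prove this by a standard argument in algebraic geometry: expressing the sections of $\cF|U$ over $U = X \setminus Y$ as sections of $\cF$ twisted by powers of $\cL = \cO_X(Y)$, with the colimit realizing the "allow arbitrary poles along $Y$" operation. Concretely, I would work locally on an affine open cover $\{V_\alpha\}$ of $X$ where $\cL$ is trivialized, say by $e_\alpha$, so that $s = f_\alpha e_\alpha$ for regular functions $f_\alpha$ on $V_\alpha$ with $Y \cap V_\alpha = \{f_\alpha = 0\}$. Then $U \cap V_\alpha = (V_\alpha)_{f_\alpha}$ is the principal open set, and for a quasicoherent sheaf one has the classical identity $\cF(U \cap V_\alpha) = \cF(V_\alpha)_{f_\alpha} = \varinjlim_r \cF(V_\alpha) \cdot f_\alpha^{-r}$, where the transition maps are multiplication by $f_\alpha$. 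Identifying $f_\alpha^{-r}$ with the local frame $e_\alpha^{\otimes r}$ of $\cL^r$ dualized appropriately, this says precisely that $(\cF|U)$ is the colimit of the inductive system $\cF \to \cF \otimes \cL \to \cF \otimes \cL^2 \to \cdots$ with maps given by $\otimes s$, restricted to $U$ — but on $U$ the section $s$ is invertible, so all the maps in the system become isomorphisms, and the colimit is just $\cF|U$.

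The next step is to pass from sheaves to cohomology. The functor $\varinjlim_r$ over the directed set $\mathbb{N}$ is exact, and on a Noetherian (or quasicompact, quasiseparated) scheme sheaf cohomology commutes with filtered colimits of quasicoherent sheaves; hence
\begin{equation}
  \varinjlim_r H^*(X, \cF \otimes \cL^r) \cong H^*\!\left(X, \varinjlim_r (\cF \otimes \cL^r)\right).
\end{equation}
I would then identify the colimit sheaf: by the local computation above, $\varinjlim_r (\cF \otimes \cL^r) \cong j_*(\cF|U)$ where $j: U \hookrightarrow X$ is the open inclusion, because forming the colimit along $\otimes s$ amounts sheaf-theoretically to inverting $s$, i.e.\ to $j_* j^*$. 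Finally, since $j$ is an open immersion with $U$ itself quasicompact quasiseparated, one has $H^*(X, j_* (\cF|U)) \cong H^*(U, \cF|U)$ — in degree zero this is immediate ($j_*$ has the right global sections by definition), and in higher degrees it follows because $R^{>0} j_* $ applied to a quasicoherent sheaf on an affine-over-$X$ situation vanishes (this is where one uses, e.g., that $U \to X$ is affine when $Y$ is an effective Cartier divisor, which is exactly our setting since $Y = s^{-1}(0)$), so the Leray spectral sequence for $j$ collapses.

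The main obstacle — or rather the point requiring the most care — is the justification that sheaf cohomology commutes with the filtered colimit and that $R^{>0} j_* = 0$; both hold because $X$ is quasicompact and quasiseparated (here even quasiprojective) and because the inclusion $j: X \setminus Y \hookrightarrow X$ is an affine morphism, $Y$ being the zero locus of a section of a line bundle. Granting these two facts, the rest is the purely local colimit identification, which is routine. One could alternatively phrase the entire argument via the Čech complex of a fixed affine cover: $H^*(U, \cF|U)$ is computed by the Čech complex for the cover $\{V_\alpha \cap U\}$, each term of which is the localization $\cF(V_{\alpha_0 \cdots \alpha_p})_{f}$, and localization commutes with the finite direct sums and with taking cohomology of the (finite-length) Čech complex, so the whole Čech complex of $\cF|U$ is the colimit of the Čech complexes of $\cF \otimes \cL^r$; taking cohomology then gives the claim directly. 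This Čech approach avoids invoking $R j_*$ and is probably the cleanest writeup.
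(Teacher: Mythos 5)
Your argument is correct, but there is nothing in the paper to compare it against: the paper does not prove this proposition at all, it quotes it as statement (1.10) of Seidel's paper \cite{seidel-a-infinity-subalgebras} and uses it as a black box to motivate the wrapped Floer computations. Your writeup supplies the standard argument that the citation encapsulates, and both of your routes go through. In the $j_*$ route, the points you flag are exactly the ones that need care, and they hold here: since $X$ is Noetherian, the filtered colimit $\varinjlim_r (\cF\otimes\cL^r)$ along multiplication by $s$ may be computed section-by-section and identified with $j_*(\cF|U)$ (quasicoherence gives $\cF(V)_f \cong \cF(V\cap U)$ on affines $V$ trivializing $\cL$), cohomology on a Noetherian space commutes with filtered colimits, and $j$ is an affine morphism because $V\cap U = V_f$ is a principal open, so $R^{>0}j_*$ vanishes on quasicoherent sheaves and Leray collapses. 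The \v{C}ech variant is indeed the cleanest self-contained writeup: $X$ quasiprojective is separated and admits a finite affine cover trivializing $\cL$, so \v{C}ech complexes compute cohomology both for $\cF\otimes\cL^r$ on $X$ and for $\cF|U$ on $U$ with respect to the induced affine cover; each term of the latter complex is the localization of the corresponding term of the former at a local equation of $Y$ (well defined since different trivializations change the equation by a unit), and exactness of filtered colimits then gives the isomorphism on cohomology. The only other hypothesis silently used is that the restriction of the system to $U$ has all maps isomorphisms because $s$ is invertible there, which you state explicitly.
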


We spell out the application of this proposition to each of the cases
we consider

\begin{itemize}
\item $U_L$: Since $L = \{y=0\}$ is a line, we identify $\cL \cong \cO(1)$ and
  take $s = y$. Thus
  \begin{equation}
    H^*(U_L,\cO(d)) \cong \lim_{r\to \infty} H^*(\CP^2,\cO(d+r))
  \end{equation}
  where the limit is formed with respect to multiplication by $y$. An
  element of $H^0(U_L,\cO(d))$ is a rational function $f(x,y,z)/y^r$,
  where $f$ is a homogeneous polynomial of degree $d+r$.
\item $U_C$: Since $C = \{xz-y^2 = 0\}$ is a conic, we identify $\cL
  \cong \cO(2)$ and take $s = p = xz-y^2$. Thus
  \begin{equation}
    H^*(U_C,\cO(d)) \cong \lim_{r\to \infty} H^*(\CP^2,\cO(d+2r))
  \end{equation}
  where the limit is formed with respect to multiplication by $p =
  xz-y^2$.  An element of $H^0(U_C,\cO(d))$ is a rational function
  $f(x,y,z)/p^r$, where $f$ is a homogeneous polynomial of degree
  $d+2r$.
\item $U_D$: Since $D = \{yp = xyz-y^3 = 0\}$ is a cubic, we identify
  $\cL \cong \cO(3)$ and take $s = yp$. Thus
  \begin{equation}
    H^*(U_D,\cO(d)) \cong \lim_{r \to \infty} H^*(\CP^2,\cO(d+3r))
  \end{equation}
  where the limit is formed with respect to multiplication by $yp$. An
  element of $H^0(U_D,\cO(d))$ is a rational function
  $f(x,y,z)/(yp)^r$, where $f$ is a homogeneous polynomial of degree
  $d+3r$.
\end{itemize}

For the purposes of computation, a useful simplification comes from
noting that the line bundles $\cO(d)$ may become isomorphic over
the complements.

\begin{itemize}
\item $U_L$: Since $U_L \cong \C^2$, all the line bundles $\cO(d)$ are
  isomorphic over it.
\item $U_C$: The complement of a smooth conic in $\CP^2$ has
  $H^2(U_C;\Z) \cong \Z/2\Z$, generated by $c_1(\cO(1))$. The defining
  section $p: \cO\to \cO(2)$ is an isomorphism over $U_C$, and so $\Pic
  (U_C) \cong\Z/2\Z$ as well.
\item $U_D$: Since $U_D \subset U_L$, all the line bundles $\cO(d)$
  are isomorphic over it as well.
\end{itemize}

\subsection{Wrapping}
\label{sec:wrapping}

In this subsection we describe the geometric setup for wrapped Floer
cohomology in the mirrors of $U_L,U_C$, and $U_D$. 

\subsubsection{Completions}
\label{sec:completions}

Wrapped Floer cohomology is formulated in terms of noncompact
manifolds containing noncompact Lagrangian submanifolds. These can be
defined as (partial) completions of compact manifolds with boundary.

The starting point for all three cases is the original Lefschetz
fibration $X(B)\to X(I)$ containing the Lagrangians
$\{L(d)\}_{d\in\Z}$. We define completions $\hat{X}_L$, $\hat{X}_C$,
and $\hat{X}_D$. This process involves replacing a boundary component
with an infinite end, either on the base or in the fiber of the
Lefschetz fibration.
% The symplectic form constructed in section
% \ref{sec:main-construction} has the defect that it blows up at the
% corners of $B$. We could potentially work with it directly, but the
% technically safest way to deal with it is to simply cut these corners
% off by restricting the fibration to a sub-annulus of $X(I)$. We remark
% that it is clear that the results of the previous sections all carry
% over to this manifold without change: nowhere was the behavior at the
% corners essentially used other than in motivating the restriction on how the
% Lagrangians $L(d)$ should behave near the corners.

% \begin{remark}
%   If one wanted a mirror interpretation of cutting off the corners of
%   $B$ and
%   completing, it would be blowing up the intersections $C \cap L$, and
%   then removing the total inverse image of either $L$, $C$, or $D$,
%   which is the same as just removing $L$, $C$, or $D$.
% \end{remark}

\begin{itemize}
\item $L$: The manifold $\hat{X}_L$ retains a boundary component at
  the top horizontal boundary, corresponding to the fiber of the
  superpotential $W_L = u$. The fibers of $X(B) \to X(I)$ are
  completed at the other, bottom, end. The base annulus $X(I)$ is
  completed to a cylinder $\hat{X}(I)$, and the fibration is extended
  over this cylinder.
\item $C$: The manifold $\hat{X}_C$ retains a boundary component at
  the bottom horizontal boundary, corresponding to the fiber of the
  superpotential $W_C = \frac{e^{-\Lambda}v^2}{uv-1}$. The fibers are
  completed at the other, top, end. The base annulus is completed to a
  cylinder $\hat{X}(I)$, and the fibration is extended over this cylinder.
\item $D$: The manifold $\hat{X}_D$ has no boundary, and the fibers are
  completed at both ends. The base annulus is completed to a
  cylinder $\hat{X}(I)$, and the fibration is extended over this cylinder.
\end{itemize}

The torus fibration on $X(B)$ extends to these completions, and yields
torus fibrations over completed bases $\hat{B}_L$,$\hat{B}_C$,
$\hat{B}_D$.
\begin{itemize}
\item $\hat{B}_L$ is a half-plane with singular affine structure given
  by removing the bottom boundary from $B$ and extending in that
  direction.
\item $\hat{B}_C$ is a half-plane with singular affine structure given
  by removing the top boundary from $B$ and extending in that
  direction.
\item $\hat{B}_D$ is an entire plane with singular affine structure
  given by removing all boundaries from $B$ and extending in all directions.
\end{itemize}

The Lagrangian submanifolds $L(d)$ are extended to $\hat{L}(d)$; In
all cases, we extend $L(d)$ into whatever ends are attached so as to
be invariant under the Liouville flow within the end. However, in the
case of $L$, respectively $C$, we still have the boundary condition
that $\hat{L}(d)$ is required to end on $\Sigma_1$ (the complex
hypersurface contained in the top boundary), respectively $\Sigma_0$
(contained in the bottom boundary).

\subsubsection{Hamiltonians}
\label{sec:hamiltonians}

The most crucial difference between the three cases comes from the
choices of Hamiltonians that are be used to perform the wrapping. The
Hamiltonians we consider are the sum of contributions from the base
and the fiber. 

Let $H_b: \hat{X}(B) \to \R$ be the pullback of a function on the base
cylinder which is a function of the radial coordinate $\eta = \log|w|$
only. Writing the symplectic form on the base as $d\rho \wedge
d\theta$, where $\rho$ is a function of $\eta$, we take $H_b$ to be a
convex function of $\rho$ on the compact part $X(I)$, and linear in
$\rho$ on the ends. We also require $H_b \geq 0$, with minimum on the
central circle $\eta = 0$. Since $dH_b$ vanishes on the fibers, $X_{H_b}$ is
horizontal.

The fiber Hamiltonian $H_f$ is chosen differently in each case. The main
constraint is that its differential must vanish at any boundary
component which may still be present. The construction is most
convenient if we assume the completion preserves the $S^1$-symmetry
that rotates the fibers. If $\mu$ denotes the moment map for this
action, we can take $H_f$ to be a function of
$\mu$. Since $X_{H_f}$ is tangent to the fibers, we have
\begin{equation}
  \{H_b,H_f\} = \omega(X_{H_b},X_{H_f}) = 0
\end{equation}
which allows us to compute the flow of $H_b+H_f$ term by term.

We use the same base Hamiltonian $H_b$ for all cases. The specific
choice of $H_f$ in each case is as follows.
\begin{itemize}
\item $L$: Let $H_{f,L} \geq 0$ be a function with a minimum at the
  top of the fiber, convex in $\mu$ on the compact part, and linear
  in $\mu$ on the bottom end. 
\item $C$: Let $H_{f,C} \geq 0$ be a function with a minimum at the
  bottom of the fiber, convex in $\mu$ on the compact part, and linear
  in $\mu$ on the top end.
\item $D$: Let $H_{f,D} \geq 0$ be a function with a minimum in the
  two-thirds of the way down from the top of the compact part of the fiber, convex in $\mu$ on the
  compact part, and linear in $\mu$ on the ends.
\end{itemize}

In each case, the total Hamiltonian $H = H_b + H_f$ achieves its minimum along a torus in $X(B)$, which is one of the fibers of the torus fibration over $B$. Since $L(r)$ is a section of the torus fibration, $H$ has a unique minimum on $L(r)$, and hence there is a unique constant chord for $L(r)$, which corresponds to the identity element $e_r \in HW^*(L(r),L(r))$. The cases are:
\begin{itemize}
\item $L$: The minimum is along the torus corresponding to the midpoint of the top edge of $B$, where the generator $y$ lies. All of the Lagrangians $L(r)$ for $r \in \Z$ intersect this torus at the same point. As an intersection point of $L(0)$ and $L(d)$, the minimum corresponds to $q_{a,i}$ for $(a,i) = (0,0)$.
\item $C$: The minimum is along the torus corresponding to the midpoint of the bottom edge of $B$, where the generator $p$ lies. All of the Lagrangians $L(r)$ for $r \in 2\Z$ intersect this torus at the same point. As an intersection point of $L(0)$ and $L(r)$, the minimum corresponds to $q_{a,i}$ for $(a,i)= (0,r/2)$.
\item $D$: The minimum is along the torus two-thirds of the way from the top of the middle fiber of the map $B \to I$, where the generator $yp$ lies. All of the Lagrangians $L(r)$ for $r \in 3\Z$ intersect this torus in the same point. As an intersection point of $L(0)$ and $L(r)$, the minimum corresponds to $q_{a,i}$ for $(a,i) = (0,r/3)$.
\end{itemize}

\begin{remark}
  The Hamiltonians we obtain as $H_b + H_f$ are not admissible in the
  usual sense, because they vanish at some boundaries, and, even in
  the case $D$, are not linear with respect to a cylindrical
  end. Closer to our situations are the \emph{Lefschetz admissible}
  Hamiltonians considered by Mark McLean \cite{mclean-lf-sh}, that are
  precisely those functions on the total space of a Lefschetz
  fibration that are the sum of admissible Hamiltonians on the base
  and fiber separately.
\end{remark}

\subsubsection{Generators}
\label{sec:generators}

Given Lagrangian submanifolds $L_1, L_2$ of $X$, equipped with
Hamiltonian $H$, and $r \in \R$, we get Floer cohomology complexes
$CF^*(L_1,L_2;rH)$ generated by time-$1$ trajectories of
$X_{rH}$ starting on $L_1$ and ending on $L_2$. As usual
the differential counts inhomogeneous pseudo-holomorphic strips. We
also have continuation maps
\begin{equation}
  CF^*(L_1,L_2;rH) \to CF^*(L_1,L_2;r'H), \quad r < r'
\end{equation}
given by counting strips where the inhomogeneous term interpolates
between $r'X_H$ and $rX_H$. At the homology level, the continuation
maps form an inductive system, and we define the \emph{wrapped Floer cohomology}
\begin{equation}
  HW^*(L_1,L_2) = \lim_{r\to \infty} HF^*(L_1,L_2;rH)
\end{equation}

Our purpose in this section is simply to set up an enumeration of
the generators of $CF^*(L(d_1),L(d_2);rH)$ in each of the three
cases. These generators can also be regarded as intersection points
$\phi_{rH}(L(d_1))\cap L(d_2)$. In order to make the situation as
convenient as possible for our later arguments, we refine our choice
of Hamiltonians so as to ensure that $\phi_{rH}(L(d))$ is actually
$L(d')$ for some $d'$; thus we can identify wrapped Floer cohomology
generators with intersection points of our original Lagrangians. This
is done by adjusting the slopes of our Hamiltonians on the ends.
\begin{itemize}
\item We take the base Hamiltonian $H_b$ so that the time--$1$ flow
  completes $1$ turn on the cylindrical ends of the base.
\item For cases $L$ and $C$, we take the fiber Hamiltonian $H_f$ so that the
  time--$1$ flow completes $1/2$ turn on the cylindrical end of the
  fiber.
\item For case $D$, we take the fiber Hamiltonian $H_f$ so that the
  time--$1$ flow completes $1/3$ turn at the top of the fiber, and
  $1/6$ turn at the bottom of the fiber.
\end{itemize}
In all cases the total Hamiltonian we use is $H = H_b + H_f$.

The way to understand the flow of $H$ is to first apply $H_f$, then
$H_b$. The flow of $H_f$ wraps $L(d)$ in the fiber,
while the flow of $H_b$, when it completes a loop in the base,
performs the monodromy of the Lefschetz fibration around that loop,
which undoes some of the wrapping due to $H_f$.

We can relate $\phi_H(L(d))$ to $L(d')$ as follows:
\begin{itemize}
\item $L$: we have $\phi_{rH}(L(d)) = L(d-r)$.
\item $C$: we have $\phi_{2rH}(L(d)) = L(d-2r)$. Note that the same
  \emph{cannot} be said with $r$ in place of $2r$; in that case the
  two Lagrangians intersect the bottom boundary (where no wrapping
  occurs) in different points.
\item $D$: we have $\phi_{3rH}(L(d)) = L(d-3r)$. Again the same cannot
  be said with $r$ in place of $3r$.
\end{itemize}

In order to identify generators with intersection points, we perturb
the boundary intersection points in a positive sense just as before. 
In the cases with boundary, where the Hamiltonian is supposed to
have a minimum at the boundary, it is useful to perform this
perturbation in an extra collar attached to the boundary, so that the
Lagrangians still intersect at the minimum of $H$ if they did prior to
the perturbation. Once this is done, we can identify
\begin{equation}
  CF^*(L(d_1),L(d_2);rH) \cong CF^*(\phi_{rH}(L(d_1)),L(d_2)) \cong CF^*(L(d_1-r),L(d_2))
\end{equation}
where $r\in \Z$ in case $L$, $r\in 2\Z$ in case $C$, and $r\in 3\Z$
in case $D$. This identification is compatible with the gradings, which shows that for $r > d_1 - d_2$, the Floer complex $CF^*(L(d_1),L(d_2);rH)$ is concentrated in degree zero, and so has vanishing differential. Recall that the generators of the last group are
identified with $B(\frac{1}{d_2-d_1+r}\Z)$.

Once we are in the range $d_2-d_1 + r > 0$, we find that as $r$
increases, new generators are created, none are destroyed, and the
generators that already exist are ``compressed'' toward the minimum of
$H$. This gives rise to naive inclusion maps $i :
CF^*(L(d_1),L(d_2);rH) \to CF^*(L(d_1),L(d_2);r'H)$ for $r < r'$,
where $r > d_1-d_2$. In terms of fractional integral points, this $i$
corresponds to the map $B(\frac{1}{d_2-d_1+r}\Z) \to
B(\frac{1}{d_2-d_1+r'}\Z)$ which is dilation by the appropriate factor
centered at the point corresponding to the minimum of $H$.

We can index the points of $\hat{B}(\frac{1}{d}\Z)$ with two
indices. The index $a\in \Z$ corresponds to the column lying at $\eta
= a/d$, while the index $i$ that indexes points within a column, and
which lies in $\{0,\dots,\left\lfloor\frac{d-|a|}{2}\right\rfloor\}$
in the compact case, is now unbounded in the positive direction in
case $L$, in the negative direction in case $C$, and in both
directions in case $D$. We use the notation $q_{a,i}$ for these
points.

\subsection{Continuation maps and products}
\label{sec:continuation-product}

We saw above that for $r > d_1-d_2$, the generators of $CF^*(L(d_1),L(d_2);rH)$ all
have degree $0$, so there are no differentials, and we can
identify these complexes with their homologies. In order to obtain the
wrapped Floer cohomology, we must determine the continuation
maps.

Let it be understood that we require $r \in \Z$ in case $L$, $r \in
2\Z$ in case $C$, and $r \in 3\Z$ in case $D$.

To get started, we consider the wrapped Floer cohomology of $L(d_1)$
with itself. Each complex $CF^*(L(d_1),L(d_1);rH)$ has a distinguished
element $e_r$, sitting at the minimum of $H$. The complexes are
filtered by action, and the continuation maps are non-increasing with
respect to this filtration. Under the $r \to r'$ continuation map,
$e_r \mapsto e_{r'}$; $e_r$ and $e_{r'}$ are the unique generators of
minimal action in their respective complexes, and in fact their
actions are equal, so the only strip is the constant map to the
minimum.

At this point we bring in the product structure.

\begin{proposition}
 Let $L_i$ one of the Lagrangians $L(d_i)$ for $i = 1, 2, 3$. Under the identification
  \begin{equation}
  CF^*(L_i,L_j;rH) \cong CF^*(\phi_{rH}(L_i),L_j) 
\end{equation}
The product 
\begin{equation}
  HF^*(L_2,L_3; rH) \otimes HF^*(L_1,L_2; sH) \to HF^*(L_1,L_3; (r+s)H)
\end{equation}
which counts inhomogeneous pseudo-holomorphic triangles corresponds to the product
\begin{equation}
  HF^*(\phi_{rH}(L_2),L_3) \otimes
  HF^*(\phi_{(r+s)H}(L_1),\phi_{rH}(L_2)) \to HF^*(\phi_{(r+s)H}(L_1),L_3)
\end{equation}
counting pseudo-holomorphic triangles with no inhomogeneous term.
\end{proposition}

\begin{proof}
  First we recall the equation for the product in the wrapped setting. The disk with three boundary punctures $S$ is equipped with strip-like ends: at the inputs they are parametrized by $(s,t) \in (0,\infty)\times [0,1]$, while at the output we have $(s,t) \in (-\infty,0)\times [0,1]$. $S$ is also equipped with a closed one form $\beta \in \Omega^1(S)$, whose restriction to $\partial S$ vanishes,  such that on the strip-like ends of $S$, $\beta$ has the form $r\,dt$, $s\,dt$ and $(r+s)\,dt$ for the two inputs and the output respectively. The curves we count are solutions of 
  \begin{equation}
    (du - X_H \otimes \beta)^{0,1} = 0
  \end{equation}
  Because $\beta$ is closed and $S$ has no first homology, we may find a primitive $\tau \in C^\infty(S)$ such that $d \tau = \beta$, and we assume $\tau = 0$ on the boundary corresponding to $L_3$.  Denoting by $\phi^t_H$ the flow of $X_H$ for time $t$, we can define a new map $v = \phi^{-\tau}_H \circ u$, meaning that for $p \in S$, we take $u(p)$ and flow it for time $-\tau(p)$ by $X_H$. This has the effect of collapsing each Hamiltonian chord to a single point. Then the map $v$ is pseudo-holomorphic without inhomogeneous term, but with a domain-dependent almost complex structure obtained by conjugating the original $J$ by $\phi_H^\tau$ at each point.

We must relate the products \begin{equation}
  HF^*(\phi_{rH}(L_2),L_3) \otimes
  HF^*(\phi_{(r+s)H}(L_1),\phi_{rH}(L_2)) \to HF^*(\phi_{(r+s)H}(L_1),L_3)
\end{equation}
defined with respect to this domain-dependent almost complex structure $\{J_p\}_{p\in S}$ to the operation that we studied in Section \ref{sec:degeneration}, where the complex structure $J$ was fixed. We fix attention to one strip-like end, say the one corresponding to $L_2$ and $L_3$. On this end the complex structure depends only on $t \in [0,1]$. There is a continuation map 
\begin{equation}
HF^*(\phi_{rH}(L_2),L_3;\{J_t\}) \to HF^*(\phi_{rH}(L_2),L_3;J)  
\end{equation}
counting strips with a domain-dependent almost complex structure that interpolates between $\{J_t\}$ and $J$. We claim that, in our situation, all such strips are constant. This means that the bases of intersection points on the two sides of the continuation map correspond. These continuation maps intertwine the product with respect to $\{J_p\}$ and the one defined using $J$, implying that these products agree when expressed in the basis of intersection points on either side of the correspondence.

It remains to prove the claim that all the continuation strips are constant. Let $u$ denotes such a strip. Because the Hamiltonian flow preserves the structure of the Lefschetz fibration, each $J_p$ for $p \in S$ is a complex structure with the property that the tangent space to a fiber is complex, and the projection to the base is holomorphic, once we equip the base with a suitable complex structure $j_p$. As $u$ is pseudo-holomorphic with respect to $\{J_p\}$, this implies that $\pi \circ u$ has the property that its differential at each point has rank either $0$ or $2$. Thus the image of $\pi \circ u$ is a set whose boundary is contained in the images of the Lagrangians $\phi_{rH}(L_2)$ and $L_3$. Because there are no topological strips in the base joining different intersection points of the base paths, we conclude that $\pi \circ u$ must be constant, and the image of $u$ is contained in a fiber. But in the fiber there are also no topological strips joining different intersection points. Hence $u$ itself is constant.
\end{proof}

When the differentials on the Floer complexes vanish, the identification of the products holds at the chain level
as well. Tracing the isomorphisms through, we find that
the product with $e_r$ induces the naive inclusion map on generators
\begin{equation}
  \mu^2(e_r,\cdot) = i : HF^*(L(d_1),L(d_2); sH) \to HF^*(L(d_1),L(d_2);(r+s)H)
\end{equation}

%  There is
% an identification between the product
% \begin{equation}
%   HF^*(L_2,L_3; rH) \otimes HF^*(L_1,L_2; sH) \to HF^*(L_1,L_2; (r+s)H)
% \end{equation}
% which counts inhomogeneous pseudo-holomorphic triangles, and the
% product
% \begin{equation}
%   HF^*(\phi_{rH}(L_2),L_3) \otimes
%   HF^*(\phi_{(r+s)H}(L_1),\phi_{rH}(L_2)) \to HF^*(\phi_{(r+s)H}(L_1),L_2)
% \end{equation}
% counting pseudo-holomorphic triangles, which holds at the homology
% level. 

Due to the compatibility of the product with the continuation maps,
and the fact that $e_r \mapsto e_{r'}$ under continuation, we find that
the naive inclusion maps commute with the continuation maps:
\begin{equation}
  \xymatrix{
    HF^*(L(d_1),L(d_2);sH)\ar[d]^{=} \ar[r]^{i}& HF^*(L(d_1),L(d_2); (s+r)H)\ar[d]^{\text{cont.}}\\
    HF^*(L(d_1),L(d_2);sH) \ar[r]^{i}& HF^*(L(d_1),L(d_2); (s+r')H)
  }
\end{equation}
Thus the continuation map agrees with the naive inclusion map, at
least on those generators which are in the image of
$HF^*(L(d_1),L(d_2);sH)$. It follows that the continuation maps agree
with the naive inclusion maps, at least for $r$ large enough depending
on a particular generator. Hence
\begin{equation}
  HW^*(L(d_1),L(d_2)) = \lim_{r\to \infty} HF^*(L(d_1),L(d_2);rH),
\end{equation}
where the limit is formed with respect to the continuation maps, or
with respect to the naive inclusion maps, or (what is equal) the
multiplications by the various elements $e_r$.

Spelling this out a bit more gives a precise correspondence with
section \ref{sec:algebraic-motivation}. Consider the isomorphism
\begin{equation}
  HF^*(L(d_1),L(d_1);rH) \cong HF^*(L(d_1-r),L(d_1)) \cong H^*(\CP^2, \cO(r))
\end{equation}
\begin{itemize}
\item $L$: For $r \in \Z$, this isomorphism identifies $e_r$ with $y^r$.
\item $C$: For $r \in 2\Z$, this isomorphism identifies $e_r$ with $p^{r/2}$.
\item $D$: For $r \in 3\Z$, this isomorphism identifies $e_r$ with $(yp)^{r/3}$.
\end{itemize}
Thus the directed systems computing wrapped Floer cohomology are
identified with those computing the cohomology of line bundles on the
divisor complements.

We can identify the basis of $HW^*(L(d_1),L(d_2))$ with
$\hat{B}(\frac{1}{d_2-d_1} \Z)$, where $\hat{B} =
\hat{B}_L,\hat{B}_C,\hat{B}_D$ is the completion of the affine
manifold. The sets $B(\frac{1}{d_2-d_1+r}\Z)$ embed in
$\hat{B}(\frac{1}{d_2-d_1}\Z)$ and this latter is their limit as $r
\to \infty$; the map
is dilation by $\frac{d_2-d_1+r}{d_2-d_1}$ centered at the minimum of
$H$.

Using the products we computed in section \ref{sec:degeneration}, we
can identify this basis $\hat{B}(\frac{1}{d}\Z)$ for $HW^*(L(0),L(d))$
with a basis of $H^*(U;\cO(d))$, and complete the proof of Theorem \ref{thm:wrapped-products}

\begin{itemize}
\item $L$: The point $q_{a,i}$ of $\hat{B}_L(\frac{1}{d}\Z)$
  corresponds to the function $x^{-a}p^iy^{d+a-2i}$ for $a \leq 0$,
  and $z^ap^iy^{d-a-2i}$ for $a \geq 0$. In this case $i \geq 0$ can
  be arbitrarily large, so the exponent of $y$ is allowed to be negative.
\item $C$: The point $q_{a,i}$ of $\hat{B}_C(\frac{1}{d}\Z)$
  corresponds to the function $x^{-a}p^iy^{d+a-2i}$ for $a \leq 0$,
  and $z^ap^iy^{d-a-2i}$ for $a \geq 0$. In this case $i \leq
  \left\lfloor\frac{d-|a|}{2}\right\rfloor$ can be negative, so the exponent of $p$ is
  allowed to be negative, while the exponent of $y$ is non-negative.
\item $D$:  The point $q_{a,i}$ of $\hat{B}_D(\frac{1}{d}\Z)$
  corresponds to the function $x^{-a}p^iy^{d+a-2i}$ for $a \leq 0$,
  and $z^ap^iy^{d-a-2i}$ for $a \geq 0$. In this case $i \in \Z$, so
  the exponents of $y$ and $p$ are allowed to be negative.

\end{itemize}

%\end{doublespace}
\bibliographystyle{amsplain}
\bibliography{symplectic}

\providecommand{\bysame}{\leavevmode\hbox to3em{\hrulefill}\thinspace}
\providecommand{\MR}{\relax\ifhmode\unskip\space\fi MR }
% \MRhref is called by the amsart/book/proc definition of \MR.
\providecommand{\MRhref}[2]{%
  \href{http://www.ams.org/mathscinet-getitem?mr=#1}{#2}
}
\providecommand{\href}[2]{#2}
\begin{thebibliography}{10}

\bibitem{abouzaid06}
Mohammed Abouzaid, \emph{Homogeneous coordinate rings and mirror symmetry for
  toric varieties}, Geom. Topol. \textbf{10} (2006), 1097--1157 (electronic).
  \MR{2240909 (2007h:14052)}

\bibitem{abouzaid-comm}
Mohammed Abouzaid, 2009, Private communication.

\bibitem{abouzaid09}
Mohammed Abouzaid, \emph{Morse homology, tropical geometry, and homological
  mirror symmetry for toric varieties}, Selecta Math. (N.S.) \textbf{15}
  (2009), no.~2, 189--270. \MR{2529936}

\bibitem{abouzaid-msri-dec09}
\bysame, \emph{Toward a tropical {F}ukaya category}, Talk given in the MSRI
  workshop ``Tropical Structures in Geometry and Physics'', December 2009,
  video available on MSRI website.

\bibitem{as-open-string}
Mohammed Abouzaid and Paul Seidel, \emph{An open string analogue of {V}iterbo
  functoriality}, Geom. Topol. \textbf{14} (2010), no.~2, 627--718.
  \MR{2602848}

\bibitem{d-branes-book}
Paul~S. Aspinwall, Tom Bridgeland, Alastair Craw, Michael~R. Douglas, Mark
  Gross, Anton Kapustin, Gregory~W. Moore, Graeme Segal, Bal{\'a}zs
  Szendr{\H{o}}i, and P.~M.~H. Wilson, \emph{Dirichlet branes and mirror
  symmetry}, Clay Mathematics Monographs, vol.~4, American Mathematical
  Society, Providence, RI, 2009. \MR{2567952 (2011e:53148)}

\bibitem{auroux07}
Denis Auroux, \emph{Mirror symmetry and {$T$}-duality in the complement of an
  anticanonical divisor}, J. G\"okova Geom. Topol. GGT \textbf{1} (2007),
  51--91. \MR{2386535 (2009f:53141)}

\bibitem{ako06}
Denis Auroux, Ludmil Katzarkov, and Dmitri Orlov, \emph{Mirror symmetry for del
  {P}ezzo surfaces: vanishing cycles and coherent sheaves}, Invent. Math.
  \textbf{166} (2006), no.~3, 537--582. \MR{2257391 (2007g:14045)}

\bibitem{ako08}
\bysame, \emph{Mirror symmetry for weighted projective planes and their
  noncommutative deformations}, Ann. of Math. (2) \textbf{167} (2008), no.~3,
  867--943. \MR{2415388 (2009f:53142)}

\bibitem{candelas-etal}
Philip Candelas, Xenia~C. de~la Ossa, Paul~S. Green, and Linda Parkes, \emph{A
  pair of {C}alabi-{Y}au manifolds as an exactly soluble superconformal
  theory}, Nuclear Phys. B \textbf{359} (1991), no.~1, 21--74. \MR{1115626
  (93b:32029)}

\bibitem{fltz-T-duality}
Bohan Fang, Chiu-Chu~Melissa Liu, David Treumann, and Eric Zaslow,
  \emph{T-duality and homological mirror symmetry for toric varieties}, Adv.
  Math. \textbf{229} (2012), no.~3, 1875--1911. \MR{2871160 (2012m:14075)}

\bibitem{fomin-zelevinsky-I}
Sergey Fomin and Andrei Zelevinsky, \emph{Cluster algebras. {I}.
  {F}oundations}, J. Amer. Math. Soc. \textbf{15} (2002), no.~2, 497--529
  (electronic). \MR{1887642 (2003f:16050)}

\bibitem{fooo-ch10}
Kenji Fukaya, Yong-Geun Oh, Hiroshi Ohta, and Kaoru Ono, \emph{Lagrangian
  intersection floer theory: Anomaly and obstruction, chapter 10},  (2007),
  preprint.

\bibitem{fooo-toric-I}
\bysame, \emph{Lagrangian {F}loer theory on compact toric manifolds. {I}}, Duke
  Math. J. \textbf{151} (2010), no.~1, 23--174. \MR{2573826 (2011d:53220)}

\bibitem{givental-equivariant-GW}
Alexander~B. Givental, \emph{Equivariant {G}romov-{W}itten invariants},
  Internat. Math. Res. Notices (1996), no.~13, 613--663. \MR{1408320
  (97e:14015)}

\bibitem{gross-syz-survey}
Mark Gross, \emph{The {S}trominger-{Y}au-{Z}aslow conjecture: from torus
  fibrations to degenerations}, Algebraic geometry---{S}eattle 2005. {P}art 1,
  Proc. Sympos. Pure Math., vol.~80, Amer. Math. Soc., Providence, RI, 2009,
  pp.~149--192. \MR{2483935 (2010b:14080)}

\bibitem{gross-hacking-keel}
Mark Gross, Paul Hacking, and Sean Keel, \emph{Mirror symmetry for log
  {C}alabi--{Y}au surfaces {I}}, preprint: arXiv:1106.4977.

\bibitem{gs-theta}
Mark Gross and Bernd Siebert, \emph{Theta functions and mirror symmetry},
  Written for the {JDG}2011 conference, arXiv:1204.1991.

\bibitem{gs-affine-log-mirror}
\bysame, \emph{Affine manifolds, log structures, and mirror symmetry}, Turkish
  J. Math. \textbf{27} (2003), no.~1, 33--60. \MR{1975331 (2004g:14041)}

\bibitem{gs-log-degen-I}
\bysame, \emph{Mirror symmetry via logarithmic degeneration data. {I}}, J.
  Differential Geom. \textbf{72} (2006), no.~2, 169--338. \MR{2213573
  (2007b:14087)}

\bibitem{gs-log-degen-II}
\bysame, \emph{Mirror symmetry via logarithmic degeneration data, {II}}, J.
  Algebraic Geom. \textbf{19} (2010), no.~4, 679--780. \MR{2669728
  (2011m:14066)}

\bibitem{gs-affine-complex}
\bysame, \emph{From real affine geometry to complex geometry}, Ann. of Math.
  (2) \textbf{174} (2011), no.~3, 1301--1428. \MR{2846484}

\bibitem{hitchin-slag}
Nigel~J. Hitchin, \emph{The moduli space of special {L}agrangian submanifolds},
  Ann. Scuola Norm. Sup. Pisa Cl. Sci. (4) \textbf{25} (1997), no.~3-4,
  503--515 (1998), Dedicated to Ennio De Giorgi. \MR{1655530 (2000c:32075)}

\bibitem{hori-iqbal-vafa}
Kentaro Hori, Amer Iqbal, and Cumrum Vafa, \emph{D-branes and mirror symmetry},
  preprint: arXiv:hep-th/0005247.

\bibitem{hori-vafa}
Kentaro Hori and Cumrun Vafa, \emph{Mirror symmetry}, preprint:
  arXiv:hep-th/0002222.

\bibitem{tropical-AG}
Ilia Itenberg, Grigory Mikhalkin, and Eugenii Shustin, \emph{Tropical algebraic
  geometry}, second ed., Oberwolfach Seminars, vol.~35, Birkh\"auser Verlag,
  Basel, 2009. \MR{2508011 (2010d:14086)}

\bibitem{hms-general-type}
Anton Kapustin, Ludmil Katzarkov, Dmitri Orlov, and Mirroslav Yotov,
  \emph{Homological mirror symmetry for manifolds of general type}, Cent. Eur.
  J. Math. \textbf{7} (2009), no.~4, 571--605. \MR{2563433 (2010j:53184)}

\bibitem{hms}
Maxim Kontsevich, \emph{Homological algebra of mirror symmetry}, Proceedings of
  the {I}nternational {C}ongress of {M}athematicians, {V}ol.\ 1, 2 ({Z}\"urich,
  1994) (Basel), Birkh\"auser, 1995, pp.~120--139. \MR{1403918 (97f:32040)}

\bibitem{kontsevich-notes}
\bysame, \emph{Lectures at {ENS}, {P}aris},  (Spring 1998), notes taken by J.
  Bellaiche, J.-F. Dat, I. Marin, G. Racinet and H. Randriambololona.

\bibitem{ks-torus-fibrations}
Maxim Kontsevich and Yan Soibelman, \emph{Homological mirror symmetry and torus
  fibrations}, Symplectic geometry and mirror symmetry ({S}eoul, 2000), World
  Sci. Publ., River Edge, NJ, 2001, pp.~203--263. \MR{1882331 (2003c:32025)}

\bibitem{ks-nonarchimedean}
\bysame, \emph{Affine structures and non-{A}rchimedean analytic spaces}, The
  unity of mathematics, Progr. Math., vol. 244, Birkh\"auser Boston, Boston,
  MA, 2006, pp.~321--385. \MR{2181810 (2006j:14054)}

\bibitem{lyz}
N.~C. Leung, S.-T. Yau, and E.~Zaslow, \emph{From special {L}agrangian to
  {H}ermitian-{Y}ang-{M}ills via {F}ourier-{M}ukai transform}, Winter {S}chool
  on {M}irror {S}ymmetry, {V}ector {B}undles and {L}agrangian {S}ubmanifolds
  ({C}ambridge, {MA}, 1999), AMS/IP Stud. Adv. Math., vol.~23, Amer. Math.
  Soc., Providence, RI, 2001, pp.~209--225. \MR{1876070 (2002k:53102)}

\bibitem{lly}
Bong~H. Lian, Kefeng Liu, and Shing-Tung Yau, \emph{Mirror principle. {I}},
  Asian J. Math. \textbf{1} (1997), no.~4, 729--763. \MR{1621573 (99e:14062)}

\bibitem{mclean-lf-sh}
Mark McLean, \emph{Lefschetz fibrations and symplectic homology}, Geom. Topol.
  \textbf{13} (2009), no.~4, 1877--1944. \MR{2497314 (2011d:53224)}

\bibitem{mclean-calibrated}
Robert~C. McLean, \emph{Deformations of calibrated submanifolds}, Comm. Anal.
  Geom. \textbf{6} (1998), no.~4, 705--747. \MR{1664890 (99j:53083)}

\bibitem{polishchuk-zaslow}
Alexander Polishchuk and Eric Zaslow, \emph{Categorical mirror symmetry: the
  elliptic curve}, Adv. Theor. Math. Phys. \textbf{2} (1998), no.~2, 443--470.
  \MR{1633036 (99j:14034)}

\bibitem{seidel01b}
Paul Seidel, \emph{More about vanishing cycles and mutation}, Symplectic
  geometry and mirror symmetry ({S}eoul, 2000), World Sci. Publ., River Edge,
  NJ, 2001, pp.~429--465. \MR{1882336 (2003c:53125)}

\bibitem{seidel01a}
\bysame, \emph{Vanishing cycles and mutation}, European {C}ongress of
  {M}athematics, {V}ol. {II} ({B}arcelona, 2000), Progr. Math., vol. 202,
  Birkh\"auser, Basel, 2001, pp.~65--85. \MR{1905352 (2003i:53128)}

\bibitem{seidel-les}
\bysame, \emph{A long exact sequence for symplectic {F}loer cohomology},
  Topology \textbf{42} (2003), no.~5, 1003--1063. \MR{1978046 (2004d:53105)}

\bibitem{seidel-a-infinity-subalgebras}
\bysame, \emph{{$A_\infty$}-subalgebras and natural transformations}, Homology,
  Homotopy Appl. \textbf{10} (2008), no.~2, 83--114. \MR{2426130 (2010k:53154)}

\bibitem{seidel-book}
\bysame, \emph{Fukaya categories and {P}icard-{L}efschetz theory}, Zurich
  Lectures in Advanced Mathematics, European Mathematical Society (EMS),
  Z\"urich, 2008. \MR{2441780 (2009f:53143)}

\bibitem{syz}
Andrew Strominger, Shing-Tung Yau, and Eric Zaslow, \emph{Mirror symmetry is
  {$T$}-duality}, Nuclear Phys. B \textbf{479} (1996), no.~1-2, 243--259.
  \MR{1429831 (97j:32022)}

\bibitem{symington}
Margaret Symington, \emph{Four dimensions from two in symplectic topology},
  Topology and geometry of manifolds ({A}thens, {GA}, 2001), Proc. Sympos. Pure
  Math., vol.~71, Amer. Math. Soc., Providence, RI, 2003, pp.~153--208.
  \MR{2024634 (2005b:53142)}

\end{thebibliography}

\end{document}